%%%%%%%%%%%%%%%%%%%%%%%%%%%%%%%%%%%%%%%%%%%%%%%%%%%%%
%
%   Boltje, Çoskun: Fibered Biset Functors.
%
%   Version sent to 
%   ---Advances in Mathematics, Dec. 4, 2016
%   --- ArXiv, Dec 4, 2016
%
%

\documentclass[12pt]{article}

\input diagram.tex
\usepackage{amsmath} % eg. for \begin{equation}
\usepackage[mathscr]{eucal}
\usepackage{amssymb} % eg. for \leqslant
\usepackage{theorem}

\topmargin=-1cm
\textheight=22cm
\oddsidemargin=0cm
\textwidth=16cm

%%%%%%%%%%%%%%%%%%%% THEOREM- UND GLEICHUNGSDEFINITIONEN %%%%%%%%%%%%%

\theoremstyle{change}  % puts numbers IN FRONT of "Theorem"
\newtheorem{theorem}{Theorem}[section] % defines environment "Theorem".
                                       % puts number "2.5" in front of
                                       % "Theorem", if counter
                                       %  "theorem"  has value 5 and
                                       %  counter "mysection" has value 2.
      % When counter "mysection" increases with "stepcounter"-command,
      % then counter "theorem" is automatically reset.
\newtheorem{lemma}[theorem]{Lemma}  % defines environment "Lemma", that
                                    % is numbered in the same way as the
                                    % environment theorem.
\newtheorem{proposition}[theorem]{Proposition}
\newtheorem{corollary}[theorem]{Corollary}
\theorembodyfont{\rmfamily}  % has the effect that the content of Remark,
                             % Example, and Examples is set in
                             % "\rmfamily".
\newtheorem{remark}[theorem]{Remark}
\newtheorem{example}[theorem]{Example}

\newtheorem{definition}[theorem]{Definition}
\newtheorem{notation}[theorem]{Notation}
\newtheorem{nothing}[theorem]{} % empty Theoremumgebung.

\newtheorem{hypothesis}[theorem]{Hypothesis}

\newenvironment{proof}{\noindent{\bf Proof}\ }{\qed\bigskip}

%\newcounter{equation}[theorem]
      % The new environment has the same effect as the old one except
      % for creating the label (2.5.c) if it is the third equation in
      % the environment "Theorem 2.5".
%\renewcommand{\theequation}{\thetheorem.{\rm \alph{equation}}}

%%%%%%%%%%%%%%%%%%%%%%%%%%%% MACROS %%%%%%%%%%%%%%%%%%%%%%%%%%%%%%%%%%

\renewcommand{\le}{\leqslant} % needs amssymb-Paket
\renewcommand{\ge}{\geqslant}

\newcommand{\mylabel}[1]{\label{#1}}

\newcommand{\abar}{\bar{a}}
\newcommand{\alphabar}{\bar{\alpha}}
\newcommand{\Aut}{\mathrm{Aut}}
\newcommand{\bbar}{\bar{b}}

\newcommand{\calC}{\mathcal{C}}
\newcommand{\calCkA}{\calC_k^A}                       
\newcommand{\calE}{\mathcal{E}}
\newcommand{\calF}{\mathcal{F}}
\newcommand{\calFkA}{\calF_k^A}
\newcommand{\calH}{\mathcal{H}}
\newcommand{\calI}{\mathcal{I}}
\newcommand{\calK}{\mathcal{K}}
\newcommand{\calM}{\mathcal{M}}
\newcommand{\calMc}{\calM^c}

\newcommand{\calR}{\mathcal{R}}
\newcommand{\calRtilde}{\tilde{\calR}}
\newcommand{\calS}{\mathcal{S}}
\newcommand{\calSbar}{\bar{\cal{S}}}
\newcommand{\calStilde}{\tilde{\calS}}
\newcommand{\calSkA}{\calS_k^A}                                      
\newcommand{\calSbarkA}{\bar{\calS}_k^A}                        
\newcommand{\can}{\mathrm{can}}
\newcommand{\catfont}{\mathsf}

\newcommand{\CC}{\mathbb{C}}
\newcommand{\cdotG}{\mathop{\cdot}\limits_{G}}
\newcommand{\cdotH}{\mathop{\cdot}\limits_{H}}

\newcommand{\Def}{\mathrm{Def}}
\newcommand{\Del}{\mathrm{Del}}
\newcommand{\dia}{\mathrm{diag}}

\newcommand{\Ebar}{\bar{E}}
\newcommand{\End}{\mathrm{End}}
\newcommand{\etabar}{\bar{\eta}}
\newcommand{\Ext}{\mathrm{Ext}}
\newcommand{\fbar}{\bar{f}}

\newcommand{\Gbar}{\bar{G}}

\newcommand{\GsetA}{\lset{G}^A}
\newcommand{\GsetGA}{\lset{G}_G^A}
\newcommand{\GsetHA}{\lset{G}_H^A}
\newcommand{\GsetKA}{\lset{G}_K^A}

\newcommand{\Gtilde}{\tilde{G}}
\newcommand{\Hbar}{\bar{H}}
\newcommand{\HsetGA}{\lset{H}_G^A}
\newcommand{\HsetKA}{\lset{H}_K^A}
\newcommand{\Hom}{\mathrm{Hom}}
\newcommand{\Htilde}{\tilde{H}}
\newcommand{\id}{\mathrm{id}}
\newcommand{\im}{\mathrm{im}}
\newcommand{\Ind}{\mathrm{Ind}}
\newcommand{\ind}{\mathrm{ind}}
\newcommand{\Inf}{\mathrm{Inf}}
\newcommand{\Inn}{\mathrm{Inn}}
\newcommand{\Ins}{\mathrm{Ins}}
\newcommand{\iotatilde}{\tilde{\iota}}
\newcommand{\Irr}{\mathrm{Irr}}

\newcommand{\kappabar}{\bar{\kappa}}
\newcommand{\kappatilde}{\tilde{\kappa}}
\newcommand{\kappattilde}{\tilde{\kappatilde}}
\newcommand{\Kbar}{\bar{K}}
\newcommand{\Khat}{\hat{K}}
\newcommand{\kMod}{\lMod{k}}
\newcommand{\KsetLA}{\lset{K}_L^A}
\newcommand{\Ktilde}{\tilde{K}}
\newcommand{\Kttilde}{\tilde{\Ktilde}}
\newcommand{\ktilde}{\tilde{k}}
\newcommand{\lambdabar}{\bar{\lambda}}
\newcommand{\lDelta}[1]{\llap{\phantom{|}}_{#1}\Delta}
\newcommand{\Lbar}{\bar{L}}
\newcommand{\lexp}[2]{\setbox0=\hbox{$#2$} \setbox1=\vbox to
                 \ht0{}\,\box1^{#1}\!#2}
\newcommand{\lGamma}[1]{\llap{\phantom{|}}_{#1}\Gamma}
\newcommand{\Lhat}{\hat{L}}
\newcommand{\lin}{\mathrm{lin}}
\newcommand{\liso}{\buildrel\sim\over\longrightarrow}

\newcommand{\lMod}[1]{\llap{\phantom{|}}_{#1}\catfont{Mod}}
\newcommand{\lmon}[1]{\llap{\phantom{|}}_{#1}\catfont{mon}}

\newcommand{\lset}[1]{\llap{\phantom{|}}_{#1}\catfont{set}}
\newcommand{\Ltilde}{\tilde{L}}
\newcommand{\ltilde}{\tilde{l}}
\newcommand{\Mat}{\mathrm{Mat}}
\newcommand{\myiso}{\buildrel\sim\over\to}
\newcommand{\mubar}{\bar{\mu}}
\newcommand{\mutilde}{\tilde{\mu}}
\newcommand{\NN}{\mathbb{N}}
\newcommand{\omegabar}{\bar{\omega}}
\newcommand{\Out}{\mathrm{Out}}
\newcommand{\pbar}{\bar{p}}
\newcommand{\phibar}{\bar{\phi}}
\newcommand{\phiop}{\phi^{\mathrm{op}}}
\newcommand{\phitilde}{\tilde{\phi}}
\newcommand{\pitilde}{\tilde{\pi}}

\newcommand{\psitilde}{\tilde{\psi}}
\newcommand{\qed}{\nobreak\hfill
                  \vbox{\hrule\hbox{\vrule\hbox to 5pt
                  {\vbox to 8pt{\vfil}\hfil}\vrule}\hrule}}
\newcommand{\QQ}{\mathbb{Q}}
\newcommand{\Res}{\mathrm{Res}}
\newcommand{\sbar}{\bar{s}}

\newcommand{\sigmatilde}{\tilde{\sigma}}
\newcommand{\Stilde}{\tilde{S}}
\newcommand{\thetatilde}{\tilde{\theta}}
\newcommand{\torA}{\mathrm{tor}A}
\newcommand{\Ubar}{\bar{U}}
\newcommand{\Uop}{U^{\mathrm{op}}}
\newcommand{\Utilde}{\tilde{U}}
\newcommand{\Vtilde}{\widetilde{V}}
\newcommand{\Wtilde}{\widetilde{W}}

\newcommand{\xop}{x^{\mathrm{op}}}
\newcommand{\Xop}{X^{\mathrm{op}}}

\newcommand{\Yop}{Y^{\mathrm{op}}}

\newcommand{\ZZ}{\mathbb{Z}}

%%%%%%%%%%%%%%%%%%%%%%%%%%%% TITLE %%%%%%%%%%%%%%%%%%%%%%%%%%%%%%%%%%%

\title{Fibered Biset Functors\footnote{MR Subject Classification 19A22, 20C15, 20C20}}
\author{\small Robert Boltje\thanks{Research supported by the NSF, DMS-0200592}\\
        \small Department of Mathematics\\
        \small University of California\\
        \small Santa Cruz, CA 95064\\
        \small U.S.A.\\
        \small boltje@math.ucsc.edu %
        \and
        \small Olcay Co\c{s}kun\thanks{Research supported partially by  T\"ubitak-1001-113F240.}\\
        \small Department of Mathematics\\
        \small Bo\u{g}azi\c{c}i University\\
        \small Bebek, 34342 Istanbul\\
        \small T\"{u}rkiye\\
        \small olcay.coskun@boun.edu.tr}
\date{December 4, 2016}

%%%%%%%%%%%%%%%%%%% BEGIN %%%%%%%%%%%%%%%%%%%%%%%%%%%%%%%%%%%%%%%%%%%%

\begin{document}

\sloppy

\maketitle

%%%%%%%%%%%%%%%%%%%%%%%%%%%% ABSTRACT %%%%%%%%%%%%%%%%%%%%%%%%%%%%%%%%

\begin{abstract}
The theory of biset functors, introduced by Serge Bouc, gives a unified treatment of operations in representation theory that are induced by permutation bimodules. In this paper, by considering fibered bisets, we introduce and describe the basic theory of fibered biset functors which is a natural framework for operations induced by monomial bimodules. The main result of this paper is the classification of simple fibered biset functors. 
\end{abstract}

%%%%%%%%%%%%%%%%%%%%%% INTRODUCTION %%%%%%%%%%%%%%%%%%%%%%%%%%%%%%%%%%

\section*{Introduction}

In \cite{Bouc1996a} and \cite{BoucBook} Serge Bouc introduced and developed the theory of {\em biset functors}. This notion provides a framework for situations where structural maps that behave like {\em restriction}, {\em induction}, {\em inflation}, and {\em deflation}, or a subset of them, are present. Typical examples of biset functors are the various representation rings, as for instance the Burnside ring, the character ring, the Green ring, and the trivial source ring of a finite group $G$. The Brauer character ring of $G$ and cohomology groups in fixed degree of $G$ are other examples (no deflation in these cases). The theory of biset functors proved to be the right framework to prove striking results: The determination of the Dade group of endo-permutation modules of a $p$-group (see \cite{BoucThevenaz2000a} for the final result, or \cite{Thevenaz2007} for an overview) and the determination of the unit group of the Burnside ring of a $p$-group (see \cite{Bouc2010a} or \cite{BoucBook}). Both the Dade group and the unit group of the Burnside ring provide further examples of biset functors. 

Biset functors are additive, abelian group valued functors on the biset category, whose objects are finite groups and whose morphism sets are given by the Grothendieck groups $B(G,H)$ of finite $(G,H)$-bisets. The composition is induced by a construction that imitates the tensor product of bimodules. Restriction, induction, inflation and deflation can be seen as particular transitive bisets.

Recently, a category analogous to the biset category, whose objects are finite sets and morphisms are correspondences between them, together with functors on this category have been considered by Bouc and Th\'evenaz, see~\cite{BoucThevenaz2015}. Again surprising connections to other areas, for instance to lattice theory, came to light.

In this article we strive to systematically develop a similar theory of {\em fibered biset functors}. The motivation comes from the fact that representation rings of finite groups carry more structure, by considering multiplication with one-dimensional representations as structural maps. Monomial Burnside rings have been introduced by Dress in \cite{Dress} in great generality and utilized by the first author in the formalism of canonical induction formulas, see \cite{Boltje1989} and \cite{Boltje1998}. See also \cite{Barker} and \cite{Romero2013a} for some basic properties. Let $A$ be a multiplicatively written abelian group. The $A$-fibered Burnside ring $B^A(G)$ is the Grothendieck group of $A$-fibered $G$-sets, i.e., $G\times A$-sets with finitely many orbits which are free as $A$-sets. A $\ZZ$-basis of $B^A(G)$ is parametrized by $G$-conjugacy classes of pairs $(H,\phi)$, where $H$ is a subgroup of $G$ and $\phi\in\Hom(H,A)$. Similarly one defines $A$-fibered $(G,H)$-bisets and their Grothendieck groups $B^A(G,H)=B^A(G\times H)$. Also fibered bisets allow a tensor product construction which gives rise to the $A$-fibered biset category over a commutative ring $k$, denoted by $\calC_k^A$. Its objects are again finite groups and its morphism sets are given by $B_k^A(G,H):=k\otimes B^A(G,H)$. The $k$-linear functors from $\calCkA$ to $\lMod{k}$ together with natural transformations form the abelian category $\calFkA$ of {\em $A$-fibered biset functors over $k$}. If $k=\ZZ$, we usually suppress the index $k$. In the case that $A$ is the unit group of a field $K$, one obtains a natural linearization map from $B^A(G)$ to various representation rings of $KG$-modules, by mapping the class of $(H,\phi)$ to $\Ind_H^G(K_\phi)$, where $K_\phi$ denotes the one-dimensional $KH$-module associated with the homomorphism $\phi\colon H\to K^\times$. Using the linearization map one can interpret these representation rings as additive functors on the $A$-fibered biset category with values in the category of abelian groups, i.e., as {\em $A$-fibered biset functors}.

One has a natural embedding $B(G)\to B^A(G)$ and also a natural splitting map $B^A(G)\to B(G)$ of this embedding in the category of rings. The embedding allows to view the biset category as a subcategory of the $A$-fibered biset category. To view an $A$-fibered biset functor $F$ via restriction as a biset functor means forgetting some of its structure. Thus, if a biset functor comes via restriction from an $A$-fibered biset functor it is worth trying to understand its richer structure as an $A$-fibered biset functor. 

The main goal of this paper is to determine the simple $A$-fibered biset functors. For this we first need to study the tensor product of $A$-fibered bisets and how transitive $A$-fibered bisets can be factored through smaller groups. This is done in Sections~1 and 2. The situation for bisets is as follows: It is shown in \cite{BoucBook} that a transitive $(G,H)$-biset
$(G\times H)/ U$, with $U\le G\times H$, is equal to the product of five canonical bisets. More precisely, one has
\begin{equation}\label{eqn biset decomposition}
\Big(\frac{G\times H}{U}\Big) = \
\Ind_P^G\times_P\Inf_{P/K}^P\times_{P/K}\mbox{\rm c}_{P/K,Q/L}^\eta\times_{Q/L}\Def^Q_{Q/L}\times_{Q}\Res^H_Q.
\end{equation}
Here, $P=p_1(U)$ and $Q = p_2(U)$ are the first and the second projections of the subgroup $U\le G\times H$ and $K = p_1(U\cap (P\times 1))$ and 
$L = p_2(U\cap (1\times Q))$. In this case, the groups $P/K$ and $Q/L$ are isomorphic and a canonical isomorphism $\eta$ is 
determined by $U$. Moreover, $\times_?$ denotes the product of bisets. See \ref{4 operations} for the description of 
the factors appearing in (\ref{eqn biset decomposition}). We usually write a transitive {\em $A$-fibered} $(G,H)$-biset $X$ in the form $\left(\frac{G\times H}{U,\phi}\right)$, where $U\le G\times H$ is the stabilizer of the $A$-orbit of a chosen element $x\in X$ and $\phi\colon U\to A$ is the homomorphism  arising from the action of $U$ on this $A$-orbit. Different choices of $x$ lead to $G\times H$-conjugates of $(U,\phi)$. In Corollary~\ref{Mackey formula for fibered bisets 2} we derive a Mackey formula for the tensor product of two transitive $A$-fibered bisets and use it to show an analogue of the decomposition in (\ref{eqn biset decomposition}). As above, $U$ determines a quintuple $(P,K,\eta,L,Q)$. Further, the homomorphism $\phi\colon U\to A$ determines the following data: Let $\phi_1\times \phi_2^{-1}$ be the decomposition of the restriction of $\phi$ to $K\times L$ and let $\hat K$ (resp.~$\hat L$) be the kernel of $\phi_1$ (resp. $\phi_2^{-1}$). We write $l(U,\phi) = (P,K,\phi_1)$ and $r(U,\phi) = (Q,L,\phi_2)$ and call them the left and the right invariants of $(U,\phi)$. In Proposition~\ref{decomposition} we prove the following decomposition
\begin{equation}\label{eqn fibered biset decomposition 1}
\left(\frac{G\times H}{U,\phi}\right) = 
\Ind_P^G\otimes_{AP}\Inf_{P/\hat K}^P\otimes_{AP/\hat K}\mbox{\rm Y}\otimes_{AQ/\hat L}\Def^Q_{Q/\hat L}\otimes_{AQ}\Res^H_Q.
\end{equation}
Here, $Y$ is a transitive $A$-fibered $(P/\hat K, Q/\hat L)$-biset with stabilizing pair $(\bar U,\bar \phi)$ such that the first and the second projections of $\bar U$ are surjective, and the homomorphisms $\bar\phi_1$ and $\bar\phi_2$ are faithful. In contrast to the theory of bisets is that in general the $A$-fibered biset $Y$ can be factored through groups of smaller order than $P/\Khat$ and $Q/\Lhat$. The problem remains to decompose $Y$ further by factoring through a smallest possible group. It is solved in Section~10 under additional assumptions on $A$, see Hypothesis~\ref{hypothesis}, which hold for instance if $A$ is the group of units of an algebraically closed field. Under this assumption, we decompose $X$ as follows. For simplicity, assume $P=G, Q=H, \hat K =1$ and $\hat L=1$. Set $\tilde K = G^\prime\cap K$ and $\tilde L = H^\prime \cap L$, where $G^\prime$ (resp.~$H^\prime$)  denotes the derived subgroup of $G$ (resp.~H). We construct a triple $(\tilde G, M, \mu)$, where $\tilde G$ is a central extension of $G/K$ by $\tilde K$, $M$ is a subgroup of $G\times \tilde G$, and $\mu\in\Hom(M,A)$, with additional properties as described in Section \ref{sec completely reduced pairs}, and also a triple $(\tilde H, N,\psi)$ with analogous properties related to $H$, $L$, and $\tilde{L}$, such that 
\begin{equation*}
Y \cong \Ins_{\tilde G}^G\otimes_{A\tilde G} X\otimes_{A\tilde H} \Del_{\tilde H}^H\,,
\end{equation*}
where $X$ is an $A$-fibered $(\tilde G,\tilde H)$-biset which is reduced, that is, cannot be factored further through a group of order smaller than $|\Gtilde|=|\Htilde|$. 
Here the \emph{insertion} fibered biset $\Ins_{\tilde G}^G$ \emph{inserts} the section $K/\tilde K$ into the group $\tilde G$ and the 
\emph{deletion} fibered biset $\Del^H_{\tilde H}$ \emph{deletes} the section $L/\tilde L$ from the group $H$. Unfortunately, the construction of $\Ins_{\tilde G}^G$ and $\Del^H_{\tilde H}$ uses choices which make them not canonical.

In Section~3 we introduce $A$-fibered biset functors and recall basic properties of such functor categories from \cite{Bouc1996a}. In order to determine the simple $A$-fibered biset functors over a commutative ring $k$ one needs to find the simple modules of the endomorphism algebra $E_k^A(G) = B_k^A(G,G)$ of a finite group $G$ which are annihilated by the ideal $I^A_k(G)$ of endomorphisms that factor through groups of smaller order. In order to determine the simple $E_k^A(G)/I_k^A(G)$-modules we consider the subalgebra $E_k^{A,c}(G)$ of $E_k^A(G)$ spanned over $k$ by the isomorphism classes of {\em covering} transitive $A$-fibered $(G,G)$-bisets $\left(\frac{G\times G}{U,\phi}\right)$, i.e., satisfying $p_1(U)=G=p_2(U)$. This subalgebra of $E_k^A(G)$ covers $E_k^A(G)/I_k^A(G)$ and it is isomorphic to a product of matrix rings over group algebras $k\Gamma_{(G,K,\kappa)}$. Here, $\Gamma_{(G,K,\kappa)}$ is the group of isomorphism classes of covering transitive $A$-fibered $(G,G)$-bisets $\left(\frac{G\times G}{U,\phi}\right) $ with $l(U,\phi)=(G,K,\kappa)=r(U,\phi)$. This result is proved in the course of Sections~4, 5, and 6. In Section~4 we introduce the central idempotents of $E_G^c$ which split the algebra $E_G^c$ into matrix rings. In Section~5 we determine an equivalence relation on the pairs $(K,\kappa)$ whose equivalence classes will parametrize the various matrix components. And in Section~6 we prove that $E_k^{A,c}(G)$ has the announced structure. In Section~7, the groups $\Gamma_{(G,K,\kappa)}$ are identified as extensions of a subgroup of the outer automorphism group $\Out(G/K)$ (determined by $\kappa$) and the group $\Hom(G/K,A)$. Section~8 is devoted to understanding which simple modules of $E_G^c$ are annihilated by $E_G\cap I_G$. For a given matrix component, indexed by $(K,\kappa)$, this only depends on the pair $(K,\kappa)$. Pairs that will lead to simple $E_G/I_G$-modules are called {\em reduced} pairs. Unfortunately, for general $A$ we do not have a handy criterion when $(K,\kappa)$ is reduced. Proposition~\ref{conditions for reduced} gives a necessary and a (different) sufficient condition for $(K,\kappa)$ to be reduced.

In Section~9 we show that the simple $A$-fibered biset functors are parametrized by equivalence classes of quadruples $(G,K,\kappa, [V])$, where $G$ is a finite group, $(K,\kappa)$ is a reduced pair, and $[V]$ is the isomorphism class of a simple $k\Gamma_{(G,K,\kappa)}$-module. This equivalence relation involves the notion of {\em linkage} of two tripes $(G,K,\kappa)$ and $(H,L,\lambda)$ by a pair $(U,\phi)$ with $U\le G\times H$ and $\phi\in\Hom(U,A)$ such that $l(U,\phi)=(G,K,\kappa)$ and $r(U,\phi)=(H,L,\lambda)$. In Proposition~\ref{linkage criterion} a group cohomology criterion is given to determine if two triples are linked. There exist examples of such linked triples with $G$ and $H$ not isomorphic. They lead to a negative answer of a question asked by Bouc, see Remark~\ref{rem Romero}, which has been observed independently by Romero in \cite{Romero2012a}.

In Section~10, we show that under a further hypothesis on the group $A$, one can show that a pair $(K,\kappa)$ is reduced if and only if $K\le Z(G)\cap G'$ and $\kappa$ is injective. This uses lengthy computations in the cohomology groups $H^2(G/K,A)$ and related cohomology groups. Finally, in Section~11 we realize some representation rings as simple fibered biset functors over appropriate fields of coefficients.

\bigskip
{\bf Notation}\quad
Throughout the paper, we will adopt the following conventions:

If $G$ is a group, $H\le G$ is a subgroup and $x\in G$, then $\lexp{x}{H}:=xHx^{-1}$. By $H<G$ we indicate that $H$ is a proper subgroup of $G$, i.e., $H\neq G$. If $k$ is a commutative ring we denote by $kG$ or $k[G]$ the group algebra of $G$ over $k$. If $H\le G$ and $M$ is a left $k[H]$-module then $\lexp{x}{M}$ is the $k[\lexp{x}{H}]$-module with underlying $k$-module equal to $M$ and $\lexp{x}{H}$-action given by $xhx^{-1}\cdot m := h m$, for $h\in H$ and $m\in M$. If $A$ is an abelian group, $\phi\colon H\to A$ is a group homomorphism and $x\in G$, then we define the homomorphism $\lexp{x}{\phi}\colon\lexp{x}{H}\to A$ by $xhx^{-1}\mapsto \phi(h)$ for $h\in H$.

For any ring $R$, the isomorphism class of an irreducible left $R$-module $V$ is denoted by $[V]$ and the set of isomorphism classes of simple left $R$-modules is denoted by $\Irr(R)$.

\bigskip
{\bf Acknowledgement} Work on this paper began 2007 during a visit of the second author at UC Santa Cruz. Since then both authors have enjoyed the support of the following institutions while working on this projects: UC Santa Cruz, MSRI,  Bo\u{g}azi\c{c}i University, Nesin Mathematics Village, Centre Interfacultaire Bernoulli. We would like to express our gratitude to these institutions.

%If $M$ is an $R[G]$-module and $H\le G$, the $H$-coinvariant module $M_H$ is defined as $M/IM$, where $I\subset R[H]$ is the augmentation ideal, i.e., the set of all $R$-linear combinations of elements of the form $h-1$, $h\in H$. For $m\in M$ we abbreviate $m+IM$ just by $\overline{m}$. Note that $M_H$ is an $R[N_G(H)/H]$-module in a natural way by setting $(nH)\cdot \overline{m}:=\overline{nm}$, for $n\in N_G(H)$ and $m\in M$. Note also that $M\mapsto M_H$ is a functor from the category of $R[G]$-modules to the category of $R[N_G(H)/H]$-modules which is equivalent to the tensor functor $R\otimes_{R[H]}-$, where $R$ is considered as the trivial right $R[H]$-module.

%%%%%%%%%%%%%%%%%%%%%%% SECTION 1 %%%%%%%%%%%%%%%%%%%%%%%%%%%%%%%%%%%%

\section{$A$-Fibered Bisets}\label{sec fibered bisets}

Throughout this article, we fix a multiplicatively written abelian
group $A$. For every finite group $G$, we set
\begin{equation*}
  G^*:=\Hom(G,A)
\end{equation*}
and view $G^*$ as an abelian group with point-wise multiplication.

\begin{nothing} {\bf The categories $\GsetA$ and $\GsetHA$.}\quad
Let $G$ be a finite group. An {\em $A$-fibered $G$-set} is a left
$A\times G$-set $X$ which is free as an $A$-set and has finitely many
$A$-orbits. A morphism between two $A$-fibered $G$-sets is just an
$A\times G$-equivariant map. The $A$-fibered $G$-sets and their
morphisms form a category which we denote by $\GsetA$.

We often consider a left $G$-set $X$ also as a right $G$-set via
\begin{equation*}
  xg=g^{-1}x
\end{equation*} 
for any $x\in X$ and $g\in G$
and vice-versa. However, when we view a left $A$-set as a right
$A$-set we always do this via
\begin{equation}\label{two-sided A-action}
  xa=ax
\end{equation}
and vice-versa. Elements of $A$ play a similar role as the elements of a commutative ring $k$, when
switching sides for modules over the group algebra $kG$.

If also $H$ is a finite group we denote by the category $\GsetHA$ of {\em
$A$-fibered $(G,H)$-bisets} as the category $\lset{G\times H}^A$. By
the above convention, we can view an object $X\in\lset{G\times H}^A$
as equipped with a left $G$-action, a right $H$-action and a two-sided
$A$-action through (\ref{two-sided A-action}), all three commuting with each other.
\end{nothing}

\begin{nothing}\label{noth M_G}{\bf The set $\calM_G=\calM^A(G)$.}\quad
For a finite group $G$, we denote by $\calM^A(G)$ the set of all pairs
$(K,\kappa)$ where $K$ is a subgroup of $G$ and $\kappa\in K^*$. Often we will just write $\calM_G$ for $\calM^A(G)$. The
set $\calM_G$ has a poset structure given by $(L,\lambda)\le
(K,\kappa)$ if $L\le K$ and $\lambda=\kappa|_L$. Moreover, $G$ acts on
$\calM_G$ by conjugation via
$\lexp{g}{(K,\kappa)}:=(\lexp{g}{K},\lexp{g}{\kappa})$ for $g\in G$.
Note that conjugation respects the poset structure. We denote the
$G$-orbit of $(K,\kappa)$ by $[K,\kappa]_G$.

Let also $H$ be a finite group and let $(U,\phi)\in\calM_{G\times
H}$. We denote by
\begin{equation*}
  p_1\colon G\times H\to G\quad\text{and}\quad p_2\colon G\times H\to
  H
\end{equation*}
the projection maps and set
\begin{equation*}
  k_1(U):=\{g\in G\mid (g,1)\in U\}\quad\text{and}\quad
  k_2(U):=\{h\in H\mid (1,h)\in U\}\,.
\end{equation*}
We also set
\begin{equation*}
  k(U):=k_1(U)\times k_2(U)\le U\,,
\end{equation*}
the largest \lq rectangular\rq\ subgroup of $U$. Furthermore, for
$i=1,2$, we define homomorphisms $\phi_i\in k_i(U)^*$ by
\begin{equation*}
  \phi|_{k(U)}=\phi_1\times\phi_2^{-1}\,.
\end{equation*}
The reason we use $\phi_2^{-1}$ in the above equation is that later
formulas will look nicer. Finally, we associate to $(U,\phi)$ its {\em
left invariants} and {\em right invariants}
\begin{equation*}
  l(U,\phi):=(p_1(U),k_1(U),\phi_1)\quad\text{and}\quad
  r(U,\phi):=(p_2(U),k_2(U),\phi_2)\,.
\end{equation*}
Sometimes we will only be interested in part of these invariants and
also define
\begin{equation*}
  l_0(U,\phi):=(k_1(U),\phi_1)\quad\text{and}\quad
  r_0(U,\phi):=(k_2(U),\phi_2)\,.
\end{equation*}
\end{nothing}

\begin{proposition}\mylabel{eta and zeta}
Let $(U,\phi)\in\calM_{G\times H}$ and set
\begin{equation*}
  (P,K,\kappa):=l(U,\phi)\quad\text{and}\quad (Q,L,\lambda):=r(U,\phi)\,.
\end{equation*}
Moreover, set
\begin{equation*}
  \Khat:=\ker(\kappa)\,,\quad \Lhat:=\ker(\lambda)\,,\quad \Ktilde:=\Khat P'\cap
K\quad \text{and}\quad \Ltilde:=\Lhat Q'\cap L\,.
\end{equation*}
Then, clearly,
\begin{equation*}
  1 \le \Khat \le \Ktilde \le K \le P \le G\quad\text{and}\quad
  1 \le \Lhat \le \Ltilde \le L \le Q \le H\,.
\end{equation*}
Moreover:

\smallskip
{\rm (a)} $\Khat$, $\Ktilde$ and $K$ are normal in $P$, and $\Lhat$,
$\Ltilde$ and $L$ are normal in $Q$.

\smallskip
{\rm (b)} $K/\Khat$ is central in $P/\Khat$ and $L/\Lhat$ is central in
$Q/\Lhat$.

\smallskip
{\rm (c)} One has group isomorphisms
\begin{equation*}
  P/K \cong U/(K\times L) \cong Q/L
\end{equation*}
which are induced by the projection maps $U\to P$ and $U\to Q$. The
resulting isomorphism
\begin{equation*}
  \eta=\eta_U\colon Q/L\to P/K
\end{equation*}
is characterized by
\begin{equation*}
  \eta(qL)=pK \iff (p,q)\in U\,,
\end{equation*}
for $q\in Q$ and $p\in P$.

\smallskip
{\rm (d)} One has group isomorphisms
\begin{equation*}
  \Ktilde/\Khat \cong
  \ker(\phi|_{\Ktilde\times\Ltilde})/(\Khat\times\Lhat) \cong
  \Ltilde/\Lhat
\end{equation*}
induced by the projection maps $\Ktilde\times\Ltilde\to\Ktilde$ and
$\Ktilde\times\Ltilde\to\Ltilde$. The resulting isomorphism
\begin{equation*}
  \zeta=\zeta_{U,\phi}\colon \Ltilde/\Lhat\to\Ktilde/\Khat
\end{equation*}
is characterized by
\begin{equation*}
  \zeta(\ltilde\Lhat)=\ktilde\Khat \iff
  \kappa(\ktilde)=\lambda(\ltilde)\,,
\end{equation*}
for $\ltilde\in\Ltilde$ and $\ktilde\in\Ktilde$.

\smallskip
{\rm (e)} If $\Khat=1$, $\Ktilde=K$ and $P=G$ then $|G|\le|H|$.
\end{proposition}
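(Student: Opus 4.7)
My plan is to combine parts (c) and (d) to get two cardinality comparisons and chain them together. The argument should be short and purely order-theoretic given what has already been established in (a)--(d).

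First, I would translate the hypotheses. Since $P=G$ and $\Khat=1$, part (b) tells me that $K$ is central in $G$, and the condition $\Ktilde=K$ unpacks to $K\le \Khat P'=G'$. Neither of these is directly needed for the inequality, but they confirm that $\kappa\colon K\to A$ is injective, which is the structural feature that makes (d) useful.

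Next, I would apply part (d) under the hypotheses. Setting $\Khat=1$ and $\Ktilde=K$, the isomorphism
\begin{equation*}
  \Ktilde/\Khat\cong\Ltilde/\Lhat
\end{equation*}
becomes $K\cong \Ltilde/\Lhat$. Since $\Lhat\le\Ltilde\le L$, this yields the key numerical bound $|K|=|\Ltilde/\Lhat|\le|\Ltilde|\le|L|$.

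Then I would apply part (c): the isomorphism $P/K\cong Q/L$ together with $P=G$ gives $|G|/|K|=|Q|/|L|$, equivalently $|G|\cdot|L|=|Q|\cdot|K|$. Combined with $|K|\le|L|$ and $|Q|\le|H|$, this yields
\begin{equation*}
  |G|=\frac{|Q|\cdot|K|}{|L|}\le\frac{|H|\cdot|L|}{|L|}=|H|,
\end{equation*}
as desired. I do not anticipate any serious obstacle here: the whole content of (e) is to package the two isomorphisms from (c) and (d) into a single inequality, with the injectivity of $\kappa$ (hidden in $\Khat=1$) doing the real work by forcing $|K|\le|\Ltilde/\Lhat|\le|L|$.
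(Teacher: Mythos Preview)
Your argument is correct and is precisely the approach the paper takes: its proof of (e) is the single line ``This follows immediately from parts (c) and (d),'' and you have simply unpacked that by combining $|G|/|K|=|Q|/|L|$ from (c) with $|K|=|\Ltilde/\Lhat|\le|L|$ from (d) and $|Q|\le|H|$.
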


\begin{proof}
Parts~(a), (b) and (c) are easy verifications and are left to the
reader. Some parts of these statements are also known as Goursat's
theorem.

\smallskip
(d) We only show that the projection
\begin{equation*}
  \pbar_1\colon \ker(\phi|_{\Ktilde\times\Ltilde})/(\Khat\times\Lhat)
  \to \Ktilde/\Khat\,, \quad (\ktilde,\ltilde)(\Khat\times\Lhat)
  \mapsto \ktilde\Khat\,,
\end{equation*}
is an isomorphism. (The other isomorphism is proved with the same
arguments and the last statement follows easily.) Clearly, $\pbar_1$
is a well-defined group homomorphism, and it is easy to see that
$\pbar_1$ is injective. To show that $\pbar_1$ is surjective, let
$\ktilde\in\Ktilde$. After multiplying $\ktilde$ with an element from
$\Khat$ we may assume that $\ktilde\in P'$. Then, there exist elements
$g_1,g'_1,\ldots,g_n,g'_n\in P$ such that
\begin{equation*}
  \ktilde =  [g_1,g'_1]\cdots[g_n,g'_n]\,.
\end{equation*}
By the definition of $P$ there exist elements
$h_1,h'_1,\ldots,h_n,h'_n\in Q$ such that $(g_i,h_i),(g'_i,h'_i)\in U$
for $i=1,\ldots,n$. Set
\begin{equation*}
  \ltilde:=[h_1,h'_1]\cdots[h_n,h'_n]\in Q'\,.
\end{equation*}
It follows that $(\ktilde,\ltilde)\in U'$, which implies that
$\phi(\ktilde,\ltilde)=1$. Since $\Ktilde\le K$ we have $\ltilde\in
L$. Since $\ltilde$ belongs also to $Q'$, we have $\ltilde\in\Ltilde$
and $\pbar_1(\ktilde,\ltilde)=\ktilde\Khat$.

\smallskip
(e) This follows immediately from parts (c) and (d).
\end{proof}

\begin{nothing}\mylabel{stabilizing pairs}{\bf Stabilizing pairs.}\quad
Let $X$ be an $A$-fibered $(G,H)$-biset. We will denote the $A$-orbit
of the element $x\in X$ by $[x]$. Note that $G\times H$ acts on the
set of $A$-orbits of $X$ by $(g,h)[x]:=[(g,h)x]$. For $x\in X$, let
$S_x\le G\times H$ denote the stabilizer of $[x]$.

We define a map
\begin{equation}\label{stabilizing pair}
  X\to\calM_{G\times H}\,,\quad x\mapsto (S_x,\phi_x)\,,
\end{equation}
with
\begin{equation*}
  \phi_x\colon S_x \to A
\end{equation*}
determined by the equation
\begin{equation*}
  (g,h)x=\phi_x(g,h)x
\end{equation*}
for $(g,h)\in S_x$. We call $(S_x,\phi_x)$ the {\em stabilizing pair}
of $x$. With the notation introduced in the previous paragraph, we also
obtain group homomorphisms
\begin{equation*}
  \phi_{x,1}\colon k_1(S_x)\to A\quad\text{and}\quad
  \phi_{x,2}\colon k_2(S_x)\to A
\end{equation*}
determined by the equations
\begin{equation*}
  \phi_{x,1}(g)x=gx\quad\text{and}\quad \phi_{x,2}(h)x=xh\,,
\end{equation*}
for $g\in k_1(S_x)$ and $h\in k_2(S_x)$. Since the action of $G\times
H$ and $A$ on $X$ commute, the definitions of $\phi_x$, $\phi_{x,1}$
and $\phi_{x,2}$ do not depend on the choice of $x$ in its $A$-orbit.
Note also that for $(g,h)\in G\times H$, we have
\begin{equation*}
  (S_{(g,h)x},\phi_{(g,h)x})=\lexp{(g,h)}{(S_x,\phi_x)}\,,\quad
  \phi_{(g,h)x,1}=\lexp{g}{\phi_{x,1}}\,,\quad
  \phi_{(g,h)x,2}=\lexp{h}{\phi_{x,2}}\,.
\end{equation*}
Thus, the map defined in (\ref{stabilizing pair}) is constant on the
$A$-orbits of $X$, and considered as a map on the $A$-orbits of $X$, it
is $G\times H$-equivariant.
\end{nothing}

\begin{nothing} {\bf Transitive $A$-fibered $(G,H)$-bisets.}\quad
Let $X$ be an $A$-fibered $(G,H)$-biset. It is clear that the $A\times G\times
H$-action on $X$ is transitive if and only if the $G\times
H$-action on the set of $A$-orbits of $X$ is transitive. In this case
we call $X$ a {\em transitive} $A$-fibered $(G,H)$-biset. There exists
a bijective correspondence between the isomorphism classes of
transitive $A$-fibered $(G,H)$-bisets and the $G\times H$-conjugacy
classes of $\calM_{G\times H}$. We describe this correspondence. If
$X$ is a transitive $A$-fibered $(G,H)$-biset, choose an element $x\in
X$ and associate to $X$ the class $[S_x,\phi_x]_{G\times H}$ as
defined in (\ref{stabilizing pair}). Conversely, given a pair
$(U,\phi)$ we construct an $A$-fibered $(G,H)$-biset $X$ by
$X:=(A\times G\times H)/U_\phi$, where $U_\phi$ is the subgroup of
$G\times H \times A$ consisting of all elements $(\phi(u^{-1}), u)$,
with $u\in U$. If we start with the conjugate pair
$\lexp{(g,h)}{(U,\phi)}$ instead of $(U,\phi)$, we obtain the conjugate subgroup
$\lexp{(1,g,h)}{U_\phi}$ of $G\times H\times A$ and therefore, we
obtain isomorphic $A$-fibered $(G,H)$-bisets. It is easy to see that
these two constructions are mutually inverse.

For $(U,\phi)\in\calM_{G\times H}$ we denote the corresponding
transitive $A$-fibered $(G,H)$-biset and its isomorphism class by
\begin{equation*}
  \left(\frac{G\times H}{U,\phi}\right)\quad\text{and}\quad
  \left[\frac{G\times H}{U,\phi}\right]\,,
\end{equation*}
respectively.
\end{nothing}

\begin{nothing} {\bf Opposite of an $A$-fibered biset.}\quad
Let $X$ be an $A$-fibered $(G,H)$-biset. We define the {\em opposite}
$\Xop\in\HsetGA$ of $X$ as the $A$-fibered $(H,G)$-biset which has $X$ as underlying set, but endowed with the left $A\times H\times G$-action given by
\begin{equation*}
  (a,h,g)x:=(a^{-1},g,h)x\,.
\end{equation*}
In particular, if $X=\left(\frac{G\times H}{U,\phi}\right)$ then
$\Xop\cong \left(\frac{H\times G}{\Uop,\phiop}\right)$ with
\begin{equation*}
  \Uop:=\{(h,g)\in H\times G\mid (g,h)\in U\}\quad\text{and}\quad
  \phiop(h,g):=\phi(g,h)^{-1}\,.
\end{equation*}
Thus, $k_1(\Uop)=k_2(U)$, $k_2(\Uop)=k_1(U)$, $(\phiop)_1 = \phi_2$, and $(\phiop)_2=\phi_1$.
%If additionally $Y$ is an $A$-fibered $(H,K)$-biset then $(X\cdotH Y)^{\mathrm{op}}\cong \Yop\cdotH\Xop$ as $A$-fibered $(K,G)$-bisets.
\end{nothing}

\begin{nothing}{\bf The Grothendieck group.}\quad
We denote the isomorphism class of an $A$-fibered $(G,H)$-biset $X$ by
$[X]$. Let $X$ and $Y$ be two $A$-fibered $(G,H)$-bisets. The disjoint
union $X\coprod Y$ of $X$ and $Y$ is the categorical coproduct. On the
set of isomorphism classes it induces the structure of a monoid
$[X]+[Y]:=[X\coprod Y]$. The corresponding Grothendieck group is called
the {\em Burnside group} of $A$-fibered $(G,H)$-bisets and is denoted
by $B^A(G,H)$. Every $A$-fibered $(G,H)$-biset $X$ is represented
by a class $[X]$ in $B^A(G,H)$ and the elements
$\left[\frac{G\times H}{U,\phi}\right]$, with $[U,\phi]_{G\times H}\in 
G\times H\backslash\calM_{G\times H}$, form a $\ZZ$-basis of the abelian
group $B^A(G,H)$. We will denote again by $-^{\mathrm{op}}\colon B^A(G,H)\to B^A(H,G)$ the isomorphism induced by taking opposite fibered bisets. With this we have a canonical isomorphism $B^A(G,H)\cong B^A(H,G)$ of abelian groups.

As a special case, when $H$ is the trivial group we obtain the Grothendieck group $B^A(G)$ of the category $\GsetA$ with respect to disjoint unions. 
\end{nothing}

\begin{remark}\label{rem kG-mon}  (a) The group $B^A(G)$ can be interpreted as the result of the $-_+$ construction, see  \cite{Boltje1998}, applied to the group rings $\ZZ G^*=\ZZ\Hom(G,A)$.

\smallskip
(b) Assume that $A=k^\times$, the unit group of an integral domain $k$. Then $B^A(G)$ can also be interpreted as the Grothendieck group of the $k$-linear additive category $\lmon{kG}$ of finite $G$-equivariant line bundles over $k$, which was introduced in \cite{Boltje2001a}. %This category comes with a $k$-linear forgetful functor $\lmon{kG}\to\lmod{kG}$. Replacing $G$ with $G\times H$, one obtains a category $\lmon{kG}_{kH}$ whose Grothendieck group is $B(G,H;A)$ and a $k$-linear functor $\lmon{kG}_{kH}\to\lmod{kG}_{kH}$. With respect to the latter functor and the homomorphism it induces between $B(G,H;A)$ and the Grothendieck group of $\lmod{kG}_{kH}$, the functor $-^{\mathrm{op}}\colon \GsetHA\to\HsetGA$ is a lift of the $k$-dual functor $\lmod{kG}_{kH}\to\lmod{kH}_{kG}$, $M\mapsto \Hom_k(M,k)$. Moreover, the tensor product bifunctor $-\otimes_{AH}-\colon \GsetHA\times \HsetKA\to \GsetKA$ which we will define in the following section is a lift of the tensor product functor $-\otimes_{kH}-\colon \lmod{kG}_{kH}\times\lmod{kH}_{kK}\to \lmod{kG}_{kK}$.
\end{remark}

%%%%%%%%%%%%%%%% SECTION 2 %%%%%%%%%%%%%%%%%%%%%%%%%%%%%%%%%

\section{Tensor product of fibered bisets}\label{sec tensor product}

In this section, we introduce the tensor product of fibered bisets. The usual product construction of 
bisets does not work in the fibered case, since under this product, the $A$-action may not remain free. The remedy
is to consider the subset of free $A$-orbits in the product. We will call this construction the tensor product. After establishing associativity of the tensor product, we prove an explicit formula (see Corollary~\ref{Mackey formula for fibered bisets 2}), called the Mackey formula, for the tensor product of two transitive $A$-fibered bisets, and a first level decomposition of a transitive fibered biset into a product of standard (fibered) bisets (see Proposition~\ref{decomposition}). The latter we call the standard decomposition. The middle factor of this decomposition will be decomposed further in Section~\ref{sec completely reduced pairs}. Both the Mackey formula and the standard decomposition are completely analogous to formulas in the biset category, see \cite[Section~2.3]{BoucBook}.

\begin{nothing} {\bf The tensor product.}\quad
Given finite groups $G,H,K$ and objects $X\in\GsetHA$ and
$Y\in\HsetKA$, we will define their {\em tensor product}
$X\otimes_{AH} Y\in\GsetKA$. First recall the definition of the product
$X\times_{AH}Y$ of the right $A\times H$-set $X$ with the left
$A\times H$-set $Y$:  It is the set of $A\times H$-orbits of $X\times
Y$ under the action
\begin{equation*}
  \lexp{(a,h)}{(x,y)}= (x(a^{-1},h^{-1}),(a,h)y)
\end{equation*}
for $(a,h)\in A\times H$ and $(x,y)\in X\times Y$. We will denote the
$A\times H$-orbit of $(x,y)$ by $[x,y]_{AH}$. Note that we have $[xa,y]_{AH}=[x,ay]_{AH}$ and
$[xh,y]_{AH}=[x,hy]_{AH}$ for $a\in A$ and $h\in H$.

The set $X\times_{AH}Y$ is an $A$-set via $a[x,y]_{AH}:=[ax,y]_{AH}=[x,ay]_{AH}$ and
it is a $(G,K)$-biset via $(g,k)[x,y]_{AH}:=[gx,yk^{-1}]_{AH}$. These two
actions commute so that $X\times_{AH}Y$ is a left $A\times G\times
K$-set. However, in general the $A$-action is not free. Note that the
action of $G\times K$ permutes the free $A$-orbits of $X\times_{AH}Y$.
This allows us to define
\begin{equation*}
  X\otimes_{AH} Y\in\GsetKA
\end{equation*}
as the union of the free $A$-orbits of $X\times_{AH} Y$. If the
stabilizer of $[x,y]_{AH}$ in $A$ is trivial, i.e., if $[x,y]_{AH}\in
X\otimes_{AH}Y$, we will write $x\otimes_{AH}y$ instead of
$[x,y]_{AH}$. Note that the construction $X\otimes_{AH} Y$ is
functorial in $X$ and $Y$. It is clear from the definitions that the
tensor product respects disjoint unions:
\begin{align*}
  (X\coprod X')\otimes_{AH}Y
  & \cong (X\otimes_{AH}Y)\coprod (X'\otimes_{AH}Y)\,,\\
  X\otimes_{AH}(Y\coprod Y')
  & \cong (X\otimes_{AH}Y)\coprod (X\otimes_{AH}Y')\,,
\end{align*}
for $X,X'\in\GsetHA$ and $Y,Y'\in\HsetKA$. It is also straightforward to verify that
\begin{equation*}
  (X\otimes_{AH} Y)^{\mathrm{op}} \cong \Yop\otimes_{AH}\Xop\,,
\end{equation*}
as $A$-fibred $(K,G)$-bisets, under the map $[x,y]\mapsto [y,x]$.
\end{nothing}

\begin{nothing} {\bf Associativity.}\quad
Let $G,H,K,L$ be finite groups, let $X\in\GsetHA$, $Y\in\HsetKA$ and
$Z\in\KsetLA$ be $A$-fibered bisets. We will show that there exists an
isomorphism
\begin{equation*}
  (X\otimes_{AH} Y)\otimes_{AK} Z \cong X\otimes_{AH}(Y\otimes_{AK}Z)
\end{equation*}
which maps $(x\otimes_{AH} y)\otimes_{AK} z$ to
$x\otimes_{AH}(y\otimes_{AK} z)$ for $x\in X$, $y\in Y$ and $z\in Z$.
It is well-known that the map
\begin{align}\label{associativity isomorphism}
  (X\times_{AH} Y)\times_{AK} Z
  & \to X\times_{AH} (Y\times_{AK} Z)\,,\\
  \notag
  \bigl[[x,y]_{AH},z\bigr]_{AK}
  & \mapsto \bigl[x,[y,z]_{AK}\bigr]_{AH}\,,
\end{align}
is a bijection. It is also clear that it is an isomorphism of $A\times
G\times L$-sets. Since $(X\otimes_{AH}Y)\otimes_{AK}Z$ is a subset of
the left hand side and $X\otimes_{AH}(Y\otimes_{AK}Z)$ is a subset of
the right hand side, it suffices to show that the above isomorphism
restricts to these subsets. Note that $[[x,y]_{AH},z]_{AK}\in
(X\otimes_{AH}Y)\otimes_{AK}Z$ if and only if the stabilizer $S_1$ of
$[x,y]_{AH}$ in $A$ is trivial and the stabilizer $S_2$ of
$[[x,y]_{AH},z]_{AK}$ in $A$ is trivial. Since $S_1\le S_2$, this is
equivalent to the statement that $S_2=1$. Similarly,
$[x,[y,z]_{AK}]_{AH}\in X\otimes_{AH}(Y\otimes_{AK} Z)$ if and only if
the stabilizer $T_2$ of $[x,[y,z]_{AK}]_{AH}$ in $A$ is trivial. But
since the map in (\ref{associativity isomorphism}) is an
$A$-equivariant isomorphism, $S_2=T_2$, and the proof is complete.
\end{nothing}

\begin{nothing}\mylabel{*notation}
Our next aim is to find an explicit formula for the tensor product of
two transitive $A$-fibered bisets. For this purpose, we will need the
following notation. Let $G$, $H$ and $K$ be finite groups, let $U\le
G\times H$ and $V\in H\times K$ be subgroups, and let $\phi\in U^*$
and $\psi\in V^*$ be homomorphisms satisfying
\begin{equation}\label{compatibility condition}
  \phi_2|_{k_2(U)\cap k_1(V)} = \psi_1|_{k_2(U)\cap k_1(V)}.
\end{equation}
Following \cite{Bouc1996a}, we set
\begin{equation*}
  U*V:=\{(g,k)\in G\times K \mid \text{there exists $h\in H$ with $(g,h)\in
  U$ and $(h,k)\in V$}\}\,.
\end{equation*}
Moreover, we define $\phi*\psi\in(U*V)^*$ by
\begin{equation*}
  (\phi*\psi)(g,k):=\phi(g,h)\psi(h,k)\,,
\end{equation*}
where $h\in H$ is chosen such that $(g,h)\in U$ and $(h,k)\in V$, a construction that has also been used in \cite{Bouc2010c}.
Note that, by the condition in (\ref{compatibility condition}), this does not depend
on the choice of $h\in H$.
\end{nothing}

For the following proposition, we fix transitive $A$-fibered bisets
$X\in\GsetHA$ and $Y\in\HsetKA$, and also elements $x\in X$ and $y\in Y$.
Let $(S_x,\phi_x)\in\calM_{G\times H}$ and
$(S_y,\phi_y)\in\calM_{H\times K}$ denote their stabilizing pairs.
Also, set $H_x:=p_2(S_x)$ and $H_y:=p_1(S_y)$. Note that every
$A\times G\times K$-orbit of $X\times_{AH}Y$ has an element of the
form $[x,hy]_{AH}$, with $h\in H$.

\begin{proposition}\mylabel{Mackey formula for fibered bisets 1}
Assume the above notation.

\smallskip
{\rm (a)} Let $t,t' \in H$. The elements $[x,ty]_{AH}$ and
$[x,t'y]_{AH}$ belong to the same $A\times G\times K$-orbit of
$X\times_{AH}Y$ if and only if $H_xtH_y=H_xt'H_y$.

\smallskip
{\rm (b)} Let $t\in H$. The stabilizer of $[x,ty]_{AH}$ in $A$ is
trivial if and only if
\begin{equation*}
  \phi_{x,2}|_{H_t}=\lexp{t}{\phi_{y,1}}|_{H_t}\,,
\end{equation*}
where $H_t:=k_2(S_x)\cap\lexp{t}{k_1(S_y)}$.

\smallskip
{\rm (c)} The elements $[x,ty]_{AH}\in X\times_{AH} Y$, where $t$ runs
through a set of representatives of the double cosets $H_x\backslash
H/H_y$ such that $\phi_{x,2}|_{H_t}=\lexp{t}{\phi_{y,1}}|_{H_t}$, form
a set of representatives for the $A\times G\times K$-orbits of
$X\otimes_{AH} Y$. For $t\in H$ with
$\phi_{x,2}|_{H_t}=\lexp{t}{\phi_{y,1}}|_{H_t}$, the stabilizing pair of
$[x,ty]_{AH}$ is equal to
\begin{equation*}
  (S_x*\!\lexp{(t,1)}{S_y},\ \phi_x*\!\lexp{(t,1)}{\phi_y})\,.
\end{equation*}
\end{proposition}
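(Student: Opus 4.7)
The whole proposition can be deduced from a single explicit computation: unravel what it means for two elements of the form $[x,ty]_{AH}$ and $[x,t'y]_{AH}$ to lie in the same $A\times G\times K$-orbit, and keep track of which $a\in A$ appears. Concretely, $(a,g,k)\cdot[x,ty]_{AH}=[agx, tyk^{-1}]_{AH}$ equals $[x,t'y]_{AH}$ if and only if there exist $b\in A$ and $h\in H$ with
\begin{equation*}
  agx=xb^{-1}h^{-1}\quad\text{in } X\quad\text{and}\quad tyk^{-1}=bht'y\quad\text{in } Y.
\end{equation*}
Using the biset-action identities $xa=ax$ together with $(g,h)x=\phi_x(g,h)x$ for $(g,h)\in S_x$, the first equation rewrites as $(g,h^{-1})\in S_x$ with $\phi_x(g,h^{-1})=(ab)^{-1}$, and similarly the second equation, after multiplying by $t^{-1}$ on the left and absorbing $t$ into a conjugation, rewrites as $(t^{-1}h^{-1}t,k)\in S_y$ with $\phi_y(t^{-1}h^{-1}t,k)=b$.

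For part (a) we ignore the values of $\phi_x,\phi_y$ and only retain the membership conditions. They say $h\in H_x$ and $h\in tH_y(t')^{-1}$, so a valid $h$ exists iff $H_x\cap tH_y(t')^{-1}\neq\emptyset$, iff $H_xtH_y=H_xt'H_y$. (The existence of suitable $g,k,a,b$ once $h$ is available is immediate from the transitivity of $S_x$ and $S_y$ in the respective projections.) For part (b) we specialize to $g=1$, $k=1$, $t'=t$; the two equations then force $h\in k_2(S_x)\cap\lexp{t}{k_1(S_y)}=H_t$, and express $a$ entirely in terms of $h$: the computation $a=\phi_x(1,h^{-1})^{-1}\phi_y(t^{-1}h^{-1}t,1)^{-1}=\phi_{x,2}(h)^{-1}\cdot\lexp{t}{\phi_{y,1}}(h)$. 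The set of $a$ fixing $[x,ty]_{AH}$ is thus the image of the homomorphism $H_t\to A,\ h\mapsto\phi_{x,2}(h)^{-1}\,\lexp{t}{\phi_{y,1}}(h)$, which is trivial precisely when $\phi_{x,2}$ and $\lexp{t}{\phi_{y,1}}$ agree on $H_t$.

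For part (c) we combine (a), (b), and the definition of $\otimes_{AH}$: double-coset representatives with the compatibility of (b) yield the listed set of orbit representatives of $X\otimes_{AH}Y$. For the stabilizing pair of $[x,ty]_{AH}$, we repeat the computation of the opening paragraph with $t'=t$ but arbitrary $(g,k)$. The two conditions translate, after the substitution $h':=h^{-1}$, to $(g,h')\in S_x$ and $(h',k)\in\lexp{(t,1)}{S_y}$, which is exactly the definition of $(g,k)\in S_x*\lexp{(t,1)}{S_y}$. Tracking the value of $a=\phi_{[x,ty]}(g,k)$ through the same equations gives $\phi_{[x,ty]}(g,k)=\phi_x(g,h')\cdot\phi_y(t^{-1}h'^{-1}t,k)^{-1}$ — wait, the signs: re-reading, the convention $(g,h)x=\phi_x(g,h)x$ yields $\phi_{[x,ty]}(g,k)=\phi_x(g,h')\cdot\lexp{(t,1)}{\phi_y}(h',k)$, which is the defining formula for $\phi_x*\lexp{(t,1)}{\phi_y}$. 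Finally, one checks that the compatibility condition (\ref{compatibility condition}) needed to make $\phi_x*\lexp{(t,1)}{\phi_y}$ well-defined is exactly the condition in (b), so the star operation makes sense precisely on the orbits that survive in $X\otimes_{AH}Y$.

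The main obstacle is purely bookkeeping: keeping straight the interplay between the left/right $G$- and $H$-actions, the two-sided $A$-action normalized by $xa=ax$, and the conjugation-twists $\lexp{t}{(-)}$, so that the final formulas end up exactly in the form $S_x*\lexp{(t,1)}{S_y}$ and $\phi_x*\lexp{(t,1)}{\phi_y}$ rather than some twisted or inverted variant. Once the sign conventions are pinned down at the outset, each of (a), (b), (c) follows from the same two displayed equations.
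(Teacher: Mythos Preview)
Your proposal is correct and follows essentially the same approach as the paper: both arguments unravel the condition $(a,g,k)\cdot[x,ty]_{AH}=[x,t'y]_{AH}$ into two equations in $X$ and $Y$, then read off the double-coset condition for (a), the $A$-stabilizer for (b), and the stabilizing pair for (c). The paper presents three parallel chains of equivalences while you set up a single computation and specialize, but the substance is identical; aside from a harmless $t$ versus $t'$ slip in your rewriting of the second equation (which only matters once you have already set $t'=t$) and the informal ``wait, the signs'' aside, your bookkeeping lands on exactly the same formulas.
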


\begin{proof}
(a) This follows from the following chain of equivalences:
\begin{align*}
  & \text{$[x,ty]_{AH}$ and $[x,t'y]_{AH}$ belong to the same
  $A\times G\times K$-orbit}\\
  \iff & \exists g\in G, k\in K, a\in A\colon [x,t'y]_{AH} =
  [agx,tyk]_{AH} \\
  \iff & \exists g\in G, k\in K, a,b\in A, h\in H\colon
  (x,t'y)=(agxh^{-1}b^{-1},hbtyk)\\
  \iff & \exists g\in G, k\in K, a,b\in A, h\in H\colon
    (g,h)\in S_x, ({t'}^{-1}ht,k^{-1})\in S_y,\\
    &\qquad    \phi_x(g,h)=a^{-1}b, \phi_y({t'}^{-1}ht,k^{-1})=b^{-1}\\
  \iff & \exists g\in G, h\in H, k\in K\colon (g,h)\in S_x,
  ({t'}^{-1}ht,k^{-1})\in S_y \\
  \iff & \exists h\in H \colon h\in H_x, {t'}^{-1}ht\in H_y\\
  \iff & \exists h\in H_x \colon t\in ht'H_y\\
  \iff & t\in H_xt'H_y\,.
\end{align*}

\smallskip
(b) Let $a\in A$. Then we have the following chain of equivalences:
\begin{align*}
  & [x,ty]_{AH} = [x,aty]_{AH} \\
  \iff & \exists h\in H, b\in A\colon (x,ty)=(xh^{-1}b^{-1},bhaty)\\
  \iff & \exists h\in H, b\in A\colon h\in k_2(S_x), t^{-1}ht\in
  k_1(S_y)\\
  & \qquad \phi_{x,2}(h^{-1})=b, \phi_{y,1}(t^{-1}ht)=a^{-1}b^{-1}\\
  \iff & \exists h\in k_2(S_x)\cap\lexp{t}{k_1(S_y)} \colon
  \lexp{t}{\phi_{y,1}}(h)=a^{-1}\phi_{x,2}(h)\\
  \iff & a \in\im\bigr(\phi_{x,2}|_{H_t}\cdot
  \lexp{t}{\phi_{y,1}^{-1}}|_{H_t}\bigr)\,.
\end{align*}
Thus, the stabilizer of $[x,ty]_{AH}$ in $A$ is trivial if and only if
$\phi_{x,2}|_{H_t}=\lexp{t}{\phi_{y,1}}|_{H_t}$.

\smallskip
(c) With the results from (a) and (b), we only have to verify the
statement about the stabilizing pair of $[x,ty]_{AH}$. Let
$(W,\lambda)\in\calM_{G\times K}$ denote the stabilizing pair of
$[x,ty]_{AH}$, and let $(g,k)\in G\times K$ and $a\in A$. Then we have
the chain of equivalences
\begin{align*}
  & \text{$(g,k)\in W$ and $\lambda(g,k)=a$}\\
  \iff &  [gx,tyk^{-1}]_{AH}=[x,aty]_{AH} \\
  \iff & \exists b\in A, h\in H \colon (gxh^{-1}b^{-1},hbtyk^{-1})=
  (x,aty)\\
  \iff & \exists b\in A, h\in H \colon (g,h)x=bx,
  (t^{-1}ht,k)y=ab^{-1}y\\
  \iff & \text{$(g,k)\in S_x*\!\lexp{(t,1)}{S_y}$ and there exists $h\in H$
  such that}\\
  & \qquad \text{$(g,h)\in S_x$, $(h,k)\in\lexp{(t,1)}{S_y}$ and
  $\lexp{(t,1)}{\phi_y}(h,k)=a\phi_x(g,h)^{-1}$}\\
  \iff & \text{$(g,k)\in S_x*\lexp{(t,1)}{S_y}$ and $(\phi_x * \lexp{(t,1)}{\phi_y})(g,k) = a$.} 
\end{align*}
Thus, $W=S_x*\lexp{(t,1)}{S_y}$ and $\lambda=\phi_x * \lexp{(t,1)}{\phi_y}$.
\end{proof}

The following Mackey formula is completely analogous to the formula for bisets, see \cite[Lemma~2.3.24]{BoucBook}.

\begin{corollary}[Mackey formula]\mylabel{Mackey formula for fibered bisets 2}
Let $(U,\phi)\in\calM_{G\times H}$ and $(V,\psi)\in\calM_{H\times
K}$. There exists an isomorphism of $A$-fibered $(G,K)$-bisets,
\begin{equation*}
  \left(\frac{G\times H}{U,\phi}\right)\otimes_{AH}
  \left(\frac{H\times K}{V,\psi}\right) \cong
  \coprod_{\substack{t\in p_2(U)\backslash H/p_1(V)\\
  \phi_2|_{H_t}=\lexp{t}{\psi_1}|_{H_t}}}
  \left(\frac{G\times K}{U*\lexp{(t,1)}{V},\phi*\lexp{(t,1)}{\psi}}\right)\,,
\end{equation*}
where $H_t:=k_2(U)\cap \lexp{t}{k_1(V)}$. The above isomorphism maps
the trivial coset of the $t$-component of the right hand side to the
element $U_\phi\otimes_{AH}(t,1,1)V_\psi$.
\end{corollary}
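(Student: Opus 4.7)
The plan is to reduce the corollary to a direct invocation of Proposition~\ref{Mackey formula for fibered bisets 1}. First I would set $X := \left(\frac{G\times H}{U,\phi}\right)$ and $Y := \left(\frac{H\times K}{V,\psi}\right)$, and fix the canonical base points $x := [U_\phi] \in X$ and $y := [V_\psi] \in Y$ arising from the coset constructions of \ref{stabilizing pairs}. By construction, the stabilizing pair of $x$ is $(U,\phi)$ and that of $y$ is $(V,\psi)$; in particular $H_x = p_2(U)$, $H_y = p_1(V)$, $\phi_{x,2} = \phi_2$, and $\phi_{y,1} = \psi_1$.

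Next I would observe that since $X$ is transitive, every element of $X \times_{AH} Y$ has the form $a(g,k)[x, hy]_{AH}$ for some $a \in A$, $(g,k) \in G \times K$, $h \in H$, so the $A \times G \times K$-orbits of $X \times_{AH} Y$ are represented by the family $[x, hy]_{AH}$ as $h$ ranges over $H$. Proposition~\ref{Mackey formula for fibered bisets 1}(a) then identifies distinct orbits with distinct double cosets in $p_2(U)\backslash H/p_1(V)$; part~(b) cuts out precisely those $t$ satisfying $\phi_2|_{H_t} = \lexp{t}{\psi_1}|_{H_t}$ as indexing the orbits that actually land inside the free-$A$-orbit subset $X \otimes_{AH} Y$; and part~(c) computes the stabilizing pair of $[x, ty]_{AH}$ to be $(U * \lexp{(t,1)}V,\ \phi * \lexp{(t,1)}\psi)$.

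Since isomorphism classes of transitive $A$-fibered $(G,K)$-bisets correspond bijectively to $G \times K$-conjugacy classes in $\calM_{G \times K}$ via the stabilizing pair (as recorded in \ref{stabilizing pairs} and the paragraph following it), each such orbit is isomorphic as an $A$-fibered $(G,K)$-biset to $\left(\frac{G \times K}{U * \lexp{(t,1)}V,\ \phi * \lexp{(t,1)}\psi}\right)$. Taking the disjoint union over the chosen set of admissible double coset representatives yields the asserted decomposition, and the final claim about the image of the trivial coset is immediate by tracing $U_\phi \otimes_{AH} (t,1,1)V_\psi$ through the bijection $X \otimes_{AH} Y \cong \coprod_t (A \times G \times K)/(U * \lexp{(t,1)}V)_{\phi * \lexp{(t,1)}\psi}$.

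I expect no genuine obstacle: the content is essentially a translation of Proposition~\ref{Mackey formula for fibered bisets 1}(c) into the notation of~\ref{*notation}. The only point deserving a brief check is that the condition $\phi_2|_{H_t} = \lexp{t}{\psi_1}|_{H_t}$ of~(b), when rewritten at $\lexp{(t,1)}V$ in place of $V$, coincides with the compatibility condition~(\ref{compatibility condition}) needed to make $\phi * \lexp{(t,1)}\psi$ well defined as a homomorphism on $U * \lexp{(t,1)}V$. This is a direct verification: under the conjugation $V \mapsto \lexp{(t,1)}V$ we have $k_1(\lexp{(t,1)}V) = \lexp{t}{k_1(V)}$ and $(\lexp{(t,1)}\psi)_1 = \lexp{t}{\psi_1}$, so the two conditions literally match.
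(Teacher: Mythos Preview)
Your proposal is correct and follows exactly the same approach as the paper: the paper's proof simply says this is an immediate consequence of Proposition~\ref{Mackey formula for fibered bisets 1}(c) upon choosing $x$ and $y$ to be the trivial cosets $U_\phi$ and $V_\psi$. Your additional verification that the condition in part~(b) matches the compatibility condition~(\ref{compatibility condition}) for $\lexp{(t,1)}{V}$ is a worthwhile sanity check that the paper leaves implicit.
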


\begin{proof}
This is an immediate consequence of Proposition \ref{Mackey formula
for fibered bisets 1}(c) by choosing $x=U_\phi$ and $y=U_\psi$ to be
the trivial cosets of $(G\times H\times A)/U_\phi$ and $(H\times
K\times A)/U_\psi$, respectively.
\end{proof}

The proof of the following proposition is straightforward and is left
to the reader, see also \cite[Lemma~2.3.22]{BoucBook} for Parts (a) and (c).

\begin{proposition}\mylabel{first consequences}
Let $(U,\phi)\in\calM_{G\times H}$ and $(V,\psi)\in\calM_{H\times
K}$. 

\smallskip
{\rm (a)} One has
\begin{equation*}
  k_1(U)\le k_1(U*V)\le p_1(U*V)\le p_1(U)\quad\text{and}\quad
  k_2(V)\le k_2(U*V)\le p_2(U*V)\le p_2(V)
\end{equation*}

\smallskip
{\rm (b)} Assume that the restrictions of $\phi_2$ and $\psi_1$ to $k_2(U)\cap k_1(V)$ coincide. Then
\begin{equation*}
  l_0(U,\phi) \le l_0(U*V,\phi*\psi)
  \quad\text{and}\quad
  r_0(V,\psi) \le r_0(U*V,\phi*\psi) \,.
\end{equation*}

\smallskip
{\rm (c)} Let $\eta_U\colon p_2(U)/k_2(U)\to p_1(U)/k_1(U)$ be the
isomorphism from Proposition~{\rm \ref{eta and zeta}(c)}. Then one has
\begin{equation*}
  k_1(U*V)/k_1(U) \cong \eta_U\bigr((p_2(U)\cap
  k_1(V)k_2(U))/k_2(U)\bigr)\,.
\end{equation*}

\smallskip
{\rm (d)} If $r(U,\phi)=l(V,\psi)$ then $l(U*V,\phi*\psi)=l(U,\phi)$ and $r(U*V,\phi*\psi)=r(V,\psi)$.
\end{proposition}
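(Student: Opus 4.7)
The plan is to verify each of the four assertions by direct element chasing, using the definitions of $U*V$, $\phi*\psi$, and the isomorphism $\eta_U$ from Proposition~\ref{eta and zeta}(c).

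For parts (a) and (b), the key observation is that $h=1$ always serves as a valid middle witness in the definition of $U*V$. If $g\in k_1(U)$, then $(g,1)\in U$ together with $(1,1)\in V$ give $(g,1)\in U*V$, hence $g\in k_1(U*V)$. The chain $k_1(U*V)\le p_1(U*V)\le p_1(U)$ is immediate from the definitions, since any witness $h$ for $g\in p_1(U*V)$ places $g$ in $p_1(U)$; the $H$-side inclusions are symmetric. For part (b), the compatibility condition ensures $\phi*\psi$ is defined, and taking again $h=1$ for $g\in k_1(U)$ yields $(\phi*\psi)(g,1)=\phi(g,1)\psi(1,1)=\phi_1(g)$, so $(\phi*\psi)_1$ restricts to $\phi_1$ on $k_1(U)$; this settles $l_0(U,\phi)\le l_0(U*V,\phi*\psi)$, and the $r_0$ statement follows symmetrically.

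Part (c) is the technical heart. I would define a map
\[
  \Phi\colon k_1(U*V)/k_1(U)\longrightarrow
  \eta_U\bigl((p_2(U)\cap k_1(V)k_2(U))/k_2(U)\bigr)
\]
by $gk_1(U)\mapsto \eta_U(hk_2(U))$, where $h\in p_2(U)\cap k_1(V)$ is any witness for $g\in k_1(U*V)$. Well-definedness follows because two such witnesses $h,h'$ satisfy $(1,h^{-1}h')\in U$ and hence differ by an element of $k_2(U)$. The modular law, applied to $k_2(U)\le p_2(U)$, gives the identity $(p_2(U)\cap k_1(V))k_2(U)=p_2(U)\cap k_1(V)k_2(U)$, which explains the form of the target. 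Injectivity is immediate from the characterization of $\eta_U$ in Proposition~\ref{eta and zeta}(c): if $\eta_U(hk_2(U))=k_1(U)$, then $(g,h)\in U$ forces $g\in k_1(U)$. For surjectivity, given $h\in p_2(U)\cap k_1(V)k_2(U)$, write $h=h_1h_2$ with $h_1\in k_1(V)$ and $h_2\in k_2(U)$; then $h_1=hh_2^{-1}\in p_2(U)\cap k_1(V)$, and any $g\in G$ with $(g,h_1)\in U$ satisfies $g\in k_1(U*V)$ and $\Phi(gk_1(U))=\eta_U(hk_2(U))$.

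Part (d) follows quickly from the previous parts. The hypothesis $r(U,\phi)=l(V,\psi)$ forces $p_2(U)=p_1(V)$, $k_2(U)=k_1(V)$, and $\phi_2=\psi_1$; in particular the compatibility condition of~\ref{*notation} holds trivially, so $\phi*\psi$ is defined. Then $k_1(V)k_2(U)=k_2(U)$, so the target in (c) collapses, giving $k_1(U*V)=k_1(U)$. For $p_1(U*V)=p_1(U)$, the reverse inclusion to (a) is as follows: given $g\in p_1(U)$, pick $h\in p_2(U)=p_1(V)$ with $(g,h)\in U$, and any $k\in K$ with $(h,k)\in V$ yields $(g,k)\in U*V$. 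Combined with (b), this gives $l(U*V,\phi*\psi)=l(U,\phi)$, and $r(U*V,\phi*\psi)=r(V,\psi)$ follows symmetrically. The only real obstacle is the bookkeeping in part (c), especially the application of the modular law and the correct choice of witness in $p_2(U)\cap k_1(V)$; everything else is a direct unwinding of the definitions.
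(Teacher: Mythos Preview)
Your argument is correct. The paper itself omits the proof entirely, declaring it ``straightforward and left to the reader'' with a pointer to \cite[Lemma~2.3.22]{BoucBook} for parts (a) and (c), so there is nothing to compare against; your direct element-chasing verification is exactly the kind of argument that is expected, and all four parts are handled cleanly. One minor remark: in part (c) your map $\Phi$ is, once you unwind the characterization of $\eta_U$, literally the identity on cosets (since $\eta_U(hk_2(U))=gk_1(U)$ whenever $(g,h)\in U$), so you are really proving that the two subgroups $k_1(U*V)/k_1(U)$ and $\eta_U\bigl((p_2(U)\cap k_1(V)k_2(U))/k_2(U)\bigr)$ of $p_1(U)/k_1(U)$ coincide; phrasing it that way would make the injectivity step vacuous and slightly streamline the exposition.
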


Next we want to show that every transitive $A$-fibered $(G,H)$-biset
$X$ is the tensor product of the canonical operations restriction,
deflation, inflation, induction (which themselves are bisets) and a
transitive $A$-fibered $(\Gbar,\Hbar)$-biset $Y$ for certain sections
$\Gbar$ and $\Hbar$ of $G$ and $H$, respectively. First we need the following notations from \cite{Bouc1996a}.

\begin{notation}\mylabel{4 operations}
(a) Let $G$ and $H$ be finite groups. For any subgroups $H_2\le H_1\le H$ and any group homomorphism
$f\colon H_1\to G$, we set
\begin{equation*}
  \lDelta{f}(H_2):=\{(f(h),h)\mid h\in H_2\}\le G\times H\,.
\end{equation*}
For any subgroups $G_2\le G_1\le G$ and any group
homomorphism $f\colon G_1\to H$, we set
\begin{equation*}
  \Delta_f(G_2):=\{(g,f(g))\mid g\in G_2\}\le G\times H\,.
\end{equation*}
Moreover, if $f$ is the inclusion map of a subgroup, we only write
$\Delta(H_2)$, resp.~$\Delta(G_2)$.

\smallskip
(b) For a finite group $G$, a subgroup $H$ of $G$ and a normal subgroup
$N$ of $G$ one defines the objects
\begin{gather*}
  \Ind_H^G:=\left(\frac{G\times H}{\Delta(H),1}\right) \in \GsetHA\,,
  \quad
  \Res^G_H:=\left(\frac{H\times G}{\Delta(H),1}\right) \in
  \HsetGA\,,\\
  \Inf_{G/N}^G:=\left(\frac{G\times G/N}{\Delta_{\pi}(G),1}\right)
  \in\lset{G}_{G/N}^A\,,\quad
  \Def_{G/N}^G:=\left(\frac{G/N\times G}{\lDelta{\pi}(G),1}\right)
  \in\lset{G/N}_G^A\,,
\end{gather*}
where $\pi\colon G\to G/N$ denotes the canonical epimorphism. These
objects are called {\em induction, restriction, inflation} and {\em
deflation}.
\end{notation}

The following proposition is a straightforward verification, using the
explicit formula in Corollary~\ref{Mackey formula for fibered bisets
2}. It is completely analogous to the proof of Lemma~3 in
\cite[Section~3]{Bouc1996a} or \cite[Lemma~2.3.26]{BoucBook}.

\begin{proposition}\mylabel{decomposition}
Let $G$ and $H$ be finite groups and let $(U,\phi)\in\calM_{G\times
H}$. Then, with the notation from Proposition~\ref{eta and zeta},
one has the decomposition
\begin{equation*}
  \left(\frac{G\times H}{U,\phi}\right) \cong
  \Ind_P^G \otimes_{AP}
  \Inf_{P/\Khat}^P \otimes_{A(P/\Khat)}
  X \otimes_{A(Q/\Lhat)}
  \Def_{Q/\Lhat}^Q \otimes_{AQ}
  \Res^H_Q\,,
\end{equation*}
where
\begin{equation*}
  X=\left(\frac{P/\Khat \times Q/\Lhat}
  {U/(\Khat\times\Lhat),\phibar}\right)\,.
\end{equation*}
\end{proposition}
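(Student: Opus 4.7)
The goal is to decompose the left-hand side as a five-fold tensor product, so the plan is to compute the right-hand side directly by iterating the Mackey formula of Corollary~\ref{Mackey formula for fibered bisets 2}. Equivalently, I will compute the iterated $*$-product of the five stabilizing pairs of the factors and check that it equals $(U,\phi)$ up to the canonical identifications.

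I begin by reading off the stabilizing pairs of the four standard factors: $\Ind_P^G$ corresponds to $(\Delta(P),1)\in\calM_{G\times P}$, $\Inf^P_{P/\Khat}$ to $(\Delta_\pi(P),1)\in\calM_{P\times P/\Khat}$, $\Def^Q_{Q/\Lhat}$ to $(\lDelta{\pi}(Q),1)\in\calM_{Q/\Lhat\times Q}$, and $\Res^H_Q$ to $(\Delta(Q),1)\in\calM_{Q\times H}$, while the middle factor has pair $(U/(\Khat\times\Lhat),\phibar)$. At every one of the four junctions, the second projection of the left pair equals the first projection of the right pair (namely $P$, $P/\Khat$, $Q/\Lhat$, $Q$, respectively), so the double-coset sum in the Mackey formula collapses to the single term $t=1$, and each tensor product yields a single transitive $A$-fibered biset.

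Next I verify the $*$-compatibility condition of~\ref{*notation} at each junction. Direct inspection shows that at all four junctions the intersection $k_2(U_i)\cap k_1(U_{i+1})$ of the relevant kernel subgroups is trivial, because one of the two kernels is already $1$ in each case: $k_2(\Delta(P))=1$, $k_2(\Delta_\pi(P))=1$, $k_1(\lDelta{\pi}(Q))=1$, and $k_1(\Delta(Q))=1$. So the compatibility condition is vacuous and the four applications of the Mackey formula are legitimate.

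Finally, I compute the five-fold $*$-product. Using $\Khat\times 1\le U$ and $1\times\Lhat\le U$, a direct calculation shows that $\Delta(P)*\Delta_\pi(P)*\bigl(U/(\Khat\times\Lhat)\bigr)*\lDelta{\pi}(Q)*\Delta(Q)$ identifies with $U\le G\times H$ via the canonical projections, and the corresponding product of homomorphisms reduces to $\phi$, because the four outer factors contribute the trivial homomorphism and the middle $\phibar$ lifts through the surjections $P\to P/\Khat$ and $Q\to Q/\Lhat$ precisely to $\phi$. The main point requiring care is the bookkeeping of this lift, ensuring that $\phibar$ is recovered as $\phi$ rather than some twist; this is the substance of the verification, but it is routine given Proposition~\ref{first consequences} and the explicit description of $\phibar$.
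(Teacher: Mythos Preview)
Your proof is correct and follows exactly the approach the paper indicates: a direct iterated application of the Mackey formula (Corollary~\ref{Mackey formula for fibered bisets 2}), with the observation that each junction has a single double coset and trivial compatibility condition, reducing the computation to the $*$-product of the five stabilizing pairs. The paper's own proof is in fact less detailed than yours, merely stating that the verification is straightforward and analogous to Bouc's Lemma~2.3.26.
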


In the above proposition the homomorphism $\phibar\colon
U/(\Khat\times\Lhat)\to A$ is induced by $\phi$ and the group
$U/(\Khat\times\Lhat)$ is viewed as a subgroup of
$(P/\Khat)\times(Q/\Lhat)$ via the canonical isomorphism
\begin{equation*}
  \can\colon  (P\times Q)/(\Khat\times\Lhat) \to
  (P/\Khat) \times (Q/\Lhat)\,.
\end{equation*}

Note that setting $\Gbar:=P/\Khat$, $\Hbar:=Q/\Lhat$,
$\Ubar:=\can(U/(\Khat\times\Lhat))\le \Gbar\times \Hbar$, and
$\phibar\colon\Ubar\to A$ (using the above isomorphism), we have
\begin{equation*}
  p_1(\Ubar)=\Gbar\,,\quad \ker(\phibar_1)=\{1\}\,, \quad
  p_2(\Ubar)=\Hbar\,,\quad \ker(\phibar_2)=\{1\}\,.
\end{equation*}

%%%%%%%%%%%%%%%%%%%%%% SECTION 3 %%%%%%%%%%%%%%%%%%%%%%%%%%%%%%%%%%%%%

\section{$A$-fibered Biset Functors}\label{sec fibered biset functors}

Throughout this section, let $k$ denote a commutative ring. In this section we recall and use the
general results in \cite[Sections~2 and 4]{Bouc1996a} on functor
categories, and we specialize the approach in
\cite[Section~3]{Bouc1996a} to our situation.

\begin{definition}
(a) By $\calC=\calCkA$ we denote the following category. Its objects
are the finite groups. For finite groups $G$ and $H$, we set
\begin{equation*}
  \Hom_{\calC}(G,H):= B_k^A(H,G):=k\otimes B^A(H,G)\,.
\end{equation*}
If also $K$ is a finite group then composition in $\calC$ is defined by
\begin{equation*}
-\cdotH-\colon B_k^A(K,H) \times B_k^A(H, G) \to
  B_k^A(K, G)\,,\quad (y,x)\mapsto y\cdotH x\,,
\end{equation*}
the $k$-linear extension of the bilinear map induced by taking the
tensor product of $A$-fibered bisets. The identity element of $G$ is the element $\left[\frac{G\times G}{\Delta(G),1}\right]$.

(b) An {\em $A$-fibered biset functor} over $k$ is a $k$-linear functor
from the $k$-linear category $\calCkA$ to the $k$-linear category
$\kMod$ of left $k$-modules. By $\calF=\calFkA$ we denote the
category of $A$-fibered biset functors over $k$. Their morphisms are
the natural transformations. Since the category $\kMod$ is abelian,
also the category $\calFkA$ is abelian, with point-wise
constructions of kernels, cokernels, etc. As explained in Section~2 of
\cite{Bouc1996a}, this allows to define subfunctors, quotient
functors, simple functors, projective functors, etc. If $M\in\calF$, $m\in M(H)$ and $b\in B_k^A(G,H)$, we will usually write $bm$ instead of $(\calF(b))(m)$.
\end{definition}

\begin{remark}\label{rem isomorphisms in C}
(a) Two finite groups $G$ and $H$ are isomorphic as groups if and only if they are isomorphic in $\calC$. In fact, if $\phi\colon G\to H$ is a group isomorphism then $\left[\frac{G\times H}{\Delta_\phi(G),1}\right]$ is an isomorphism between $G$ and $H$ in $\calC$. Conversely, assume $a\in B_k^A(G,H)$ and $b\in B_k^A(H,G)$ satisfy $ab=1$ and $ba=1$. Then, the equation $ab=1$ and the Mackey formula imply that there exist standard basis elements $\left[\frac{G\times H}{U,\phi}\right]$ and $\left[\frac{H\times G}{V,\psi}\right]$ such that $U*V=\Delta(G)$. Proposition~\ref{first consequences}(a) then implies that $k_1(U)=\{1\}$ and $p_1(U)=G$. Now Proposition~\ref{eta and zeta}(c) implies that $G$ is isomorphic to a section of $H$. Similarly, $ba=1$ implies that $H$ is isomorphic to a section of $G$. Thus, $G$ and $H$ are isomorphic groups.

\smallskip
(b) By Part~(a), one may replace $\calC$ with a full subcategory as long as every isomorphism type of finite groups is represented and one obtains a functor category that is equivalent to $\calF$. Thus, we may assume that $\mathrm{Ob}(\calC)$ is a set, without changing the equivalence class of $\calF$.
\end{remark}

The following remark shows that $\calF$ also has some functorial properties and rigidity when changing the abelian group $A$.

\begin{remark}\label{isomorphic calFs}
(a) If $f\colon A\to A'$ is a homomorphism of abelian groups then one obtains an induced $k$-linear homomorphism $B^A_k(G,H)\to B_k^{A'}(G,H)$ for any two finite groups $G$ and $H$. Moreover, these homomorphisms induce a $k$-linear functor $\calC_k^A\to\calC_k^{A'}$, and by restriction along this functor a $k$-linear functor $\calF_k^{A'}\to\calF_k^A$ between the associated functor categories. If $f$ is an isomorphism then all these induced $k$-linear homomorphisms and functors are isomorphisms.

\smallskip
(b) The inclusion $\torA\subseteq A$ induces a $k$-linear isomorphism $B_k^{\torA}(G,H)\myiso B_k^A(G,H)$ for any two finite groups $G$ and $H$, and further $k$-linear isomorphisms $\calC_k^{\torA}\to\calC_k^A$ and $\calF_k^A\myiso \calF_k^{\torA}$.
\end{remark}

\begin{nothing}\label{noth LGV} We recall several constructions from
\cite[Section~2]{Bouc1996a}.

\noindent For a finite group $G$ let $E_G=E_k^A(G)$ denote the endomorphism
algebra of $G$ in $\calCkA$, i.e.,
\begin{equation*}
  E_G=E_k^A(G):=\End_{\calC}(G)= B_k^A(G,G)\,.
\end{equation*}
Clearly, $E_G$ is a $k$-algebra, and for any fibered biset functor $F$,
the $k$-module $F(G)$ has the structure of a left $E_G$-module. This
way one obtains a functor, given by evaluation at $G$,
\begin{equation*}
  \calE_G\colon\calFkA\to \lMod{E_G}
\end{equation*}
which maps a functor $F\in\calFkA$ to $F(G)$ and a natural
transformation $\eta\colon F\to F'$ to $\eta_G$. The evaluation
functor $\calE_G$ has a left adjoint which we now describe. To a left
$E_G$-module $V$ one associates the functor $L_{G,V}\in\calFkA$, which is
defined on an object $H$ of $\calC$ by
\begin{equation*}
  L_{G,V}(H):=\Hom_{\calC}(G,H)\otimes_{E_G} V = B_k^A(H,G)\otimes_{E_G} V\,,
\end{equation*}
where $\Hom_{\calC}(G,H)$ is considered as a right $E_G$-module via
composition of morphisms. If also $K$ is an object of $\calC$ and if
$\phi\in\Hom_{\calC}(K,H)$ then $L_{G,V}(\phi)$ is given by
composition with $\phi$ in the first factor of the tensor product.
This way one obtains a functor
\begin{equation*}
  L_{G,-}\colon \lMod{E_G}\to\calFkA
\end{equation*}
which is left adjoint to $\calE_G$.
\end{nothing}

Lemma~1 in \cite{Bouc1996a} specializes in our situation to the
following Lemma.

\begin{lemma}\label{lem J}
Let $V$ be a simple left $E_G$-module. Then the
$A$-fibered biset functor $L_{G,V}$ has a unique maximal subfunctor
$J_{G,V}$. Its evaluation at a finite group $H$ is given by
\begin{equation*}
  J_{G,V}(H) = \left\{ \sum_{i} x_i\otimes v_i \in B_k^A(H,G)\otimes_{E_G} V %L_{G,V}(H) 
  \ \big|\ 
  \forall y\in B_k^A(G, H)\colon \sum_{i} (y\cdotH x_i)(v_i)=0 \right\}\,.
\end{equation*}
Moreover, the simple head $S_{G,V}$ of $L_{G,V}$ satisfies
$S_{G,V}(G)\cong V$.
\end{lemma}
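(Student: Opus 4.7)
The plan is to establish three things about $J_{G,V}$: it is a subfunctor of $L_{G,V}$, it is a proper subfunctor, and it contains every other proper subfunctor. These three facts together will immediately yield that $J_{G,V}$ is the unique maximal subfunctor, and that $S_{G,V}(G)\cong V$.

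The first preliminary step is to recognize, via the canonical identification $L_{G,V}(G)=E_G\otimes_{E_G}V\cong V$, that the expression $\sum_i(y\cdotH x_i)(v_i)\in V$ appearing in the defining condition of $J_{G,V}(H)$ is nothing other than the image $L_{G,V}(y)(\sum_i x_i\otimes v_i)\in L_{G,V}(G)\cong V$, where $y\in B_k^A(G,H)=\Hom_\calC(H,G)$ is viewed as a morphism $H\to G$. Thus $J_{G,V}(H)=\bigcap_{y\in B_k^A(G,H)}\ker(L_{G,V}(y))$. That $J_{G,V}$ is a subfunctor is then a one-line check: for $\xi\in J_{G,V}(H)$ and any morphism $z\in\Hom_\calC(H,K)$ and any $w\in B_k^A(G,K)$, functoriality of $L_{G,V}$ yields $L_{G,V}(w)(L_{G,V}(z)(\xi))=L_{G,V}(w\cdotK z)(\xi)=0$, since $w\cdotK z\in B_k^A(G,H)$.

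The crucial observation is that for \emph{every} subfunctor $M$ of $L_{G,V}$, the value $M(G)$ is an $E_G$-submodule of $V$, hence by simplicity of $V$ equals either $0$ or $V$. If $M(G)=V$, then any generator $x\otimes v$ of $L_{G,V}(H)$, with $x\in B_k^A(H,G)=\Hom_\calC(G,H)$, satisfies
\begin{equation*}
x\otimes v = L_{G,V}(x)(1_G\otimes v) \in L_{G,V}(x)(M(G)) \subseteq M(H)\,,
\end{equation*}
forcing $M=L_{G,V}$. So a subfunctor is proper if and only if its value at $G$ vanishes. Applied to $J_{G,V}$ itself, the choice $y=1_G\in B_k^A(G,G)$ in the defining condition forces $J_{G,V}(G)=0$, so $J_{G,V}$ is proper; applied to any other proper $M$, one gets $L_{G,V}(y)(M(H))\subseteq M(G)=0$ for every $H$ and every $y\in B_k^A(G,H)$, i.e., $M(H)\subseteq J_{G,V}(H)$. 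Hence $J_{G,V}$ is the unique maximal subfunctor, and the simple head satisfies $S_{G,V}(G)=L_{G,V}(G)/J_{G,V}(G)=V/0\cong V$, using that kernels and cokernels in $\calF$ are computed pointwise.

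The argument is essentially a translation into the functor-category setting of the fact that a cyclic module $R/\mathfrak{a}$ has a unique maximal submodule exactly when it is simple; the role of the cyclic vector is played by $1_G\otimes v$ at the point $G$. The only real obstacle is keeping the composition direction straight, as the convention $\Hom_\calC(G,H)=B_k^A(H,G)$ reverses the naive ordering of symbols; once one consistently tracks which elements represent morphisms in which direction, the proof is essentially formal.
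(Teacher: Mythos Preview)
Your proof is correct and complete. The paper itself does not give a proof of this lemma but simply cites Lemma~1 of \cite{Bouc1996a}; your argument is precisely the standard one underlying that reference, so there is nothing to compare.
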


\begin{nothing}\label{noth essential algebra}
{\bf The essential algebra.}\quad
For a finite group $G$ we set
\begin{equation*}
  I_G=I_k^A(G):=\sum_{|H|<|G|} B_k^A(G,H)\cdotH B_k^A(H,G)\subseteq B_k^A(G,G)=E_G\,.
\end{equation*}
The sum runs over all finite groups $H$ of order smaller than $|G|$. Obviously, $I_G$ is an ideal of $E_G$ and we denote by $\Ebar_G:=E_G/I_G$ the factor $k$-algebra. It is called the {\em essential algebra} of $G$.
\end{nothing}

\begin{proposition}\mylabel{simple functor proposition}
{\rm (a)} Let $S\in\calF$ be a simple functor and let $G$ be a finite group
such that $V:=S(G)\neq\{0\}$. Then $V$ is a simple $E_G$-module and
$S\cong S_{G,V}$ in $\calF$.

\smallskip
{\rm (b)} Let $S\in\calF$ be a simple functor and let $G$ be a finite group of smallest order satisfying $S(G)\neq \{0\}$. Then the simple $E_G$-module $S(G)$ is annihilated by $I_G$. In particular, every simple functor $S\in\calF$ is isomorphic to $S_{G,V}$ for some finite group $G$ and some simple $E_G$-module $V$ which is annihilated by $I_G$.

\smallskip
{\rm (c)} Let $G$ be a finite group and let $V$ be a simple $E_G$-module which is annihilated by $I_G$. Then $G$ is a {\em minimal group} for $S_{G,V}$, i.e., $S_{G,V}(G)\neq \{0\}$ and for every finite group $H$ with $S_{G,V}(H)\neq\{0\}$ one has $|G|\le|H|$.

\smallskip
{\rm (d)} For finite groups $G,H$ and simple modules $V\in\lMod{E_G}$ and
$W\in\lMod{E_H}$ one has $S_{G,V}\cong S_{H,W}$ if and only if
$S_{G,V}(H)\cong W$ as $E_H$-modules.
\end{proposition}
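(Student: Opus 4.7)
The plan is to dispatch the four parts in order, using the adjunction between $L_{G,-}$ and $\calE_G$ recalled in~\ref{noth LGV} together with Lemma~\ref{lem J} as the two essential tools; the outline is standard for functor-category arguments of this type. For Part~(a), I would first check that $V := S(G)$ is a simple $E_G$-module. If $W \subseteq V$ is a nonzero $E_G$-submodule, the subfunctor of $S$ it generates is nonzero, hence equals $S$ by simplicity; its evaluation at $G$ is the $E_G$-submodule generated by $W$, namely $W$ itself, so $W = V$. With $V$ simple, the adjunction yields a natural isomorphism $\Hom_{\calF}(L_{G,V}, S) \cong \Hom_{E_G}(V, V)$; let $\alpha \colon L_{G,V} \to S$ correspond to $\id_V$. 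Since $\alpha_G = \id_V \neq 0$ and $S$ is simple, $\alpha$ is surjective, so $\ker(\alpha)$ is a maximal subfunctor of $L_{G,V}$. By Lemma~\ref{lem J}, $L_{G,V}$ has the unique maximal subfunctor $J_{G,V}$, forcing $\ker(\alpha) = J_{G,V}$ and $S \cong S_{G,V}$.

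Part~(b) is immediate from minimality: for any generator $y \cdotH x$ of $I_G$ with $x \in B_k^A(H, G)$, $y \in B_k^A(G, H)$ and $|H| < |G|$, and any $v \in S(G)$, one has $(y \cdotH x)(v) = y\bigl(x(v)\bigr) \in y\bigl(S(H)\bigr) = \{0\}$ since minimality forces $S(H) = 0$; the second statement of (b) follows by combining this with Part~(a). For Part~(c), Lemma~\ref{lem J} gives $S_{G,V}(G) \cong V \neq 0$, so it suffices to prove $L_{G,V}(H) \subseteq J_{G,V}(H)$ whenever $|H| < |G|$. Given $\sum_i x_i \otimes v_i \in L_{G,V}(H)$ with $x_i \in B_k^A(H, G)$, and any $y \in B_k^A(G, H)$, each $y \cdotH x_i$ lies in $I_G$ by definition, so $\sum_i (y \cdotH x_i)(v_i) = 0$ because $I_G$ annihilates $V$; thus the vanishing criterion of Lemma~\ref{lem J} is satisfied.

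Finally, Part~(d) combines (a) with Lemma~\ref{lem J}: if $S_{G,V} \cong S_{H,W}$, evaluating at $H$ gives $S_{G,V}(H) \cong S_{H,W}(H) \cong W$; conversely, if $S_{G,V}(H) \cong W$ then $S_{G,V}(H) \neq 0$ (as $W$ is simple), and applying Part~(a) to the simple functor $S_{G,V}$ at the group $H$ yields $S_{G,V} \cong S_{H, S_{G,V}(H)} \cong S_{H, W}$. The only step that demands genuine care rather than bookkeeping is the verification in Part~(c) that the definition of $I_G$ aligns exactly with the vanishing condition defining $J_{G,V}$; I would flag this as the main, though modest, obstacle.
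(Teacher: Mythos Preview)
Your proof is correct and follows essentially the same route as the paper: the paper simply cites \cite{Bouc1996a} for Part~(a), calls Part~(b) immediate from the definitions, derives Part~(c) from the explicit description of $J_{G,V}(H)$ in Lemma~\ref{lem J}, and obtains Part~(d) from Part~(a) together with the last statement of Lemma~\ref{lem J}. You have unpacked each of these steps with the expected arguments (the adjunction for (a), the factorization through $S(H)=0$ for (b), matching $I_G$ to the vanishing criterion in Lemma~\ref{lem J} for (c), and evaluating at $H$ for (d)), so there is nothing to add.
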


\begin{proof}
Part (a) follows from a short argument given at the beginning of
Section~4 in \cite{Bouc1996a}. Part~(b) is immediate from the definitions and Part~(a). To prove Part~(c) assume that $|H|<|G|$. Then the description of $J_{G,V}(H)$ in Lemma~\ref{lem J} implies that $L_{G,V}(H)=J_{G,V}(H)$. Thus, $S_{G,V}(H)= \{0\}$. Finally, Part~(d) follows immediately from Part~(a) and the last statement in Lemma~\ref{lem J}.
\end{proof}

%%%%%%%%%%%%%%%%%%%%%%% SECTION 4 %%%%%%%%%%%%%%%%%%%%%%%%%%%%%%

\section{Idempotents in $E_G$}\label{sec idempotents}

%Proposition~2.4(b) implies that in order to classify the simple functors $S\in\calF$, we need to classify the simple $\Ebar_G$-modules. This goal will be achieved in Section~4. The goal of this section is to introduce and study a  subalgebra $E_G^c$ of $E_G$ which covers $\Ebar_G$.

In this section, we introduce idempotents in $E_G$ that will play an important role later.

\smallskip
Recall that $G$ acts on $\calM_G$ by
conjugation. We will denote the $G$-fixed point set by $\calM_G^G$.
A pair $(K,\kappa)\in\calM_G$ is $G$-fixed if and only if $K$ is a
normal subgroup of $G$ and $\kappa$ is a $G$-stable homomorphism. Note
that if also $H$ is a finite group, $(U,\phi)\in\calM_{G\times H}$
and $p_1(U)=G$ then, by Proposition~\ref{eta and zeta}(a) and (b), one has
$l_0(U,\phi)\in\calM_G^G$.

\begin{definition}\mylabel{idempotent definition 1}
Let $G$ be a finite group and let $(K,\kappa)\in\calM_G^G$. We
define the $A$-fibered $(G,G)$-biset $E_{(K,\kappa)}$ as
\begin{equation*}
  E_{(K,\kappa)}:=\left(\frac{G\times G}{\Delta_K(G),\phi_\kappa}\right)\,,
\end{equation*}
where
\begin{gather*}
  \Delta_K(G):=\{(g_1,g_2)\in G\times G\mid g_1K=g_2K\}= (K\times \{1\})\Delta(G)
  =(\{1\}\times K)\Delta(G)\\
  \text{and}\quad
  \phi_\kappa(g_1,g_2)=\kappa(g_2^{-1}g_1)=\kappa(g_1g_2^{-1})\,.
\end{gather*}
Note that $\Delta_K(G)$ is a subgroup of $G\times G$, since $K$ is
normal in $G$, and that $\phi_\kappa$ is a homomorphism, since
$\kappa$ is $G$-stable. Note also that $E_{(K,\kappa)}^{\mathrm{op}}\cong
E_{(K,\kappa)}$.
\end{definition}

\begin{proposition}\mylabel{idempotent proposition}
Let $G$ and $H$ be finite groups and let
$(K,\kappa),(K',\kappa')\in\calM_G^G$. Moreover, let
$(U,\phi)\in\calM_{G\times H}$ with $l(U,\phi)=(G,K',\kappa')$.

\smallskip
{\rm (a)} One has $l(\Delta_K(G),\phi_\kappa)=(G,K,\kappa)=r(\Delta_K(G),\phi_\kappa)$.

\smallskip
{\rm (b)} One has
\begin{equation*}
  E_{(K,\kappa)}\otimes_{AG}
  \left(\frac{G\times H}{U,\phi}\right) \cong
  \begin{cases}
    \ \emptyset\,, & \text{if $\kappa|_{K\cap K'}\neq \kappa'|_{K\cap
    K'}$,}\\
    \left(\frac{G\times H}{(K\times 1)U,\kappa\cdot\phi}\right)\,, &
    \text{if $\kappa|_{K\cap K'} = \kappa'|_{K\cap K'}$,}
  \end{cases}
\end{equation*}
where $(\kappa\cdot\phi)((k,1)u):=\kappa(k)\phi(u)$ for $k\in K$ and
$u\in U$. Moreover, in the second case, one has
\begin{equation*}
  l((K\times1)U,\kappa\cdot\phi)= (G, KK', \kappa\cdot\kappa')\,.
\end{equation*}
In particular one has
\begin{equation*}
  E_{(K,\kappa)}\otimes_{AG} E_{(K',\kappa')} \cong
  \begin{cases}
    \ \emptyset\,, & \text{if $\kappa|_{K\cap K'}\neq\kappa'|_{K\cap K'}$,}\\
    E_{(KK',\kappa\cdot\kappa')}\,, &
    \text{if $\kappa|_{K\cap K'}=\kappa'|_{K\cap K'}$.}
  \end{cases}
\end{equation*}

\smallskip
{\rm (c)} Assume that $(K,\kappa)\le(K',\kappa')$. Then
\begin{equation*}
  E_{(K,\kappa)}\otimes_{AG}\left(\frac{G\times H}{U,\phi}\right)
  \cong \left(\frac{G\times H}{U,\phi}\right)\,.
\end{equation*}
In particular, one has $E_{(K,\kappa)}\otimes_{AG}E_{(K',\kappa')}
\cong E_{(K',\kappa')}$ and $E_{(K,\kappa)}\otimes_{AG}E_{(K,\kappa)}
\cong E_{(K,\kappa)}$.

\smallskip
{\rm (d)} Assume that $p_2(U,\phi)=H$. Then
\begin{equation*}
  \left(\frac{G\times H}{U,\phi}\right) \otimes_{AH}
  \left(\frac{G\times H}{U,\phi}\right)^{\mathrm{op}} \cong
  E_{(K',\kappa')}
\end{equation*}
in $\GsetGA$.
\end{proposition}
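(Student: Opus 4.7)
The plan is to derive all four parts uniformly from the Mackey formula (Corollary~\ref{Mackey formula for fibered bisets 2}) together with a direct unwinding of $\Delta_K(G)$ and $\phi_\kappa$.

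Part~(a) is a direct verification from Definition~\ref{idempotent definition 1}: both projections of $\Delta_K(G)$ are surjective, both kernels equal $K$, and evaluation of $\phi_\kappa$ on $K\times\{1\}$ and $\{1\}\times K$ gives $(\phi_\kappa)_1 = \kappa = (\phi_\kappa)_2$. For part~(b), I apply Mackey to $(\Delta_K(G),\phi_\kappa)$ and $(U,\phi)$. Since $p_2(\Delta_K(G)) = G = p_1(U)$, the double-coset index set $G\backslash G/G$ has only the representative $t=1$ with $H_1 = K\cap K'$, and the compatibility condition reads exactly $\kappa|_{K\cap K'} = \kappa'|_{K\cap K'}$. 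If it fails the tensor product is empty; if it holds, direct computation of the star operation gives $\Delta_K(G)*U = (K\times 1)U$ and $\phi_\kappa*\phi = \kappa\cdot\phi$, the main ingredient being the identity $\phi_\kappa(kg',g') = \kappa(g'^{-1}kg') = \kappa(k)$ which uses the $G$-stability of $\kappa$. The formula $l((K\times 1)U,\kappa\cdot\phi) = (G,KK',\kappa\cdot\kappa')$ follows by evaluating $\kappa\cdot\phi$ on $(KK')\times\{1\}$, and the final displayed identity for $E_{(K,\kappa)}\otimes_{AG} E_{(K',\kappa')}$ is the special case $(U,\phi)=(\Delta_{K'}(G),\phi_{\kappa'})$, once one checks $(K\times 1)\Delta_{K'}(G) = \Delta_{KK'}(G)$.

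Part~(c) is then a corollary of (b): if $(K,\kappa)\le(K',\kappa')$ then $K\le K' = k_1(U)$, so $K\times\{1\}\subseteq U$ and $(K\times 1)U = U$; the hypothesis $\kappa = \kappa'|_K$ both gives compatibility and implies $\kappa\cdot\phi = \phi$ pointwise on $U$ (writing any $u\in U$ as $(1,1)\cdot u$). For part~(d), I apply Mackey to $(U,\phi)$ and $(U^{\mathrm{op}},\phi^{\mathrm{op}})$. The hypothesis $p_2(U) = H = p_1(U^{\mathrm{op}})$ collapses the double-coset sum to the single $t = 1$ term, and the compatibility requirement $\phi_2|_{k_2(U)} = (\phi^{\mathrm{op}})_1|_{k_2(U)}$ is tautological since $(\phi^{\mathrm{op}})_1 = \phi_2$. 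Direct unwinding then gives
\begin{equation*}
U * U^{\mathrm{op}} = \{(g_1,g_2)\in G\times G \mid g_1g_2^{-1}\in k_1(U) = K'\} = \Delta_{K'}(G),
\end{equation*}
using $p_1(U) = G$ to realize every coset. For the homomorphism, picking $h\in H$ with $(g_2,h)\in U$ and writing $(g_1,h) = (g_1g_2^{-1},1)(g_2,h)$, I compute
\begin{equation*}
(\phi*\phi^{\mathrm{op}})(g_1,g_2) = \phi(g_1,h)\phi(g_2,h)^{-1} = \phi(g_1g_2^{-1},1) = \kappa'(g_1g_2^{-1}) = \phi_{\kappa'}(g_1,g_2),
\end{equation*}
which exactly identifies the stabilizing pair with that of $E_{(K',\kappa')}$.

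The only place where genuine care is needed is in part~(b): verifying that the assignment $\kappa\cdot\kappa'$ is a well-defined homomorphism on $KK'$ when compatibility holds, which is where both the compatibility condition and the $G$-stability of $\kappa$ and $\kappa'$ enter essentially. The remaining calculations are formal bookkeeping with the star operation and the explicit form of $\phi_\kappa$.
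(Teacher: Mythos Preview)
Your proposal is correct and follows exactly the approach the paper takes: part~(a) is a direct verification, parts~(b) and~(d) are applications of the Mackey formula in Corollary~\ref{Mackey formula for fibered bisets 2}, and part~(c) is read off as a special case of~(b). The paper's own proof simply asserts that (b) and (d) ``follow immediately'' from the Mackey formula without writing out the computations you have supplied; your detailed unwinding of the star products, the compatibility condition, and the well-definedness of $\kappa\cdot\kappa'$ is precisely what is being left to the reader there.
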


\begin{proof}
Part~(a) is an easy verification. Parts~(b) and (d) follow immediately
from the explicit Mackey formula in Corollary~\ref{Mackey formula for fibered
bisets 2}. Part~(c) is a special case of part~(b).
\end{proof}

\begin{nothing}\mylabel{e's and f's}
For $(K,\kappa)\in\calM_G^G$ we set
\begin{equation*}
  e_{(K,\kappa)}:=[E_{(K,\kappa)}]\in B_k^A(G,G) = E_G\,.
\end{equation*}
Note that $e_{(1,1)}=1\in E_G$. By Proposition~\ref{idempotent proposition}, we have
\begin{equation}\label{product of e's}
  e_{(K,\kappa)}\cdot e_{(L,\lambda)} =
  \begin{cases}
    e_{(KL,\kappa\lambda)}, &\text{if $\kappa|_{K\cap L} =
    \lambda|_{K\cap L}$,}\\
    0, & \text{otherwise,}
  \end{cases}
\end{equation}
for all $(K,\kappa),(L,\lambda)\in\calM_G^G$. This implies that, we
obtain a commutative subalgebra
\begin{equation}\label{algebra generated by e's}
  \bigoplus_{(K,\kappa)\in\calM_G^G} ke_{(K,\kappa)}
\end{equation}
of $E_G$. For $(K,\kappa),(L,\lambda)\in\calM_G^G$,
let $\mu_{(K,\kappa),(L,\lambda)}^{\triangleleft}$ denote the
M\"obius coefficient with respect to the poset $\calM_G^G$. Since
$\kappa$ is determined by $\lambda$ if $(K,\kappa)\le(L,\lambda)$,
this coefficient is equal to the M\"obius coefficient
$\mu_{K,L}^\triangleleft$ of $K$ and $L$ with respect to the poset
of normal subgroups of $G$. For $(K,\kappa)\in\calM_G^G$, we
set
\begin{equation}\label{definition of f's}
  f_{(K,\kappa)}:=\sum_{(K,\kappa)\le(L,\lambda)\in\calM_G^G}
  \mu_{K,L}^\triangleleft e_{(L,\lambda)}
\end{equation}
and obtain by M\"obius inversion that
\begin{equation}\label{e's from f's}
  e_{(K,\kappa)}=\sum_{(K,\kappa)\le(L,\lambda)\in\calM_G^G}
  f_{(L,\lambda)}\,.
\end{equation}
It follows that also the elements $f_{(K,\kappa)}$,
$(K,\kappa)\in\calM_G^G$, form a $k$-basis of the algebra in
(\ref{algebra generated by e's}). Moreover, note that
\begin{equation}\label{sum equals 1}
  \sum_{(K,\kappa)\in\calM_G^G} f_{(K,\kappa)}= e_{(1,1)} =1 \in
  E_G\,.
\end{equation}
The next proposition shows that these basis elements are mutually
orthogonal idempotents.
\end{nothing}

\begin{proposition}\mylabel{e,f-relations 1}
For all $(K,\kappa),(L,\lambda)\in\calM_G^G$ one has
\begin{equation*}
  e_{(K,\kappa)}f_{(L,\lambda)} = f_{(L,\lambda)}e_{(K,\kappa)} =
  \begin{cases}
    f_{(L,\lambda)}, & \text{if $(K,\kappa)\le (L,\lambda)$,}\\
    0, & \text{otherwise,}
  \end{cases}
\end{equation*}
and
\begin{equation*}
  f_{(K,\kappa)}f_{(L,\lambda)} =
  \begin{cases}
    f_{(K,\kappa)}, & \text{if $(K,\kappa)=(L,\lambda)$,}\\
    0, & \text{otherwise.}
  \end{cases}
\end{equation*}
\end{proposition}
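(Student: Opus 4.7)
The plan is to derive both formulas together via Möbius inversion on the poset $\calM_G^G$. I first note that the subalgebra in (\ref{algebra generated by e's}) is commutative by (\ref{product of e's}), so the symmetry $e_{(K,\kappa)} f_{(L,\lambda)} = f_{(L,\lambda)} e_{(K,\kappa)}$ is automatic and only one of these two products needs to be computed.

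The key intermediate identity I will establish is
\begin{equation*}
  e_{(K,\kappa)} \cdot e_{(L,\lambda)} = \sum_{\substack{(M,\mu)\in\calM_G^G \\ (M,\mu)\ge(K,\kappa),\,(M,\mu)\ge(L,\lambda)}} f_{(M,\mu)}\,.
\end{equation*}
In the compatible case $\kappa|_{K\cap L}=\lambda|_{K\cap L}$, equation (\ref{product of e's}) combined with (\ref{e's from f's}) gives this directly, once one observes that the common upper bounds of $(K,\kappa)$ and $(L,\lambda)$ in $\calM_G^G$ are precisely the pairs above the join $(KL,\kappa\lambda)$. In the incompatible case, (\ref{product of e's}) yields $0$, and this matches the right-hand side because any common upper bound $(M,\mu)$ would force $\mu|_{K\cap L}$ to equal both $\kappa|_{K\cap L}$ and $\lambda|_{K\cap L}$, which is impossible.

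With this identity in hand, I compute $e_{(K,\kappa)} f_{(L,\lambda)}$ by expanding $f_{(L,\lambda)}$ via (\ref{definition of f's}), substituting, and interchanging the order of summation to obtain
\begin{equation*}
  e_{(K,\kappa)} f_{(L,\lambda)} = \sum_{(P,\pi)\ge(K,\kappa)} f_{(P,\pi)} \Bigl(\sum_{L\le M\le P} \mu_{L,M}^\triangleleft\Bigr)\,,
\end{equation*}
where only those outer summands with $(P,\pi)\ge(L,\lambda)$ actually contribute (otherwise the inner index set is empty), and where the inner sum runs over normal subgroups $M$ of $G$. By the defining property of the Möbius function of the poset of normal subgroups of $G$, the inner sum equals $\delta_{L,P}$, and when $L=P$ the condition $(P,\pi)\ge(L,\lambda)$ forces $(P,\pi)=(L,\lambda)$; thus the expression collapses to $f_{(L,\lambda)}$ if $(K,\kappa)\le(L,\lambda)$ and to $0$ otherwise, yielding the first assertion.

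For the orthogonality of the $f$'s I expand $f_{(K,\kappa)}$ via (\ref{definition of f's}) and apply the first assertion to each $e_{(N,\nu)} f_{(L,\lambda)}$; only terms with $(K,\kappa)\le(N,\nu)\le(L,\lambda)$ survive, and a second invocation of the Möbius identity collapses the sum to $\delta_{(K,\kappa),(L,\lambda)} f_{(L,\lambda)}$. The main technical obstacle is the case analysis in the intermediate identity above, in particular verifying the empty-upper-bound claim in the incompatible case; once this is settled, both assertions reduce to standard bookkeeping in the incidence algebra of $\calM_G^G$.
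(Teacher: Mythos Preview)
Your proof is correct and takes a genuinely different route from the paper. The paper proceeds by induction on $d(K,\kappa)+d(L,\lambda)$, where $d(K,\kappa)$ is the maximal length of an ascending chain in $\calM_G^G$ starting at $(K,\kappa)$; the base case treats maximal pairs (where $f=e$), and the inductive step first shows $f_{(K,\kappa)}f_{(L,\lambda)}=0$ for $(K,\kappa)\neq(L,\lambda)$ by producing a join $(KL,\kappa\lambda)$ strictly above one of them, then squares $e_{(K,\kappa)}$ to extract $f_{(K,\kappa)}^2=f_{(K,\kappa)}$, and finally deduces the $ef$-formula from these. Your argument instead establishes the single identity $e_{(K,\kappa)}e_{(L,\lambda)}=\sum_{(M,\mu)\ge(K,\kappa),(L,\lambda)} f_{(M,\mu)}$ and then reduces both assertions to the Möbius identity $\sum_{L\le M\le P}\mu^{\triangleleft}_{L,M}=\delta_{L,P}$ in the lattice of normal subgroups, using at each step that an interval $[(L,\lambda),(P,\pi)]$ in $\calM_G^G$ is in bijection with the interval $[L,P]$ of normal subgroups (via $M\mapsto(M,\pi|_M)$). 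This is the standard incidence-algebra route to orthogonal idempotents and is shorter and more conceptual; the paper's inductive approach is more hands-on and avoids any appeal to general Möbius machinery beyond the definition.
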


\begin{proof}
First note that equations (\ref{product of e's}) and (\ref{definition
of f's}) imply immediately that
$e_{(K,\kappa)}f_{(L,\lambda)}=f_{(L,\lambda)}$ when
$(K,\kappa)\le(L,\lambda)$. Recall also that $e_{(K,\kappa)}$ and
$f_{(L,\lambda)}$ commute.

Next we prove the remaining statements of the proposition by induction
on $d=d(K,\kappa)+d(L,\lambda)$, where $d(K,\kappa)$ is defined as the
largest $n\in\NN_0$ such that there exists a chain
$(K,\kappa)=(K_0,\kappa_0)<\cdots<(K_n,\kappa_n)$ in $\calM_G^G$.

If $d=0$ then $(K,\kappa)$ and $(L,\lambda)$ are maximal in
$\calM_G^G$. Thus, $f_{(K,\kappa)}=e_{(K,\kappa)}$ and
$f_{(L,\lambda)}=e_{(L,\lambda)}$. We only have to show that
$e_{(K,\kappa)}e_{(L,\lambda)}=0$ when $(K,\kappa)\neq(L,\lambda)$. So
assume that $e_{(K,\kappa)}e_{(L,\lambda)}\neq0$. Then equation
(\ref{product of e's}) implies that $\kappa|_{K\cap L}=\lambda|_{K\cap
L}$ and that there exists a pair $(KL,\kappa\lambda)\in\calM_G^G$
with $(K,\kappa)\le(KL,\kappa\lambda)\ge(L,\lambda)$. Since
$(K,\kappa)$ and $(L,\lambda)$ are maximal in $\calM_G^G$, we
obtain $(K,\kappa)=(KL,\kappa\lambda)=(L,\lambda)$.

Now assume that $d>0$ and that the proposition holds for smaller
values of $d$. We first show that if
$f_{(K,\kappa)}f_{(L,\lambda)}\neq0$ then $(K,\kappa)=(L,\lambda)$. In
fact $f_{(K,\kappa)}f_{(L,\lambda)}\neq0$ implies, after expanding
$f_{(K,\kappa)}$ and $f_{(L,\lambda)}$ according to equation
(\ref{definition of f's}) and using equation (\ref{product of e's}),
that $\kappa|_{K\cap L}=\lambda|_{K\cap L}$ and that
\begin{equation*}
  f_{(K,\kappa)}f_{(L,\lambda)} \in
  \bigoplus_{(KL,\kappa\lambda)\le(M,\mu)\in\calM_G^G}
  ke_{(M,\mu)}\,.
\end{equation*}
This implies that $e_{(KL,\kappa\lambda)}f_{(K,\kappa)}f_{(L,\lambda)}
= f_{(K,\kappa)}f_{(L,\lambda)}$ by Equation~(\ref{product of e's}). Assume that
$(K,\kappa)\neq(L,\lambda)$. Then $(K,\kappa)<(KL,\kappa\lambda)$ or $(L,\lambda)<(KL,\kappa\lambda)$, and
by induction we obtain $e_{(KL,\kappa\lambda)}f_{(K,\kappa)}=0$ or $e_{(KL,\kappa\lambda)}f_{(L,\lambda)}=0$. 
In either case we obtain $f_{(K,\kappa)}f_{(L,\lambda)} = e_{(KL,\kappa\lambda)}f_{(K,\kappa)}f_{(L,\lambda)}=0$, a contradiction.

Next we assume $(K,\kappa)=(L,\lambda)$. By the result of the previous paragraph and by
induction we have
\begin{equation*}
  e_{(K,\kappa)} = e_{(K,\kappa)}^2 =
  \Bigl(\sum_{(K,\kappa)\le(K',\kappa')\in\calM_G^G}
  f_{(K',\kappa')}\Bigr)^2
  = f_{(K,\kappa)}^2+\sum_{(K,\kappa)<(K',\kappa')\in\calM_G^G}
  f_{(K',\kappa')}\,.
\end{equation*}
Comparing this with Equation~(\ref{e's from f's}) we obtain
$f_{(K,\kappa)}^2=f_{(K,\kappa)}$.

Finally, Equation (\ref{e's from f's}) implies that
\begin{equation*}
  e_{(K,\kappa)}f_{(L,\lambda)} =
  \sum_{(K,\kappa)\le(K',\kappa')\in\calM_G^G}
  f_{(K',\kappa')}f_{(L,\lambda)}\,.
\end{equation*}
By induction and by what we already proved, the latter sum is equal to
$f_{(L,\lambda)}$ if $(K,\kappa)\le(L,\lambda)$ and equal to $0$
otherwise.
\end{proof}

In Section \ref{sec simple functors} we will need the following lemma.

\begin{lemma}\label{lem delete f}
Let $G$ and $H$ be finite groups and let $(U,\phi)\in\calM_{G\times H}$ with 
$p_1(U)=G$ and $p_2(U)=H$. Set $(K,\kappa):=l_0(U,\phi)$ and $(L,\lambda):=r_0(U,\phi)$. Then
\begin{equation*}
  \left[\frac{G\times H}{U,\phi}\right]\cdotH f_{(L,\lambda)} 
  = f_{(K,\kappa)}\cdotG \left[\frac{G\times H}{U,\phi}\right]\cdotH f_{(L,\lambda)} 
  = f_{(K,\kappa)}\cdotG \left[\frac{G\times H}{U,\phi}\right]
\end{equation*}
\end{lemma}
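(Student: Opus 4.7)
The plan is to establish the central identity $x\cdotH f_{(L,\lambda)}=f_{(K,\kappa)}\cdotG x$, where $x:=\left[\frac{G\times H}{U,\phi}\right]$. Both outer equalities of the lemma then drop out: pre- or post-composing with the appropriate idempotent and invoking Proposition~\ref{e,f-relations 1} gives, for instance, $f_{(K,\kappa)}\cdotG x\cdotH f_{(L,\lambda)}=f_{(K,\kappa)}^2\cdotG x=f_{(K,\kappa)}\cdotG x$, and symmetrically for the other outer equality.

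To prove the central identity, I would expand both sides via equation~(\ref{definition of f's}) and Proposition~\ref{idempotent proposition}(b), together with its right-sided analogue obtained by passing to opposites (using $E_{(L_2,\lambda_2)}^{\mathrm{op}}\cong E_{(L_2,\lambda_2)}$ from Definition~\ref{idempotent definition 1}). Since $(L,\lambda)\le(L_2,\lambda_2)$ and $(K,\kappa)\le(K_1,\kappa_1)$ automatically satisfy the compatibility hypothesis of~\ref{idempotent proposition}(b), each side becomes a signed sum
\begin{align*}
x\cdotH f_{(L,\lambda)}&=\sum_{(L,\lambda)\le(L_2,\lambda_2)\in\calM_H^H}\mu_{L,L_2}^\triangleleft\left[\frac{G\times H}{U(1\times L_2),\phi\cdot\lambda_2}\right]\,,\\
f_{(K,\kappa)}\cdotG x&=\sum_{(K,\kappa)\le(K_1,\kappa_1)\in\calM_G^G}\mu_{K,K_1}^\triangleleft\left[\frac{G\times H}{(K_1\times 1)U,\kappa_1\cdot\phi}\right]\,,
\end{align*}
with the characters as described by~\ref{idempotent proposition}(b) and its opposite.

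The key step is a bijection between the two index sets that matches basis elements and M\"obius coefficients. Using the isomorphism $\eta_U\colon H/L\to G/K$ of Proposition~\ref{eta and zeta}(c), to a pair $(L_2,\lambda_2)$ with $(L,\lambda)\le(L_2,\lambda_2)$ I associate the pair $(K_1,\kappa_1)$ with $K_1/K:=\eta_U(L_2/L)$ and $\kappa_1\colon K_1\to A$ defined by $\kappa_1(k):=\phi(k,l)\lambda_2(l)$ for any $(k,l)\in U$ with $l\in L_2$. The well-definedness, extension, and homomorphism properties of $\kappa_1$ are routine (using $\phi_2=\lambda$ on $L$ and $\lambda_2|_L=\lambda$); its $G$-stability follows from $H$-stability of $\lambda_2$ together with the identity $(g,h)(k,l)(g,h)^{-1}=(gkg^{-1},hlh^{-1})\in U$ for $(g,h),(k,l)\in U$. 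One then checks the subgroup identity $(K_1\times 1)U=U(1\times L_2)$ directly from $K_1/K=\eta_U(L_2/L)$, and verifies that the two auxiliary characters agree on this common subgroup: since both are homomorphisms coinciding with $\phi$ on $U$, it suffices to check on $K_1\times 1$ (or equivalently on $1\times L_2$), and this is exactly the identity that the defining formula for $\kappa_1$ encodes. Finally $\mu_{K,K_1}^\triangleleft=\mu_{L,L_2}^\triangleleft$ because the two intermediate posets of normal subgroups (in $G$ between $K$ and $K_1$, in $H$ between $L$ and $L_2$) are isomorphic via $\eta_U$ and the quotient maps.

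The main obstacle is the character comparison step: one must choose the normalization $\kappa_1(k):=\phi(k,l)\lambda_2(l)$ so that it yields a $G$-stable extension of $\kappa$ \emph{and} so that the two auxiliary characters on the common subgroup $(K_1\times 1)U=U(1\times L_2)$ (which treat the intersection piece $L$ with different sign conventions, arising from the paper's convention $\phi|_{k(U)}=\phi_1\times\phi_2^{-1}$) genuinely coincide. Everything else is a matter of bookkeeping.
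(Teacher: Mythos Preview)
Your proof is correct and complete: the bijection via $\eta_U$ is well-defined, the character $\kappa_1(k):=\phi(k,l)\lambda_2(l)$ is precisely what makes the two auxiliary characters on $(K_1\times 1)U=U(1\times L_2)$ agree (the sign convention $\phi|_{k(U)}=\phi_1\times\phi_2^{-1}$ works out exactly, since the right-sided analogue of Proposition~\ref{idempotent proposition}(b) gives $(\phi*\phi_{\lambda_2})(g,hl_2)=\phi(g,h)\lambda_2(l_2)^{-1}$, and evaluating at $(k_1,1)=(k_1,l)(1,l^{-1})$ recovers $\phi(k_1,l)\lambda_2(l)=\kappa_1(k_1)$), and the M\"obius coefficients match because $\eta_U$ induces a poset isomorphism between the relevant intervals of normal subgroups.

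Your route differs from the paper's. The paper does not prove $x\cdotH f_{(L,\lambda)}=f_{(K,\kappa)}\cdotG x$ head-on; instead it proves only the first equality $x\cdotH f_{(L,\lambda)}=f_{(K,\kappa)}\cdotG x\cdotH f_{(L,\lambda)}$ (the second being symmetric) by writing $x=e_{(K,\kappa)}\cdotG x$, expanding $f_{(K,\kappa)}$ via~(\ref{definition of f's}), and showing that each extra term $e_{(K',\kappa')}\cdotG x\cdotH f_{(L,\lambda)}$ with $(K',\kappa')>(K,\kappa)$ vanishes: one observes that $e_{(K',\kappa')}\cdotG x$ has right invariant $(L',\lambda')$ with $(L,\lambda)<(L',\lambda')$ (the strict inequality coming from $|G/K'|=|H/L'|$ via Proposition~\ref{eta and zeta}(c)), so one may insert $e_{(L',\lambda')}$ and then Proposition~\ref{e,f-relations 1} kills $e_{(L',\lambda')}f_{(L,\lambda)}$. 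This is shorter because it leans on the already-established idempotent relations and avoids the explicit character bookkeeping; your argument, by contrast, is more self-contained and makes the underlying combinatorial bijection visible, at the price of the careful verification you flagged as the main obstacle.
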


\begin{proof}
We only prove the first equation. The second one follows by a similar argument or by applying 
$-^{\mbox{\small op}}$. Set $x:=\left[\frac{G\times H}{U,\phi}\right]$. Note that $e_{(K,\kappa)} x = x$ (see~Proposition~\ref{idempotent proposition}(c)). Using the definition of $f_{(K,\kappa)}$, it suffices to show that $e_{(K',\kappa')} xf_{(L,\lambda)}=0$ for all $(K',\kappa')\in\calM_G^G$ with $(K',\kappa')>(K,\kappa)$. By Proposition~\ref{idempotent proposition}(b), we have $e_{(K',\kappa')}x = \left[\frac{G\times H}{V,\psi}\right]$ for some $(V,\psi)\in\calM_{G\times H}$ satisfying $l(V,\psi)=(G,K',\kappa')$ and $r(V,\psi)=(H,L',\lambda')$ for some $(L',\lambda')\in\calM_H^H$ with $(L,\lambda)\le (L',\lambda')$. By Proposition~\ref{eta and zeta}(c), we have $G/K\cong H/L$ and also $G/K'\cong H/L'$. Since $K<K'$, this implies $L<L'$. Thus, $e_{(K',\kappa')}xf_{(L,\lambda)}=e_{(K',\kappa')}xe_{(L',\lambda')}f_{(L,\lambda)} = 0$ by Proposition~\ref{e,f-relations 1}, and the proof is complete.
\end{proof}

%%%%%%%%%%%%%%%%%%% SECTION 5 %%%%%%%%%%%%%%%%%%%%%%%%%%%%%%%%%

\section{Linkage}\label{sec linkage}

In this section, we introduce an equivalence relation, that we call linkage, on the set $\mathcal M_G^G$ of 
$G$-fixed elements in $\mathcal M_G$. The linkage classes will be used in later sections, especially in the
determination of the parametrizing set for the simple fibered biset functors.

\begin{definition}\mylabel{linkage definition}
(a) Let $G$ and $H$ be finite groups and let $(K,\kappa)\in\calM_G^G$
and $(L,\lambda)\in\calM_H^H$. We say that $(G,K,\kappa)$ and
$(H,L,\lambda)$ are {\em linked} if there exists
$(U,\phi)\in\calM_{G\times H}$ with $l(U,\phi)=(G,K,\kappa)$ and
$r(U,\phi)=(H,L,\lambda)$. In this case we also write
$(G,K,\kappa)\mathop{\sim}\limits_{(U,\phi)}(H,L,\lambda)$ or just
$(G,K,\kappa)\sim(H,L,\lambda)$. Note that if also $I$ is a finite
group and $(M,\mu)\in\calM_I^I$ and if $(V,\psi)\in\calM_{H\times
I}$ is such that
$(H,L,\lambda)\mathop{\sim}\limits_{(V,\psi)}(I,M,\mu)$ then
$(G,K,\kappa)\mathop{\sim}\limits_{(U*V,\phi*\psi)}(I,M,\mu)$.
Therefore, the relation $\sim$ is an equivalence relation. 

\smallskip
(b) As a special case we also say that elements $(K,\kappa)$ and $(K',\kappa')$
of $\calM_G^G$ are {\em $G$-linked} if
$(G,K,\kappa)\sim(G,K',\kappa')$. We use again the
notation $(K,\kappa)\sim_G(K',\kappa')$ or just
$(K,\kappa)\sim(K',\kappa')$. Note that if
$(K,\kappa)\sim_G(K',\kappa')$ then $G/K\cong G/K'$ by
Proposition~\ref{eta and zeta} and therefore, $|K|=|K'|$. We write
$\{K,\kappa\}_G$ for the $G$-linkage class of $(K,\kappa)$.
\end{definition}

\begin{nothing}\mylabel{cocycle notation}
Let $G$ be a finite group and let $K\le Z(G)$. Let $\sigma\colon
G/K\to G$ be a section of the canonical epimorphism $\pi\colon G\to
G/K$ (i.e., $\pi\circ\sigma=\id_{G/K}$). Then $\sigma$ defines a
$2$-cocycle $\alpha\in Z^2(G/K,K)$ by the 
equation
\begin{equation}\label{cocycle section relation}
  \sigma(x)\sigma(y) = \alpha(x,y)\sigma(xy)\,,\quad \text{for $x,y\in
  G/K$.}
\end{equation}
When $\sigma$ runs through all possible sections of $\pi$ then
$\alpha$ runs through a full cohomology class $[\alpha]$ in
$H^2(G/K,K)$. We say that $\alpha$ {\em describes} the central
extension $1\to K\to G\to G/K\to 1$.
\end{nothing}

\begin{proposition}\mylabel{linkage criterion}
Let $G$ and $H$ be finite groups and let $(K,\kappa)\in\calM_G^G$
and $(L,\lambda)\in\calM_H^H$. Assume that $\kappa$ and $\lambda$
are faithful and let $\alpha\in Z^2(G/K,K)$ and $\beta\in Z^2(H/L,L)$
be cocycles which describe the central extensions $1\to K\to G\to
G/K\to 1$ and $1\to L\to H\to H/L\to 1$, respectively. Then the
following are equivalent:

\smallskip
{\rm (i)} $(G,K,\kappa)\sim(H,L,\lambda)$.

\smallskip
{\rm (ii)} There exists an isomorphism $\eta\colon H/L\to G/K$ such
that
\begin{equation*}
  [\kappa\circ\alpha]=[\lambda\circ\beta\circ(\eta^{-1}\times\eta^{-1})]
\end{equation*}
as elements in $H^2(G/K,\torA)$.
\end{proposition}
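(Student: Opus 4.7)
The plan is to translate the linkage data $(U,\phi)$ into a comparison of cocycles describing the two central extensions. The key observation is that once $\kappa$ is faithful and $l(U,\phi)=(G,K,\kappa)$, $r(U,\phi)=(H,L,\lambda)$, the subgroup $K\times L$ of $U$ lies in the center of $U$ (using Proposition~\ref{eta and zeta}(b) applied to $\Khat=1=\Lhat$, which forces $K\le Z(G)$, $L\le Z(H)$), and $U/(K\times L)\cong G/K\cong H/L$ with the comparison isomorphism being exactly $\eta_U$ from Proposition~\ref{eta and zeta}(c). So $U$ itself is a central extension of $G/K$ by $K\times L$, and the two projections exhibit $G$ and $H$ as two quotients of $U$ by $L$ and $K$, respectively.

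For (i)$\Rightarrow$(ii), I will take $(U,\phi)$ linking the triples and set $\eta:=\eta_U$. I will then choose a set-theoretic section $\rho\colon G/K\to U$ of $\pi_G\circ p_1|_U$, producing a cocycle $\gamma=(\gamma_1,\gamma_2)\in Z^2(G/K,K\times L)$. The composition $\sigma:=p_1\circ\rho$ is a section of $\pi_G\colon G\to G/K$ with cocycle $\gamma_1$, so $[\gamma_1]=[\alpha]$ in $H^2(G/K,K)$. Similarly, $p_2\circ\rho$ followed by the identification through $\eta$ gives a section of $\pi_H$ whose cocycle is $\gamma_2\circ(\eta\times\eta)$, so $[\gamma_2]=[\beta\circ(\eta^{-1}\times\eta^{-1})]$ in $H^2(G/K,L)$. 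Finally, I will set $\psi:=\phi\circ\rho\colon G/K\to A$; evaluating $\phi$ on $\rho(x)\rho(y)=\gamma(x,y)\rho(xy)$ and using $\phi|_{K\times L}=\kappa\times\lambda^{-1}$ yields $d\psi=(\kappa\circ\gamma_1)\cdot(\lambda\circ\gamma_2)^{-1}$, i.e., the desired equality of classes in $H^2(G/K,A)$. Since both cocycles take values in $\torA$, the equality holds in $H^2(G/K,\torA)$.

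For (ii)$\Rightarrow$(i), I will define
\begin{equation*}
U:=\{(g,h)\in G\times H\mid \pi_G(g)=\eta(\pi_H(h))\}\,,
\end{equation*}
which is a subgroup with $p_1(U)=G$, $p_2(U)=H$, $k_1(U)=K$, $k_2(U)=L$ and $\eta_U=\eta$. Fix sections $\sigma\colon G/K\to G$ and $\tau\colon H/L\to H$ with associated cocycles $\alpha$ and $\beta$, and set $\beta':=\beta\circ(\eta^{-1}\times\eta^{-1})$. By hypothesis, $(\kappa\circ\alpha)\cdot(\lambda\circ\beta')^{-1}=d\psi$ for some $\psi\colon G/K\to A$. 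Each element of $U$ admits a unique expression of the form $(k\sigma(x),\ell\tau(\eta^{-1}(x)))$ with $x\in G/K$, $k\in K$, $\ell\in L$, so I may define
\begin{equation*}
\phi\bigl(k\sigma(x),\ell\tau(\eta^{-1}(x))\bigr):=\kappa(k)\lambda(\ell)^{-1}\psi(x)\,.
\end{equation*}
A direct check that $\phi$ is multiplicative reduces precisely to the coboundary identity above, and it is immediate that $\phi_1=\kappa$ and $\phi_2=\lambda$, so $(U,\phi)$ realises the linkage.

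The main technical point is the correct bookkeeping in transferring cocycles across the isomorphism $\eta$ and keeping track of the sign convention $\phi|_{K\times L}=\phi_1\times\phi_2^{-1}$; in particular, the $\lambda^{-1}$ on the $L$-side is what produces the quotient $(\kappa\circ\alpha)/(\lambda\circ\beta')$ as a coboundary rather than a product, and it is this asymmetry that forces the precise form of condition (ii).
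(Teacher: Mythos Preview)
Your proof is correct and follows essentially the same line as the paper's. The direction (i)$\Rightarrow$(ii) is identical in substance: the paper fixes sections $\sigma$ and $\tau$ first and sets $\mu(x):=\phi(\sigma(x),\tau(\eta^{-1}(x)))$, while you fix a section $\rho$ of $U\to G/K$ first and set $\psi:=\phi\circ\rho$; these amount to the same computation.

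The only noteworthy difference is in (ii)$\Rightarrow$(i). The paper constructs the same subgroup $U$ and then appeals to the Hochschild--Serre five-term exact sequence: it observes that $\kappa\times\lambda^{-1}\in\Hom(K\times L,\torA)$ extends to $U$ precisely when its image under the connecting map $\delta$ vanishes, and computes $\delta(\kappa\times\lambda^{-1})=[(\kappa\circ\alpha)\cdot(\lambda\circ\beta')^{-1}]$, which is trivial by hypothesis. You instead write down the extension $\phi$ explicitly via the unique decomposition $(k\sigma(x),\ell\tau(\eta^{-1}(x)))$ and verify multiplicativity directly, which reduces to the same coboundary identity. Your approach is slightly more elementary (no reference to the five-term sequence needed), while the paper's phrasing makes the obstruction-theoretic content more transparent; but the two arguments are really the same computation seen from two angles.
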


\begin{proof}
Let $\sigma\colon G/K\to G$ and $\tau\colon H/L\to H$ be sections of
the canonical epimorphisms such that
\begin{equation*}
  \sigma(x)\sigma(y) = \alpha(x,y)\sigma(xy)\quad\text{and}\quad
  \tau(w)\tau(z) = \beta(w,z)\tau(wz)
\end{equation*}
for all $x,y\in G/K$ and $w,z\in H/L$.

We first show that (i) implies (ii). Let
$(U,\phi)\in\calM_{G\times H}$ with $l(U,\phi)=(G,K,\kappa)$ and
$r(U,\phi) = (H,L,\lambda)$. Then we obtain an isomorphism $\eta\colon
H/L\to G/K$ from Proposition~\ref{eta and zeta}(c). We set
$\tau':=\tau\circ\eta^{-1}\colon G/K\to H$ and
$\beta':=\beta\circ(\eta^{-1}\times\eta^{-1})\in Z^2(G/K,L)$. For all $x,y\in G/K$
we obtain
\begin{align*}
  &  \kappa\bigl(\alpha(x,y)\bigr)
  \lambda\bigl(\beta'(x,y)\bigr)^{-1}\\
  = & \kappa\bigl(\sigma(x)\sigma(y)\sigma(xy)^{-1}\bigr)
  \lambda\bigl(\tau'(x)\tau'(y)\tau'(xy)^{-1}\bigr)^{-1}\\
  = & \phi\Bigl(\sigma(x)\sigma(y)\sigma(xy)^{-1},
  \tau'(x)\tau'(y)\tau'(xy)^{-1}\Bigr)\\
  = & \phi\bigl(\sigma(x),\tau'(x)\bigr)
  \phi\bigl(\sigma(y),\tau'(y)\bigr)
  \phi\bigl(\sigma(xy),\tau'(xy)\bigr)^{-1}\\
  = & \mu(x)\mu(y)\mu(xy)^{-1}\,,
\end{align*}
where $\mu\colon G/K\to\torA$ is defined by
$\mu(x)=\phi(\sigma(x),\tau'(x))$. Note here that, by the definition of
$\eta$ and $\tau'$, we have $(\sigma(x),\tau'(x))\in U$ for every 
$x\in G/K$.

\smallskip
Next we show that (ii) implies (i). We define
\begin{equation*}
  U:=\{(g,h)\in G\times H\mid gK=\eta(hL)\}\,.
\end{equation*}
Then $k_1(U)=K$, $k_2(U)=L$, $p_1(U)=G$ and $p_2(U)=H$. We only need
to show that $\kappa\times \lambda^{-1}\in (K\times L)^*$ can be
extended to a homomorphism $\phi\in U^*$. Consider the central extension
\begin{equation*}
  1\to K\times L \to U \to U/(K\times L) \to 1
\end{equation*}
and note that $p_1\colon U\to G$ induces an isomorphism $\pbar_1\colon
U/(K\times L)\to G/K$. Furthermore, note that, with 
\begin{equation*}
  \tau':=\tau\circ\eta^{-1}\colon G/K\to H\quad\text{and}\quad 
  \beta':=\beta\circ(\eta^{-1}\times\eta^{-1})\in Z^2(G/K,L)\,,
\end{equation*}
the function
\begin{equation*}
  \rho\colon G/K\to U\,,\quad
  x\mapsto\bigl(\sigma(x),\tau'(x)\bigr)\,,
\end{equation*}
is a section of the surjection $U\to U/(K\times
L)\mathop{\to}\limits^{\pbar_1} G/K$ and that
\begin{equation*}
  \gamma\colon G/K\times G/K \to K\times L\,, \quad (x,y)\mapsto
  \bigl(\alpha(x,y),\beta'(x,y)\bigr)\,,
\end{equation*}
defines a cocycle $\gamma\in Z^2(G/K,K\times L)$ such that
\begin{equation*}
  \rho(x)\rho(y) = \gamma(x,y)\rho(xy)
\end{equation*}
for all $x,y\in G/K$. This means that $\rho':=\rho\circ\pbar_1\colon
U/(K\times L)\to U$ is a section of $U\to U/(K\times L)$ with
corresponding cocycle $\gamma':=\gamma\circ(\pbar_1\times\pbar_1)\in
Z^2(U/(K\times L),K\times L)$. Now, $\kappa\times\lambda^{-1}\in
(K\times L)^*$ extends to a homomorphism $\phi\in U^*$ if and only if
in the following part of the Hochschild-Serre five term exact sequence (see \cite[Theorem~1.5.1]{Karpilovsky1987a}),
\begin{equation*}
  \cdots\ar\Hom(U,\torA)\ar \Hom(K\times L,\torA) \Ar{\delta}
  H^2(U/(K\times L),\torA)\ar \cdots\,,
\end{equation*}
the homomorphism $\kappa\times\lambda^{-1}\in\Hom(K\times L,\torA)$
belongs to the kernel of the connecting homomorphism $\delta$. However,
by \cite[Theorem~1.5.1]{Karpilovsky1987a}, one has $\delta(\kappa\times\lambda^{-1}) =
[(\kappa\times\lambda^{-1})\circ\gamma']$. Therefore, it suffices to
show that $[(\kappa\times\lambda^{-1})\circ\gamma]=1\in
H^2(G/K,\torA)$. But
\begin{equation*}
  \bigl((\kappa\times\lambda^{-1})\circ\gamma\bigr)(x,y) =
  \kappa\bigl(\alpha(x,y)\bigr) \cdot
  \lambda\bigl(\beta'(x,y)\bigr)^{-1} = \mu(x)\mu(y)\mu(xy)^{-1}
\end{equation*}
for some function $\mu\colon G/K\to\torA$, by the hypothesis in (ii).
\end{proof}

\begin{remark}\mylabel{rem linkage criterion}
(a) Let $G$ and $H$ be finite groups and let $(K,\kappa)\in\calM_G^G$
and $(L,\lambda)\in \calM_H^H$. Let $\Khat:=\ker(\kappa)$,
$\Lhat:=\ker(\lambda)$ and set
\begin{equation*}
  \Gbar:=G/\Khat\,, \quad \Kbar:=K/\Khat\,, \quad \Hbar:=H/\Lhat\,,
  \quad \Lbar:=L/\Lhat\,.
\end{equation*}
Furthermore, let $\kappabar\in\Kbar^*$ and $\lambdabar\in \Lbar^*$
denote the homomorphisms that inflate to $\kappa$ and $\lambda$,
respectively. It is easy to see that
\begin{equation*}
  (G,K,\kappa)\sim(H,L,\lambda) \iff
  (\Gbar,\Kbar,\kappabar)\sim(\Hbar,\Lbar,\lambdabar)\,.
\end{equation*}
Therefore, the question if $(G,K,\kappa)$ is linked to $(H,L,\lambda)$
can be reduced to the case where $\kappa$ and $\lambda$ are faithful
and can be answered by the criterion in Proposition~\ref{linkage
criterion}.

\smallskip
(b) Proposition~\ref{linkage criterion} is still true if $H^2(G/K,\torA)$ in (ii) 
is replaced by $H^2(G/K,A)$. The proof is exactly the same, mutatis mutandis. Note also that the natural map $H^2(G/K,\torA)\to H^2(G/K,A)$ is injective by the long exact cohomology sequence, since $H^1(G/K,A/\torA)\cong \Hom(G/K,A/\torA)$ is trivial.
\end{remark}

\begin{nothing}\mylabel{linked e's and f's}
(a) Recall that, for $(K,\kappa)\in\calM_G^G$, we denote by
$\{K,\kappa\}_G$ the $G$-linkage class of $(K,\kappa)$ in
$\calM_G^G$. The partial order on $\calM_G^G$ induces a partial
order on the linkage classes, defined for
$(K,\kappa),(L,\lambda)\in\calM_G^G$ by
\begin{equation*}
  \{K,\kappa\}_G\le \{L,\lambda\}_G
\end{equation*}
if and only if there exists $(K',\kappa')\in\{K,\kappa\}_G,
(L',\lambda')\in\{L,\lambda\}_G$ with $(K',\kappa')\le (L',\lambda')$.
In order to see that this relation is transitive it suffices to show
that if $(K,\kappa)\le(L,\lambda)$ in $\calM_G^G$ and
$(K',\kappa')\sim_G(K,\kappa)$ in $\calM_G^G$ then there exists
$(L',\lambda')\in\calM_G^G$ with
$(K',\kappa')\le(L',\lambda')\sim_G(L,\lambda)$. The existence of
$(L',\lambda')$ is seen as follows. Since
$(K,\kappa)\sim_G(K',\kappa')$, there exists $(U,\phi)\in\calM_{G\times
G}$ with $l(U,\phi)=(G,K,\kappa)$ and $r(U,\phi)=(G,K',\kappa')$.
We can write
\begin{equation*}
  E_{(L,\lambda)}\otimes_{AG}\left(\frac{G\times G}{U,\phi}\right) \cong
  \left(\frac{G\times G}{V,\psi}\right)
\end{equation*}
for some $(V,\psi)\in\calM_{G\times G}$. Using $(K,\kappa)\le (L,\lambda)$,
Proposition~\ref{idempotent proposition}(b) implies
$l(V,\psi)=(G,L,\lambda)$. Now Propositions~\ref{eta and zeta}(a)
and \ref{first consequences}(b) imply $r(V,\psi)=(G,L',\lambda')$
for some $(L',\lambda')\in\calM_G^G$ with $(K',\kappa')\le
(L',\lambda')\sim_G(L,\lambda)$, as desired.

\smallskip
(b) For $(K,\kappa)\in\calM_G^G$ we define the elements
\begin{equation*}
  e_{\{K,\kappa\}_G}:= \sum_{(K',\kappa')\in\{K,\kappa\}_G}
  e_{(K',\kappa')} \in B_k^A(G, G)
\end{equation*}
and
\begin{equation*}
  f_{\{K,\kappa\}_G}:= \sum_{(K',\kappa')\in\{K,\kappa\}_G}
  f_{(K',\kappa')} \in B_k^A(G, G)\,.
\end{equation*}
\end{nothing}

The following proposition now follows immediately from
Proposition~\ref{e,f-relations 1}.

\begin{proposition}\mylabel{e,f-relations 2}
Let $(K,\kappa),(L,\lambda)\in\calM_G^G$. Then
\begin{align*}
  e_{\{K,\kappa\}_G}f_{\{L,\lambda\}_G} =
  f_{\{L,\lambda\}_G}e_{\{K,\kappa\}_G}
  & = 0 \quad\text{unless
  $\{K,\kappa\}_G\le\{L,\lambda\}_G$,}\\
  e_{\{K,\kappa\}_G}f_{\{K,\kappa\}_G} =
  f_{\{K,\kappa\}_G}e_{\{K,\kappa\}_G}
  & =  f_{\{K,\kappa\}_G}\quad\text{and}\\
  f_{\{K,\kappa\}_G}f_{\{L,\lambda\}_G}
  & =
  \begin{cases}
    f_{\{K,\kappa\}_G}\,, & \text{if $\{K,\kappa\}_G =
    \{L,\lambda\}_G$,}\\
    0\,, & \text{otherwise.}
  \end{cases}
\end{align*}
\end{proposition}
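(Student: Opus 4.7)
The plan is to prove each identity by expanding the linkage-class elements as sums over their constituent primitive elements and applying the orthogonality relations of Proposition~\ref{e,f-relations 1} term by term. Throughout, recall from \ref{e's and f's} that all the $e$'s and $f$'s commute with one another, so it suffices to handle the products in one order.

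For the product $f_{\{K,\kappa\}_G}f_{\{L,\lambda\}_G}$, expand both factors to obtain a double sum of terms $f_{(K',\kappa')}f_{(L',\lambda')}$ indexed by $(K',\kappa')\in\{K,\kappa\}_G$ and $(L',\lambda')\in\{L,\lambda\}_G$. By Proposition~\ref{e,f-relations 1}, each such term vanishes unless $(K',\kappa')=(L',\lambda')$. Since the $G$-linkage classes are disjoint or equal, if $\{K,\kappa\}_G\ne\{L,\lambda\}_G$ no pair of indices can coincide, so the product is $0$; if the two classes agree, the surviving diagonal terms sum to $\sum_{(K',\kappa')\in\{K,\kappa\}_G}f_{(K',\kappa')}=f_{\{K,\kappa\}_G}$.

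For the mixed product $e_{\{K,\kappa\}_G}f_{\{L,\lambda\}_G}$, the same expansion gives a double sum of terms $e_{(K',\kappa')}f_{(L',\lambda')}$. Proposition~\ref{e,f-relations 1} shows each such term equals $f_{(L',\lambda')}$ if $(K',\kappa')\le(L',\lambda')$ and equals $0$ otherwise. If no representatives $(K',\kappa')\in\{K,\kappa\}_G$ and $(L',\lambda')\in\{L,\lambda\}_G$ satisfy $(K',\kappa')\le(L',\lambda')$, which by definition in \ref{linked e's and f's}(a) is the statement $\{K,\kappa\}_G\not\le\{L,\lambda\}_G$, then the product vanishes.

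The key observation needed for the case $\{K,\kappa\}_G=\{L,\lambda\}_G$ is that the partial order $\le$ restricted to a single $G$-linkage class is trivial: if $(K',\kappa'),(L',\lambda')$ lie in the same class and $(K',\kappa')\le(L',\lambda')$, then $K'\le L'$, yet $|K'|=|L'|$ by Definition~\ref{linkage definition}(b), forcing $K'=L'$ and hence $\kappa'=\lambda'$. Therefore, for each $(L',\lambda')\in\{K,\kappa\}_G$ there is exactly one contributing index $(K',\kappa')=(L',\lambda')$, and the sum collapses to $\sum_{(L',\lambda')\in\{K,\kappa\}_G}f_{(L',\lambda')}=f_{\{K,\kappa\}_G}$, as required. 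No step is substantial, and the only subtlety is the observation that $G$-linkage preserves $|K|$ and thereby trivializes the order within a class, which is precisely the point that prevents over-counting in the equal-class case.
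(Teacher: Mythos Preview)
Your proof is correct and follows exactly the approach the paper has in mind: the paper simply states that the proposition ``follows immediately from Proposition~\ref{e,f-relations 1}'', and your argument supplies the term-by-term expansion that makes this immediate. Your observation that $G$-linkage preserves $|K|$, so that the partial order is trivial within a linkage class, is precisely the small detail needed to avoid over-counting in the equal-class case of $e_{\{K,\kappa\}_G}f_{\{K,\kappa\}_G}$, and the paper leaves this implicit.
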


%%%%%%%%%%%%%%% SECTION 6 %%%%%%%%%%%%%%%%%%%%%%%%%%%%%%%%%%%%

\section{The algebra $E_G^c$ and the group $\Gamma_{(G,K,\kappa)}$}\label{sec E^c}

This section is devoted to the study of the structure of the subalgebra $E_G^c$ of $E_G$ generated by the classes of
transitive $A$-fibered $(G,G)$-bisets whose stabilizing pairs have full projections. In Section~\ref{sec Ebar} it will be shown that this algebra covers the quotient $\Ebar_G$. In this section we will show that it is isomorphic to a
direct product of matrix algebras over certain group algebras $k\Gamma_{(G,K,\kappa)}$. These group algebras, together with the linkage classes, will be the main ingredients of the parametrization of the simple fibered biset functors in Section~\ref{sec simple functors}.

\begin{nothing}\mylabel{covering algebra} {\bf The algebra $E_G^c$ and the group $\Gamma_{(G,K,\kappa)}$.}\quad
(a) Let $G$ and $H$ be finite groups and let
$(U,\phi)\in\calM_{G\times H}$. We say that $(U,\phi)$ is {\em
covering} if $p_1(U)=G$ and $p_2(U)=H$. We denote by $\calMc_{G\times
H}$ the set of all covering pairs of $\calM_{G\times H}$. Note
that if $(U,\phi)$ as above is covering then
$l_0(U,\phi)\in\calM_G^G$ and $r_0(U,\phi)\in\calM_H^H$. If also
$K$ is a finite group and also $(V,\psi)\in\calMc_{H\times K}$ then, by the Mackey formula,
\begin{equation*}
  \left[\frac{G\times H}{U,\phi}\right] \cdotH
  \left[\frac{H\times K}{V,\psi}\right] =
  \begin{cases}
    \left[\frac{G\times K}{U*V,\phi*\psi}\right]\,, &
    \text{if $\phi_2=\psi_1$ on $k_2(U)\cap k_1(V)$,}\\
    0\, & \text{otherwise,}
  \end{cases}
\end{equation*}
with $(U*V,\phi*\psi)\in\calMc_{G\times K}$ in the first case. This
implies that the $k$-span
\begin{equation*}
  E_G^c = E_k^{A,c}(G) = B_k^{A,c}(G, G)
\end{equation*}
of the canonical basis elements $\left[\frac{G\times
G}{U,\phi}\right]$ of $E_G=B_k^A(G, G)$, with
$(U,\phi)\in\calMc_{G\times G}$, is a $k$-subalgebra of
$E_G=B_k^A(G, G)$. Note that $e_{(K,\kappa)},f_{(K,\kappa)},
e_{\{K,\kappa\}_G}, f_{\{K,\kappa\}_G}\in E_G^c$ for all
$(K,\kappa)\in\calM_G^G$.

\smallskip
(b) Let $G$ be a finite group and let $(K,\kappa)\in\calM_G^G$. It
follows from Propositions~\ref{first consequences}(d) and
\ref{idempotent proposition} that the standard basis elements
$\left[\frac{G\times G}{U,\phi}\right]$ of $E_G^c$ with
$l(U,\phi)=(G,K,\kappa)=r(U,\phi)$ form a finite group $\Gamma_{(G,K,\kappa)}$
under multiplication, with identity element $e_{(K,\kappa)}$ and inverses induced by taking
opposite fibered bisets.

\smallskip
(c) More generally, assume that $(K,\kappa)\in\calM_G^G$ and $(L,\lambda)\in\calM_H^H$ and set
\begin{equation*}
  \lGamma{(G,K,\kappa)}_{(H,L,\lambda)}:=\{\left[\frac{G\times H}{U,\phi}\right]\mid 
  l(U,\phi)=(G,K,\kappa),\ r(U,\phi)=(H,L,\lambda)\}\,.
\end{equation*}
This set is non-empty if and only if $(G,K,\kappa)$ and $(H,L,\lambda)$ are linked. Assume from now on that this is the case. Then it is a $(\Gamma_{(G,K,\kappa)},\Gamma_{(H,L,\lambda)})$-biset and each of the groups $\Gamma_{(G,K,\kappa)}$ and $\Gamma_{(H,L,\lambda)}$ acts transitively and freely on 
$\lGamma{(G,K,\kappa)}_{(H,L,\lambda)}$. This is easily verified by tensoring with opposites of standard basis elements and using Proposition~\ref{first consequences}(d). Therefore, each element $\left[\frac{G\times H}{U,\phi}\right]\in\lGamma{(G,K,\kappa)}_{(H,L,\lambda)}$ induces an isomorphism
\begin{equation*}
  \gamma\colon \Gamma_{(H,L,\lambda)}\liso\Gamma_{(G,K,\kappa)}\,,\quad
  y\mapsto \left[\frac{G\times H}{U,\phi}\right]\cdotH y\cdotH \left[\frac{G\times H}{U,\phi}\right]^{\mathrm{op}}\,,
\end{equation*}
and if also $\left[\frac{G\times H}{U',\phi'}\right]\in \lGamma{(G,K,\kappa)}_{(H,L,\lambda)}$ then the resulting isomorphism $\gamma'$ satisfies $\gamma'=c_z\circ\gamma$, where $z=\left[\frac{G\times H}{U',\phi'}\right]\cdotG \left[\frac{G\times H}{U,\phi}\right]^{\mathrm{op}}\in\Gamma_{(H,L,\lambda)}$. As $(\Gamma_{(G,K,\kappa)},\Gamma_{(H,L,\lambda)})$-biset we have $\lGamma{(G,K,\kappa)}_{(H,L,\lambda)}\cong \left[\frac{\Gamma_{(G,K,\kappa)}\times\Gamma_{(H,L,\lambda)}}{\Delta_\gamma}\right]$, where $\Delta_\gamma:=\{(\gamma(y),y)\mid y\in\Gamma_{(H,L,\lambda)}\}$.
As a consequence, one obtains a canonical bijection 
\begin{equation}\label{eqn irr bijection}
  \Irr(k\Gamma_{(H,L,\lambda)})\myiso\Irr(k\Gamma_{(G,K,\kappa)})
\end{equation}
induced by the category equivalence $k[\lGamma{(G,K,\kappa)}_{(H,L,\lambda)}] \otimes_{k\Gamma_{(H,L,\lambda})} -$ between the category of left $k\Gamma_{(H,L,\lambda)}$-modules and the category of left $k\Gamma_{(G,K,\kappa)}$-modules. Its inverse is given by tensoring with the bimodule $k[\lGamma{(H,L,\lambda)}_{(G,K,\kappa)}]=k[\lGamma{(G,K,\kappa)}^{\mathrm{op}}_{(H,L,\lambda)}]$. The bijection in (\ref{eqn irr bijection}) coincides with the one given by transport of structure via the isomorphism $\gamma$.
\end{nothing}

The goal of this section is the following theorem describing the
structure of the $k$-algebra $E_G^c=B_k^{A,c}(G, G)$.

\begin{theorem}\mylabel{covering algebra structure theorem}
There exists a $k$-algebra isomorphism
\begin{equation*}
 \bigoplus_{\{K,\kappa\}_G\in\calM_G^G/\sim} \Mat_{|\{K,\kappa\}_G|}(k\Gamma_{(G,K,\kappa)})
  \liso E_G^c
\end{equation*}
with the following property: For every $(K,\kappa)\in\calM_G^G$, writing $\{K,\kappa\}_G=\{(K_1,\kappa_1),\ldots,(K_n,\kappa_n)\}$, the element $f_{(K_i,\kappa_i)}\in E_G^c$ is mapped to the diagonal idempotent matrices $e_i=\dia(0,\ldots,0,1,0,\ldots,0)\in\Mat_{|\{K,\kappa\}_G|}(k\Gamma_{(G,K,\kappa)})$, $i=1,\ldots,|\{K,\kappa\}_G|$, in the $\{K,\kappa\}_G$-component.% In particular, the map
%\begin{equation*}
%  k\Gamma_{(G,K,\kappa)}\to f_{(K,\kappa)} E_G^c f_{(K,\kappa)}\,,\quad x\mapsto f_{(K,\kappa)}xf_{(K,\kappa)}\,,
%\end{equation*}
%is a $k$-algebra isomorphism.
\end{theorem}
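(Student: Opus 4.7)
The plan is to construct the isomorphism by exhibiting explicit matrix units inside $E_G^c$. For each linkage class $C=\{(K_1,\kappa_1),\ldots,(K_n,\kappa_n)\}\in\calM_G^G/{\sim}$, I would set $(K,\kappa):=(K_1,\kappa_1)$ and, for each $j\in\{1,\ldots,n\}$, choose a standard basis element $x_j\in\lGamma{(G,K,\kappa)}_{(G,K_j,\kappa_j)}$ (non-empty since $(K,\kappa)\sim(K_j,\kappa_j)$), with $x_1:=e_{(K,\kappa)}$. For $i,j\in\{1,\ldots,n\}$ and $\gamma\in\Gamma_{(G,K,\kappa)}$ put
\[
y_{ij}(\gamma):=f_{(K_i,\kappa_i)}\cdotG x_i^{\mathrm{op}}\cdotG\gamma\cdotG x_j\cdotG f_{(K_j,\kappa_j)}\in E_G^c,
\]
and define $\Phi_C\colon\Mat_n(k\Gamma_{(G,K,\kappa)})\to E_G^c$ to be the $k$-linear map sending the matrix unit $E_{ij}(\gamma)$ to $y_{ij}(\gamma)$. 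The claimed isomorphism will then be $\Phi:=\bigoplus_C\Phi_C$.

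First I would verify that each $\Phi_C$ is a ring homomorphism and that $\Phi_C(E_{ii}(1))=f_{(K_i,\kappa_i)}$. For $j\ne k$, the product $y_{ij}(\gamma)\cdotG y_{kl}(\delta)$ vanishes because $f_{(K_j,\kappa_j)}\cdotG f_{(K_k,\kappa_k)}=0$ by Proposition~\ref{e,f-relations 1}. For $j=k$, the core identity is $x_j\cdotG f_{(K_j,\kappa_j)}\cdotG x_j^{\mathrm{op}}=f_{(K,\kappa)}\cdotG x_j\cdotG x_j^{\mathrm{op}}=f_{(K,\kappa)}\cdotG e_{(K,\kappa)}=f_{(K,\kappa)}$, which combines Lemma~\ref{lem delete f}, Proposition~\ref{idempotent proposition}(d) and Proposition~\ref{e,f-relations 1}; the resulting central $f_{(K,\kappa)}$ can then be slid past $\gamma,\delta,x_i^{\mathrm{op}}$ and $x_l$ by further applications of Lemma~\ref{lem delete f}, yielding $y_{ij}(\gamma)\cdotG y_{jl}(\delta)=y_{il}(\gamma\delta)$. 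The identity $\Phi_C(E_{ii}(1))=f_{(K_i,\kappa_i)}$ similarly follows from $x_i^{\mathrm{op}}\cdotG x_i=e_{(K_i,\kappa_i)}$ (Proposition~\ref{idempotent proposition}(d) applied to $x_i^{\mathrm{op}}$) together with $e_{(K_i,\kappa_i)}\cdotG f_{(K_i,\kappa_i)}=f_{(K_i,\kappa_i)}$. That $\Phi$ itself is a ring homomorphism then follows from the orthogonality $f_C\cdotG f_{C'}=0$ for $C\ne C'$, which forces all cross-terms to vanish.

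The main obstacle is showing $\Phi$ is bijective. Both sides are free $k$-modules of the same rank $\sum_C|C|^2\cdot|\Gamma_{(G,K,\kappa)}|$: the standard basis of $E_G^c$ is indexed by $\bigsqcup_C\bigsqcup_{i,j=1}^{n_C}\lGamma{(G,K_i,\kappa_i)}_{(G,K_j,\kappa_j)}$, each bi-torsor having cardinality $|\Gamma_{(G,K,\kappa)}|$ by \ref{covering algebra}(c). My plan is a unitriangularity argument. Setting $z_{ij}(\gamma):=x_i^{\mathrm{op}}\cdotG\gamma\cdotG x_j$, which is a standard basis element with $l$-invariant $(G,K_i,\kappa_i)$ and $r$-invariant $(G,K_j,\kappa_j)$ by Proposition~\ref{first consequences}(d), Lemma~\ref{lem delete f} and (\ref{definition of f's}) give
\[
y_{ij}(\gamma)=z_{ij}(\gamma)+\sum_{(K_j,\kappa_j)<(M,\mu)\in\calM_G^G}\mu_{K_j,M}^{\triangleleft}\,z_{ij}(\gamma)\cdotG e_{(M,\mu)}.
\]
Using Proposition~\ref{idempotent proposition}(b) together with Proposition~\ref{first consequences}(c) and the isomorphism $\eta_U$ of Proposition~\ref{eta and zeta}(c), each nonzero higher-order summand is a single standard basis element whose $l$- and $r$-invariants both have underlying groups of common order $|M|>|K_j|=|K_i|$, and thus lies in a linkage class strictly larger than $C$ in the partial order of \ref{linked e's and f's}. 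Ordering the standard basis of $E_G^c$ by any linear extension of this grouping (classes sorted by the common order of their underlying group), the transition matrix from the standard basis to $\{y_{ij}(\gamma)\}$ is block upper unitriangular with identity diagonal blocks, hence invertible over $k$. Therefore $\{y_{ij}(\gamma)\}$ is a $k$-basis of $E_G^c$ and $\Phi$ is a $k$-algebra isomorphism. The delicate point is the quantitative growth of both invariants under right multiplication by $e_{(M,\mu)}$, which Proposition~\ref{first consequences}(c) controls via $\eta_U$.
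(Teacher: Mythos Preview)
Your proof is correct and follows essentially the same route as the paper. Both construct the isomorphism via the same matrix units $y_{ij}(\gamma)=f_{(K_i,\kappa_i)}\,x_i^{\mathrm{op}}\gamma\,x_j\,f_{(K_j,\kappa_j)}$ (the paper writes this as the composite $\omega_{ij}\circ\sigma_{ij}$), and both rely on Proposition~\ref{idempotent proposition}, Proposition~\ref{first consequences}(d), and Lemma~\ref{lem delete f} for the multiplicativity check.

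The only organisational difference is in the bijectivity argument. The paper first establishes, in Lemma~\ref{comparing decompositions}(b)--(c), that the projection $b\mapsto bf_{\{K,\kappa\}_G}$ is a $k$-module isomorphism $E_G^{c,\{K,\kappa\}}\to E_G^c f_{\{K,\kappa\}_G}$ by comparing the two filtrations indexed by the partial order on linkage classes; it then combines this with the explicit bijections $\sigma_{ij}$. Your unitriangularity argument is a direct unpacking of exactly the same filtration comparison: expanding $f_{(K_j,\kappa_j)}$ via M\"obius inversion and using Proposition~\ref{idempotent proposition}(b) and Proposition~\ref{first consequences}(c) to see that the higher terms land in strictly larger linkage classes is precisely what underlies Lemma~\ref{comparing decompositions}(b). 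So the two proofs differ only in packaging, not in substance.
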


Before we can prove the theorem we need a few auxiliary results.

\begin{lemma}\mylabel{orthogonality 1}
Let $(U,\phi)\in\calMc_{G\times G}$ and let
$(K,\kappa),(L,\lambda)\in\calM_G^G$. If $f_{\{K,\kappa\}_G}
\left[\frac{G\times G}{U,\phi}\right] f_{\{L,\lambda\}_G} \neq 0$ then
$\{K,\kappa\}_G=\{L,\lambda\}_G$.
\end{lemma}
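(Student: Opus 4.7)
Set $x := \left[\frac{G\times G}{U,\phi}\right]$ and $(K',\kappa'):=l_0(U,\phi)$, $(L',\lambda'):=r_0(U,\phi)$, both of which lie in $\calM_G^G$ since $(U,\phi)$ is covering. Expanding $f_{\{K,\kappa\}_G}=\sum_{(K_2,\kappa_2)\in\{K,\kappa\}_G}f_{(K_2,\kappa_2)}$ and $f_{\{L,\lambda\}_G}=\sum_{(L_1,\lambda_1)\in\{L,\lambda\}_G}f_{(L_1,\lambda_1)}$, the claim is reduced to showing that whenever $(K_2,\kappa_2),(L_1,\lambda_1)\in\calM_G^G$ satisfy $f_{(K_2,\kappa_2)}\,x\,f_{(L_1,\lambda_1)}\neq 0$, the pairs $(K_2,\kappa_2)$ and $(L_1,\lambda_1)$ are $G$-linked.

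The plan is to first analyse $xf_{(L_1,\lambda_1)}$. By Proposition~\ref{idempotent proposition}(c) applied to $x^{\mathrm{op}}$, we have $x=x\,e_{(L',\lambda')}$; substituting the M\"obius expansion $e_{(L',\lambda')}=\sum_{(M,\mu)\ge(L',\lambda')}f_{(M,\mu)}$ from (\ref{e's from f's}) and invoking the orthogonality of the $f$'s (Proposition~\ref{e,f-relations 1}) yields $xf_{(L_1,\lambda_1)}=0$ unless $(L_1,\lambda_1)\ge(L',\lambda')$. In the remaining case the compatibility condition $\lambda_1|_{L_1\cap L'}=\lambda'|_{L_1\cap L'}$ holds automatically, and Proposition~\ref{idempotent proposition}(b) applied to the opposite product $e_{(L_1,\lambda_1)}\otimes_{AG}x^{\mathrm{op}}$, followed by taking opposites, identifies $\tilde x := x\,e_{(L_1,\lambda_1)}$ with a single standard basis element $\left[\frac{G\times G}{W,\psi}\right]$, where $(W,\psi)\in\calMc_{G\times G}$ satisfies $r(W,\psi)=(G,L_1,\lambda_1)$ and $l(W,\psi)=(G,K_1,\kappa_1)$ for some $(K_1,\kappa_1)\in\calM_G^G$. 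The very existence of this pair $(W,\psi)$ witnesses $(K_1,\kappa_1)\sim_G(L_1,\lambda_1)$.

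To close, note that $e_{(L_1,\lambda_1)}f_{(L_1,\lambda_1)}=f_{(L_1,\lambda_1)}$ by Proposition~\ref{e,f-relations 1}, so $xf_{(L_1,\lambda_1)}=\tilde x f_{(L_1,\lambda_1)}$; Lemma~\ref{lem delete f} applied to $\tilde x$ then rewrites this as $f_{(K_1,\kappa_1)}\tilde x$. Multiplying on the left by $f_{(K_2,\kappa_2)}$ and using orthogonality of the $f$'s forces $(K_2,\kappa_2)=(K_1,\kappa_1)$ for the product to be nonzero, and in that case $(K_2,\kappa_2)\sim_G(L_1,\lambda_1)$, as required. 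The only non-routine step will be pinning down the left and right invariants of $\tilde x$ through the opposite form of Proposition~\ref{idempotent proposition}(b), but this is a bookkeeping exercise exploiting the fact that $(L_1,\lambda_1)\ge(L',\lambda')$ collapses the products $L_1L'$ and $\lambda_1\lambda'$ back to $L_1$ and $\lambda_1$.
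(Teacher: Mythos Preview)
Your proof is correct and actually somewhat cleaner than the paper's. The paper proceeds by showing only the inequality $\{L,\lambda\}_G\le\{K,\kappa\}_G$ in the partial order on linkage classes defined in~\ref{linked e's and f's}(a), and then appeals to symmetry for the reverse inequality. Concretely, it expands $f_{(L',\lambda')}$ via M\"obius inversion, picks a surviving term $e_{(L'',\lambda'')}$, and chains inequalities among invariants to obtain $\{L,\lambda\}_G\le\{r_0(V,\psi)\}_G=\{l_0(V,\psi)\}_G\le\{K,\kappa\}_G$.

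Your route avoids both the partial order on linkage classes and the symmetry step. By first reducing to $(L_1,\lambda_1)\ge (L',\lambda')$, then constructing $\tilde x = x\,e_{(L_1,\lambda_1)}$ with exact right invariant $(G,L_1,\lambda_1)$, you can invoke Lemma~\ref{lem delete f} to swap $\tilde x f_{(L_1,\lambda_1)}$ for $f_{(K_1,\kappa_1)}\tilde x$, and orthogonality of the $f$'s finishes in one stroke. This is more direct; the paper's argument has the mild advantage of not needing Lemma~\ref{lem delete f}, but since that lemma is already available from Section~\ref{sec idempotents}, your approach is the economical one.
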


\begin{proof}
If the above expression is non-zero then there exist
$(K',\kappa')\in\{K,\kappa\}_G$ and $(L',\lambda')\in\{L,\lambda\}_G$
such that $f_{(K',\kappa')} \left[\frac{G\times G}{U,\phi}\right]
f_{(L',\lambda')} \neq 0$ in $E_G^c$. Expanding $f_{(L',\lambda')}$ as
in Equation~(\ref{definition of f's}), we see that there exists
$(L'',\lambda'')\in\calM_G^G$ with
$(L',\lambda')\le(L'',\lambda'')$ and $f_{(K',\kappa')}
\left[\frac{G\times G}{U,\phi}\right] e_{(L'',\lambda'')} \neq 0$. So
also $\left[\frac{G\times G}{U,\phi}\right] e_{(L'',\lambda'')}$ is
non-zero and of the form $\left[\frac{G\times G}{V,\psi}\right]$ for
some $(V,\psi)\in\calMc_{G\times G}$ satisfying
$r_0(V,\psi)\ge(L'',\lambda'')$ and $(K'',\kappa''):=l_0(V,\psi)\ge
l_0(U,\phi)$ by Proposition~\ref{first consequences}(b). By
Proposition~\ref{idempotent proposition}, we now have
\begin{equation*}
  0 \neq f_{(K',\kappa')}
  \left[\frac{G\times G}{U,\phi}\right] e_{(L'',\lambda'')}
  = f_{(K',\kappa')} e_{(K'',\kappa'')}
  \left[\frac{G\times G}{U,\phi}\right] e_{(L'',\lambda'')}
\end{equation*}
which implies $(K'',\kappa'')\le(K',\kappa')$ by
Proposition~\ref{e,f-relations 1}. Altogether, we obtain
\begin{align*}
  \{L,\lambda\}_G & =\{L',\lambda'\}_G \le \{L'',\lambda''\}_G \le
  \{r_0(V,\psi)\}_G = \{l_0(V,\psi)\}_G\\
  & = \{K'',\kappa''\}_G \le \{K',\kappa'\}_G =\{K,\kappa\}_G\,.
\end{align*}
Similarly, we can prove $\{K,\kappa\}_G\le\{L,\lambda\}_G$.
\end{proof}

\begin{corollary}\mylabel{Bc decomposition}
Let $G$ be a finite group. The elements $f_{\{K,\kappa\}_G}$,
$\{K,\kappa\}_G\in\calM_G^G/\sim$, are mutually orthogonal central
idempotents of $E_G^c=B_k^{A,c}(G, G)$ and their sum is equal to $1$. In
particular, one has a decomposition
\begin{equation}\label{decomposition 1}
  E_G^c = \bigoplus_{\{K,\kappa\}_G\in\calM_G^G/\sim}
  f_{\{K,\kappa\}_G}E_G^c
\end{equation}
of $E_G^c$ into two-sided ideals.
\end{corollary}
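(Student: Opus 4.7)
The plan is to derive the corollary from the two immediately preceding results: Proposition \ref{e,f-relations 2} (mutual orthogonality and idempotency of the $f_{\{K,\kappa\}_G}$) and Lemma \ref{orthogonality 1} (vanishing of $f_{\{K,\kappa\}_G} \left[\frac{G\times G}{U,\phi}\right] f_{\{L,\lambda\}_G}$ across distinct linkage classes). The four things to verify are: (i) each $f_{\{K,\kappa\}_G}$ is an idempotent, (ii) distinct $f_{\{K,\kappa\}_G}$ are orthogonal, (iii) they sum to $1$, and (iv) each is central in $E_G^c$. Claims (i) and (ii) are immediate from Proposition \ref{e,f-relations 2}.

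For (iii), I would simply observe
\begin{equation*}
  \sum_{\{K,\kappa\}_G \in \calM_G^G/\sim} f_{\{K,\kappa\}_G}
  = \sum_{(K,\kappa)\in\calM_G^G} f_{(K,\kappa)} = 1\,,
\end{equation*}
where the first equality uses the definition of $f_{\{K,\kappa\}_G}$ and the fact that the linkage classes partition $\calM_G^G$, and the second is Equation~(\ref{sum equals 1}).

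The main point is (iv), centrality, and the only real input is Lemma \ref{orthogonality 1}. It suffices to check that $f_{\{K,\kappa\}_G}$ commutes with every standard basis element $x = \left[\frac{G\times G}{U,\phi}\right]$ of $E_G^c$, with $(U,\phi)\in\calMc_{G\times G}$. Expanding $x = 1\cdot x \cdot 1$ via (iii) and applying Lemma \ref{orthogonality 1} to kill all off-diagonal terms gives
\begin{equation*}
  x = \sum_{\{L,\lambda\}_G} f_{\{L,\lambda\}_G}\, x\, f_{\{L,\lambda\}_G}\,.
\end{equation*}
Now left-multiplying by $f_{\{K,\kappa\}_G}$ and using the orthogonality and idempotency from Proposition \ref{e,f-relations 2} collapses the sum to the single term $f_{\{K,\kappa\}_G}\, x\, f_{\{K,\kappa\}_G}$, and right-multiplication by $f_{\{K,\kappa\}_G}$ yields the same expression. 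Hence $f_{\{K,\kappa\}_G} x = x f_{\{K,\kappa\}_G}$, proving centrality.

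Given (i)--(iv), the decomposition $E_G^c = \bigoplus_{\{K,\kappa\}_G} f_{\{K,\kappa\}_G} E_G^c$ into two-sided ideals is a standard consequence of a complete set of central orthogonal idempotents summing to $1$. The only subtlety, which I would double-check in writing it out, is that Lemma \ref{orthogonality 1} is stated for covering pairs $(U,\phi) \in \calMc_{G\times G}$, which is exactly the class of standard basis elements spanning $E_G^c$, so the argument lives entirely inside $E_G^c$ and no element outside this subalgebra is needed. This is the only step that could go wrong; the rest is bookkeeping with the results already established.
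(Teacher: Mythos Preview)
Your proposal is correct and follows essentially the same approach as the paper: both use Proposition~\ref{e,f-relations 2} for orthogonality and idempotency, Equation~(\ref{sum equals 1}) for the sum being $1$, and Lemma~\ref{orthogonality 1} as the key input for centrality. Your centrality argument (expanding $x=1\cdot x\cdot 1$ and killing the off-diagonal terms directly) is a slightly more streamlined variant of the paper's, which instead compares the three decompositions $\bigoplus f_{\{K,\kappa\}_G}E_G^c = \bigoplus f_{\{K,\kappa\}_G}E_G^c f_{\{K,\kappa\}_G} = \bigoplus E_G^c f_{\{K,\kappa\}_G}$ and deduces centrality from the equality of their components; the content is identical.
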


\begin{proof}
We already know from Proposition~\ref{e,f-relations 2} that the
elements $f_{\{K,\kappa\}_G}$, $\{K,\kappa\}_G\in\calM_G^G/\sim$,
are mutually orthogonal idempotents of $E_G^c$
and we know from Equation~(\ref{sum equals 1}) that their sum is equal
to $1$. From this and from Lemma~\ref{orthogonality 1}, we obtain the three
decompositions
\begin{equation*}
  E_G^c = \bigoplus f_{\{K,\kappa\}_G} E_G^c =
  \bigoplus f_{\{K,\kappa\}_G} E_G^c f_{\{K,\kappa\}_G} =
  \bigoplus E_G^c f_{\{K,\kappa\}_G}
\end{equation*}
where each of the three sums runs over
$\{K,\kappa\}_G\in\calM_G^G/\sim$. Since
$f_{\{K,\kappa\}_G}E_G^cf_{\{K,\kappa\}_G}\subseteq
f_{\{K,\kappa\}_G}E_G^c$ and
$f_{\{K,\kappa\}_G}E_G^cf_{\{K,\kappa\}_G}\subseteq
E_G^cf_{\{K,\kappa\}_G}$, we obtain equality
\begin{equation*}
  f_{\{K,\kappa\}_G} E_G^c =
  f_{\{K,\kappa\}_G} E_G^c f_{\{K,\kappa\}_G} =
  E_G^c f_{\{K,\kappa\}_G}
\end{equation*}
for every $\{K,\kappa\}_G\in\calM_G^G/\sim$. Now let $b\in E_G^c$
be arbitrary. Then
\begin{equation*}
  \sum_{\{K,\kappa\}_G\in\calM_G^G/\sim} f_{\{K,\kappa\}_G}b = b =
  \sum_{\{K,\kappa\}_G\in\calM_G^G/\sim} bf_{\{K,\kappa\}_G}
\end{equation*}
and we obtain $f_{\{K,\kappa\}_G}b = bf_{\{K,\kappa\}_G}$ for every
$\{K,\kappa\}_G\in\calM_G^G/\sim$ by the equality of the above
decompositions. This completes the proof.
\end{proof}

\begin{nothing}\mylabel{two decompositions}
Besides the decomposition (\ref{decomposition 1}) of $E_G^c$ into ideals we have
a second natural decomposition of $E_G^c$, this time into $k$-submodules, again indexed
by $\calM_G^G/\sim$. For $\{K,\kappa\}_G\in\calM_G^G/\sim$, we
denote by $E_G^{c,\{K,\kappa\}}$
the $k$-span of all standard basis elements $\left[\frac{G\times
G}{U,\phi}\right]$ with $(U,\phi)\in\calMc_{G\times G}$ satisfying
$l_0(U,\phi)\in\{K,\kappa\}_G$. Note that this last condition is
equivalent to requiring $r_0(U,\phi)\in\{K,\kappa\}_G$. We have the
obvious decomposition
\begin{equation}\label{decomposition 2*}
  E_G = \bigoplus_{\{K,\kappa\}_G\in\calM_G^G/\sim} E_G^{c,\{K,\kappa\}}  
\end{equation}
into $k$-submodules. The next lemma provides a connection
between the latter decomposition and the one in (\ref{decomposition 1}).
\end{nothing}

\begin{lemma}\mylabel{comparing decompositions}
Let $G$ be a finite group and let $(K,\kappa)\in\calM_G^G$.

\smallskip
{\rm (a)} Let $(U,\phi)\in\calMc_{G\times G}$ with
$r_0(U,\phi)=(K,\kappa)$ and let $(L,\lambda)\in\calM_G^G$ with
$(K,\kappa)\not\le(L,\lambda)$. Then $\left[\frac{G\times
G}{U,\phi}\right] f_{(L,\lambda)}=0$.

\smallskip
{\rm (b)} One has
\begin{equation*}
  \bigoplus_{\{K,\kappa\}_G\le\{L,\lambda\}_G\in\calM_G^G/\sim} 
  E_G^{c,\{L,\lambda\}}
  =  \bigoplus_{\{K,\kappa\}_G\le\{L,\lambda\}_G\in\calM_G^G/\sim}
  E_G^c f_{\{L,\lambda\}_G}\,.
\end{equation*}

\smallskip
{\rm (c)} The projection map
\begin{equation*}
  \omega\colon E_G^{c,\{K,\kappa\}} \to E_G^c f_{\{K,\kappa\}_G}\,,
  \quad b\mapsto bf_{\{K,\kappa\}_G}\,,
\end{equation*}
with respect to the decomposition~(\ref{decomposition 1}) is an isomorphism of $k$-modules whose inverse 
is the projection map with respect to the decomposition~(\ref{decomposition 2*}).

\smallskip
{\rm (d)} Enumerating the elements of $\{K,\kappa\}_G$ as $(K_1,\kappa_1), \ldots,(K_n,\kappa_n)$, the isomorphism $\omega$ is the direct sum of the $k$-module isomorphisms 
\begin{equation*}
  \omega_{ij}\colon k[\lGamma{(G,K_i,\kappa_i)}_{(G,K_j,\kappa_j)}] \to f_{(K_i,\kappa_i)} E_G^c f_{(K_j,\kappa_j)}\,,\quad
  b_{ij}\mapsto  f_{(K_i,\kappa_i)} b_{ij} f_{(K_j,\kappa_j)}\,,
\end{equation*}
for $i,j\in\{1,\ldots,n\}$.

\smallskip
{\rm (e)} Let $i,j,l,m\in \{1,\ldots,n\}$, $b_{ij}\in k[\lGamma{(G,K_i,\kappa_i)}_{(G,K_j,\kappa_j)}]$ and $b_{lm}\in k[\lGamma{(G,K_l,\kappa_l)}_{(G,K_m,\kappa_m)}]$. If $j=l$ then $b_{ij}b_{lm}\in k[\lGamma{(G,K_i,\kappa_i)}_{(G,K_m,\kappa_m)}]$. In any case one has
\begin{equation}\label{eqn omega matrix property}
  \omega_{ij}(b_{ij})\omega_{lm}(b_{lm})=
  \begin{cases} 
     \omega_{il}(b_{ij}b_{lm}), & \text{if $j=l$,}\\
     0, & \text{if $j\neq l$.}
   \end{cases}
\end{equation}

\smallskip
{\rm (f)} The map
\begin{equation*}
  k[\Gamma_{(G,K,\kappa)}]\to f_{(K,\kappa)} E_G^c f_{(K,\kappa)}\,, \quad a\mapsto f_{(K,\kappa)} a f_{(K,\kappa)}\,,
\end{equation*}
is a $k$-algebra isomorphism.
\end{lemma}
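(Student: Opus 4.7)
The plan is to derive part (f) as an immediate consequence of parts (d) and (e) of the same lemma, applied in the special case $i=j$ corresponding to the chosen representative $(K,\kappa)$ of its $G$-linkage class.

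First I would observe that $\Gamma_{(G,K,\kappa)} = \lGamma{(G,K,\kappa)}_{(G,K,\kappa)}$ by the definition in \ref{covering algebra}(b)(c), so the stated map coincides with $\omega_{ii}$ from part (d) for the index $i$ with $(K_i,\kappa_i)=(K,\kappa)$. By part (d), $\omega_{ii}$ is a $k$-module isomorphism onto $f_{(K,\kappa)} E_G^c f_{(K,\kappa)}$, so bijectivity is for free.

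Next I would verify multiplicativity. For $a,b\in k[\Gamma_{(G,K,\kappa)}]$, part (e) applied with $i=j=l=m$ gives
\begin{equation*}
  \omega_{ii}(a)\,\omega_{ii}(b)=\omega_{ii}(ab)\,,
\end{equation*}
which is exactly the multiplicativity of the map. The only subtle point here is that one needs $ab\in k[\Gamma_{(G,K,\kappa)}]$ in the sense of part (e); but this is built into the group structure of $\Gamma_{(G,K,\kappa)}$ as described in \ref{covering algebra}(b), and the case $j=l$ of (\ref{eqn omega matrix property}) is precisely what applies.

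Finally I would check that the unit is preserved. The identity element of $\Gamma_{(G,K,\kappa)}$ is $e_{(K,\kappa)}$, and the map sends it to $f_{(K,\kappa)}\,e_{(K,\kappa)}\,f_{(K,\kappa)}$. By Proposition~\ref{e,f-relations 1}, since $(K,\kappa)\le(K,\kappa)$ we have $f_{(K,\kappa)} e_{(K,\kappa)} = f_{(K,\kappa)}$, and $f_{(K,\kappa)}^2=f_{(K,\kappa)}$, so the image of the identity is $f_{(K,\kappa)}$, which is the unit of $f_{(K,\kappa)}E_G^c f_{(K,\kappa)}$. I expect no genuine obstacle here: the entire content of (f) has already been encoded in parts (d) and (e), and this item is essentially a repackaging highlighting that the diagonal blocks are group algebras rather than merely $k$-modules.
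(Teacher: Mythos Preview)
Your proof is correct and follows exactly the same route as the paper's own proof, which simply says that (f) is an immediate consequence of part (e) by choosing $i$ with $(K_i,\kappa_i)=(K,\kappa)$ and considering $\omega_{ii}$. You have merely spelled out the unit check in a bit more detail than the paper does.
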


\begin{proof}
(a) One has
\begin{equation*}
  \left[\frac{G\times G}{U,\phi}\right] f_{(L,\lambda)} =
  \left[\frac{G\times G}{U,\phi}\right] e_{(K,\kappa)} f_{(L,\lambda)}
  = 0
\end{equation*}
by Propositions~\ref{idempotent proposition}(c) and \ref{e,f-relations
1}.

\smallskip
(b) The left hand side of the equation is an ideal of $E_G^c$ by Proposition~\ref{first consequences}(b), and it contains
$f_{\{L,\lambda\}_G}$ for every $\{L,\lambda\}_G\ge\{K,\kappa\}_G$.
This shows that the left hand side contains the right hand side.
Conversely, let $(M,\mu)\in\calM_G^G$ with
$\{M,\mu\}_G\not\ge\{K,\kappa\}_G$. Then, by Part~(a), the left hand
side annihilates $f_{\{M,\mu\}_G}$. But the right hand side equals the
annihilator of the set of all $f_{\{M,\mu\}_G}$ with
$\{M,\mu\}_G\not\ge\{K,\kappa\}_G$, by Corollary~\ref{Bc
decomposition}. This shows that the left hand side is contained in the
right hand side.

\smallskip
(c) Note that the subsums
of both sides in the equation in Part~(b), that are indexed by $\{L,\lambda\}_G>\{K,\kappa\}_G$,
are also equal. In fact, this follows by applying (b) to the elements $\{L,\lambda\}_G$. Therefore, $E_G^{c,\{K,\kappa\}}$ and $E_G^c f_{\{K,\kappa\}_G}$ are both complements to the same submodule of the direct sum in (b). The assertion is now immediate.

\smallskip
(d) By definition we have a direct sum decomposition into $k$-submodules
\begin{equation}\label{eqn ij decomposition}
  E_G^{c,\{K,\kappa\}} = \bigoplus_{i,j=1}^n k[\lGamma{(G,K_i,\kappa_i)}_{(G,K_j,\kappa_j)}]\,.
\end{equation}
Since $f_{\{K,\kappa\}_G}=f_{(K_1,\kappa_1)}+\cdots + f_{(K_n,\kappa_n)}$ is an orthogonal idempotent decomposition and since $f_{\{K,\kappa\}_G}$ is central in $E_G^c$, we also have a decomposition
\begin{equation*}
  E_G^c f_{\{K,\kappa\}_G} = f_{\{K,\kappa\}_G} E_G^c f_{\{K,\kappa\}_G} 
  = \bigoplus_{i,j=1}^n f_{(K_i,\kappa_i)} E_G^c f_{(K_j,\kappa_j)}
\end{equation*}
into $k$-submodules. Moreover, by Part~(a) and its left-sided version we have, for $b_{ij}\in k[\lGamma{(G,K_i,\kappa_i)}_{(G,K_j,\kappa_j)}]$, 
\begin{equation*}
  \omega(b_{ij}) = f_{\{K,\kappa\}_G} b_{ij} f_{\{K,\kappa\}_G} = f_{(K_i,\kappa_i)} b_{ij} f_{(K_j,\kappa_j)} \in 
  f_{(K_i,\kappa_i)} E_G^c f_{(K_j,\kappa_j)}\,.
\end{equation*}
Since $\omega$ is an isomorphism of $k$-modules, also each $\omega_{ij}$ is an isomorphism of $k$-modules. 

\smallskip
(e) If $j=l$ then we have $b_{ij}b_{jm}\in k[\lGamma{(G,K_i,\kappa_i)}_{(G,K_m,\kappa_m)}]$ by  Proposition~\ref{first consequences}(d). Moreover, in this case, by the same argument as in the proof of Part~(d), and since $f_{\{K,\kappa\}_G}$ is central in $E_G^c$, we have 
\begin{align*}
  \omega_{ij}(b_{ij})\omega_{jm}(b_{jm}) 
  & = f_{(K_i,\kappa_i)} b_{ij} f_{(K_j,\kappa_j)} b_{jm} f_{(K_m,\kappa_m)} 
  = f_{(K_i,\kappa_i)} b_{ij} f_{\{K,\kappa\}_G} b_{jm} f_{(K_m,\kappa_m)} \\
  & = f_{(K_i,\kappa_i)} f_{\{K,\kappa\}_G} b_{ij}  b_{jm} f_{(K_m,\kappa_m)}
  = f_{(K_i,\kappa_i)}b_{ij}b_{jm} f_{(K_m,\kappa_m)} = \omega_{im}(b_{ij} b_{jm})\,.
\end{align*}
On the other hand, if $j\neq l$ then $\omega_{ij}(b_{ij})\omega_{lm}(b_{lm}) = f_{(K_i,\kappa_i)}b_{ij} f_{(K_j,\kappa_j)} f_{(K_l,\kappa_l}) b_{lm} f_{(K_m,\kappa_m)} = 0$, since $f_{(K_j,\kappa_j)}f_{(K_l,\kappa_l)} = 0$.

\smallskip
(f) This is an immediate consequence of Part~(e) by choosing $i\in\{1,\ldots,n\}$ such that $(K,\kappa) = (K_i,\kappa_i)$ and considering $\omega_{ii}$.
\end{proof}

Now we are ready to prove Theorem~\ref{covering algebra structure
theorem}.

\bigskip\noindent
\begin{nothing}  {\bf Proof} {\sl of Theorem}~\ref{covering algebra structure theorem}.
Let $(K,\kappa)\in\calM_G^G$. By Corollary~\ref{Bc decomposition}, it suffices to show that
there exists a $k$-algebra isomorphism between
$E_G^c f_{\{K,\kappa\}_G}$ and
$\Mat_n(k\Gamma_{(G,K,\kappa)})$ which maps $f_{(K_i,\kappa_i)}$ to the diagonal idempotent matrix $e_i$, where $(K_1,\kappa_1),(K_2,\kappa_2),\ldots,(K_n,\kappa_n)$ enumerate the elements of
$\{K,\kappa\}_G$. 

For each $i=1,\ldots,n$, we choose an element $(U_i,\phi_i)\in\calMc_{G\times
G}$ with $l_0(U_i,\phi_i)=(K,\kappa)$ and
$r_0(U_i,\phi_i)=(K_i,\kappa_i)$, and set $x_i:=\left[\frac{G\times
G}{U_i,\phi_i}\right]\in E_G^{c,\{K,\kappa\}}$, for
$i=1\ldots,n$. For any two elements $i,j\in\{1,\ldots,n\}$ we claim that the $k$-module homomorphism
\begin{equation*}
  \sigma_{ij}\colon k\Gamma_{(G,K,\kappa)} \to k[\lGamma{(G,K_i,\kappa_i)}_{(G,K_j,\kappa_j)}]\,,\quad
  a\mapsto\xop_i a x_j\,,
\end{equation*}
is an isomorphism. In fact, first of all, $\xop_i a x_j\in k[\lGamma{(G,K_i,\kappa_i)}_{(G,K_j,\kappa_j)}]$ by Proposition~\ref{first consequences}(d). Secondly, the map $b\mapsto x_i b \xop_j$ is an inverse to $\sigma_{ij}$, since $x_i\cdot \xop_i= e_{(K,\kappa)}$, $\xop_i\cdot x_i = e_{(K_i,\kappa_i)}$, and $e_{(K,\kappa)} a = a e_{(K,\kappa)} = a$ and $e_{(K_i,\kappa_i)} b = b e_{(K_j,\kappa_j)} = b$ by Proposition~\ref{idempotent proposition}(c) and (d). Moreover, we have, for $a,a'\in k\Gamma_{(G,K,\kappa)}$ and $i,j,l\in\{1,\ldots,n\}$,
\begin{equation}\label{eqn sigma matrix property}
  \sigma_{ij}(a)\sigma_{jl}(a') = \sigma_{il}(aa')\,,
\end{equation}
since $\xop_i a x_j \xop_j a' x_l = \xop_i a e_{(K_j,\kappa_j)}  a' x_l = \xop_i aa' x_l$, again by Proposition~\ref{idempotent proposition}(c) and (d). Taking the direct sum of the maps $\sigma_{ij}$ and using the direct sum decomposition in (\ref{eqn ij decomposition}) we obtain a $k$-module isomorphism
\begin{equation*}
   \sigma\colon \Mat_n(k\Gamma_{(G,K,\kappa)}) \to E_G^{c,\{K,\kappa\}}\,.
\end{equation*}
Thus, we have a $k$-module isomorphism
\begin{equation*}
  \rho:=\omega\circ\sigma \colon 
  \Mat_n(k\Gamma_{(G,K,\kappa)}) \myiso E_G^{c,\{K,\kappa\}} \myiso E_G^c f_{\{K,\kappa\}_G}\,,\quad 
  (a_{ij})\mapsto \sum_{i,j=1}^n \omega_{ij}(\sigma_{ij}(a_{ij}))\,.
\end{equation*}
For $(a_{ij}), (a'_{lm})\in\Mat_n(k\Gamma_{(G,K,\kappa)})$, Equations~(\ref{eqn omega matrix property}) and (\ref{eqn sigma matrix property}) now imply that $\rho$ is a $k$-algebra isomorphism:
\begin{align*}
  \rho((a_{ij}))\rho((a'_{lm})) 
  & = \Bigl(\sum_{i,j=1}^n\omega_{ij}(\sigma_{ij}(a_{ij}))\Bigr) \Bigl(\sum_{l,m=1}^n \omega_{lm}(\sigma_{lm}(a'_{lm}))\Bigr) \\
  & = \sum_{i,j,l,m=1}^n \omega_{ij}(\sigma_{ij}(a_{ij}))\cdot \omega_{lm}(\sigma_{lm}(a'_{lm})) 
  = \sum_{i,j,m=1}^n \omega_{ij}(\sigma_{ij}(a_{ij}))\cdot \omega_{jm}(\sigma_{jm}(a'_{jm}))  \\
  & = \sum_{i,j,m=1}^n \omega_{im}\bigl(\sigma_{ij}(a_{ij})\cdot\sigma_{jm}(a'_{jm})\bigr) 
  = \sum_{i,j,m=1}^n \omega_{im}\bigl(\sigma_{im}(a_{ij}a'_{jm}\bigr)) \\
  & = \sum_{i,m=1}^n \omega_{im}\bigl(\sigma_{im}(\sum_{j=1}^n a_{ij}a'_{jm})\bigr) 
  = \omega(\sigma( (a_{ij})\cdot(a'_{lm})) ) = \rho( (a_{ij})\cdot(a'_{lm}) )\,.
\end{align*}
Finally, one has $\rho(e_i) = \omega_{ii}(\sigma_{ii}(1)) = f_{(K_i,\kappa_i)} e_{(K_i,\kappa_i)}f_{(K_i,\kappa_i)} = f_{(K_i,\kappa_i)}$ as desired.
\qed
\end{nothing}

%%%%%%%%%%%%%%%% SECTION 7 %%%%%%%%%%%%%%%%%%%%%%%%%%%%%%%%%%%%

\section{The structure of $\Gamma_{(G,K,\kappa)}$}\label{sec structure of Gamma}

In this section we show that, for $(K,\kappa)\in\calM_G^G$, the group $\Gamma_{(G,K,\kappa)}$ is an extension of a certain subgroup of
the outer automorphism group $\Out(G)$ with the group $(G/K)^*$, under the assumption that $\kappa$ is faithful. By passing from $G$ to $G/\ker(\kappa)$ one could avoid this assumption at the cost of more complicated notation. In subsequent sections we will only need to consider the case where $\kappa$ is faithful.

\begin{nothing}\mylabel{study of Gamma}
Let $(K,\kappa)\in\calM_G^G$ and assume that $\kappa$ is faithful. Then $K\le Z(G)$. 
For $\eta\in\Aut(G/K)$ we set
\begin{equation*}
  U_\eta:=\{(g_1,g_2)\in G\times G\mid \eta(g_2K)=g_1K\}\,.
\end{equation*}
Clearly, $U_\eta$ satisfies
\begin{equation}\label{Ueta property}
  p_1(U_\eta)=p_2(U_\eta)=G\,,\quad\text{and}\quad
  k_1(U_\eta)=k_2(U_\eta)=K\,.
\end{equation}
Conversely, every subgroup $U\le G\times G$ satisfying (\ref{Ueta
property}) is of the form $U_\eta$ for some $\eta\in\Aut(G/K)$.
Moreover, if $(h_1,h_2)\in G\times G$ then
\begin{equation*}
  \lexp{(h_1,h_2)}{U_\eta}=U_{\eta'}\,,
\end{equation*}
where $\eta':= c_{h_1K\cdot\eta(h_2^{-1}K)}\circ\eta\in\Aut(G/K)$.
This implies that the map $\eta\mapsto U_\eta$ induces a bijection
between the outer automorphism group $\Out(G/K)$ and the $G\times
G$-conjugacy classes of subgroups $U\le G\times G$
satisfying~(\ref{Ueta property}). It is easy to see that, for
$\eta_1,\eta_2\in\Aut(G/K)$, one has
\begin{equation*}
  U_{\eta_1\circ\eta_2}=U_{\eta_1}*U_{\eta_2}\,.
\end{equation*}
Thus, we obtain a well-defined group homomorphism
\begin{equation*}
  \pi\colon \Gamma_{(G,K,\kappa)}\to\Out(G/K)\,,\quad
  \left[\frac{G\times G}{U_\eta,\phi}\right]\mapsto
  \etabar\,,
\end{equation*}
where $\etabar:=\eta\Inn(G/K)$.

\smallskip
Let $\alpha\in Z^2(G/K,K)$ be a cocycle that describes the central
extension $1\to K\to G\to G/K\to 1$. We denote by $\Aut^\circ(G/K)$
the subgroup of $\Aut(G/K)$ consisting of those elements $\eta$
satisfying
\begin{equation*}
  [\kappa\circ\alpha\circ(\eta^{-1}\times\eta^{-1})] =
  [\kappa\circ\alpha]\in H^2(G/K,\torA)\,.
\end{equation*}
In other words, $\Aut^\circ(G/K)$ is the stabilizer of $[\kappa\circ\alpha]$ in $\Aut(G/K)$ with respect to
the natural action of $\Aut(G/K)$ on $H^2(G/K,\torA)$. Note that the inner
automorphism group $\Inn(G/K)$ of $G/K$ fixes $[\alpha]\in H^2(G/K,K)$ and therefore also $[\kappa\circ\alpha]\in H^2(G/K,\torA)$. In fact, if $\sigma\colon G/K\to G$ is a section of the canonical epimorphism $\pi\colon G\to G/K$ such that (\ref{cocycle section relation}) holds then the cocycle $\alpha\circ (c_{z}^{-1}\times c_{z}^{-1})$ is defined by the section $\lexp{z}{\sigma}\colon x\mapsto g\sigma(z^{-1}xz)g^{-1}$, for every $z:=gK\in G/K$. We set
\begin{equation*}
  \Out^\circ(G/K):=\Aut^\circ(G/K)/\Inn(G/K)\le\Out(G/K)\,.
\end{equation*}  
It follows from Proposition~\ref{linkage criterion} that, for given
$\eta\in\Aut(G/K)$, there exists $\varphi\in(U_\eta)^*$ such that
$l_0(U_\eta,\phi)=r_0(U_\eta,\phi)=(K,\kappa)$ if and only if
$\eta\in\Aut^\circ(G/K)$. In other words, the image of $\pi$ is equal
to $\Out^\circ(G/K)$.

\smallskip
Recall that $U_{\id}=\Delta_K(G)=\Delta(G)(K\times 1)$. For every
$\theta\in(G/K)^*$ we can define a homomorphism
$\thetatilde\in(\Delta_K(G))^*$ by
\begin{equation*}
  \thetatilde(gk,g):= \kappa(k)\theta(gK)\,,
\end{equation*}
for $k\in K$ and $g\in G$. Note that $\tilde{1}=\phi_\kappa$ from Definition~\ref{idempotent definition 1}. 
It is straightforward to verify that the map
\begin{equation*}
  \iota\colon (G/K)^*\to \Gamma_{(G,K,\kappa)}\,,\quad
  \theta\mapsto \left[\frac{G\times G}{\Delta_K(G),\thetatilde}\right]\,,
\end{equation*}
is an injective group homomorphism and that $\ker(\pi)=\im(\iota)$.
\end{nothing}

The following proposition is now immediate from the preceding
discussion.

\begin{proposition}\mylabel{ses for Gamma}
Let $(K,\kappa)\in\calM_G^G$ and assume that $\kappa$ is faithful.
With the notation from~\ref{study of Gamma} one has a short exact
sequence
\begin{equation*}
  1\ar (G/K)^* \Ar{\iota} \Gamma_{(G,K,\kappa)} \Ar{\pi} \Out^\circ(G/K)
  \ar 1\,.
\end{equation*}
\end{proposition}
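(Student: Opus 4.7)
The plan is to assemble the pieces already laid out in \ref{study of Gamma} into the short exact sequence. Concretely, four things must be verified: (i) $\iota$ is a (well-defined) injective group homomorphism; (ii) $\pi$ is a (well-defined) group homomorphism; (iii) the image of $\pi$ equals $\Out^\circ(G/K)$; (iv) $\ker(\pi) = \im(\iota)$. Items (ii) and (iii) are already explicit in \ref{study of Gamma}: well-definedness of $\pi$ rests on the conjugation formula $\lexp{(h_1,h_2)}{U_\eta}=U_{\eta'}$ with $\eta'=c_{h_1K\cdot\eta(h_2^{-1}K)}\circ\eta$, which passes to $\Out(G/K)$; multiplicativity comes from $U_{\eta_1\circ\eta_2}=U_{\eta_1}*U_{\eta_2}$; and surjectivity onto $\Out^\circ(G/K)$ is Proposition~\ref{linkage criterion} applied to the stabilizer condition defining $\Aut^\circ(G/K)$.

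For (i), I would first check that $\thetatilde$ is a genuine homomorphism on $\Delta_K(G)$: writing elements as $(gk,g)$ with $k\in K\le Z(G)$, the product $(g_1k_1,g_1)(g_2k_2,g_2)=(g_1g_2k_1k_2,g_1g_2)$ (using centrality of $K$), and $\kappa$ being a homomorphism together with $\theta$ being a homomorphism on $G/K$ gives $\thetatilde$ multiplicativity. Multiplicativity of $\iota$ follows from the Mackey formula Corollary~\ref{Mackey formula for fibered bisets 2}: $U_{\id}*U_{\id}=U_{\id}=\Delta_K(G)$, and the compatibility condition on $K\times K$ holds since both factors have $l_0=r_0=(K,\kappa)$, so the product $\thetatilde_1*\thetatilde_2$ is computed directly and equals $\widetilde{\theta_1\theta_2}$. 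Injectivity of $\iota$ follows because $\Delta_K(G)$ is $G\times G$-invariant (since $K\le Z(G)$ and $K\trianglelefteq G$), so different $\theta$'s give pairs $(\Delta_K(G),\thetatilde)$ in distinct $G\times G$-orbits.

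The one piece not spelled out in \ref{study of Gamma} (and the only step that needs actual work) is (iv). The inclusion $\im(\iota)\subseteq\ker(\pi)$ is immediate since $\iota(\theta)$ has $\eta=\id_{G/K}$. For the reverse, take $[\,(G\times G)/(U_\eta,\phi)\,]\in\ker(\pi)$, so $\eta\in\Inn(G/K)$, say $\eta=c_{aK}$ for some $a\in G$. Using the conjugation formula with $(h_1,h_2)=(1,a)$, one computes $\eta(a^{-1}K)=a^{-1}K$ and hence the conjugate automorphism is $c_{a^{-1}K}\circ c_{aK}=\id$; so after conjugation we may assume $U_\eta=\Delta_K(G)$. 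It remains to show that any $\phi\in(\Delta_K(G))^*$ with $l_0(\Delta_K(G),\phi)=r_0(\Delta_K(G),\phi)=(K,\kappa)$ is of the form $\thetatilde$ for some $\theta\in(G/K)^*$.

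To do this, I would define $\theta\colon G/K\to A$ by $\theta(gK):=\phi(g,g)$; well-definedness uses $\phi(gk,g)=\phi(g,g)\phi_1(k)=\phi(g,g)\kappa(k)$ and $\phi(gk,gk)=\phi(gk,g)\phi_2(k)^{-1}=\phi(gk,g)\kappa(k)^{-1}$, which combine to $\phi(gk,gk)=\phi(g,g)$. That $\theta$ is a homomorphism follows from $\phi(gg',gg')=\phi(g,g)\phi(g',g')$. Finally, $\phi(gk,g)=\phi(g,g)\kappa(k)=\theta(gK)\kappa(k)=\thetatilde(gk,g)$, so $\phi=\thetatilde$ and $[(G\times G)/(\Delta_K(G),\phi)]=\iota(\theta)$, completing the argument. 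No step here is genuinely hard; the only subtlety worth watching is the sign convention $\phi|_{K\times K}=\phi_1\times\phi_2^{-1}=\kappa\times\kappa^{-1}$ when recovering $\theta$.
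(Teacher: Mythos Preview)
Your proof is correct and follows exactly the approach of the paper; the paper's proof is simply ``immediate from the preceding discussion'' in \ref{study of Gamma}, where the facts you verify (injectivity of $\iota$, that $\im(\pi)=\Out^\circ(G/K)$, and $\ker(\pi)=\im(\iota)$) are asserted as straightforward. You have merely written out the details that the paper leaves to the reader, and all of them are correct.
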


\begin{remark}
(a) If $\left[\frac{G\times G}{U,\phi}\right]\in\Gamma_{(G,K,\kappa)}$
with $U=U_\eta$ for some $\eta\in\Aut^\circ(G/K)$ and if $\theta\in(G/K)^*$ then, with the explicit
formula from Corollary~\ref{Mackey formula for fibered bisets 2}, it is
easy to verify that
\begin{equation*}
  \left[\frac{G\times G}{U,\phi}\right]
  \left[\frac{G\times G}{\Delta_K(G),\thetatilde}\right]
  \left[\frac{G\times G}{U,\phi}\right]^{\mathrm{op}} =
  \left[\frac{G\times G}
  {\Delta_K(G),\widetilde{\theta\circ\eta^{-1}}}\right]\,,
\end{equation*}
which shows that the induced action of $\Out^\circ(G/K)$ on $(G/K)^*$
is the usual canonical action.

\smallskip
(b) We do not know if the sequence in Proposition~\ref{ses for Gamma}
splits.
\end{remark}

%%%%%%%%%%%%%%%%%%%%%% SECTION 8 %%%%%%%%%%%%%%%%%%%%%%%%%%%%%%%%%%%%%

\section{Reduced pairs and the simple $\Ebar_G$-modules}\label{sec Ebar}

We keep the notation from the previous sections. Thus, $k$ denotes a commutative ring. We also fix a finite group $G$ for this section and we will classify the simple $\Ebar_G$-modules. This is the last step before the
classification of the simple fibered biset functors.

\begin{notation}
We define the subset $\calR_G=\calR_k^A(G)$ of $\calM_G^G$ by
\begin{equation*}
  \calR_G=\calR_k^A(G):=\{(K,\kappa)\in\calM_G^G\mid e_{(K,\kappa)}\notin I_G\}\,.
\end{equation*}
We call $(K,\kappa)\in\calM_G$ a {\em reduced} pair if $(K,\kappa)\in\calR_G$. Thus, every pair in $\calM_G\smallsetminus\calM_G^G$ is by definition not reduced. Note that if $(K,\kappa),(K',\kappa')\in\calM_G^G$ are $G$-linked (cf.~Definition~\ref{linkage definition}(b)) by a pair $(U,\phi)\in\calM_{G\times G}^c$ with $(K,\kappa)=l_0(U,\phi)$ and $(K',\kappa')=r_0(U,\phi)$ then $(K,\kappa)$ is reduced if and only if $(K',\kappa')$ is reduced. In fact,
\begin{equation*}
  e_{(K',\kappa')}= \left[\frac{G\times G}{U,\phi}\right]^{\mathrm{op}}\cdotG
  \left[\frac{G\times G}{U,\phi}\right] 
  = \left[\frac{G\times G}{U,\phi}\right]^{\mathrm{op}}\cdotG e_{(K,\kappa)} \cdotG
  \left[\frac{G\times G}{U,\phi}\right]\,,
\end{equation*}
by Proposition~\ref{idempotent proposition}(c) and (d). We write $\calR_G/\sim$ for the set of linkage classes in $\calR_G$.

Note also that for $(K,\kappa)\in\calM_G^G$, one has
\begin{equation}\label{calR is ideal}
  (K,\kappa)\le(K',\kappa')\in\calR_G\implies (K,\kappa)\in\calR_G\,,
\end{equation}
by Proposition~\ref{idempotent proposition}(c).

In the sequel we will denote the image of an element $b\in E_G$ in $\Ebar_G$ by $\bbar$.
\end{notation}

\begin{lemma}\mylabel{Basis of IG}
The ideal $I_G$ of $E_G$ is generated as a $k$-module by the standard basis elements $\left[\frac{G\times G}{U,\phi}\right]$ with $(U,\phi)\in\calM_{G\times G}$ satisfying 

\smallskip
{\rm (i)} $p_1(U)\neq G$ or

\smallskip
{\rm (ii)} $p_1(U)=G$ and $l_0(U,\phi)\notin \calR_G$.
\end{lemma}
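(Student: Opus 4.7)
The plan is to prove the two containments separately. Write $J$ for the $k$-span of the standard basis elements satisfying (i) or (ii); the lemma asserts $J=I_G$.

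For $J\subseteq I_G$: if $p_1(U)=P<G$, then Proposition~\ref{decomposition} writes $\left[\frac{G\times G}{U,\phi}\right]$ as $\Ind_P^G\otimes_{AP}(\cdots)$ with the second factor in $B_k^A(P,G)$, placing the element in $B_k^A(G,P)\cdot B_k^A(P,G)\subseteq I_G$. If $p_1(U)=G$ and $(K,\kappa):=l_0(U,\phi)\notin\calR_G$, then $e_{(K,\kappa)}\in I_G$ by the definition of $\calR_G$, and Proposition~\ref{idempotent proposition}(c) gives $\left[\frac{G\times G}{U,\phi}\right]=e_{(K,\kappa)}\cdotG\left[\frac{G\times G}{U,\phi}\right]\in I_G$.

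For $I_G\subseteq J$, it suffices to show that $\alpha\cdotH\beta$ lies in $J$ for every $\alpha=\left[\frac{G\times H}{U,\phi}\right]$, $\beta=\left[\frac{H\times G}{V,\psi}\right]$ with $|H|<|G|$. By the Mackey formula (Corollary~\ref{Mackey formula for fibered bisets 2}), $\alpha\cdotH\beta$ is a sum of standard basis elements $\left[\frac{G\times G}{W_t,\chi_t}\right]$ with $W_t=U*\lexp{(t,1)}V$, so I need to show each such term satisfies (i) or (ii). If $p_1(W_t)<G$ we are in case (i) by Proposition~\ref{first consequences}(a). Otherwise $p_1(W_t)=G$, and since $p_1(W_t)\le p_1(U)$ by the same proposition, the same holds for $U$.

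The crux is then to show, in this case, that $(K_0,\kappa_0):=l_0(U,\phi)\in\calM_G^G$ does not lie in $\calR_G$. Set $Q:=p_2(U)$ and view $(U,\phi)$ as an element of $\calM_{G\times Q}$, so that both projections are full and $|Q|\le|H|<|G|$. Proposition~\ref{idempotent proposition}(d) then yields
\[
  \left[\frac{G\times Q}{U,\phi}\right]\otimes_{AQ}\left[\frac{G\times Q}{U,\phi}\right]^{\mathrm{op}} = e_{(K_0,\kappa_0)},
\]
which exhibits $e_{(K_0,\kappa_0)}$ as an element of $B_k^A(G,Q)\cdot B_k^A(Q,G)\subseteq I_G$; hence $(K_0,\kappa_0)\notin\calR_G$. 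Proposition~\ref{first consequences}(b), whose compatibility hypothesis is precisely the Mackey condition selecting the $t$-term, now gives $(K_0,\kappa_0)\le l_0(W_t,\chi_t)=:(K,\kappa)$, and Proposition~\ref{idempotent proposition}(c) gives $e_{(K,\kappa)}=e_{(K_0,\kappa_0)}\cdotG e_{(K,\kappa)}\in I_G$. Thus $(K,\kappa)\notin\calR_G$ and the Mackey term satisfies (ii).

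The main obstacle is precisely this step of establishing $e_{(K_0,\kappa_0)}\in I_G$; the key trick is re-viewing $(U,\phi)$ as living in $\calM_{G\times Q}$ rather than $\calM_{G\times H}$, which is what lets Proposition~\ref{idempotent proposition}(d) force $e_{(K_0,\kappa_0)}$ into $I_G$ from the bare inequality $|Q|\le|H|<|G|$.
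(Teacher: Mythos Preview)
Your proof is correct and follows essentially the same strategy as the paper's: both directions use the same ingredients (Proposition~\ref{decomposition} and the absorption relation $e_{(K,\kappa)}\cdotG x=x$ for one inclusion; the Mackey formula, Proposition~\ref{first consequences}, and Proposition~\ref{idempotent proposition}(d) for the other). Your explicit passage to $Q=p_2(U)$ before invoking Proposition~\ref{idempotent proposition}(d) is in fact slightly more careful than the paper, which tacitly writes $e_{(K',\kappa')}=\left[\frac{G\times H}{V,\psi}\right]\cdotH\left[\frac{G\times H}{V,\psi}\right]^{\mathrm{op}}$ without checking $p_2(V)=H$.
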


\begin{proof}
First we show that every element in $I_G$ can be written as a $k$-linear combination of elements $\left[\frac{G\times G}{U,\phi}\right]$ with $(U,\phi)\in\calM_{G\times G}$ satisfying (i) or (ii). To that end, let $H$ be a finite group with $|H|<|G|$ and let $(V,\psi)\in\calM_{G\times H}$ and $(W,\mu)\in\calM_{H\times G}$. It suffices to show that $\left[\frac{G\times H}{V,\psi}\right]\cdotH\left[\frac{H\times G}{W,\mu}\right]$ can be written as such a linear combination. But every summand occurring in this product is of the form $\left[\frac{G\times G}{V*W',\psi*\mu'}\right]$ for some $(W',\mu')\in\calM_{H\times G}$. We may assume that $p_1(V*W')=G$. Otherwise (i) holds and we are done. So we have $l(V*W',\psi*\mu')=(G,K,\kappa)$ for some $(K,\kappa)\in\calM_G^G$. We need to show that $(K,\kappa)\notin\calR_G$. By Proposition~\ref{first consequences}(a) and (b), we have $l(V,\psi)=(G,K',\kappa')$ with $(K',\kappa')\le (K,\kappa)$. Thus, if $(K,\kappa)\in\calR_G$ then (\ref{calR is ideal}) implies that $(K',\kappa')\in\calR_G$ and we obtain the contradiction
\begin{equation*}
  e_{(K',\kappa')}=\left[\frac{G\times H}{V,\psi}\right] \cdotH
  \left[\frac{G\times H}{V,\psi}\right]^{\mathrm{op}} \in I_G\,.
\end{equation*}

Conversely, assume that $(U,\psi)\in\calM_{G\times G}$ with $p_1(U)<G$. Then the decomposition in Proposition~\ref{decomposition} implies that $\left[\frac{G\times G}{U,\phi}\right]\in I_G$. Also, if $p_1(U)=G$ and $(K,\kappa):=l_0(U,\phi)\notin\calR_G$ then $e_{(K,\kappa)}\in I_G$ and $\left[\frac{G\times G}{U,\phi}\right] =
e_{(K,\kappa)}\cdotG \left[\frac{G\times G}{U,\phi}\right] \in I_G$.
\end{proof}

\begin{remark}\mylabel{remark on basis of IG}
Since $I_G$ is stable under the map $-^{\mathrm{op}}\colon E_G\to E_G$, we may replace the conditions (i) and (ii) in the previous lemma also by

\smallskip
(i') $p_2(U)\neq G$ or

\smallskip
(ii') $p_2(U)=G$ and $r_0(U,\phi)\notin\calR$.
\end{remark}

\begin{proposition}\mylabel{prop structure of Ebar}
{\rm (a)} One has
\begin{equation*}
  E_G^c+I_G=E_G
\end{equation*}
and
\begin{equation}\label{eqn intersection}
  E_G^c\cap I_G = \bigoplus_{\{K,\kappa\}_G\in\calM_G^G/\sim \atop (K,\kappa)\notin\calR_G}
  f_{\{K,\kappa\}_G} E_G^c\,.
\end{equation}

\smallskip
{\rm (b)} The canonical epimorphism $E_G\to\Ebar_G$ maps the subalgebra
\begin{equation*}
  \bigoplus_{\{K,\kappa\}_G\in\calR_G/\sim} f_{\{K,\kappa\}_G} E_G^c
\end{equation*}
of $E_G^c$ isomorphically onto $\Ebar_G$. 

\smallskip
{\rm (c)} For each $(K,\kappa)\in\calR_G$, the map 
\begin{equation*}
  k\Gamma_{(G,K,\kappa)}\to \fbar_{(K,\kappa)}\Ebar_G\fbar_{(K,\kappa)}\,,\quad 
  a\mapsto \fbar_{(K,\kappa)}\abar\fbar_{(K,\kappa)}\,,
\end{equation*}
is a $k$-algebra isomorphism.
\end{proposition}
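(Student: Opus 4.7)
My strategy is to prove (a) first — it is the only genuinely technical step — and then derive (b) and (c) essentially formally. I would open with one preliminary observation: as subspaces of $E_G^c$, we have $E_G^{c,\{K,\kappa\}} = f_{\{K,\kappa\}_G}E_G^c$. To see $\subseteq$, take a standard basis element $x = \left[\frac{G\times G}{U,\phi}\right]$ with $l_0(U,\phi)=(K_i,\kappa_i)\in\{K,\kappa\}_G$; Lemma~\ref{lem delete f} gives $f_{(K_i,\kappa_i)}x=x$, and orthogonality of the $f_{(K_j,\kappa_j)}$'s from Proposition~\ref{e,f-relations 1} then yields $f_{\{K,\kappa\}_G}x=x$. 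The reverse inclusion is the dual application of the same identity.

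For (a), the identity $E_G^c+I_G=E_G$ follows at once from Lemma~\ref{Basis of IG} together with Remark~\ref{remark on basis of IG}: every standard basis element of $E_G$ either has $p_1(U)=p_2(U)=G$, putting it in $E_G^c$, or has some projection proper, putting it in $I_G$. For the intersection formula, the inclusion $\supseteq$ comes from $f_{(K',\kappa')} = e_{(K',\kappa')}f_{(K',\kappa')}$ of Proposition~\ref{e,f-relations 1}: if $(K',\kappa')\notin\calR_G$ then $e_{(K',\kappa')}\in I_G$ (by definition of reduced), hence $f_{(K',\kappa')}\in I_G$, hence $f_{\{K,\kappa\}_G}\in I_G$ whenever $(K,\kappa)\notin\calR_G$. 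For $\subseteq$, I use that the generating set of $I_G$ in Lemma~\ref{Basis of IG} is a subset of the standard basis of $E_G$ and therefore a $k$-basis of $I_G$; intersecting with $E_G^c$ picks out exactly the span of basis elements with $p_1(U)=p_2(U)=G$ and $l_0(U,\phi)\notin\calR_G$, which by the preliminary identification equals $\bigoplus_{(K,\kappa)\notin\calR_G}f_{\{K,\kappa\}_G}E_G^c$.

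For (b), the composition
\[
  \bigoplus_{\{K,\kappa\}_G\in\calR_G/\sim} f_{\{K,\kappa\}_G}E_G^c \hookrightarrow E_G^c \hookrightarrow E_G \twoheadrightarrow \Ebar_G
\]
is surjective by (a), since the complementary summands of $E_G^c$ lie in $I_G$ by (a), while $E_G^c+I_G=E_G$. Its kernel is the intersection of the domain with $I_G$, which is trivial: by (a) combined with Corollary~\ref{Bc decomposition}, the domain meets $E_G^c\cap I_G = \bigoplus_{(K,\kappa)\notin\calR_G}f_{\{K,\kappa\}_G}E_G^c$ in $\{0\}$ because the central idempotents $f_{\{K,\kappa\}_G}$ indexed by different linkage classes are orthogonal. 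Since the domain is itself a direct sum of two-sided ideals of $E_G^c$ (Corollary~\ref{Bc decomposition}), it is a subalgebra and the bijection is a $k$-algebra isomorphism.

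For (c), the map factors as $k\Gamma_{(G,K,\kappa)} \xrightarrow{\psi} f_{(K,\kappa)}E_G^c f_{(K,\kappa)} \xrightarrow{\pi} \bar f_{(K,\kappa)}\Ebar_G\bar f_{(K,\kappa)}$, where $\psi$ is the $k$-algebra isomorphism of Lemma~\ref{comparing decompositions}(f). It remains to show $\pi$ is a $k$-algebra isomorphism. Surjectivity is immediate from (a) by lifting any $\bar x\in\Ebar_G$ to an element of $E_G^c$. For injectivity it suffices to show $f_{(K,\kappa)}(E_G^c\cap I_G)f_{(K,\kappa)}=0$; this follows from the explicit description of $E_G^c\cap I_G$ in (a) together with the orthogonality $f_{(K,\kappa)}f_{\{K',\kappa'\}_G}=0$ whenever $(K',\kappa')\notin\calR_G$, which holds because the linkage class $\{K,\kappa\}_G\subseteq\calR_G$ is disjoint from $\{K',\kappa'\}_G$. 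I anticipate no serious obstacle beyond the preliminary identification of $E_G^{c,\{K,\kappa\}}$ with $f_{\{K,\kappa\}_G}E_G^c$, which is what makes everything else book-keeping.
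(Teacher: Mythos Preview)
Your preliminary observation that $E_G^{c,\{K,\kappa\}} = f_{\{K,\kappa\}_G}E_G^c$ is false, and the error propagates into the $\subseteq$ direction of the intersection formula in (a). Lemma~\ref{lem delete f} does \emph{not} assert $f_{(K_i,\kappa_i)}x=x$; it asserts $f_{(K_i,\kappa_i)}x = x f_{(L_j,\lambda_j)}$ where $(L_j,\lambda_j)=r_0(U,\phi)$, and the right-hand side is generally not equal to $x$. Concretely, take $x=e_{(K,\kappa)}$ for any $(K,\kappa)\in\calM_G^G$ that is not maximal: then $f_{(K,\kappa)}e_{(K,\kappa)}=f_{(K,\kappa)}\neq e_{(K,\kappa)}$ by~(\ref{e's from f's}). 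So $e_{(K,\kappa)}\in E_G^{c,\{K,\kappa\}}$ but $e_{(K,\kappa)}\notin f_{\{K,\kappa\}_G}E_G^c$. Lemma~\ref{comparing decompositions}(c) only gives a $k$-module \emph{isomorphism} $E_G^{c,\{K,\kappa\}}\to E_G^c f_{\{K,\kappa\}_G}$ via projection, not an equality of subspaces.

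What rescues the argument is precisely what you omitted: the non-reduced linkage classes form an \emph{upper set} in the poset $\calM_G^G/\sim$. This follows from the downward-closure of $\calR_G$ in~(\ref{calR is ideal}) together with the fact that linked pairs are simultaneously reduced or not. Given this, Lemma~\ref{comparing decompositions}(b), summed over the minimal non-reduced linkage classes, yields
\[
  \bigoplus_{\{K,\kappa\}_G\notin\calR_G/\sim} E_G^{c,\{K,\kappa\}}
  \;=\;
  \bigoplus_{\{K,\kappa\}_G\notin\calR_G/\sim} E_G^c f_{\{K,\kappa\}_G}\,,
\]
even though the individual summands differ. This is the step the paper takes. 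Everything else in your proposal---the identity $E_G^c+I_G=E_G$, the $\supseteq$ inclusion via $f_{(K',\kappa')}=e_{(K',\kappa')}f_{(K',\kappa')}$, and the derivations of (b) and (c)---is correct and matches the paper's approach.
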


\begin{proof}
(a) Lemma~\ref{Basis of IG} and Remark~\ref{remark on basis of IG} imply immediately that a standard basis element $\left[\frac{G\times G}{U,\phi}\right]$ of $E_G$ belongs to $E_G^c$ if $(U,\phi)\in\calM_{G\times G}^c$, and it belongs to $I_G$ if $(U,\phi)\notin\calM_{G\times G}^c$. This implies the first equation.

Lemma~\ref{Basis of IG} implies that $E_G^c\cap I_G$ is generated as $k$-module by the canonical basis elements $\left[\frac{G\times G}{U,\phi}\right]$ with $(U,\phi)\in\calM_{G\times G}^c$ with $l_0(U,\phi)\notin\calR_G$. Thus,
\begin{equation*}
  E_G^c\cap I_G = \bigoplus_{\{K,\kappa\}_G\in\calM_G^G/\sim \atop (K,\kappa)\notin\calR_G}
  E_G^{c,\{K,\kappa\}}\,.
\end{equation*}
But, by the property in (\ref{calR is ideal}) and by Lemma~\ref{comparing decompositions}, the latter direct sum is equal to the direct sum in Equation~(\ref{eqn intersection}).

\smallskip
Part~(b) follows immediately from Part~(a) and Corollary~\ref{Bc decomposition}, and Part~(c) follows immediately from Part~(b) and Lemma~\ref{comparing decompositions}(f).
\end{proof}

Now we are ready to classify the simple $\Ebar_G$-modules. Set
\begin{equation*}
 \calS_G=\calS_k^A(G):= \{((K,\kappa),[V])\mid (K,\kappa)\in\calR_G, [V]\in\Irr(k\Gamma_{(G,K,\kappa)})\}\,.
\end{equation*}
We call two elements $(K,\kappa,[V]),(K',\kappa',[V'])\in\calS_G$ {\em equivalent}, if $(K,\kappa)$ and $(K',\kappa')$ are $G$-linked and $[V]$ corresponds to $[V']$ via the canonical bijection $\Irr(k\Gamma_{(G,K',\kappa')})\myiso\Irr(k\Gamma_{(G,K,\kappa)})$ from \ref{covering algebra}(c). If $\calRtilde_G$ denotes a set of representatives of the linkage classes of $\calR_G$ then 
\begin{equation*}
  \calStilde_G:=\{((K,\kappa),[V])\mid (K,\kappa)\in\calRtilde_G, [V]\in\Irr(k\Gamma_{(G,K,\kappa)})\}
\end{equation*}
is a set of representatives of the equivalence classes of $\calS_G$. By the canonical isomorphism from Proposition~\ref{prop structure of Ebar}(c), we can view each simple $k\Gamma_{(G,K,\kappa)}$-module $V$ as a simple $\fbar_{(K,\kappa)}\Ebar_G\fbar_{(K,\kappa)}$-module, and we can view $\Ebar_G \fbar_{(K,\kappa)}$ as $(\Ebar_G,k\Gamma_{(G,K,\kappa)})$-bimodule.

\begin{corollary}\label{simples of Ebar 3}
With the above notation, the map
\begin{equation}\label{simples of Ebar 2}
  ((K,\kappa),[V])\mapsto \Vtilde:=\Ebar_G \fbar_{(K,\kappa)} \otimes_{k  \Gamma_{(G,K,\kappa)}} V\,.
\end{equation}
induces a bijection between the set of equivalence classes of $\calS_G$ and $\Irr(\Ebar_G)$.
\end{corollary}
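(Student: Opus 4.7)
The plan is to derive the classification directly from the matrix block decomposition of $\Ebar_G$ together with standard Morita equivalence for matrix algebras. Let me outline the three main steps and indicate where the real work lies.

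First, I would reduce the problem to each block separately. Proposition~\ref{prop structure of Ebar}(b) identifies $\Ebar_G$ with $\bigoplus_{\{K,\kappa\}_G\in\calR_G/\sim} \fbar_{\{K,\kappa\}_G}E_G^c$, and the $\fbar_{\{K,\kappa\}_G}$ are central orthogonal idempotents with sum $1$ by Proposition~\ref{e,f-relations 2} and Corollary~\ref{Bc decomposition}. Consequently $\Ebar_G$ decomposes as a direct product of the block ideals $\fbar_{\{K,\kappa\}_G}\Ebar_G$ and every $M\in\Irr(\Ebar_G)$ is supported on exactly one block. Combining the isomorphism of Theorem~\ref{covering algebra structure theorem} with Proposition~\ref{prop structure of Ebar}(b) further identifies $\fbar_{\{K,\kappa\}_G}\Ebar_G$ with $\Mat_n(k\Gamma_{(G,K,\kappa)})$, where $n=|\{K,\kappa\}_G|$ and, after enumerating $\{K,\kappa\}_G=\{(K_1,\kappa_1),\ldots,(K_n,\kappa_n)\}$, the idempotent $\fbar_{(K_i,\kappa_i)}$ corresponds to the diagonal matrix unit $e_{ii}$.

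Second, fix a representative $(K,\kappa)\in\calRtilde_G$ of its linkage class. Under the matrix identification above, $\Ebar_G\fbar_{(K,\kappa)}$ is the column module $\Mat_n(k\Gamma_{(G,K,\kappa)})e_{ii}$, a progenerator of $\fbar_{\{K,\kappa\}_G}\Ebar_G$ whose endomorphism algebra, by Proposition~\ref{prop structure of Ebar}(c), is precisely $\fbar_{(K,\kappa)}\Ebar_G\fbar_{(K,\kappa)}\cong k\Gamma_{(G,K,\kappa)}$. Standard Morita theory for $\Mat_n(R)$ then shows that $[V]\mapsto \Vtilde:=\Ebar_G\fbar_{(K,\kappa)}\otimes_{k\Gamma_{(G,K,\kappa)}}V$ yields a bijection between $\Irr(k\Gamma_{(G,K,\kappa)})$ and the isomorphism classes of simple $\fbar_{\{K,\kappa\}_G}\Ebar_G$-modules, with inverse $\Vtilde\mapsto \fbar_{(K,\kappa)}\Vtilde$. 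Together with Step~1 this proves that the simples of $\Ebar_G$ are parametrized by $\calStilde_G$.

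Third, I have to check that the map $((K,\kappa),[V])\mapsto\Vtilde$ defined in the statement (using \emph{any} representative, not only those in $\calRtilde_G$) identifies equivalent pairs and only equivalent pairs. Pairs from different linkage classes give modules supported on different blocks, so these are non-isomorphic. If $(K',\kappa')\in\{K,\kappa\}_G$, choose $x=\left[\frac{G\times G}{U,\phi}\right]\in\lGamma{(G,K,\kappa)}_{(G,K',\kappa')}$. Then Lemma~\ref{lem delete f} and its opposite version give $\xbar=\fbar_{(K,\kappa)}\xbar=\xbar\fbar_{(K',\kappa')}$ and $\xbar^{\mathrm{op}}=\fbar_{(K',\kappa')}\xbar^{\mathrm{op}}=\xbar^{\mathrm{op}}\fbar_{(K,\kappa)}$, and Proposition~\ref{idempotent proposition}(d) combined with Proposition~\ref{e,f-relations 1} yields $\xbar\xbar^{\mathrm{op}}=\fbar_{(K,\kappa)}$ and $\xbar^{\mathrm{op}}\xbar=\fbar_{(K',\kappa')}$. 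Hence right multiplication by $\xbar$ and $\xbar^{\mathrm{op}}$ gives a mutually inverse pair of isomorphisms between $\Ebar_G\fbar_{(K,\kappa)}$ and $\Ebar_G\fbar_{(K',\kappa')}$ as left $\Ebar_G$-modules that intertwine the right actions of $k\Gamma_{(G,K,\kappa)}$ and $k\Gamma_{(G,K',\kappa')}$ along the conjugation isomorphism $\gamma\colon y\mapsto \xbar y\xbar^{\mathrm{op}}$ from \ref{covering algebra}(c). Tensoring then identifies $\Vtilde_{(K,\kappa),V}$ with $\Vtilde_{(K',\kappa'),V'}$ exactly when $[V]$ and $[V']$ correspond under (\ref{eqn irr bijection}).

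The main obstacle is the third step: one must carefully unwind the matrix identification to see that the bimodule isomorphism $\Ebar_G\fbar_{(K,\kappa)}\cong\Ebar_G\fbar_{(K',\kappa')}$ induced by $\xbar$ is indeed the one governing the canonical bijection in (\ref{eqn irr bijection}). Once this coherence is verified — a direct calculation from Lemma~\ref{lem delete f} and Proposition~\ref{idempotent proposition} — the well-definedness and injectivity of the parametrization on $\calS_G/\!\sim$ follow, and surjectivity is already provided by Step~2.
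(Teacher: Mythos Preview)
Your approach is essentially the paper's: reduce to blocks via Proposition~\ref{prop structure of Ebar}(b), identify each block with a matrix algebra via Theorem~\ref{covering algebra structure theorem}, and invoke the standard Morita equivalence between $\Mat_n(R)$ and $R$; the paper carries this out working directly with a set $\calStilde_G$ of representatives, whereas your Step~3 adds an explicit well-definedness check on equivalence classes, which is a welcome clarification the paper leaves implicit.

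One small technical correction in Step~3: Lemma~\ref{lem delete f} only yields $\fbar_{(K,\kappa)}\xbar=\xbar\fbar_{(K',\kappa')}$, not that either equals $\xbar$; and Proposition~\ref{idempotent proposition}(d) gives $\xbar\xbar^{\mathrm{op}}=\ebar_{(K,\kappa)}$, not $\fbar_{(K,\kappa)}$. This does not damage your argument, since for $m\in\Ebar_G\fbar_{(K,\kappa)}$ one has $m\xbar=m\fbar_{(K,\kappa)}\xbar=m\xbar\fbar_{(K',\kappa')}\in\Ebar_G\fbar_{(K',\kappa')}$ and $m\xbar\xbar^{\mathrm{op}}=m\fbar_{(K,\kappa)}\ebar_{(K,\kappa)}=m$ by Proposition~\ref{e,f-relations 1}, so right multiplication by $\xbar$ and $\xbar^{\mathrm{op}}$ are still mutually inverse; just phrase the claim accordingly.
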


\begin{proof}
By the isomorphism in Proposition~\ref{prop structure of Ebar}(b), one obtains a bijection  between $\Irr(\Ebar_G)$ and the set of pairs $((K,\kappa),[W])$ with $(K,\kappa)\in\calRtilde_G$ and $[W]\in\Irr(f_{\{K,\kappa\}_G}E_G^c)$. By the isomorphism $\rho\colon \Mat_n(k\Gamma_{(G,K,\kappa)})\to f_{\{K,\kappa\}_G}E_G^c$ used in the proof of Theorem~\ref{covering algebra structure theorem}, one obtains a bijection between $\Irr(f_{\{K,\kappa\}_G}E_G^c)$ and $\Irr(\Mat_n(k\Gamma_{(G,K,\kappa)}))$. Finally, since $\Mat_n(k\Gamma_{(G,K,\kappa)})$ and $k\Gamma_{(G,K,\kappa)}$ are Morita equivalent, $\Irr(\Mat_n(k\Gamma_{(G,K,\kappa)}))$ is in bijection with $\Irr(k\Gamma_{(G,K,\kappa)})$. If $(K,\kappa)\in\calRtilde_G$, $[V]\in\Irr(k\Gamma_{(G,K,\kappa}))$, and $(K,\kappa)=(K_i,\kappa_i)$ in the enumeration of $\{K,\kappa\}_G$ in Theorem~\ref{covering algebra structure theorem}, then we use the Morita equivalence given by tensoring with the bimodule $\Mat_n(k\Gamma_{(G,K,\kappa)})e_i$, where $e_i:=\dia(0,\ldots,0,1,0,\ldots,0)$, and $k\Gamma_{(G,K,\kappa)}$ is identified with $e_i\Mat_n(k\Gamma_{(G,K,\kappa)})e_i$ via $a\mapsto e_iae_i$. Thus, $[V]\in\Irr(k\Gamma_{(G,K,\kappa)})$ corresponds to the class of $\Mat_n(k\Gamma_{(G,K,\kappa)})e_i\otimes_{k\Gamma_{(G,K,\kappa)}} V$ in $\Irr(\Mat_n(k\Gamma_{(G,K,\kappa)}))$. Choosing $x_i:=e_{(K,\kappa)}$ in the definition of the isomorphism $\rho$ (in the proof of Theorem~\ref{covering algebra structure theorem}), the isomorphism $\rho$ transports this latter irreducible module to the irreducible module $f_{\{K,\kappa\}_G}E_G^c f_{(K,\kappa)}\otimes_{k\Gamma_{(G,K,\kappa)}} V = E_G^c f_{(K,\kappa)}\otimes_{k\Gamma_{(G,K,\kappa)}} V$, since $f_{\{K,\kappa\}_G}E_G^c f_{(K,\kappa)}=E_G^c f_{\{K,\kappa\}_G} f_{(K,\kappa)}= E_G^c f_{(K,\kappa)}$. And finally, via the isomorphism in Proposition~\ref{prop structure of Ebar}(b), the latter simple module corresponds to the module $\Vtilde$ in the statement of the corollary.
\end{proof}

The following proposition gives a necessary condition and a sufficient condition for a pair $(K,\kappa)\in\calM_G^G$ to be reduced. For special cases of $A$, we will see in Section~\ref{sec completely reduced pairs} that the sufficient condition in Part~(b) is also necessary. 

\begin{proposition}\mylabel{conditions for reduced}
Let $(K,\kappa)\in\calM_G^G$.

\smallskip
{\rm (a)} If $(K,\kappa)\in\calR_G$ then $\kappa$ is faithful and $K\le Z(G)$. 

\smallskip
{\rm (b)} If $\kappa$ is faithful and $K\le G'$ then $(K,\kappa)\in\calR_G$.
\end{proposition}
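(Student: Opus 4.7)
For Part~(a), I proceed by contrapositive in each direction. Suppose first that $\ker\kappa\ne\{1\}$. Applying the standard decomposition of Proposition~\ref{decomposition} to the defining pair $(\Delta_K(G),\phi_\kappa)\in\calM_{G\times G}$, one computes $P=Q=G$, $k_1(\Delta_K(G))=k_2(\Delta_K(G))=K$, and $\Khat=\Lhat=\ker\kappa$. The decomposition thus exhibits $E_{(K,\kappa)}$ as a composition whose middle factor is an $A$-fibered biset on $(G/\ker\kappa,\,G/\ker\kappa)$, so $e_{(K,\kappa)}$ factors through a group of strictly smaller order and lies in $I_G$, contradicting $(K,\kappa)\in\calR_G$. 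Centrality of $K$ is then automatic from the $G$-invariance of $\kappa$: $\kappa(gkg^{-1})=\kappa(k)$ combined with the just-established faithfulness forces $gkg^{-1}=k$ for all $g\in G$ and $k\in K$.

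For Part~(b), I argue by contradiction. Assume $e_{(K,\kappa)}\in I_G$ and write $e_{(K,\kappa)}=\sum_i a_i\,x_i\cdot_{H_i}y_i$ with $|H_i|<|G|$. Expanding each $x_i$ and $y_i$ in the standard basis and invoking the Mackey formula (Corollary~\ref{Mackey formula for fibered bisets 2}), the right-hand side becomes a $k$-linear combination of basis elements of $E_G$ indexed by pairs of the form $(V*\lexp{(t,1)}{W},\ \psi*\lexp{(t,1)}{\mu})$, with $(V,\psi)\in\calM_{G\times H}$, $(W,\mu)\in\calM_{H\times G}$, and $|H|<|G|$. Linear independence of the standard basis forces at least one such summand to be $G\times G$-conjugate to $(\Delta_K(G),\phi_\kappa)$; replacing $V$ and $W$ by suitable $G\times G$-conjugates (which represent the same basis elements) and absorbing the $(t,1)$-shift, I may assume the equalities $V*W=\Delta_K(G)$ and $\psi*\mu=\phi_\kappa$.

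I then apply Proposition~\ref{eta and zeta}(e) to $(V,\psi)$ to derive the contradiction $|G|\le|H|$. Three hypotheses need to be checked, with $\Khat,\Ktilde$ denoting the invariants of $(V,\psi)$ in the notation of Proposition~\ref{eta and zeta}. First, $p_1(V)=G$ follows from $p_1(V)\ge p_1(V*W)=G$ via Proposition~\ref{first consequences}(a). Next, to see $\Khat=\{1\}$ I evaluate $\phi_\kappa$ and $\psi*\mu$ on the subgroup $k_1(V)\times\{1\}\subseteq\Delta_K(G)$ (using the connecting element $h=1$ in the definition of $*$): these reduce to $\kappa(k)$ and $\psi_1(k)$ respectively, forcing $\psi_1|_{k_1(V)}=\kappa|_{k_1(V)}$, so faithfulness of $\kappa$ yields $\Khat=\ker(\psi_1)=k_1(V)\cap\ker\kappa=\{1\}$. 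Finally, for $\Ktilde=k_1(V)$ the hypothesis $K\le G'$ enters: since $k_1(V)\le k_1(V*W)=K\le G'$, one computes $\Ktilde=\Khat G'\cap k_1(V)=G'\cap k_1(V)=k_1(V)$.

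The main obstacle, I expect, is the reduction in Part~(b) from the abstract membership $e_{(K,\kappa)}\in I_G$ to a concrete Mackey summand coinciding exactly with $(\Delta_K(G),\phi_\kappa)$; once that reduction is in place, the coincidence of $\psi_1$ with the faithful character $\kappa$ on $k_1(V)$, together with the commutator hypothesis $K\le G'$, supply precisely the three conditions that make Proposition~\ref{eta and zeta}(e) fire.
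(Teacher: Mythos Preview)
Your argument is correct and follows essentially the same route as the paper's proof: Part~(a) via Proposition~\ref{decomposition} (the paper then quotes Proposition~\ref{eta and zeta}(b) for centrality, where you argue directly from $G$-stability and faithfulness of $\kappa$---these are the same observation), and Part~(b) via the Mackey formula, Proposition~\ref{first consequences}, and Proposition~\ref{eta and zeta}(e). The one small difference in Part~(b) is that the paper, after obtaining $l(V,\psi)=(G,K',\kappa')$ with $(K',\kappa')\le(K,\kappa)$, first multiplies by $e_{(K,\kappa)}$ (using Proposition~\ref{idempotent proposition}(b)) to force the left invariant to become exactly $(G,K,\kappa)$ before invoking Proposition~\ref{eta and zeta}(e); you instead apply Proposition~\ref{eta and zeta}(e) directly to $(V,\psi)$, which works because $k_1(V)\le K\le G'$ already suffices to give $\Ktilde=k_1(V)$. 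Your version is a legitimate shortcut.
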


\begin{proof}
(a) By the decomposition in Proposition~\ref{decomposition} we obtain that $\ker(\kappa)=1$. Then, $K\le Z(G)$ by Proposition~\ref{eta and zeta}(b).

\smallskip
(b) Assume that $e_{(K,\kappa)}\in I_G$. Then, by the definition of $I_G$ there exists a finite group $H$ with $|H|<|G|$ and pairs $(V,\psi)\in\calM_{G\times H}$ and $(W,\mu)\in\calM_{H\times G}$ such that the standard basis element $e_{(K,\kappa)}$ occurs as a summand in $\left[\frac{G\times H}{V,\psi}\right]\cdotH \left[\frac{H\times G}{W,\mu}\right]$. By the Mackey formula in Corollary~\ref{Mackey formula for fibered bisets 2}, it follows that there exists a pair $(W',\mu')\in\calM_{H\times G}$ such that $(\Delta_K(G),\phi_\kappa)=(V*W',\psi*\mu')$. By
Proposition~\ref{first consequences}, it follows that $l(V,\psi)=(G,K',\kappa')$ with $(K',\kappa')\le(K,\kappa)$. Now, by Proposition~\ref{idempotent proposition}(b), we have $e_{(K,\kappa)}\cdotG\left[\frac{G\times H}{V,\psi}\right] = \left[\frac{G\times H}{V',\psi'}\right]$ with $(V',\psi')\in\calM_{G\times H}$ satisfying $l(V',\psi')=(G,K,\kappa)$. Proposition~\ref{eta and zeta}(e) applied to $(V',\psi')$ implies that $|G|\le|H|$, a contradiction.  
\end{proof}

\begin{example}
The converse of Part~(b) in the previous Proposition does not hold in general: Let $A:=\{\pm 1\}$, $G$ the cyclic group of order $4$, $K$ its subgroup of order $2$, and $\kappa\colon K\to A$ the unique injective homomorphism. Then an easy computation shows that $e_{(K,\kappa)}\notin I_G$. Thus, $(K,\kappa)$ is reduced in $G$, but $K$ is not contained in $G'=\{1\}$.
\end{example}

We conclude this section with some additional results around the notion of being reduced.

\begin{proposition}\label{prop reduced and linked}
Let $G$ be a finite group and let $(K,\kappa)\in\calM_G^G$. The following are equivalent:

\smallskip
{\rm (i)} $(K,\kappa)\notin\calR_G$.

\smallskip
{\rm  (ii)} There exist a finite group $H$ with $|H|<|G|$ and $(L,\lambda)\in\calM_H^H$ such that $(G,K,\kappa)\sim(H,L,\lambda)$.
\end{proposition}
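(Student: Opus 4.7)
The direction (ii)$\Rightarrow$(i) is immediate from Proposition~\ref{idempotent proposition}(d). If $(U,\phi)\in\calM_{G\times H}$ witnesses the linkage then $p_1(U)=G$, $p_2(U)=H$ and $l_0(U,\phi)=(K,\kappa)$, so
\begin{equation*}
\left[\frac{G\times H}{U,\phi}\right]\cdotH\left[\frac{G\times H}{U,\phi}\right]^{\mathrm{op}} = e_{(K,\kappa)}\,.
\end{equation*}
Since $|H|<|G|$, this puts $e_{(K,\kappa)}$ in $B_k^A(G,H)\cdotH B_k^A(H,G)\subseteq I_G$, so $(K,\kappa)\notin\calR_G$.

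For (i)$\Rightarrow$(ii), assume $e_{(K,\kappa)}\in I_G$. Then for some finite group $H$ with $|H|<|G|$, one can write $e_{(K,\kappa)}=\sum_i \lambda_i \left[\frac{G\times H}{V_i,\psi_i}\right]\cdotH\left[\frac{H\times G}{W_i,\mu_i}\right]$ with $\lambda_i\in k$ and standard basis elements on the right. Expanding each product via the Mackey formula (Corollary~\ref{Mackey formula for fibered bisets 2}) into a disjoint union of standard basis elements (each appearing with multiplicity one) and comparing the coefficient of $\left[\frac{G\times G}{\Delta_K(G),\phi_\kappa}\right]=e_{(K,\kappa)}$ on both sides, at least one index $i$ must admit a Mackey summand isomorphic to $e_{(K,\kappa)}$. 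After absorbing the relevant double coset representative into $W_i$ and then conjugating the pair $(V_i,W_i)$ by a suitable diagonal element of $G\times G$, I may assume $(V*W,\psi*\mu)=(\Delta_K(G),\phi_\kappa)$ on the nose (dropping the index $i$). Proposition~\ref{first consequences}(a),(b) then forces $p_1(V)=G$ and $l(V,\psi)=(G,K',\kappa')$ with $(K',\kappa')\le(K,\kappa)$, exactly as in the proof of Proposition~\ref{conditions for reduced}(b).

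I then sharpen $(V,\psi)$: since $(K',\kappa')\le(K,\kappa)$ gives $\kappa|_{K\cap K'}=\kappa'|_{K\cap K'}$, Proposition~\ref{idempotent proposition}(b) yields
\begin{equation*}
e_{(K,\kappa)}\cdotG\left[\frac{G\times H}{V,\psi}\right]=\left[\frac{G\times H}{V',\psi'}\right]\,,\qquad (V',\psi'):=((K\times 1)V,\kappa\cdot\psi)\,,
\end{equation*}
with $l(V',\psi')=(G,KK',\kappa\cdot\kappa')=(G,K,\kappa)$. Let $H':=p_2(V')=p_2(V)\le H$ and $(L,\lambda):=r_0(V',\psi')$. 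Proposition~\ref{eta and zeta}(a) gives $L\trianglelefteq H'$, and Proposition~\ref{eta and zeta}(b) gives that $L/\hat L$ is central in $H'/\hat L$, which easily forces $\lambda$ to be $H'$-stable. Hence $(L,\lambda)\in\calM_{H'}^{H'}$, and $(V',\psi')\in\calM_{G\times H'}$ exhibits $(G,K,\kappa)\sim(H',L,\lambda)$ with $|H'|\le|H|<|G|$, establishing (ii).

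The only genuinely subtle step is the extraction of a single Mackey summand isomorphic to $e_{(K,\kappa)}$ from the expression $\sum_i\lambda_i[X_i]\cdotH[Y_i]$: this uses that Mackey summands appear with coefficient $1$, so the coefficient of $e_{(K,\kappa)}$ on the right-hand side is a $k$-combination of the $\lambda_i$, and its being equal to $1\in k$ forces at least one product to contain $e_{(K,\kappa)}$ as an actual summand. Everything else is routine bookkeeping with the results of Sections~\ref{sec fibered bisets}--\ref{sec idempotents}.
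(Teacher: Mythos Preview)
Your proof is correct and follows essentially the same approach as the paper's: both directions use Proposition~\ref{idempotent proposition}(d) for (ii)$\Rightarrow$(i), and for (i)$\Rightarrow$(ii) both extract a single Mackey summand producing $e_{(K,\kappa)}$, use Proposition~\ref{first consequences}(a),(b) to get $p_1(V)=G$ and $l_0(V,\psi)\le(K,\kappa)$, then left-multiply by $e_{(K,\kappa)}$ via Proposition~\ref{idempotent proposition}(b) and pass to the subgroup $p_2(V)\le H$ on the right. Your extra care in arranging $(V*W,\psi*\mu)=(\Delta_K(G),\phi_\kappa)$ on the nose is not strictly needed (since $(K,\kappa)\in\calM_G^G$ is $G$-fixed, conjugacy already gives equality of $l_0$), but it does no harm.
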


\begin{proof}
(i) $\Rightarrow$ (ii): If $(K,\kappa)\notin\calR_G$ then $e_{(K,\kappa)}\in I_G$ and there exists a group $H$ with $|H|<|G|$ and pairs $(U,\phi)\in\calM_{G\times H}$, $(V,\psi)\in\calM_{H\times G}$ such that the standard basis element $e_{(K,\kappa)}$ of $B_k^A(G,G)$ occurs with non-zero coefficient in the tensor product $\left[\frac{G\times H}{U,\phi}\right] \cdotH \left[\frac{H\times G}{V,\psi}\right]$. The Mackey formula and Proposition~\ref{first consequences}(a),(b) imply that $p_1(U)=G$ and that $(K,\kappa)\ge l_0(U,\phi)$. We set $\Htilde:=p_2(U)$ and write $e_{(K,\kappa)}\cdotG \left[\frac{G\times \Htilde}{U,\phi}\right] = \left[\frac{G\times \Htilde}{\Utilde,\phitilde}\right]$ with $(\Utilde,\phitilde)\in\calM_{G\times\Htilde}$. Then
\begin{equation*}
  |\Htilde|\le |H|<|G|,\quad p_2(\Utilde)=\Htilde,\quad\text{and}\quad l(\Utilde,\phitilde)=(G,K,\kappa)
\end{equation*}
by Proposition~\ref{idempotent proposition}(b). Thus, $(G,K,\kappa)\mathop{\sim}\limits_{(\Utilde,\phitilde)} (\Htilde,\Ltilde,\tilde{\lambda})$ with $(L,\tilde{\lambda}):=r(\Utilde,\phitilde)\in\calM_{\Htilde}^{\Htilde}$.

\smallskip
(ii)$\Rightarrow$(i): Let $(U,\phi)\in\calM_{G\times H}$ with $l(U,\phi)=(G,K,\kappa)$ and $r(U,\phi)=(H,L,\lambda)$. Then Proposition~\ref{idempotent proposition}(d), implies $e_{(K,\kappa)}= \left[\frac{G\times H}{U,\phi}\right]\cdotH \left[\frac{G\times H}{U,\phi}\right]^{\mathrm{op}}\in I_G$, and therefore $(K,\kappa)\notin\calR_G$.
\end{proof}

Since the condition in Proposition~\ref{prop reduced and linked}(ii) is independent of $k$, we have the following immediate corollary.

\begin{corollary}\label{cor calR independent of k}
Let $G$ and $H$ be finite groups. The subset $\calR_k^A(G)$ of $\calM_G$ is independent of $k$. 
\end{corollary}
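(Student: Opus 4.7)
The proof is essentially a one-line deduction from Proposition~\ref{prop reduced and linked}, so the plan is very short. First I will dispose of pairs $(K,\kappa) \in \calM_G \setminus \calM_G^G$: by definition these lie outside $\calR_k^A(G)$ for every commutative ring $k$, so they contribute no $k$-dependence.

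For pairs $(K,\kappa) \in \calM_G^G$, I will invoke Proposition~\ref{prop reduced and linked}, which asserts that
\begin{equation*}
  (K,\kappa) \notin \calR_k^A(G) \iff \exists\, H \text{ with } |H|<|G| \text{ and } (L,\lambda)\in\calM_H^H \text{ with } (G,K,\kappa)\sim(H,L,\lambda).
\end{equation*}
The right-hand condition is formulated purely in terms of the linkage relation $\sim$, which in turn is defined via the existence of a pair $(U,\phi)\in\calM_{G\times H}$ with prescribed left and right invariants (see Definition~\ref{linkage definition}). Since $\calM_{G\times H}$ and the invariants $l(U,\phi)$, $r(U,\phi)$ are defined from the group $A$ alone and involve no coefficient ring, the right-hand condition is manifestly independent of $k$. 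Hence so is the left-hand condition, and $\calR_k^A(G)$ coincides with $\calR_\ZZ^A(G)$ for every $k$.

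There is no real obstacle here — the work has already been done in establishing Proposition~\ref{prop reduced and linked}; the corollary is just the observation that the equivalent condition provided there makes no reference to $k$. The only mild care required is remembering to handle the non-$G$-fixed pairs separately, which is immediate from the definition of $\calR_G$.
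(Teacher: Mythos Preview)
Your proof is correct and takes essentially the same approach as the paper: the paper's proof is just the one-line remark that condition~(ii) in Proposition~\ref{prop reduced and linked} is independent of $k$. Your version spells out a bit more explicitly why the linkage condition involves no coefficient ring and handles the non-$G$-fixed pairs separately, but the core observation is identical.
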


\begin{definition}\label{def twosided reduced}
Let $G$ and $H$ be finite groups. A pair $(U,\phi)\in\calM_{G\times M}^c$ is called {\em reduced} if $l_0(U,\phi)\in\calR_G$ and $r_0(U,\phi)\in\calR_H$. By Corollary~\ref{cor calR independent of k}, this notion does not depend on $k$.
\end{definition}

The following proposition justifies this terminology.

\begin{proposition}\label{prop twosided reduced}
Let $G$ and $H$ be finite groups, let $(U,\phi)\in\calM^c_{G\times H}$, and set $(K,\kappa):=l_0(U,\phi)$ and $(L,\lambda):=r_0(U,\phi)$. The following are equivalent:

\smallskip
{\rm (i)} $(U,\phi)$ is reduced.

\smallskip
{\rm (ii)} $|G|=|H|$ and $\left[\frac{G\times H}{U,\phi}\right]$ does not factor through a group of order smaller than $|G|=|H|$, i.e., there exists no group $I$ of order smaller than $|G|=|H|$ such that $\left[\frac{G\times H}{U,\phi}\right]\in B^A_k(G,I)\mathop{\cdot}\limits_{I} B^A_k(I,H)$.
\end{proposition}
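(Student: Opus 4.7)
The plan is to prove the equivalence by combining the characterization of ``$(K,\kappa)\notin\calR_G$'' from Proposition~\ref{prop reduced and linked} (which says this amounts to the existence of a linkage $(G,K,\kappa)\sim(H',L',\lambda')$ with $|H'|<|G|$) with the idempotent identities of Proposition~\ref{idempotent proposition}(c),(d), which provide a mechanism for converting linkages of triples into factorizations of bisets.

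For (i)$\Rightarrow$(ii), I would first argue that $|G|=|H|$. Since $(U,\phi)$ is covering, $l_0(U,\phi)=(K,\kappa)$ and $r_0(U,\phi)=(L,\lambda)$, so $(U,\phi)$ itself witnesses $(G,K,\kappa)\sim(H,L,\lambda)$. Because $(K,\kappa)\in\calR_G$, Proposition~\ref{prop reduced and linked} forbids a linkage of $(G,K,\kappa)$ to any triple over a group strictly smaller than $G$, hence $|H|\ge|G|$. Applying the same argument to the opposite biset and using $(L,\lambda)\in\calR_H$ gives $|G|\ge|H|$. For the no-factorization statement, suppose for contradiction that $\left[\frac{G\times H}{U,\phi}\right]=x\cdotK y$ for some finite group $I$ with $|I|<|G|=|H|$, $x\in B_k^A(G,I)$ and $y\in B_k^A(I,H)$. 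By Proposition~\ref{idempotent proposition}(d) (applied to $(U,\phi)$, which is covering and has left invariants $(G,K,\kappa)$),
\begin{equation*}
e_{(K,\kappa)}=\left[\frac{G\times H}{U,\phi}\right]\cdotH\left[\frac{G\times H}{U,\phi}\right]^{\mathrm{op}}= x\cdotK\Bigl(y\cdotH\left[\frac{G\times H}{U,\phi}\right]^{\mathrm{op}}\Bigr)\in B_k^A(G,I)\cdotK B_k^A(I,G)\subseteq I_G,
\end{equation*}
contradicting $(K,\kappa)\in\calR_G$.

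For (ii)$\Rightarrow$(i), suppose $(K,\kappa)\notin\calR_G$. By Proposition~\ref{prop reduced and linked} there is a finite group $H'$ with $|H'|<|G|$, a pair $(L',\lambda')\in\calM_{H'}^{H'}$, and $(V,\psi)\in\calM_{G\times H'}$ with $l(V,\psi)=(G,K,\kappa)$ and $r(V,\psi)=(H',L',\lambda')$. Proposition~\ref{idempotent proposition}(d) gives $\left[\frac{G\times H'}{V,\psi}\right]\cdot_{\!H'}\left[\frac{G\times H'}{V,\psi}\right]^{\mathrm{op}}=e_{(K,\kappa)}$, while Proposition~\ref{idempotent proposition}(c) gives $e_{(K,\kappa)}\cdotG\left[\frac{G\times H}{U,\phi}\right]=\left[\frac{G\times H}{U,\phi}\right]$ (since $l(U,\phi)=(G,K,\kappa)$). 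Combining these,
\begin{equation*}
\left[\frac{G\times H}{U,\phi}\right]=\left[\frac{G\times H'}{V,\psi}\right]\cdot_{\!H'}\!\Bigl(\left[\frac{G\times H'}{V,\psi}\right]^{\mathrm{op}}\cdotG\left[\frac{G\times H}{U,\phi}\right]\Bigr),
\end{equation*}
a factorization through $H'$ with $|H'|<|G|=|H|$, contradicting (ii). The case $(L,\lambda)\notin\calR_H$ is handled symmetrically by applying the same argument to $\left[\frac{G\times H}{U,\phi}\right]^{\mathrm{op}}$ (after noting that (ii) is invariant under taking opposites).

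There is no serious obstacle: once Proposition~\ref{prop reduced and linked} has reduced the property ``$\notin\calR$'' to the existence of a small linking group, the two directions are essentially the same calculation, each side contracting the offending pair against $(U,\phi)$ via the absorption identity of Proposition~\ref{idempotent proposition}(c) and the ``orthogonality'' identity of Proposition~\ref{idempotent proposition}(d). The only point requiring a modicum of care is the symmetric treatment in the second implication, which is why I flag the use of opposites.
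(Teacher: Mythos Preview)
Your proof is correct and follows essentially the same route as the paper's: both directions hinge on the identities $e_{(K,\kappa)}=\left[\frac{G\times H}{U,\phi}\right]\cdotH\left[\frac{G\times H}{U,\phi}\right]^{\mathrm{op}}$ and $\left[\frac{G\times H}{U,\phi}\right]=e_{(K,\kappa)}\cdotG\left[\frac{G\times H}{U,\phi}\right]$ from Proposition~\ref{idempotent proposition}(c),(d). The only difference is that for (ii)$\Rightarrow$(i) the paper simply writes $\left[\frac{G\times H}{U,\phi}\right]=e_{(K,\kappa)}\cdotG\left[\frac{G\times H}{U,\phi}\right]\cdotH e_{(L,\lambda)}$ and says ``(ii) immediately implies (i)'', whereas you route through Proposition~\ref{prop reduced and linked} to produce an explicit linking pair $(V,\psi)$ and hence a factorization of $\left[\frac{G\times H}{U,\phi}\right]$ through a \emph{single} smaller group $H'$; this makes the contradiction with the ``single $I$'' formulation of (ii) transparent rather than implicit.
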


\begin{proof}
(i)$\Rightarrow$(ii):  Since $\left[\frac{G\times H}{U,\phi}\right]\cdotH\left[\frac{G\times H}{U,\phi}\right]^{\mathrm{op}} = e_{(K,\kappa)}$ by Proposition~\ref{idempotent proposition}(d), and since $(K,\psi)\in\calR_G$, we obtain $|H|\ge|G|$. Similarly one proves that $|G|\ge|H|$ to obtain $|G|=|H|$. If $\left[\frac{G\times H}{U,\phi}\right]$ factors through a group of order smaller than $|G|$, so does $e_{(K,\kappa)}=\left[\frac{G\times H}{U,\phi}\right]\cdotH\left[\frac{G\times H}{U,\phi}\right]^{\mathrm{op}}$ which contradicts $(K,\psi)\in\calR_G$.

(ii)$\Rightarrow$(i): By Proposition~\ref{idempotent proposition}(c) we have $\left[\frac{G\times H}{U,\phi}\right]=e_{(K,\psi)}\cdotG \left[\frac{G\times H}{U,\phi}\right] \cdotH e_{(L,\lambda)}$. Now, (ii) immediately implies (i).
\end{proof}

%%%%%%%%%%%%%%%%%%%%%% SECTION 9 %%%%%%%%%%%%%%%%%%%%%%%%%%%%%%%%%%%%%

\section{Simple $A$-Fibered Biset Functors}\label{sec simple functors}

We keep the notation from the previous sections: $A$ is an abelian
group, $k$ is a commutative ring, $\calC=\calC_k^A$ is the category introduced in Section~\ref{sec fibered biset functors}, and $\calF=\calF_k^A$ is the category of $k$-linear functors from $\calC_k^A$ to $\lMod{k}$ . For a finite group $G$, we continue to write $E_G:=B_k^A(G,G)=\End_{\calC}(G,G)$ and $\Ebar_G=E_G/I_G$, where $I_G$ denotes the ideal of $E_G$ of all morphisms factoring through groups of order strictly smaller than $|G|$. The goal of this section is the classification of the simple objects in $\calF$.

\begin{nothing}\mylabel{quadruples} Recall from
Proposition~\ref{simple functor proposition} that for every finite group
$G$ and every simple $E_G$-module $V$ one obtains a simple functor $S_{G,V}\in\calF$ and that every
simple functor arises that way. Also, recall from Corollary~\ref{simples of Ebar 3} that if
$(K,\kappa)\in\calR_G$ and if $V$ is an irreducible
$k\Gamma_{(G,K,\kappa)}$-module then
\begin{equation}\label{Vtilde}
  \Vtilde:= \Ebar_G \fbar_{(K,\kappa)}\otimes_{k\Gamma_{(G,K,\kappa)}} V
\end{equation}
is an irreducible $E_G$-module. Therefore, we obtain a
simple functor
\begin{equation*}
  S_{(G,K,\kappa,V)}:=S_{G,\Vtilde}\in\calF\,.
\end{equation*}
It is clear that if $V'\cong V$ as $k\Gamma_{(G,K,\kappa)}$-modules then
$\Vtilde\cong\Vtilde'$ as $E_G$-modules,
$L_{G,\Vtilde}\cong L_{G,\Vtilde'}$ as functors in $\calF$, and
therefore, $S_{G,\Vtilde}\cong S_{G,\Vtilde'}$ in $\calF$ as the
unique simple factors of $L_{G,\Vtilde}$ and $L_{G,\Vtilde'}$,
respectively. We denote by
$\Irr(\calF)$ the set of isomorphism classes $[S]$ of
simple functors $S\in\calF$. Moreover, we set
\begin{equation*}
  \calS=\calSkA:=\{(G,K,\kappa,[V])\mid G\in\mathrm{Ob}(\calC),
  (K,\kappa)\in\calR_G, [V]\in\Irr(k\Gamma_{(G,K,\kappa)})\}\,.
\end{equation*}
Two quadruples $(G,K,\kappa,[V])$ and $(H,L,\lambda,[W])$ in
$\calS$ will be called {\em linked} if
$(G,K,\kappa)\sim(H,L,\lambda)$, cf.~Definition~\ref{linkage definition}(a), and
\begin{equation*}
  V\cong k[\lGamma{(G,K,\kappa)}_{(H,L,\lambda)}]\otimes_{k\Gamma_{(H,L,\lambda)}} W
\end{equation*}
as $k\Gamma_{(G,K,\kappa)}$-modules, cf.~\ref{covering algebra}(c). In this case we write
$(G,K,\kappa,[V])\sim(H,L,\lambda,[W])$. Note that linkage is an
equivalence relation on $\calS$. We denote by $\calSbar=\calSbarkA$ the set
of linkage classes $[G,K,\kappa,[V]]$ of $\calS$. By all the above,
we have now defined a function
\begin{equation}\label{eqn omega}
  \omega\colon \calS\to\Irr(\calF)\,,\quad
  (G,K,\kappa,[V])\mapsto [S_{(G,K,\kappa,V)}]\,.
\end{equation}
\end{nothing}

The goal of this section is to prove the following theorem.

\begin{theorem}\mylabel{parametrization of simples}
The function $\omega$ from (\ref{eqn omega}) induces a
bijection
\begin{equation*}
  \omegabar\colon \calSbar \to \Irr(\calF)\,,\quad
  [G,K,\kappa,[V]]\mapsto [S_{(G,K,\kappa,V)}]\,.
\end{equation*}
\end{theorem}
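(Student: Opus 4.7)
The plan is to deduce the bijectivity of $\bar\omega$ by combining Proposition~\ref{simple functor proposition}, which reduces the classification of simple objects of $\calF$ to simple $\Ebar_G$-modules for minimal groups $G$, with Corollary~\ref{simples of Ebar 3}, which parametrizes $\Irr(\Ebar_G)$ by $G$-linkage classes of pairs $((K,\kappa),[V])$ with $(K,\kappa)\in\calR_G$ and $[V]\in\Irr(k\Gamma_{(G,K,\kappa)})$. The argument naturally splits into surjectivity, well-definedness of $\bar\omega$, and injectivity.

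Surjectivity is immediate: given a simple $S\in\calF$, Proposition~\ref{simple functor proposition}(b) yields $S\cong S_{G,V'}$ with $V'$ a simple $\Ebar_G$-module, and Corollary~\ref{simples of Ebar 3} rewrites $V'\cong\Vtilde$ for some quadruple $(G,K,\kappa,[V])\in\calS$. For well-definedness, the case $G=H$ is a direct restatement of Corollary~\ref{simples of Ebar 3}. In the general case, assume $(G,K,\kappa,[V])\sim(H,L,\lambda,[W])$ witnessed by $(U,\phi)\in\calMc_{G\times H}$, set $x:=\left[\frac{H\times G}{\Uop,\phiop}\right]\in B_k^A(H,G)$ and $y:=\left[\frac{G\times H}{U,\phi}\right]\in B_k^A(G,H)$, and fix $0\neq v\in V$. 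Proposition~\ref{idempotent proposition}(d) gives $y\cdotH x=e_{(K,\kappa)}$, which acts on $\fbar_{(K,\kappa)}\otimes v\in\Vtilde$ as the identity; Lemma~\ref{lem J} then shows $x\otimes(\fbar_{(K,\kappa)}\otimes v)\notin J_{G,\Vtilde}(H)$, so $S_{(G,K,\kappa,V)}(H)\neq 0$. Observe that $|G|=|H|$, since the reduced pair $e_{(K,\kappa)}$ factors through $H$ (and $e_{(L,\lambda)}$ through $G$) by Proposition~\ref{idempotent proposition}(d), so $H$ is also a minimal group for $S_{(G,K,\kappa,V)}$ and Proposition~\ref{simple functor proposition}(a),(b) makes $S_{(G,K,\kappa,V)}(H)$ a simple $\Ebar_H$-module. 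A further computation using Lemma~\ref{lem delete f} and the canonical isomorphism $\Gamma_{(H,L,\lambda)}\liso\Gamma_{(G,K,\kappa)}$ of~\ref{covering algebra}(c) upgrades $v\mapsto\overline{x\otimes(\fbar_{(K,\kappa)}\otimes v)}$ to an $E_H$-module isomorphism $\Wtilde\liso S_{(G,K,\kappa,V)}(H)$, and Proposition~\ref{simple functor proposition}(d) concludes.

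Injectivity is the heart of the theorem. Assume $S_{(G,K,\kappa,V)}\cong S_{(H,L,\lambda,W)}$. Proposition~\ref{simple functor proposition}(c) forces $|G|=|H|$, and Proposition~\ref{simple functor proposition}(d) gives $S_{(G,K,\kappa,V)}(H)\cong\Wtilde$ as $E_H$-modules. The task reduces to identifying $S_{(G,K,\kappa,V)}(H)$, via Corollary~\ref{simples of Ebar 3}, with a pair $((L'',\lambda''),[W''])$ such that the quadruple $(H,L'',\lambda'',[W''])$ is cross-group linked to $(G,K,\kappa,[V])$; the uniqueness clause of Corollary~\ref{simples of Ebar 3}, applied to $\Wtilde$, then matches $(L'',\lambda'',[W''])$ with $(L,\lambda,[W])$ and closes the argument. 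Concretely one computes the simple head of $L_{G,\Vtilde}(H)=B_k^A(H,G)\otimes_{E_G}\Vtilde$ by writing $\Vtilde=\Ebar_G\fbar_{(K,\kappa)}\otimes_{k\Gamma_{(G,K,\kappa)}}V$ and filtering by the central idempotents $\fbar_{\{L',\lambda'\}_H}$ of $\Ebar_H^{\,c}$. Combining Lemma~\ref{lem delete f} with its right-sided analog (obtained via $-^{\mathrm{op}}$) shows that only standard basis elements $\left[\frac{H\times G}{U',\phi'}\right]$ whose invariants satisfy $r_0(U',\phi')=(K,\kappa)$ and $l_0(U',\phi')\in\calR_H$ can survive in the simple head---precisely the witnesses of the required cross-group linkage.

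The main obstacle is exactly this last idempotent analysis: identifying the simple head of $L_{G,\Vtilde}(H)$ as a specific $\Ebar_H$-module by tracking how the central idempotents $\fbar_{\{L',\lambda'\}_H}$ act on $B_k^A(H,G)\otimes_{E_G}\Vtilde$ without collapsing the $k\Gamma$-module structure, and verifying that the surviving component carries exactly the $\Gamma_{(H,L'',\lambda'')}$-module predicted by the canonical bijection of~\ref{covering algebra}(c). Everything else in the argument is either a direct invocation of earlier results or routine tensor-product bookkeeping.
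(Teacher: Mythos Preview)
Your overall architecture---surjectivity via Proposition~\ref{simple functor proposition}(b) plus Corollary~\ref{simples of Ebar 3}, then well-definedness, then injectivity---is the same as the paper's, and your identification of the ``main obstacle'' is accurate. What you have not supplied is precisely the technical lemma that the paper isolates and proves separately (Lemmas~\ref{lem simple functors 2} and~\ref{lem simple functors 3}), and both your well-definedness and injectivity arguments depend on it.

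The paper's device is cleaner than the direct idempotent filtration you sketch. Rather than analyzing $L_{G,\Vtilde}(H)$ itself, the paper introduces an auxiliary functor $M_{H,W}(I):=B_k^A(I,H)f_{(L,\lambda)}\otimes_{k\Gamma_{(H,L,\lambda)}}W$ which surjects onto $L_{H,\Wtilde}$ and hence onto $S_{H,\Wtilde}$. The key observation is that in $S_{H,\Wtilde}(G)$, when $|G|=|H|$, any non-covering basis element $\left[\frac{G\times H}{U,\phi}\right]$ dies, because by Proposition~\ref{decomposition} it factors through a group of smaller order on which $S_{H,\Wtilde}$ vanishes. This immediately cuts down to $B_k^{A,c}(G,H)f_{(L,\lambda)}\otimes W$, and after hitting with $f_{(K,\kappa)}$ one obtains an epimorphism
\[
  k[\lGamma{(G,K,\kappa)}_{(H,L,\lambda)}]\otimes_{k\Gamma_{(H,L,\lambda)}}W\ \longrightarrow\ f_{(K,\kappa)}S_{H,\Wtilde}(G)
\]
of $k\Gamma_{(G,K,\kappa)}$-modules (Lemma~\ref{lem simple functors 3}(b)). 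This single epimorphism then does double duty. For well-definedness, the paper does \emph{not} build your explicit map $v\mapsto\overline{x\otimes(\fbar_{(K,\kappa)}\otimes v)}$ and verify $E_H$-equivariance; instead it observes that the left-hand side is $V$ by hypothesis, so the epimorphism gives $\Hom_{k\Gamma}(V,f_{(K,\kappa)}S_{H,\Wtilde}(G))\neq 0$, and the adjunction $\Hom_{k\Gamma}(V,\fbar_{(K,\kappa)}M)\cong\Hom_{\Ebar_G}(\Vtilde,M)$ converts this into $\Vtilde\cong S_{H,\Wtilde}(G)$ between simple modules. For injectivity, the same epimorphism with the hypothesis $f_{(K,\kappa)}S_{H,\Wtilde}(G)\cong V\neq 0$ forces $\lGamma{(G,K,\kappa)}_{(H,L,\lambda)}\neq\emptyset$, and then simplicity of the source (from~\ref{covering algebra}(c)) makes the epimorphism an isomorphism.

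Your ``further computation'' for well-definedness and your ``idempotent analysis'' for injectivity would both work, but you would essentially be reproving Lemma~\ref{lem simple functors 3}(b) inline, twice. The paper's route via the auxiliary functor $M_{H,W}$ and the adjunction is more economical and avoids the explicit $E_H$-equivariance check you would otherwise need.
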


Before we can prove the theorem we will need three Lemmas as preparation. 

\begin{lemma}\label{lem simple functors 1}
Let $G$ and $H$ be finite groups and let $(K,\kappa)\in\calM_G^G$ and $(L,\lambda)\in\calM_H^H$.

\smallskip
{\rm (a)} If $M\in \calF$ and $\lGamma{(G,K,\kappa)}_{H,L,\lambda}\neq \emptyset$ then, for every $x\in \lGamma{(G,K,\kappa)}_{H,L,\lambda}$, the map
\begin{equation*}
  f_{(L,\lambda)} M(H) \to f_{(K,\kappa)} M(G)\,,\quad m\mapsto xm\,,
\end{equation*}
is an isomorphism of $k$-modules with inverse $m\mapsto \xop m$.

\smallskip
{\rm (b)} Assume that $(K,\kappa)\in\calR_G$ and $(L,\lambda)\in\calR_H$ and that $(G,K,\kappa)\sim(H,L,\lambda)$. Then $|G|=|H|$.
\end{lemma}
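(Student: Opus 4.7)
For part (a), my plan is to represent $x$ as a standard basis element $\left[\frac{G\times H}{U,\phi}\right]$ with $l(U,\phi)=(G,K,\kappa)$ and $r(U,\phi)=(H,L,\lambda)$, so that $\xop$ corresponds to $(U^{\mathrm{op}},\phi^{\mathrm{op}})$ with $l_0(\xop)=(L,\lambda)$ and $r_0(\xop)=(K,\kappa)$. The central tool is Proposition~\ref{idempotent proposition}(d), which (applied to both $(U,\phi)$ and $(U^{\mathrm{op}},\phi^{\mathrm{op}})$) gives the two key identities
\begin{equation*}
  x\cdotH\xop = e_{(K,\kappa)}\quad\text{and}\quad \xop\cdotG x = e_{(L,\lambda)}\,.
\end{equation*}
With these in hand the whole lemma is essentially bookkeeping with the $e,f$-idempotent calculus.

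More precisely, for part (a) I need two things. First, I must check that $m\mapsto xm$ really lands in $f_{(K,\kappa)}M(G)$ when $m\in f_{(L,\lambda)}M(H)$. This is where Lemma~\ref{lem delete f} comes in: since $l_0(U,\phi)=(K,\kappa)$ and $r_0(U,\phi)=(L,\lambda)$, that lemma yields $x\cdotH f_{(L,\lambda)} = f_{(K,\kappa)}\cdotG x\cdotH f_{(L,\lambda)}$, so $xm=xf_{(L,\lambda)}m=f_{(K,\kappa)}(xm)\in f_{(K,\kappa)}M(G)$. Second, I must verify that the two compositions are identities. For $m\in f_{(L,\lambda)}M(H)$, Proposition~\ref{e,f-relations 1} gives $e_{(L,\lambda)}f_{(L,\lambda)}=f_{(L,\lambda)}$, so
\begin{equation*}
  \xop\cdotG(xm) = (\xop\cdotG x)m = e_{(L,\lambda)}m = e_{(L,\lambda)}f_{(L,\lambda)}m = f_{(L,\lambda)}m = m\,.
\end{equation*}
The opposite composition is obtained symmetrically from $x\cdotH\xop = e_{(K,\kappa)}$ and $e_{(K,\kappa)}f_{(K,\kappa)}=f_{(K,\kappa)}$.

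For part (b), I will argue by contradiction. Pick any $(U,\phi)\in\calM_{G\times H}$ witnessing $(G,K,\kappa)\sim(H,L,\lambda)$, and set $x:=\left[\frac{G\times H}{U,\phi}\right]$. Again by Proposition~\ref{idempotent proposition}(d), $x\cdotH\xop = e_{(K,\kappa)}$. If $|H|<|G|$ then, directly from the definition of $I_G$ in Section~\ref{noth essential algebra}, the right-hand side lies in $I_G$, contradicting $(K,\kappa)\in\calR_G$. Hence $|H|\ge|G|$; applying the same reasoning to $\xop\cdotG x = e_{(L,\lambda)}$ together with $(L,\lambda)\in\calR_H$ gives $|G|\ge|H|$, so $|G|=|H|$. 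There is no real obstacle in either part: (a) is a matter of tracking how the $e$'s and $f$'s interact with the morphism $x$ (with Lemma~\ref{lem delete f} doing the main work), and (b) is an immediate consequence of Proposition~\ref{idempotent proposition}(d) and the definition of a reduced pair.
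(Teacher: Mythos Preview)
Your proof is correct and follows essentially the same approach as the paper's own proof, which simply records the four relations $x\cdotH\xop=e_{(K,\kappa)}$, $\xop\cdotG x=e_{(L,\lambda)}$, $e_{(K,\kappa)}f_{(K,\kappa)}=f_{(K,\kappa)}$, $e_{(L,\lambda)}f_{(L,\lambda)}=f_{(L,\lambda)}$ and declares part~(a) immediate, while part~(b) is argued exactly as you do. Your use of Lemma~\ref{lem delete f} to check that $m\mapsto xm$ lands in $f_{(K,\kappa)}M(G)$ is a harmless extra detail the paper leaves implicit.
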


\begin{proof}
(a) This follows immediately from the relations 
\begin{equation}\label{eqn x e f relations}
  x \xop = e_{(K,\kappa)},\quad \xop x = e_{(L,\lambda)},\quad e_{(K,\kappa)}f_{(K,\kappa)} = f_{(K,\kappa)},\quad
  e_{(L,\lambda)}f_{(L,\lambda)}=f_{(L,\lambda)}.
\end{equation}

\smallskip
(b) Let $(U,\phi)\in \calM_{G\times H}$ satisfy $l(U,\phi)=(G,K,\kappa)$ and $r(U,\phi)=(H,L,\lambda)$ and assume that $|G|>|H|$. Then $e_{(K,\kappa)}=\left[\frac{G\times H}{U,\phi}\right]\cdotH \left[\frac{G\times H}{U,\phi}\right]^{\rm{op}} = e_{(K,\kappa)}\in I_G$, contradicting $(K,\kappa)\in\calR_G$. Similarly, we obtain a contradiction if we assume $|G|<|H|$.
\end{proof}

\begin{lemma}\label{lem simple functors 2}
Let $(H,L,\lambda,[W])\in\calS$ and let $\Wtilde$ be the irreducible $\Ebar_H$-module defined as in (\ref{Vtilde}).

\smallskip
{\rm (a)}  If $G$ is a finite group with $|G|=|H|$ then $I_G\cdot S_{H,\Wtilde}(G)=\{0\}$ and $S_{H,\Wtilde}(G)$ can be viewed as $\Ebar_G$-module.

\smallskip
{\rm (b)} One has a $k\Gamma_{(H,L,\lambda)}$-module isomorphism $f_{(L,\lambda)}S_{H,\Wtilde}(H)\cong W$. Here, $f_{(L,\lambda)}S_{H,\Wtilde}(H)$ is viewed as $k\Gamma_{(H,L,\lambda)}$-module via the $k$-algebra isomorphism $k\Gamma_{(H,L,\lambda)} \to f_{(L,\lambda)} E_H^c f_{(L,\lambda)}$ from Lemma~\ref{comparing decompositions}(f).
\end{lemma}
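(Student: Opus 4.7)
For part (a) the strategy is to exploit that $H$ is a minimal group for the simple functor $S_{H,\Wtilde}$. By construction $\Wtilde$ is an $\Ebar_H$-module, so the ideal $I_H$ annihilates $\Wtilde$, and Proposition~\ref{simple functor proposition}(c) forces $S_{H,\Wtilde}(K)=\{0\}$ for every finite group $K$ with $|K|<|H|$. Now $I_G$ is spanned as a $k$-module by products $c\cdotK d$ with $c\in B_k^A(G,K)$, $d\in B_k^A(K,G)$ and $|K|<|G|=|H|$, so it suffices to show that such an element kills $S_{H,\Wtilde}(G)$. For any $m\in S_{H,\Wtilde}(G)$, functoriality gives $(c\cdotK d)\,m = c\bigl(d(m)\bigr)$, but $d(m)\in S_{H,\Wtilde}(K)=\{0\}$. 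Hence $I_G\cdot S_{H,\Wtilde}(G)=\{0\}$, and $S_{H,\Wtilde}(G)$ inherits an $\Ebar_G$-module structure.

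For part (b), I would start with the $E_H$-module isomorphism $S_{H,\Wtilde}(H)\cong\Wtilde$ of Proposition~\ref{simple functor proposition}(a); applying $f_{(L,\lambda)}$ yields a $k$-module isomorphism $f_{(L,\lambda)}S_{H,\Wtilde}(H)\cong f_{(L,\lambda)}\Wtilde$. Since $I_H$ annihilates $\Wtilde$, the action of $f_{(L,\lambda)}$ agrees with that of its image $\fbar_{(L,\lambda)}\in\Ebar_H$, so substituting $\Wtilde = \Ebar_H\fbar_{(L,\lambda)}\otimes_{k\Gamma_{(H,L,\lambda)}} W$ from (\ref{Vtilde}) produces
\begin{equation*}
f_{(L,\lambda)}\Wtilde \;\cong\; \fbar_{(L,\lambda)}\Ebar_H\fbar_{(L,\lambda)}\otimes_{k\Gamma_{(H,L,\lambda)}} W \;\cong\; k\Gamma_{(H,L,\lambda)}\otimes_{k\Gamma_{(H,L,\lambda)}} W \;\cong\; W\,,
\end{equation*}
where the middle isomorphism is induced by the inverse of the $k$-algebra isomorphism of Proposition~\ref{prop structure of Ebar}(c). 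All maps in sight are $k\Gamma_{(H,L,\lambda)}$-equivariant.

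The main obstacle is a small compatibility check: the statement prescribes the $k\Gamma_{(H,L,\lambda)}$-module structure on $f_{(L,\lambda)}S_{H,\Wtilde}(H)$ via the isomorphism $k\Gamma_{(H,L,\lambda)}\liso f_{(L,\lambda)}E_H^c f_{(L,\lambda)}$ of Lemma~\ref{comparing decompositions}(f), while my computation invokes the isomorphism $k\Gamma_{(H,L,\lambda)}\liso \fbar_{(L,\lambda)}\Ebar_H\fbar_{(L,\lambda)}$ of Proposition~\ref{prop structure of Ebar}(c). These two identifications are intertwined by the canonical map $E_H^c\hookrightarrow E_H\twoheadrightarrow\Ebar_H$, which, by Proposition~\ref{prop structure of Ebar}(b), restricts to a $k$-algebra isomorphism $f_{(L,\lambda)}E_H^c f_{(L,\lambda)}\liso \fbar_{(L,\lambda)}\Ebar_H\fbar_{(L,\lambda)}$ sending each standard basis element of $\Gamma_{(H,L,\lambda)}$ to its image in $\Ebar_H$. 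Therefore the two module structures coincide and the argument is essentially bookkeeping.
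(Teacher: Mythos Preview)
Your proposal is correct and follows essentially the same route as the paper: part~(a) uses minimality of $H$ for $S_{H,\Wtilde}$ to kill $I_G$ via vanishing at smaller groups, and part~(b) applies $f_{(L,\lambda)}$ to $S_{H,\Wtilde}(H)\cong\Wtilde$ and then invokes Proposition~\ref{prop structure of Ebar}(c). Your compatibility check between the isomorphisms of Lemma~\ref{comparing decompositions}(f) and Proposition~\ref{prop structure of Ebar}(c) is a detail the paper leaves implicit.
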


\begin{proof}
(a) We may assume that $S_{H,\Wtilde}(G)\neq \{0\}$. Then, with $H$, also $G$ is minimal for $S_{H,\Wtilde}$, since $|G|=|H|$. The result now follows from Proposition~\ref{simple functor proposition}(b).

\smallskip
(b) Recall from Lemma~\ref{lem J} that $S_{H,\Wtilde}(H)\cong \Wtilde = \Ebar_H\fbar_{(L,\lambda)}\otimes_{k\Gamma_{(H,L,\lambda)}} W$ as $E_H$-modules. Thus, using Part~(a) for $G=H$, we have $f_{(L,\lambda)}S_{H,\Wtilde}(H)=\fbar_{(L,\lambda)} S_{H,\Wtilde}(H)\cong \fbar_{(L,\lambda)}\Ebar_H\fbar_{(L,\lambda)}\otimes_{k\Gamma_{(H,L,\lambda)}} W$ as $\fbar_{(L,\lambda)}\Ebar_H\fbar_{(L,\lambda)}$-modules. Using the $k$-algebra isomorphism in Proposition~\ref{prop structure of Ebar}(c), we obtain the desired isomorphism.
\end{proof}

Let $G$ and $H$ be finite groups. Generalizing the notation from~\ref{covering algebra}(a), we define $B_k^{A,c}(G,H)$ as the $k$-span of the standard basis elements $\left[\frac{G\times H}{U,\phi}\right]$ of $B_k^A(G,H)$ with $(U,\phi)\in\calM_{G\times H}^c$, i.e., such that $p_1(U)=G$ and $p_2(U)=H$.

\begin{lemma}\label{lem simple functors 3}
Let $G$ and $H$ be finite groups and let $(K,\kappa)\in\calM_G^G$ and $(L,\lambda)\in\calM_H^H$.

\smallskip
{\rm (a)} If $|G|=|H|$ then the map
\begin{equation*}
  \alpha\colon k[\lGamma{(G,K,\kappa)}_{(H,L,\lambda)}] \to f_{(K,\kappa)} B^{A,c}_k(G, H) f_{(L,\lambda)},\quad 
  b\mapsto f_{(K,\kappa)} b f_{(L,\lambda)}\,,
\end{equation*}
is an isomorphism of $(k\Gamma_{(G,K,\kappa)},k\Gamma_{(H,L,\lambda)})$-bimodules, where $f_{(K,\kappa)} B^{A,c}_k(G,H) f_{(L,\lambda)}$ is viewed as $(k\Gamma_{(G,K,\kappa)},k\Gamma_{(H,L,\lambda)})$-bimodule via the isomorphism from Lemma~\ref{comparing decompositions}(f).

\smallskip
{\rm (b)} Assume that $(H,L,\lambda,[W])\in\calS$ and that $|G|=|H|$. Then there exists an epimorphism
\begin{equation*}
  k[\lGamma{(G,K,\kappa)}_{(H,L,\lambda)}] \otimes_{k\Gamma_{(H,L,\lambda)}} W  \to  f_{(K,\kappa)} S_{H,\Wtilde}(G)\end{equation*}
of $k\Gamma_{(G,K,\kappa)}$-modules, where $f_{(K,\kappa)} S_{H,\Wtilde}(G)$ is viewed as $k\Gamma_{(G,K,\kappa)}$-module via the isomorphism from Lemma~\ref{comparing decompositions}(f).
\end{lemma}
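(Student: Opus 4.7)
For part~(a), the plan is to mimic the proof of the structure theorem for $E_G^c$ (Theorem~\ref{covering algebra structure theorem}) applied to the $(E_G^c,E_H^c)$-bimodule $B_k^{A,c}(G,H)$. The starting point is that for a standard basis element $b=\left[\frac{G\times H}{U,\phi}\right]$ with $l_0(b)=(M,\mu)$ and $r_0(b)=(N,\nu)$, Lemma~\ref{lem delete f} yields $f_{(M,\mu)}b=bf_{(N,\nu)}=f_{(M,\mu)}bf_{(N,\nu)}$. Combined with the orthogonality of distinct $f$-idempotents (Proposition~\ref{e,f-relations 1}) and the fact from Proposition~\ref{eta and zeta}(c) that two linked pairs in $\calM_G^G$ with one below the other must coincide (same index), this will force $f_{(K,\kappa)}bf_{(L,\lambda)}=0$ unless $b\in\lGamma{(G,K,\kappa)}_{(H,L,\lambda)}$, in which case $\alpha(b)=bf_{(L,\lambda)}=f_{(K,\kappa)}b$. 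Surjectivity of $\alpha$ onto $f_{(K,\kappa)}B_k^{A,c}(G,H)f_{(L,\lambda)}$ is then immediate. For injectivity, I expand $f_{(L,\lambda)}$ via Equation~(\ref{definition of f's}): in the standard-basis expansion of $\alpha(b)$, the M\"obius coefficient of the leading term (corresponding to $L''=L$) is $1$ and recovers $b$, so $\alpha(b)=0$ forces $b=0$. The bimodule compatibility is inherited via the algebra isomorphisms of Proposition~\ref{prop structure of Ebar}(c) applied to $G$ and $H$.

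For part~(b), I define $\beta\colon k[\lGamma{(G,K,\kappa)}_{(H,L,\lambda)}]\otimes_{k\Gamma_{(H,L,\lambda)}}W\to S_{H,\Wtilde}(G)$ by $x\otimes w\mapsto \overline{x\otimes[w]}$, where $[w]:=\fbar_{(L,\lambda)}\otimes w\in\Wtilde=\Ebar_H\fbar_{(L,\lambda)}\otimes_{k\Gamma_{(H,L,\lambda)}}W$. The identity $y\cdot[w]=[y\cdot w]$ for $y\in\Gamma_{(H,L,\lambda)}$, derived from the algebra isomorphism of Proposition~\ref{prop structure of Ebar}(c) and the tensor-product balance, gives the required $k\Gamma_{(H,L,\lambda)}$-balancing. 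Using Lemma~\ref{lem delete f}, $x\otimes[w]=xf_{(L,\lambda)}\otimes[w]=f_{(K,\kappa)}x\otimes[w]=f_{(K,\kappa)}(x\otimes[w])$, so the image lies in $f_{(K,\kappa)}S_{H,\Wtilde}(G)$, and $k\Gamma_{(G,K,\kappa)}$-linearity on the left is manifest.

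For surjectivity, I analyse arbitrary $f_{(K,\kappa)}c\otimes\wtilde\in L_{H,\Wtilde}(G)$ for $c$ a standard basis element of $B_k^A(G,H)$. If $c$ is non-covering, then $c$ factors through a group of order strictly less than $|G|=|H|$, so for every $y\in B_k^A(H,G)$ the composition $yf_{(K,\kappa)}c$ lies in $I_H$ and annihilates $\Wtilde$ (since $\Wtilde$ is an $\Ebar_H$-module); the criterion in Lemma~\ref{lem J} then forces $f_{(K,\kappa)}c\otimes\wtilde\in J_{H,\Wtilde}(G)$, giving zero image. If $c\in B_k^{A,c}(G,H)$, part~(a) applied for each $(L',\lambda')\in\calM_H^H$ writes $f_{(K,\kappa)}c=\sum_{(L',\lambda')}f_{(K,\kappa)}z_{L'}f_{(L',\lambda')}$ with $z_{L'}\in k[\lGamma{(G,K,\kappa)}_{(H,L',\lambda')}]$; by Corollary~\ref{Bc decomposition}, only $(L',\lambda')\in\{L,\lambda\}_H$ contribute, as $f_{(L',\lambda')}$ annihilates $\Wtilde$ otherwise. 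For each such $(L',\lambda')$, I choose $y_{L'}\in\lGamma{(H,L,\lambda)}_{(H,L',\lambda')}$; the map $\wtilde\mapsto y_{L'}\wtilde$ sends $f_{(L',\lambda')}\Wtilde$ onto $f_{(L,\lambda)}\Wtilde\cong W$ (using~(\ref{eqn x e f relations}) and Lemma~\ref{lem simple functors 2}(b)), so $f_{(L',\lambda')}\wtilde=y_{L'}^{\mathrm{op}}[w']$ for some $w'\in W$; absorbing $y_{L'}^{\mathrm{op}}$ into the bimodule factor via Proposition~\ref{first consequences}(d) gives $z_{L'}y_{L'}^{\mathrm{op}}\in k[\lGamma{(G,K,\kappa)}_{(H,L,\lambda)}]$, and the summand becomes $\beta(z_{L'}y_{L'}^{\mathrm{op}}\otimes w')$. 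The main obstacle is part~(a): one must carefully rule out contributions of standard basis elements with $l_0$ or $r_0$ strictly below $(K,\kappa)$ or $(L,\lambda)$, for which the key ingredient is the index-preservation of linkage from Proposition~\ref{eta and zeta}(c). Once part~(a) is established, the tensor-product transport in part~(b) proceeds cleanly.
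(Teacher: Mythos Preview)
Your surjectivity argument in part~(a) contains a genuine gap. The claim that $f_{(K,\kappa)}bf_{(L,\lambda)}=0$ for every standard basis element $b\notin\lGamma{(G,K,\kappa)}_{(H,L,\lambda)}$ is false. Take $G=H$, choose any nontrivial $(K,\kappa)\in\calM_G^G$, set $(L,\lambda)=(K,\kappa)$, and let $b=e_{(1,1)}=1\in E_G^c$. Then $l_0(b)=(1,1)\neq(K,\kappa)$, so $b\notin\Gamma_{(G,K,\kappa)}$, yet $f_{(K,\kappa)}\cdot 1\cdot f_{(K,\kappa)}=f_{(K,\kappa)}\neq 0$. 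The identity from Lemma~\ref{lem delete f} only gives $f_{(M,\mu)}b=bf_{(N,\nu)}$; it does not let you replace $b$ by $f_{(M,\mu)}b$ inside $f_{(K,\kappa)}bf_{(L,\lambda)}$. What you do get (via $b=e_{(M,\mu)}b=be_{(N,\nu)}$ and Proposition~\ref{e,f-relations 1}) is the weaker statement that $f_{(K,\kappa)}bf_{(L,\lambda)}=0$ unless $(M,\mu)\le(K,\kappa)$ and $(N,\nu)\le(L,\lambda)$. Your ``linked pairs below each other coincide'' observation does not close this gap, because $(M,\mu)$ and $(K,\kappa)$ are not known to be $G$-linked.

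The paper avoids this issue entirely. When $\lGamma{(G,K,\kappa)}_{(H,L,\lambda)}\neq\emptyset$, it fixes an element $x$ of this set and builds a commutative square in which the horizontal maps are right multiplication by $\xop$ (respectively $\xop f_{(K,\kappa)}$); these are bijections with explicit inverses given by right multiplication by $x$ (respectively $xf_{(L,\lambda)}$), using the relations $x\xop=e_{(K,\kappa)}$, $\xop x=e_{(L,\lambda)}$ and Lemma~\ref{lem delete f}. The right vertical map is the known isomorphism $k\Gamma_{(G,K,\kappa)}\to f_{(K,\kappa)}E_G^c f_{(K,\kappa)}$ from Lemma~\ref{comparing decompositions}(f), and commutativity forces $\alpha$ to be an isomorphism as well. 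When the set is empty, the paper shows (using Proposition~\ref{idempotent proposition}(b)) that no covering $b$ can satisfy both $l_0(b)\le(K,\kappa)$ and $r_0(b)\le(L,\lambda)$, whence both sides vanish. Your injectivity argument via the M\"obius expansion is correct and gives a different proof of that half, but for surjectivity you need the twist-by-$x$ trick or an equivalent device. Since your part~(b) invokes surjectivity of $\alpha$ (for varying $(L',\lambda')$), it inherits the same gap; the paper's version of~(b) constructs an auxiliary functor $M_{H,W}$ surjecting onto $S_{H,\Wtilde}$ and then applies the established part~(a) once, after multiplying by $f_{(K,\kappa)}$.
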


\begin{proof}
(a) First we treat the case that $\lGamma{(G,K,\kappa)}_{(H,L,\lambda)}$ is non-empty and we pick an element $x$ from it. Consider the diagram of left $k\Gamma_{(G,K,\kappa)}$-module homomorphisms,
\begin{diagram}
  \movevertex(-40,0){k[\lGamma{(G,K,\kappa)}_{(H,L,\lambda)}]} & 
  \movearrow(5,0){\Ear[70]{-\cdotH \xop}} & 
  \movevertex(30,0){k\Gamma_{(G,K,\kappa)}} &&
  \movearrow(-40,0){\Sar{\alpha}} & & 
  \movearrow(30,0){\saR{\wr}} &&
  \movevertex(-50,0){f_{(K,\kappa)} B_k^{A,c}(G,H) f_{(L,\lambda)}} & 
  \Ear[60]{-\cdotH \xop f_{(K,\kappa)}} & 
  \movevertex(30,0){f_{(K,\kappa)} E_G^c f_{(K,\kappa)}} &&
\end{diagram}
where the right vertical map is the isomorphism $a\mapsto f_{(K,\kappa)} a f_{(K,\kappa)}$ from Lemma~\ref{comparing decompositions}(f). By Lemma~\ref{lem delete f} we have
\begin{equation}\label{eqn x f relations}
  f_{(K,\kappa)} x f_{(L,\lambda)} = x f_{(L,\lambda)}\quad\text{and}\quad 
  f_{(L,\lambda)} \xop f_{(K,\kappa)} = \xop f_{(K,\kappa)}\,.
\end{equation}
This implies that the diagram is commutative. And together with the relations in (\ref{eqn x e f relations}) it implies that the top horizontal map is an isomorphism with inverse $-\cdotG x$ and that the bottom horizontal map is an isomorphism with inverse $-\cdotG x f_{(L,\lambda)}$. Thus, also $\alpha$ is an isomorphism.

Next assume that $\lGamma{(G,K,\kappa)}_{(H,L,\lambda)}$ is empty. Let $(U,\phi)\in\calM^c_{G\times H}$ and set $b:=\left[\frac{G\times H}{U,\phi}\right] \in B_k^{A,c}(G,H)$, a general standard basis element. It suffices to show that $f_{(K,\kappa)} b f_{(L,\lambda)} = 0$. Set $(K',\kappa'):=l_0(U,\phi)\in \calM_G^G$ and $(L',\lambda'):=r_0(U,\phi)\in \calM_H^H$.
Then $b=e_{(K',\kappa')} b e_{(L',\lambda')}$. By Proposition~\ref{e,f-relations 1}, we may assume that $(K',\kappa')\le (K,\kappa)$ and $(L',\lambda')\le (L,\lambda)$. We will show that a pair $(U,\phi)$ with these conditions cannot exist. Assume it does. Then \begin{equation*}
   e_{(K,\kappa)}\left[\frac{G\times H}{U,\phi}\right] e_{(L,\lambda)} =  \left[\frac{G\times H}{V,\psi}\right]
\end{equation*}
for some $(V,\psi)\in\calM_{G\times H}$ satisfying $l(V,\psi)=(G,K,\kappa)$ and $l(V,\psi)=(H,L,\lambda)$ by Proposition~\ref{idempotent proposition}(b). Thus $(V,\psi)\in\lGamma{(G,K,\kappa)}_{(H,L,\lambda)}$, a contradiction.

\smallskip
(b) Recall from \ref{noth LGV} that, for every finite group $I$, the evaluation $L_{H,\Wtilde}(I)$ is given by 
\begin{equation*}
  L_{H,\Wtilde}(I)=B_k^A(I,H)\otimes_{E_H}\Wtilde = 
  B_k^A(I,H)\otimes_{E_H} \Ebar_H \fbar_{(L,\lambda)}\otimes_{k\Gamma_{(H,L,\lambda)}} W\,.
\end{equation*}
Similar to the functor $L_{H,\Wtilde}$ we define a functor $M_{H,W}\in\calF$ via
\begin{equation*}
  M_{H,W}(I)= B_k^A(I,H)\otimes_{E_H} E_H f_{(L,\lambda)}\otimes_{k\Gamma_{(H,L,\lambda)}} W 
  = B_k^A(I,H) f_{(L,\lambda)}\otimes_{k\Gamma_{(H,L,\lambda)}} W \,.
\end{equation*}
The functor is defined on morphisms in the same way as $L_{H,\Wtilde}$, namely via composition in the category $\calC$ from the left. Since the $(E_H,k\Gamma_{(H,L,\lambda)})$-bimodule $\Ebar_H \fbar_{(L,\lambda)}$ is a factor module of the $(E_H,k\Gamma_{(H,L,\lambda)})$-bimodule  $E_H f_{(L,\lambda)}$, we obtain an epimorphism of functors $M_{H,W}\to L_{H,\Wtilde}$. Composing it with the natural epimorphism $L_{H,\Wtilde}\to S_{H,\Wtilde}$ we obtain an epimorphism $\pi\colon M_{H,W}\to S_{H,\Wtilde}$ of $A$-fibered biset functors. In particular we have an epimorphism
\begin{equation*}
  \pi_G\colon B_k^A(G,H)f_{(L,\lambda)}\otimes_{k\Gamma_{(H,L,\lambda)}} W \to S_{H,\Wtilde}(G)
\end{equation*}
of $E_G$-modules. If $(U,\phi)\in\calM_{G\times H}\smallsetminus\calM^c_{G\times H}$ then $\pi_G(\left[\frac{G\times H}{U,\phi}\right]\otimes w) = 0$ for all $w\in W$. In fact, this follows from the decomposition of $\left[\frac{G\times H}{U,\phi}\right]$ as in   Proposition~\ref{decomposition} and from $|G|=|H|$, since $S_{H,\Wtilde}(I)=\{0\}$ for all finite groups $I$ with $|I|< |G|=|H|$. Thus, we also obtain an epimorphism
\begin{equation*}
  B_k^{A,c}(G,H)f_{(L,\lambda)}\otimes_{k\Gamma_{(H,L,\lambda)}} W \to S_{H,\Wtilde}(G)
\end{equation*}
of $E_G^c$-modules, and after multiplying with the idempotent $f_{(K,\kappa)}$ we obtain an epimorphism
\begin{equation*}
  f_{(K,\kappa)} B_k^{A,c}(G,H)f_{(L,\lambda)}\otimes_{k\Gamma_{(H,L,\lambda)}} W \to f_{(K,\kappa)} S_{H,\Wtilde}(G)
\end{equation*}
of $f_{(K,\kappa)} E_G^c f_{(K,\kappa)}$-modules. Using the isomorphism from Part~(a) and the $k$-algebra isomorphism $k\Gamma_{(G,K,\kappa)} \to f_{(K,\kappa)} E_G^c f_{(K,\kappa)}$ from Proposition~\ref{prop structure of Ebar}(c), we obtain the desired epimorphism of $k\Gamma_{(G,K,\kappa)}$-modules.
\end{proof}

\begin{nothing}{\bf Proof of Theorem~\ref{parametrization of simples}.}
(a) First we show that the map $\omega$ in (\ref{eqn omega}) is surjective.
So let $S\in\calF$ be simple. Choose a finite group
$G$ of minimal order  such that $S(G)\neq\{0\}$. Then, by
Proposition~\ref{simple functor proposition}(a), $U:=S(G)$ is an
irreducible $E_G$-module and $S\cong S_{G,U}$. Since
$S(H)=0$ for all finite groups $H$ with $|H|<|G|$, the module $U$ is
annihilated by $I_G$ and comes via
inflation from an irreducible $\Ebar_G$-module which we again denote by $U$. By Corollary~\ref{simples of Ebar 3}, we obtain $U\cong \Ebar_G \fbar_{(K,\kappa)}
\otimes_{k\Gamma_{(G,K,\kappa)}} V$ for some $(K,\kappa)\in\calR_G$
and some irreducible $k\Gamma_{(G,K,\kappa)}$-module $V$. Thus, $U\cong \Vtilde$ as defined in (\ref{Vtilde}) and $S\cong S_{G,U}\cong S_{G,\Vtilde}=S_{(G,K,\kappa,V)}$.

\smallskip
(b) Next we show that if $(G,K,\kappa,[V]), (H,L,\lambda,[W])\in\calS$ are linked then $S_{(G,K,\kappa,V)}\cong S_{(H,L,\lambda,W)}$. First, $(G,K,\kappa,[V])\sim(H,L,\lambda,[W])$ implies $|G|=|H|$, by Lemma~\ref{lem simple functors 1}(b). Further, by Lemma~\ref{lem simple functors 2}(a), $S_{H,\Wtilde}(G)$ is an $\Ebar_G$-module. Since $f_{(L,\lambda)}S_{H,\Wtilde}(H)\cong W$, by Lemma~\ref{lem simple functors 2}(b), and since $(G,K,\kappa)\sim(H,L,\lambda)$, we obtain $\fbar_{(K,\kappa)}S_{H,\Wtilde}(G)\neq0$, by Lemma~\ref{lem simple functors 1}(a). Moreover, since $(G,K,\kappa,[V])\sim (H,L,\lambda,[W])$, we have $V\cong k[\lGamma{(G,K,\kappa)}_{(H,L,\lambda)}] \otimes_{k\Gamma_{(H,L,\lambda)}} W$ and Lemma~\ref{lem simple functors 3}(b) implies $\Hom_{k\Gamma_{(G,K,\kappa)}}(V,\fbar_{(K,\kappa)}S_{H,\Wtilde}(G))\neq\{0\}$. But
\begin{align*} 
  & \Hom_{k\Gamma_{(G,K,\kappa)}}\bigl(V,\fbar_{(K,\kappa)}S_{H,\Wtilde}(G)\bigr)
  \cong  \Hom_{k\Gamma_{(G,K,\kappa)}}\bigl(V,\Hom_{\Ebar_G}(\Ebar_G \fbar_{(K,\kappa)}, S_{H,\Wtilde}(G))\bigr) \\
  & \qquad \cong \ \Hom_{\Ebar_G}\bigl(\Ebar_G \fbar_{(K,\kappa)}\otimes_{k\Gamma_{(G,K,\kappa)}} V, S_{H,\Wtilde}(G)\bigr) 
  \cong  \Hom_{\Ebar_G} (\Vtilde, S_{H,\Wtilde}(G))\,.
\end{align*}
Thus, $\Hom_{\Ebar_G}(\Vtilde,S_{H,\Wtilde}(G))\neq\{0\}$. Since both $\Vtilde$ and $S_{H,\Wtilde}(G)$ are simple $E_G$-modules, we obtain $\Vtilde\cong S_{H,\Wtilde}(G)$. Now Proposition~\ref{simple functor proposition}(d) implies that $S_{G,\Vtilde}=S_{H,\Wtilde}$.
  
\smallskip
(c) Finally, we show that if $(G,K,\kappa,[V]),(H,L,\lambda,[W])\in\calS$ satisfy $S_{(G,K,\kappa,[V])}\cong S_{(H,L,\lambda,[W])}$ then $(G,K,\kappa,V)\sim(H,L,\lambda,W)$. Note that, since $G$ is a minimal subgroup for $S_{G,\Vtilde}$ and $H$ is a minimal subgroup for $S_{H,\Wtilde}$ (by Proposition~\ref{simple functor proposition}(d)), we have $|G|=|H|$. Thus, $I_G$ annihilates $S_{H,\Wtilde}(G)$, and $S_{H,\Wtilde}(G)$ is an $\Ebar_G$-module, by Lemma~\ref{lem simple functors 2}(a). Since $S_{G,\Vtilde}\cong S_{H,\Wtilde}$, we have isomorphisms $\fbar_{(K,\kappa)} S_{H,\Wtilde}(G) \cong \fbar_{(K,\kappa)} S_{G,\Vtilde}(G) \cong V$ as $k\Gamma_{(G,K,\kappa)}$-modules, by Proposition~\ref{lem simple functors 2}(b). By Lemma~\ref{lem simple functors 3}(b), there exists an epimorphism $k[\lGamma{(G,K,\kappa)}_{(H,L,\lambda)}]\otimes_{k\Gamma_{(H,L,\lambda)}} W\to \fbar_{(K,\kappa)} S_{H,\Wtilde}(G)\cong V$ of $k\Gamma_{(G,K,\kappa)}$-modules. In particular, $\lGamma{(G,K,\kappa)}_{(H,L,\lambda)}\neq\emptyset$. Since $k[\lGamma{(G,K,\kappa)}_{(H,L,\lambda)}] \otimes_{k\Gamma_{(H,L,\lambda)}} W$ is a simple $k\Gamma_{(G,K,\kappa)}$-module (see \ref{covering algebra}(c)), we obtain $k[\lGamma{(G,K,\kappa)}_{(H,L,\lambda)}] \otimes_{k\Gamma_{(H,L,\lambda)}} W \cong V$ as $k\Gamma_{(G,K,\kappa)}$-modules. Thus, $(G,K,\kappa, [V])\sim(H,L,\lambda,[W])$, and the proof of Theorem~\ref{parametrization of simples} is complete.
\qed
\end{nothing}

The next proposition shows that the evaluation $S(H)$ of a simple functor $S$ parametrized by the quadruple $(G,K,\kappa,[V])$ vanishes, unless $H$ has a section that is related to the triple $(G,K,\kappa)$ in very strong sense.

\begin{proposition}\mylabel{non-vanishing of simple functors}
Let $(G,K,\kappa,[V])\in\calS$ and let $\Vtilde$ be the associated
irreducible $E_G$-module from (\ref{Vtilde}). Assume that $H$ is a finite group such that $S_{G,\Vtilde}(H)\neq \{0\}$. Then there exist subgroups
$H_1\trianglelefteq H_2\le H$ such that $I:=H_2/H_1$ has the following property: There exists a pair
$(L,\lambda)\in\calM_I^I$ with faithful $\lambda$, such that 
$(G,K,\kappa)\sim(I,L,\lambda)$, 
$G/K\cong I/L$, $K\cap G'\cong L\cap I'$, and $|I|\ge|G|$.
\end{proposition}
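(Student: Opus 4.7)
The plan is to use $S_{G,\Vtilde}(H)\ne\{0\}$ to produce a single standard basis element $b\in B_k^A(H,G)$ with $r_0(b)=(K,\kappa)$, and then to read off the required section $I$ from the standard decomposition of $b$. By Proposition~\ref{conditions for reduced}(a), $\kappa$ is faithful and $K\le Z(G)$; since $\Vtilde=\Ebar_G\fbar_{(K,\kappa)}\otimes_{k\Gamma_{(G,K,\kappa)}}V$, we have $f_{(K,\kappa)}\Vtilde\ne\{0\}$, so I pick $\tilde v\in f_{(K,\kappa)}\Vtilde\setminus\{0\}$. Using simplicity of $S_{G,\Vtilde}$ and Lemma~\ref{lem J}, one obtains a standard basis element $b=\left[\frac{H\times G}{U,\phi}\right]$ and some $c\in B_k^A(G,H)$ with $(c\cdotH b)\tilde v\ne 0$. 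The identity $e_{(K,\kappa)}\tilde v=\tilde v$ (which holds because $e_{(K,\kappa)}=\sum_{(L,\lambda)\ge(K,\kappa)}f_{(L,\lambda)}$ and $f_{(L,\lambda)}\tilde v=0$ for $(L,\lambda)\ne(K,\kappa)$) gives $b\otimes\tilde v=(b\cdotG e_{(K,\kappa)})\otimes\tilde v$ in $L_{G,\Vtilde}(H)$; by the opposite form of Proposition~\ref{idempotent proposition}(b), $b\cdotG e_{(K,\kappa)}$ is a single nonzero standard basis element, and after replacing $b$ by it I may assume $r_0(U,\phi)=(K,\kappa)$. In particular $k_2(U)=K$, $\phi_2=\kappa$, and $\ker(\phi_2)=1$.

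Next I apply the standard decomposition (Proposition~\ref{decomposition}) to this refined $b$. Setting $P:=p_1(U)\le H$, $Q:=p_2(U)\le G$, $\Khat:=\ker(\phi_1)\trianglelefteq k_1(U)$, and $I:=P/\Khat$, it reads
\[
  b\ =\ \Ind_P^H\cdot_{P}\Inf_{I}^P\cdot_{I} Y\cdot_Q\Res_Q^G,
\]
with the deflation factor trivial because $\ker(\phi_2)=1$, and $Y$ a transitive $A$-fibered $(I,Q)$-biset whose stabilizing pair $(\Ubar,\phibar)$ has full projections, faithful $\phibar_1$, and $r(\Ubar,\phibar)=(Q,K,\kappa)$. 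Hence $c\cdotH b$ factors through both $Q$ and $I$. Since $I_G$ annihilates $\Vtilde$ while $c\cdotH b$ does not, neither group can have order less than $|G|$: this forces $Q=G$ and $|I|\ge|G|$. Taking $H_2:=P$ and $H_1:=\Khat$ exhibits $I=H_2/H_1$ as a section of $H$.

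With $Q=G$, the stabilizing pair of $Y$ satisfies $l(\Ubar,\phibar)=(I,L,\lambda)$ with $L:=k_1(U)/\Khat$ and $\lambda:=\phibar_1$ faithful, and $r(\Ubar,\phibar)=(G,K,\kappa)$. Thus $(\Ubar,\phibar)^{\mathrm{op}}$ witnesses $(G,K,\kappa)\sim(I,L,\lambda)$, with $(L,\lambda)\in\calM_I^I$ and $\lambda$ faithful. The isomorphism $G/K\cong I/L$ is Proposition~\ref{eta and zeta}(c), and since $\kappa$ and $\lambda$ are both faithful the kernels $\Khat,\Lhat$ in Proposition~\ref{eta and zeta} are trivial, so part~(d) gives $K\cap G'\cong L\cap I'$; the inequality $|I|\ge|G|$ also follows a posteriori from $(K,\kappa)\in\calR_G$ via Proposition~\ref{prop reduced and linked}. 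The decisive maneuver is the replacement $b\mapsto b\cdotG e_{(K,\kappa)}$ at the outset, pinning $r_0(b)$ to exactly $(K,\kappa)$; without it the middle biset would link $(I,L,\lambda)$ only to a proper subpair of $(K,\kappa)$, and upgrading to the full linkage would require a substantially more delicate cohomological construction.
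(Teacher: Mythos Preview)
Your approach is essentially the same as the paper's, and the overall architecture is correct. There is, however, a genuine gap in the step where you replace $b$ by $b\cdotG e_{(K,\kappa)}$ and immediately conclude $r_0(U,\phi)=(K,\kappa)$. The opposite form of Proposition~\ref{idempotent proposition}(b) has the hypothesis $p_2(U)=G$, which you have not yet established at that point; and even granting $p_2(U)=G$, that proposition only yields $r_0\bigl(b\cdotG e_{(K,\kappa)}\bigr)=(k_2(U)K,\,\phi_2\cdot\kappa)$, which equals $(K,\kappa)$ only if one already knows $k_2(U)\le K$ and $\phi_2=\kappa|_{k_2(U)}$, i.e.\ $r_0(U,\phi)\le(K,\kappa)$. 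Neither of these is automatic from $(c\cdotH b)\tilde v\neq 0$ alone, so your subsequent claim ``$\ker(\phi_2)=1$'' is not yet justified either.

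The fix is to reorder exactly as the paper does. First deduce $p_2(U)=G$ and $\ker(\phi_2)=1$ directly from $c\cdotH b\notin I_G$ via the decomposition of Proposition~\ref{decomposition}: the factors $\Res^G_{p_2(U)}$ and $\Def^{p_2(U)}_{p_2(U)/\ker(\phi_2)}$ would otherwise force $c\cdotH b\in I_G$. Then use $\tilde v=f_{(K,\kappa)}\tilde v$ to get $b\cdotG f_{(K,\kappa)}\neq 0$; since now $b=b\cdotG e_{r_0(U,\phi)}$ by (the opposite of) Proposition~\ref{idempotent proposition}(c), Proposition~\ref{e,f-relations 1} forces $r_0(U,\phi)\le(K,\kappa)$. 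Only then does the replacement $b\mapsto b\cdotG e_{(K,\kappa)}$ give $r_0=(K,\kappa)$, and the rest of your argument goes through unchanged. (The paper actually lands on some $(\Ktilde,\kappatilde)\in\{K,\kappa\}_G$ rather than $(K,\kappa)$ itself, but since the conclusion only concerns the linkage class this is immaterial; your version, once corrected, is in fact slightly cleaner.)
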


\begin{proof} Recall that $S_{G,\Vtilde}=L_{G,\Vtilde}/J_{G,\Vtilde}$.
Since $S_{G,\Vtilde}(H)\neq \{0\}$, we have a proper inclusion
\begin{equation*}
  J_{G,\Vtilde}(H)\subset L_{G,\Vtilde}(H)= B_k^A(H,G)\otimes_{E_G} \Vtilde 
  = B_k^A(H,G)\otimes_{E_G} \Ebar_G\fbar_{(K,\kappa)} \otimes_{k\Gamma_{(G,K,\kappa)}} V\,.
\end{equation*}
By the explicit description of $J_{G,\Vtilde}(H)$ in Lemma~\ref{lem J}, and since $\Ebar_G\fbar_{(K,\kappa)}=\fbar_{\{K,\kappa\}}\Ebar_G\fbar_{(K,\kappa)}$, there exist elements $(U,\phi)\in\calM_{G\times H}$ and $(W,\psi)\in\calM_{H\times G}$ such that the standard basis elements 
\begin{equation*}
  x:=\left[\frac{G\times H}{U,\phi}\right]\in B_k^A(G,H)\quad\text{and}\quad 
  y:=\left[\frac{H\times G}{W,\psi}\right]\in B_k^A(H,G)
\end{equation*}
satisfy $x\cdotH y\cdotG f_{\{K,\kappa\}}\notin I_G$. This implies that there exists $(\Ktilde,\kappatilde)\in\{K,\kappa\}_G$ such that $x\cdotH y\cdotG f_{(\Ktilde,\kappatilde)}\notin I_G$. By the decomposition in Proposition~\ref{decomposition} we obtain immediately that $r(W,\psi)=(G,\Kttilde,\kappattilde)$ with faithful $\kappattilde$. Moreover, since $x\cdotH y\cdotG f_{(\Ktilde,\kappatilde)}\notin I_G$ and $y =y\cdotG e_{(\Kttilde,\kappattilde)}$, Proposition~\ref{e,f-relations 1} implies that $(\Kttilde,\kappattilde)\le(\Ktilde,\kappatilde)$. Again by Proposition~\ref{e,f-relations 1}, we have $y \cdotG f_{(\Ktilde,\kappatilde)} = y\cdotG e_{(\Ktilde,\kappatilde)} f_{(\Ktilde,\kappatilde)}$. Thus, replacing $y$ with the standard basis element $y\cdotG e_{(\Ktilde,\kappatilde)}$ and using the dual version of Proposition~\ref{idempotent proposition}(b), we may assume that $r(W,\psi)=(G,\Ktilde,\kappatilde)$ with $(\Ktilde,\kappatilde)\in\{K,\kappa\}_G$. Since $p_2(W)=G$ and $\kappa$ is faithful, one can decompose $y$ according to Proposition~\ref{decomposition} as
\begin{equation*}
  y=\Ind_{H_2}^H\mathop{\cdot}\limits_{H_2} \Inf_{I}^{H_2} \mathop{\cdot}\limits_{I} 
  \left[\frac{I\times G}{\Wtilde,\psitilde}\right]
\end{equation*}
with $H_2:=p_1(W)$ and $H_1:=\ker(\psi_1)$, where $\psi_1\in k_1(W)^*$ is defined as in \ref{noth M_G}, and $I:=H_2/H_1$. Note that $l(\Wtilde,\psitilde)=(I,L,\lambda)$ with faithful $\lambda\in L^*$ and that $r(\Wtilde,\psitilde)=r(W,\psi)=(G,\Ktilde,\kappatilde)$. Thus we have $(I,L,\lambda)\sim(G,\Ktilde,\kappatilde)\sim(G,K,\kappa)$. Parts (c) and (d) of Proposition~\ref{eta and zeta} now imply that $H/L\cong G/K$ and that $L\cap I'\cong K\cap G'$. Finally, since $x\cdotH y\cdotG f_{\{K,\kappa\}}\notin I_G$, and $y$ factors through $I$ we also have $|G|\le |I|$.
\end{proof}

\begin{remark}\label{rem Romero}
It will frequently happen that $G$ and $H$ are non-isomorphic finite groups and that $(K,\kappa)\in\calR_G$ and $(L,\lambda)\in\calR_H$ are reduced pairs such that $(G,K,\kappa)\sim(H,L,\lambda)$. All one needs is a pair $(U,\phi)\in\calM_G$ with $l(U,\phi)=(G,K,\kappa)$ and $r(U,\phi)= (H,L,\lambda)$. If $W$ is an irreducible $k\Gamma_{(H,L,\lambda)}$-module and $V:=k[\lGamma{(G,K,\kappa)}_{(H,L,\lambda)}]\otimes_{k\Gamma_{(H,L,\lambda)}} W$ is the corresponding irreducible $k\Gamma_{(G,K,\kappa)}$-module, then $S:=S_{G,\Vtilde}\cong S_{H,\Wtilde}$ and the minimal groups $G$ and $H$ for $S$ are non-isomorphic, answering a question of Serge Bouc (cf.~\cite[Conjecture~2.16]{Romero2012a}) to the negative.
See for instance \cite[Example~12]{Romero2012a}, giving an example of $(U,\phi)\in\calM_{G\times H}$ where $G$ is the quaternion group of order $8$ and $H$ the dihedral group of order $8$. For this example to work, $A$ needs to be an abelian group which contains an element of order $4$. A pair $(U,\phi)$ with $\phi$ of order $4$ is explicitly constructed. In this example, $(K,\kappa)=(Z(G),\kappa)$ and $(L,\lambda)=(Z(H),\lambda)$, where $\kappa$ and $\lambda$ are injective homomorphisms to $A$. Note that $(K,\kappa)$ and $(L,\lambda)$ are reduced by Proposition~\ref{conditions for reduced}(b).
\end{remark}

\section{Reduced pairs revisited}\label{sec completely reduced pairs}

In the previous section, we parametrized the simple fibered biset functors by equivalence classes of quadruples $(G,K,\kappa,[V])$, where $(K,\kappa)$ is reduced in $\calM_G$. The indirect definition of being reduced makes it very difficult to determine which pairs $(K,\kappa)$ are reduced in $\calM_G$. The goal of this section is to establish a more explicit characterization (see Corollary~\ref{reduced pairs for special A}) of being reduced under additional assumptions on $A$. These assumptions are satisfied in all the cases of interest to us; for instance when $A$ is the multiplicative group of an algebraically closed field $F$.

\begin{hypothesis}\mylabel{hypothesis}
The group $A$ has the following property:
There exists a (unique) set $\pi$ of primes such that for every
$n\in\NN$, the $n$-torsion part $\{a\in A\mid a^n=1\}$ of $A$ is
cyclic of order $n_\pi$. Here, $n_\pi$ denotes the $\pi$-part of $n$.
\end{hypothesis}

\begin{remark}\mylabel{hypothesis remark}
(a) By $\pi'$ we will denote the subset of primes which is
complementary to $\pi$. For an abelian group $B$ we
denote the $\pi$-torsion subgroup of $B$ by $B_\pi$. 

\smallskip
(b) Note that Hypothesis~\ref{hypothesis} implies that $\torA$ is
divisible, i.e., for every $a\in\torA$ and every $n\in\NN$ there
exists $b\in\torA$ such that $b^n=a$.

\smallskip
(c) The important part of $A$ is its torsion subgroup $\torA$. Nothing in what follows changes if
we replace $A$ with $\torA$, see Proposition~\ref{prop A and torA}(c). But we want to keep the freedom
to choose $A$ as the multiplicative group of a field or of an integral
domain. Then Hypothesis~\ref{hypothesis} means that, for any given
prime $p$, if the field has a primitive $p$-th root of unity then it
must have a primitive $p^n$-th root of unity for every $n$. It is not
difficult to see that Hypothesis~\ref{hypothesis} is equivalent to
having an isomorphism $\torA\cong(\CC^\times)_\pi\cong(\QQ/\ZZ)_\pi$,
but we want to keep the freedom to choose the ring or field more
naturally depending on the situation.
\end{remark}

The following propositions list some consequences of
Hypothesis~\ref{hypothesis} on $A$.

\begin{proposition}\mylabel{prop A and torA}
Assume that $\torA$ is divisible and that, for every $n\in\NN$, the $n$-torsion group $A_n$ is finite. Furthermore, let $G$ be a finite group acting trivially on $A$.

\smallskip
{\rm (a)} The group $B^2(G,A)$ of $2$-coboundaries has a complement in the group $Z^2(G,A)$ of $2$-cocycles.

\smallskip
{\rm (b)} The group $H^2(G,A)$ is finite and has exponent dividing $|G|$.

\smallskip
{\rm (c)} The canonical map $H^2(G,\torA)\to H^2(G,A)$ is an isomorphism and every $2$-cohomology class in $H^2(G,A)$ can be represented by a $2$-cocycle with values in $A_{|G|}$.
\end{proposition}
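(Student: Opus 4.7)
I would prove (c) first, since parts (a) and (b) follow from it with modest additional work. The workhorse throughout is the standard annihilation $|G|\cdot H^n(G,M)=0$ for any $G$-module $M$ and any $n\ge 1$, which follows from the identity $\mathrm{cor}\circ\mathrm{res}=[G:1]$ applied to the trivial subgroup.

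For the injectivity of $H^2(G,\torA)\to H^2(G,A)$ in (c), I would use the long exact sequence associated to $0\to\torA\to A\to A/\torA\to 0$. The relevant segment is $H^1(G,A/\torA)\to H^2(G,\torA)\to H^2(G,A)$, and $H^1(G,A/\torA)=\Hom(G,A/\torA)=0$ since $A/\torA$ is torsion-free and $G$ is finite. For the claim that every class in $H^2(G,A)$ is represented by a cocycle valued in $A_{|G|}$ (which subsumes surjectivity, as $A_{|G|}\subseteq\torA$), I would take $\alpha\in Z^2(G,A)$ and use the annihilation to write $|G|\alpha=\delta\beta$ for some $\beta\in C^1(G,A)$. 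Since $\torA$ is divisible and hence injective in the category of abelian groups, $A=\torA\oplus(A/\torA)$ as abelian groups. Combining this decomposition with the $|G|$-divisibility of $\torA$, one finds $\beta'\in C^1(G,A)$ such that $\beta-|G|\beta'$ is valued in $A_{|G|}$; then $\alpha-\delta\beta'\in Z^2(G,A_{|G|})$ is cohomologous to $\alpha$.

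Part (b) is then immediate: by (c), $H^2(G,A)$ is a quotient of $Z^2(G,A_{|G|})$, which is finite because $A_{|G|}$ is finite by hypothesis and $G$ is finite, so $H^2(G,A)$ is finite; the exponent divides $|G|$ by the annihilation already recalled. For (a), I would invoke (b) and (c) to lift a generating set of the finite abelian group $H^2(G,A)$ to cocycles in $Z^2(G,A_{|G|})$ of matching additive orders, assembling them into a finite subgroup $C\subseteq Z^2(G,A)$ that maps isomorphically onto $H^2(G,A)$; this $C$ is the desired complement to $B^2(G,A)$ in $Z^2(G,A)$.

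The main obstacle is the construction of $\beta'$ in (c): divisibility of $\torA$ alone handles only the $\torA$-component of $\beta$ under the splitting, so controlling the $A/\torA$-component simultaneously with preserving the cocycle relation requires careful bookkeeping. This is where the combination of $\torA$ divisible, $A_{|G|}$ finite, and the $|G|$-annihilation of cohomology must be orchestrated precisely, and once this step is in place the remaining arguments for (a) and (b) are essentially formal consequences.
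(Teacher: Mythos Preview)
Your approach is essentially the same as the paper's: the paper simply refers to the proof of Isaacs' Theorem~11.15, whose content is exactly the transfer-and-divide argument you outline---given $\alpha\in Z^2(G,A)$, write $\alpha^{|G|}=\delta\beta$, choose $\gamma$ with $\gamma^{|G|}=\beta$, and observe that $\alpha\cdot(\delta\gamma)^{-1}$ takes values in $A_{|G|}$. Your injectivity argument via the long exact sequence is also what the paper does.

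The obstacle you flag at the end is genuine and, in fact, fatal under the stated hypotheses. Isaacs' argument needs a $|G|$-th root of $\beta$, which requires $A$ itself (not merely $\torA$) to be divisible. Your proposed workaround---finding $\beta'$ with $\beta-|G|\beta'$ valued in $A_{|G|}$---cannot succeed in general: writing $A=\torA\oplus B$ with $B\cong A/\torA$ torsion-free, the $B$-component $\beta_f$ of $\beta$ must satisfy $\beta_f=|G|\beta'_f$ exactly (since $B\cap A_{|G|}=0$), and $B$ need not be $|G|$-divisible. Concretely, take $A=\QQ/\ZZ\oplus\ZZ$ (so $\torA=\QQ/\ZZ$ is divisible and every $A_n$ is finite) and $G=\ZZ/2$: then $H^2(G,A)\cong H^2(G,\QQ/\ZZ)\oplus H^2(G,\ZZ)\cong 0\oplus\ZZ/2$, while $H^2(G,\torA)=0$, so the canonical map is not surjective and part~(c) fails. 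The same example with $B$ replaced by an infinite direct sum of copies of $\ZZ$ also breaks the finiteness in~(b). Thus the proposition as stated (and hence the paper's reference to Isaacs) tacitly needs $A$ divisible; the applications in the paper are unaffected because, as the paper remarks, one may replace $A$ by $\torA$ throughout, and for $A=\torA$ your argument (and Isaacs') goes through cleanly.
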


\begin{proof}
Parts (a) and (b) are proved in the same way as Theorem~11.15 in \cite{Isaacs1976a}, using Lemma~\cite[Lemma~11.14]{Isaacs1976a}. The proof of Theorem~11.15 in \cite{Isaacs1976a} also shows that $C^2(G,A)_{|G|}\cdot B^2(G,A) = Z^2(G,A)$. This proves the surjectivity of the canonical map $H^2(G,\torA)\to H^2(G,A)$ and the existence part of the statement in Part~(c). The injectivity of the above map follows immediately from the long cohomology sequence, see also Remark~\ref{rem linkage criterion}(b).
\end{proof}

Recall the notation $B^*:=\Hom(B,A)$ for any abelian group $B$.

\begin{proposition}\mylabel{hypothesis consequence}
Assume Hypothesis~\ref{hypothesis}, let $S$ be a finite group and let
$B$ be a finite abelian group.

\smallskip
{\rm (a)} One has $|B^*|=|B|_\pi$.

\smallskip
{\rm (b)} For every subgroup $C$ of $B$, the sequence of natural maps
\begin{equation*}
  1\to (B/C)^* \to B^* \to C^* \to 1
\end{equation*}
is exact.

\smallskip
{\rm (c)} One has $\bigcap_{\mu\in B^*} \ker(\mu)= B_{\pi'}$.

\smallskip
{\rm (d)} The canonical map
\begin{equation*}
  \epsilon\colon B_\pi\to (B^*)^*\,,\quad b\mapsto (\mu\mapsto
  \mu(b))\,,
\end{equation*}
is an isomorphism.

\smallskip
{\rm (e)} Let $T\le S$ be an abelian subgroup with $|T|=|T|_\pi$. The
restriction map $S^*\to T^*$ is surjective if and only if $S'\cap
T=\{1\}$.

\smallskip
{\rm (f)} The cohomology group $H^2(S,\torA)$ is finite and the group
$B^2(S,\torA)$ of coboundaries has a complement in the group
$Z^2(S,\torA)$ of cocycles. Here, we assume that $S$ acts trivially on
$\torA$.
\end{proposition}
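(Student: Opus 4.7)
The strategy is to exploit Hypothesis~\ref{hypothesis}, which tells us that $\torA$ is a divisible $\pi$-torsion abelian group whose $n$-torsion $A_n$ is cyclic of order $n_\pi$. In effect $\torA$ is (non-canonically) the $\pi$-part of $\QQ/\ZZ$, so it is injective as a $\ZZ$-module. All of (a)--(d) reduce to this fact together with the structure theorem for finite abelian groups, (e) is formal from (b) and (c), and (f) is a direct application of Proposition~\ref{prop A and torA} to $\torA$ in place of $A$.

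For (a), I would decompose $B=B_\pi\times B_{\pi'}$ and observe that $B^*=B_\pi^*$ since any $\mu\in B^*$ has image in $\torA$, which is a $\pi$-group; then a further decomposition of $B_\pi$ into cyclic $p$-factors (for $p\in\pi$) reduces the count to $|\Hom(\ZZ/p^n\ZZ,A)|=|A_{p^n}|=p^n$ by the Hypothesis. Part~(b) is formal at the two easy spots; for surjectivity of $B^*\to C^*$ one extends $\mu\colon C\to A$ using the fact that its image lies in the injective $\ZZ$-module $\torA$ (Remark~\ref{hypothesis remark}(b)). For (c), the inclusion $B_{\pi'}\subseteq \bigcap_\mu\ker\mu$ is clear; conversely, given $b=b_\pi b_{\pi'}$ with $b_\pi\ne 1$, I would project $b_\pi$ to a cyclic summand $\ZZ/p^n\ZZ$ of $B_\pi$ where it is nontrivial, compose with an isomorphism $\ZZ/p^n\ZZ\myiso A_{p^n}$, and extend this to $B$ using (b). For (d), note that $B^*$ is a finite $\pi$-group, so applying (a) twice yields $|(B^*)^*|=|B^*|=|B|_\pi=|B_\pi|$; then $\epsilon$ is a map between finite groups of equal order, and injectivity follows directly from (c).

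For (e), I would factor the restriction map as
\begin{equation*}
  S^*=(S/S')^*\twoheadrightarrow (TS'/S')^*=(T/(T\cap S'))^*\hookrightarrow T^*,
\end{equation*}
where the surjection uses (b) applied to $TS'/S'\le S/S'$ and the injection identifies characters of $T/(T\cap S')$ with the characters of $T$ that vanish on $T\cap S'$. Since $T$ is an abelian $\pi$-group, part~(c) applied to $T$ says that characters separate points of $T$, so the inclusion is an equality precisely when $T\cap S'=\{1\}$. Finally, (f) is immediate from Proposition~\ref{prop A and torA} applied to $\torA$: divisibility of $\torA$ is Remark~\ref{hypothesis remark}(b), and finiteness of $(\torA)_n=A_n$ is built into the Hypothesis. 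The only step that requires any real care is the identification in (e) of the image of restriction with $(T/(T\cap S'))^*$, which is a straightforward but bookkeeping-heavy commutative-diagram chase; the rest is essentially a direct application of Pontryagin-style duality for finite $\pi$-groups with values in the $\pi$-primary injective cogenerator $\torA$.
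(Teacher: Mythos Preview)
Your proposal is correct and follows essentially the same route as the paper. The only notable variation is in part~(b): the paper proves surjectivity of $B^*\to C^*$ by a counting argument using part~(a) (the image has order $|B^*|/|(B/C)^*|=|B|_\pi/|B/C|_\pi=|C|_\pi=|C^*|$), whereas you invoke injectivity of the divisible group $\torA$ directly to extend homomorphisms; both arguments are equally elementary and the rest of your proof matches the paper's closely.
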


\begin{proof}
(a) This follows immediately from Hypothesis~\ref{hypothesis}
and the structure theorem for finite abelian groups.

\smallskip
(b) Since $\Hom(-,A)$ is left exact, the sequence is exact everywhere,
except possibly at $C^*$. By Part~(a), this implies that the
restriction map $B^*\to C^*$ has image of order
\begin{equation*}
  |B^*|/|(B/C)^*| = |B|_\pi / |B/C|_\pi = |C|_\pi = |C^*|\,.
\end{equation*}
Thus, the map $B^*\to C^*$ is surjective.

\smallskip
(c) Since $A$ has trivial $\pi'$-torsion, it is clear that $B_{\pi'}\subseteq\ker(\mu)$
for every homomorphism $\mu\in B^*$.
Conversely, assume that $b\in B$ has order $n$ with $n_\pi\neq 1$. Let
$a\in A$ be an element of order $n_\pi$ (which exists by
Hypothesis~\ref{hypothesis}). Then $\nu\colon \langle
b\rangle\to A$, $b\mapsto a$, defines a group homomorphism. By
Part~(b), $\nu$ can be extended to a homomorphism $\mu\in B^*$. Thus,
we have $\mu(b)=a\neq 1$ for some $\mu\in B^*$.

\smallskip
(d) By Part~(c), the map $\epsilon$ is injective. By Part~(a), we have
$|B_\pi|=|B|_\pi=|B^*|=|B^*|_\pi = |(B^*)^*|$. Thus, $\epsilon$ is also
surjective.

\smallskip
(e) Assume first that $S'\cap T$ contains an element $t\neq 1$. Since $T$ is a $\pi$-group,
there exists a homomorphism $\nu_1\colon\langle t\rangle \to A$ with
$\nu_1(t)\neq 1$. By Part~(b), this homomorphism can be extended to a
homomorphism $\nu_2\in T^*$. Then $\nu_2(t)\neq 1$ and, since $t\in S'$, the map $\nu_2$ is not
in the image of $S^*\to T^*$.

Now assume that $S'\cap T=\{1\}$. Then $S'T/S'$ is canonically
isomorphic to $T$ and every homomorphism $S'T/S'\to A$ can be extended
to a homomorphism $S/S'\to A$, by Part~(b). Thus, $S^*\to T^*$ is
surjective.

\smallskip
(f) This follows immediately from Proposition~\ref{prop A and torA}.
\end{proof}

\begin{notation}\mylabel{cohomology notation}
Let $S$ be a group and let $B$ be an abelian group. We set
\begin{equation*}
  M(S):=H^2(S,\torA)\,,
\end{equation*}
regarding $\torA$ endowed with the trivial $S$-action. If
$A=\CC^\times$, the unit group of the complex numbers, then $M(S)$ is
the well-known Schur multiplier of $S$. There exists a natural group
homomorphism
\begin{align*}
  \Psi\colon H^2(S,B) & \to \Hom(B^*,M(S))\,,\\
  [\alpha] & \mapsto (\mu\mapsto [\mu\circ\alpha])\,.
\end{align*}
Here, we denote by $[\alpha]$ the cohomology class of a cocycle
$\alpha$. As usual, for an abelian group $C$ we denote by
\begin{equation*}
  \Ext(C,B)
\end{equation*}
the subgroup of $H^2(C,B)$ consisting of cohomology classes of
cocycles $\alpha\colon C\times C\to B$ satisfying
$\alpha(c_1,c_2)=\alpha(c_2,c_1)$ for all $c_1,c_2\in C$. These
cohomology classes correspond to abelian extensions of $C$ by $B$.
Here we assume again that $C$ acts trivially on $B$. Note that one has
a natural group homomorphism
\begin{align*}
  \iota\colon\Ext(S/S',B) & \to H^2(S,B)\,,\\
  [\alpha] & \mapsto [\alpha\circ(\nu\times\nu)]\,,
\end{align*}
where $\nu\colon S\to S/S'$ denotes the natural epimorphism. It is
shown in \cite[Lemma~2.1.17]{Karpilovsky1987a} that $\iota$ is
injective and that the image of $\iota$ is equal to the subgroup
\begin{equation*}
  H_0^2(S,B)\le H^2(S,B)
\end{equation*}
which is defined as the set of cohomology classes whose corresponding group
extensions $1\to B\to X\to S\to 1$ satisfy $B\le Z(X)$ and $B\cap X'=\{1\}$.
\end{notation}

\begin{remark}\mylabel{cohomology diagram}
Assume that $G$ is a group and that $K$ is a normal abelian subgroup of
$G$. Moreover set $\Ktilde:=K\cap G'$. Then the maps defined in
\ref{cohomology notation} give rise to a commutative diagram

\begin{diagram}
  & & & & \movevertex(30,0){\Hom((K/\Ktilde)^*,M(G/K))} &&
  & & & & \movevertex(30,0){\naR{\epsilon_4}} &&
  \movevertexleft{\llap{$1\to$}\Ext(G/KG',K)} & \Ear{\iota_1} & H^2(G/K,K) & \Ear{\Psi_1}
  & \movevertexright{\Hom(K^*,M(G/K))\rlap{$\to 1$}} &&
  \movevertexright{\naR{\epsilon_1}} & & \naR{\epsilon_2} & &
  \movevertex(30,0){\naR{\epsilon_3}} &&
  \movevertexleft{\llap{$1\to$}\Ext(G/KG',\Ktilde)} & \Ear{\iota_2} & H^2(G/K,\Ktilde)
  & \Ear{\Psi_2} & \movevertexright{\Hom(\Ktilde^*,M(G/K))\rlap{$\to 1$}} &&
  & & & & \movevertex(30,15){\nar[20]} &&
  & & & & \movevertex(30.5,35){1} &&
\end{diagram}
Here, $\epsilon_1$, $\epsilon_2$ and $\epsilon_3$ are induced by the
inclusion $\Ktilde\to K$ and $\epsilon_4$ is induced by the natural
epimorphism $K\mapsto K/\Ktilde$. Also, in the domains of the maps
$\iota_1$ and $\iota_2$, we identify $(G/K)/(G/K)'$ with $G/KG'$ in the
obvious way.
\end{remark}

\begin{proposition}\mylabel{cohomology proposition}
Assume that $A$ satisfies Hypothesis~\ref{hypothesis}. Let $G$ be a
finite group, let $K$ be a subgroup of $Z(G)$ with $|K|_\pi=|K|$, and set $\Ktilde:=K\cap G'$.

\smallskip
{\rm (a)} The two rows and the right column in the diagram in
Remark~\ref{cohomology diagram} are exact.

\smallskip
{\rm (b)} Let $\alpha\in Z^2(G/K,K)$ be a cocycle describing the central
extension $1 \to K \to G\to G/K \to 1$. Then, in the diagram in
Remark~\ref{cohomology diagram}, one has
$(\epsilon_4\circ\Psi_1)([\alpha])=1$.
\end{proposition}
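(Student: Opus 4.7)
The plan is to dispatch part (b) and the right column of part (a) by direct computation, and to establish the rows of part (a) in three steps, with the surjectivity of $\Psi_1$ (and $\Psi_2$, which is handled identically) being the principal obstacle.

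For part (b), I would show that every $\mu\in(K/\Ktilde)^*$ extends to a homomorphism $\tilde\mu\in G^*$. Since $\mu$ vanishes on $\Ktilde=K\cap G'$, it factors through $K/(K\cap G')\cong KG'/G'$, a subgroup of the finite abelian group $G/G'$. By Proposition~\ref{hypothesis consequence}(b) the restriction map $(G/G')^*\to(KG'/G')^*$ is surjective, so $\mu$ extends to $G/G'$ and pulls back to some $\tilde\mu\in G^*$. Choosing a section $\sigma\colon G/K\to G$ with $\sigma(x)\sigma(y)=\alpha(x,y)\sigma(xy)$ and setting $\beta(x):=\tilde\mu(\sigma(x))$ gives $\mu(\alpha(x,y))=\beta(x)\beta(y)\beta(xy)^{-1}$, exhibiting $\mu\circ\alpha$ as a coboundary in $H^2(G/K,A)$; via the isomorphism $H^2(G/K,\torA)\cong H^2(G/K,A)$ from Proposition~\ref{prop A and torA}(c) this yields $(\epsilon_4\circ\Psi_1)([\alpha])=1$. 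The exactness of the right column of (a) then comes for free: it is the left-exact $\Hom(-,M(G/K))$-sequence associated to the short exact sequence $1\to(K/\Ktilde)^*\to K^*\to\Ktilde^*\to 1$ of Proposition~\ref{hypothesis consequence}(b).

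For the top row of (a) (the bottom row is identical with $K$ replaced by $\Ktilde$), injectivity of $\iota_1$ is \cite[Lemma~2.1.17]{Karpilovsky1987a}. The inclusion $\im(\iota_1)\subseteq\ker(\Psi_1)$ holds because, if $[\alpha]=\iota_1([\beta])$ with $[\beta]\in\Ext(G/KG',K)$, then $[\mu\circ\alpha]$ arises by applying the analogous $\iota$-map (with $\torA$-coefficients) to $[\mu\circ\beta]\in\Ext(G/KG',\torA)$, and the latter group is trivial since $\torA$ is divisible (Remark~\ref{hypothesis remark}(b)). For the reverse inclusion, let $X$ denote the central extension $1\to K\to X\to G/K\to 1$ corresponding to $[\alpha]\in\ker(\Psi_1)$. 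Reversing the coboundary argument of part (b), $[\mu\circ\alpha]=1$ in $H^2(G/K,A)$ is equivalent to $\mu\in K^*$ extending to a homomorphism $X\to A$; so the hypothesis says the restriction map $X^*\to K^*$ is surjective, which by Proposition~\ref{hypothesis consequence}(e) (applied to $T=K$, which is abelian and a $\pi$-group) is equivalent to $X'\cap K=1$. Since $K\le Z(X)$ is automatic for a central extension, $[\alpha]\in H_0^2(G/K,K)=\im(\iota_1)$.

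The main obstacle is surjectivity of $\Psi_1$, which I would handle by a cardinality comparison based on the universal coefficient theorem. By UCT one has $|H^2(G/K,K)|=|\Ext(G/KG',K)|\cdot|\Hom(H_2(G/K,\ZZ),K)|$, and, since $\Ext(-,\torA)=0$, also $M(G/K)\cong\Hom(H_2(G/K,\ZZ),\torA)$. Applying the tensor--hom adjunction together with Proposition~\ref{hypothesis consequence}(d), which supplies $\Hom(K^*,\torA)\cong K$ (as $K$ is a $\pi$-group), one computes
\[
\Hom(K^*,M(G/K))\cong\Hom(H_2(G/K,\ZZ)\otimes K^*,\torA)\cong\Hom(H_2(G/K,\ZZ),K)\,.
\]
Combined with $\ker(\Psi_1)=\Ext(G/KG',K)$ from the previous step, this yields $|\im(\Psi_1)|=|\Hom(K^*,M(G/K))|$, so $\Psi_1$ is surjective.
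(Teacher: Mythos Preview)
Your proof is correct. For part~(b), the right column in~(a), and the exactness of the rows at the first two terms, your argument coincides with the paper's: both obtain the extension of $\mu$ to $G$ from Proposition~\ref{hypothesis consequence}, and both identify $\ker(\Psi_1)$ with $H_0^2(G/K,K)$ via the criterion $X'\cap K=\{1\}$ of Proposition~\ref{hypothesis consequence}(e), exactly as in \cite[Lemma~2.1.17, Theorem~2.1.19]{Karpilovsky1987a}.

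The genuine difference is in the surjectivity of $\Psi_1$. The paper follows Karpilovsky's constructive argument: given $f\in\Hom(K^*,M(G/K))$, the splitting $Z^2(G/K,\torA)=B^2\times C$ from Proposition~\ref{hypothesis consequence}(f) lets one choose cocycle representatives $\gamma_\mu$ of $f(\mu)$ that depend multiplicatively on $\mu$, and then the duality $K\cong(K^*)^*$ of Proposition~\ref{hypothesis consequence}(d) assembles these into a single $K$-valued cocycle $\alpha$ with $\Psi_1([\alpha])=f$. You instead count: with $|\ker(\Psi_1)|=|\Ext(G/KG',K)|$ already established, the universal coefficient theorem gives $|H^2(G/K,K)|=|\Ext(G/KG',K)|\cdot|\Hom(H_2(G/K,\ZZ),K)|$, while $M(G/K)\cong\Hom(H_2(G/K,\ZZ),\torA)$ (again UCT, using $\Ext(-,\torA)=0$) and tensor--hom adjunction with $\Hom(K^*,\torA)\cong K$ yield $|\Hom(K^*,M(G/K))|=|\Hom(H_2(G/K,\ZZ),K)|$, forcing surjectivity. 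Your route is slicker and bypasses the explicit cocycle construction and the splitting result of Proposition~\ref{hypothesis consequence}(f), at the cost of importing the universal coefficient theorem; the paper's route stays within the self-contained toolkit of Proposition~\ref{hypothesis consequence} and actually produces a preimage, which is closer in spirit to the constructive use made of this diagram in Lemma~\ref{squeezing lemma}.
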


\begin{proof}
(a) The column is exact since, $\Hom(-,M(G/K))$ is left exact and
since $1\to (K/\Ktilde)^*\to K^* \to \Ktilde^* \to 1$
is exact by Proposition~\ref{hypothesis consequence}(b).

We only show exactness of the first row. The exactness proof for the second
one is analogous. For the exactness of the first row we refer to the proof of 
\cite[Theorem~2.1.19]{Karpilovsky1987a} in the case
$A=\CC^\times$. The same proof works for $A$ satisfying Hypothesis~\ref{hypothesis}. 
The injectivity of $\iota_1$ and that $\im(\iota_1)= H^2_0(G/K,K)$ is proved in
\cite[Lemma~2.1.17]{Karpilovsky1987a}. The exactness at $H^2(G/K,K)$
can be proved with the same arguments as in
\cite[Theorem~2.1.19]{Karpilovsky1987a}, using
Proposition~\ref{hypothesis consequence}(e) and that $K$ is a
$\pi$-group. The surjectivity of $\Psi_1$ is proved with the same
arguments as in the proof of \cite[Theorem~2.1.19]{Karpilovsky1987a},
using the results in Proposition~\ref{hypothesis consequence}(d) and (f).

\smallskip
(b) There exists a function $\rho\colon G/K\to G$ such that
\begin{equation*}
  \rho(x)\rho(y)=\alpha(x,y)\rho(xy)\,,
\end{equation*}
for all $x,y\in G/K$. Every element in $(K/\Ktilde)^*$ is of the form
$\mubar$ for a homomorphism $\mu\colon K\to A$ with
$\mu|_{\Ktilde}=1$. By the definition of $\Psi_1$ and $\epsilon_4$, it suffices 
to show that $[\mu\circ\alpha]=1$ in
$M(G/K)$, for each such $\mu$. From Proposition~\ref{hypothesis consequence}(e) we know
that $\mubar$ can be extended to a homomorphism $G/\Ktilde\to A$. Then, also $\mu$ can be extended to a homomorphism $\mutilde\colon G\to A$.
Applying $\mutilde$ to the above equation and setting
$\nu(x):=\mutilde(\rho(x))$, for $x\in G/K$, we obtain
\begin{equation*}
  \nu(x)\nu(y)=\mu(\alpha(x,y))\nu(xy)
\end{equation*}
for all $x,y\in G/K$, showing that $[\mu\circ\alpha]=1$.
\end{proof}

The motivation for the following definition is given in
Proposition~\ref{proposition on alphan}. It will be heavily used in
Lemma~\ref{squeezing lemma}.

\begin{definition}\mylabel{definition of alphan}
Let $S$ be a group and let $B$ be an abelian group (endowed with the
trivial $S$-action). For $n\in\NN$, let $F(S^{2n},B)$ denote the
abelian group of functions from $S^{2n}$ to $B$. For each $\alpha\in Z^2(S,B)$, we define functions $\alpha_n\in F(S^{2n},B)$, $n\in\NN$, recursively by setting
\begin{equation*}
  \alpha_1(s_1,s_2):= \alpha(s_1,s_2) \alpha(s_2,s_1)^{-1}
  \alpha(s_2s_1,s_1^{-1}s_2^{-1})^{-1} \alpha(s_1s_2,s_1^{-1}s_2^{-1})
  \alpha(1,1)^{-1}\,,
\end{equation*}
for $s_1,s_2\in S$, and by
\begin{align*}
  \alpha_n(s_1,\ldots,s_{2n}):= & \alpha_{n-1}(s_1,\ldots,s_{2n-2})
  \alpha_1(s_{2n-1},s_{2n})\cdot \\
  & \cdot
  \alpha([s_1,s_2]\cdots[s_{2n-3},s_{2n-2}],[s_{2n-1},s_{2n}])\,,
\end{align*}
for $n\ge 2$.
\end{definition}

\begin{proposition}\mylabel{proposition on alphan}
Let $S$ be a group, let $B$ be an abelian group (with trivial
$S$-action), let $\alpha\in Z^2(S,B)$ and let $n\in\NN$. The function
$\alpha_n\colon S^{2n}\to B$ has the following properties:

\smallskip
{\rm (a)} If
\begin{equation}\label{ses}
 1 \ar B \Ar{\iota} T \Ar{\pi} S \ar 1
\end{equation}
is a short exact sequence of groups with $\iota(B)\le Z(T)$ and if
$\sigma\colon S\to T$ is a section of $\pi$ (i.e.,
$\pi\circ\sigma=\id_S$) such that
\begin{equation}\label{section cocycle relation}
  \sigma(s_1)\sigma(s_2) = \iota(\alpha(s_1,s_2)) \sigma(s_1s_2)
\end{equation}
for all $s_1,s_2\in S$, then one has
\begin{align}\label{property of alphan}
  [\sigma(s_1),\sigma(s_2)] & \cdots [\sigma(s_{2n-1}),\sigma(s_{2n})]
  \\ \notag
  & = \iota(\alpha_n(s_1,s_2,\ldots,s_{2n})) \sigma([s_1,s_2]\cdots
  [s_{2n-1},s_{2n}])\,,
\end{align}
for all $s_1,\ldots,s_{2n}\in S$.

\smallskip
{\rm (b)} One has $(\alpha\beta)_n=\alpha_n\beta_n$ for all
$\alpha,\beta\in Z^2(S,B)$.

\smallskip
{\rm (c)} If $f\colon \Stilde\to S$ is a group homomorphism then
$(\alpha\circ(f\times f))_n = \alpha_n\circ(f\times\cdots\times f)$.

\smallskip
{\rm (d)} If also $C$ is an abelian group with trivial $S$-action and $f\colon B\to C$ is a group homomorphism 
then $(f\circ \alpha)_n = f\circ\alpha_n$.

\smallskip
{\rm (e)} If $S$ is abelian and $\alpha$ is symmetric (i.e.,
$\alpha(s_1,s_2)=\alpha(s_2,s_1)$ for all $s_1,s_2\in S$) then
$\alpha_n$ is the constant function with value $\alpha(1,1)^{-1}$.

\smallskip
{\rm (f)} If there exists a function $\mu\colon S \to B$ such that
$\alpha(s,t)=\mu(s)\mu(t)\mu(st)^{-1}$ for all $s,t\in S$ then
\begin{equation*}
  \alpha_n(s_1,\ldots,s_{2n}) =
  \mu([s_1,s_2]\cdots[s_{2n-1},s_{2n}])^{-1}\,,
\end{equation*}
for all $s_1,\ldots,s_{2n}\in S$.
\end{proposition}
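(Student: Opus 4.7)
The plan is to prove all six parts by induction on $n$, with part (a) being the essential computation from which the others follow either by inspecting the recursive definition or by parallel inductive arguments. I will first establish (a), then derive (b)--(d) as formal consequences of the recursive shape of $\alpha_n$, and finally handle the specialized computations in (e) and (f).

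For part (a), in the base case $n=1$ I start from $[\sigma(s_1),\sigma(s_2)]=\sigma(s_1)\sigma(s_2)(\sigma(s_2)\sigma(s_1))^{-1}$ and apply the relation (\ref{section cocycle relation}) to both products, using centrality of $\iota(B)$ to pull constants out, obtaining
\begin{equation*}
  [\sigma(s_1),\sigma(s_2)] = \iota\bigl(\alpha(s_1,s_2)\alpha(s_2,s_1)^{-1}\bigr)\,\sigma(s_1s_2)\sigma(s_2s_1)^{-1}.
\end{equation*}
It then suffices to express $\sigma(s_1s_2)\sigma(s_2s_1)^{-1}$ as $\iota(c)\sigma([s_1,s_2])$. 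Multiplying on the right by $\sigma(s_2s_1)$ and applying (\ref{section cocycle relation}) once more gives $c=\alpha([s_1,s_2],s_2s_1)^{-1}$. To bring this into the symmetric five-term form defining $\alpha_1$, I would apply the cocycle identity on the triple $\bigl([s_1,s_2],\,s_2s_1,\,s_1^{-1}s_2^{-1}\bigr)$, together with the normalization $\alpha(x,1)=\alpha(1,x)=\alpha(1,1)$ (derived from the cocycle relation with $y=z=1$ and $x=y=1$). This yields $\alpha([s_1,s_2],s_2s_1) = \alpha(1,1)\,\alpha(s_2s_1,s_1^{-1}s_2^{-1})\,\alpha(s_1s_2,s_1^{-1}s_2^{-1})^{-1}$, and substituting recovers the prescribed $\alpha_1$. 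The inductive step of (a) is then immediate from the recursive definition of $\alpha_n$: split the product of commutators as the first $n-1$ commutators times the last commutator, apply the inductive hypothesis and the base case, and combine the two trailing $\sigma$-values via one more use of (\ref{section cocycle relation}); the resulting central factor is exactly the one prescribed by the recursion.

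Parts (b), (c), (d) follow by straightforward induction on $n$, since the recursive formula for $\alpha_n$ is built only from products of values of $\alpha$ at arguments which are words and commutators in the $s_i$. Explicitly: the base-case expression $\alpha_1(s_1,s_2)$ is a product of five evaluations of $\alpha$, so pointwise multiplicativity of $\alpha\mapsto\alpha_1$ is visible by inspection, yielding (b); for (c) the homomorphism $f$ distributes over products, inverses and commutators, so the arguments at which $\alpha\circ(f\times f)$ is evaluated are precisely the $f$-images of the arguments appearing in $\alpha_n$, and induction closes the argument; for (d) the same inductive bookkeeping works with a group homomorphism $f$ on the coefficient side. Parts (e) and (f) are direct computations. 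For (e), when $S$ is abelian and $\alpha$ is symmetric, the first and fourth factors of $\alpha_1$ coincide with (the inverses of) the second and third using $s_1s_2=s_2s_1$, leaving $\alpha(1,1)^{-1}$; the induction step uses $[s_{2n-1},s_{2n}]=1$ together with the normalization $\alpha(x,1)=\alpha(1,1)$ to reduce $\alpha_n$ to the constant $\alpha(1,1)^{-1}$. For (f), direct substitution of $\alpha(s,t)=\mu(s)\mu(t)\mu(st)^{-1}$ into the five factors defining $\alpha_1$ produces a telescoping product that collapses to $\mu([s_1,s_2])^{-1}$; the inductive step then uses the coboundary formula on the extra term $\alpha([s_1,s_2]\cdots[s_{2n-3},s_{2n-2}],[s_{2n-1},s_{2n}])$ in the recursion, which combines with the inductive hypothesis and the base case by further telescoping to give $\mu([s_1,s_2]\cdots[s_{2n-1},s_{2n}])^{-1}$.

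The main obstacle is the careful bookkeeping in the base case of part (a): one must rewrite $\sigma(s_1s_2)\sigma(s_2s_1)^{-1}$ as a central element times $\sigma([s_1,s_2])$ and then manipulate the cocycle identity to match the particular symmetric five-term expression chosen in Definition~\ref{definition of alphan}. Once this identity is verified, the recursive architecture of the definition does the rest: each subsequent part is a formal induction on the same scheme, and no essentially new computation is required.
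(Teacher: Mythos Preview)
Your proposal is correct and follows essentially the same approach as the paper: induction on $n$ for part~(a), with the base case handled by expanding the commutator via the section--cocycle relation, and parts~(b)--(d) and~(f) by direct induction on the recursive definition. The only notable difference is tactical: for the base case of~(a) the paper first records the auxiliary formula $\sigma(s)^{-1}=\iota(\alpha(s,s^{-1})^{-1}\alpha(1,1)^{-1})\sigma(s^{-1})$ and applies it to $\sigma(s_2s_1)^{-1}$, whereas you reach the same five-term expression by applying the cocycle identity to the triple $([s_1,s_2],s_2s_1,s_1^{-1}s_2^{-1})$; and for part~(e) the paper deduces the result directly from~(a) applied to the abelian extension defined by the symmetric cocycle (so that both sides of~(\ref{property of alphan}) collapse, forcing $\alpha_n=\alpha(1,1)^{-1}$ via $\sigma(1)=\iota(\alpha(1,1))$), while you compute $\alpha_1=\alpha(1,1)^{-1}$ directly and then induct. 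Both variants are valid and of comparable length.
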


\begin{proof}
(a) Assume that we have a short exact sequence as in (\ref{ses}) with
$\iota(B)\le Z(T)$ and a section $\sigma$ of $\pi$ such that
Equation~(\ref{section cocycle relation}) holds for all $s_1,s_2\in
S$. Then Equation~(\ref{section cocycle relation})
implies
\begin{equation}\label{section of 1}
  \sigma(1)=\iota(\alpha(1,1))
\end{equation}
and
\begin{equation}\label{section inverse}
  \sigma(s)^{-1} = \iota(\alpha(s,s^{-1})^{-1} \alpha(1,1)^{-1})
  \sigma(s^{-1})
\end{equation}
for all $s\in S$. We prove Equation~(\ref{property of alphan}) by induction on $n$. For $n=1$, Equation~(\ref{section cocycle relation}) yields
\begin{align*}
  [\sigma(s_1),\sigma(s_2)] & = \sigma(s_1)\sigma(s_2) \bigl(\sigma(s_2)\sigma(s_1)\bigr)^{-1}\\ 
  & = \sigma(s_1s_2)\iota\alpha(s_1,s_2)\sigma(s_2s_1)^{-1}\iota\alpha(s_2,s_1)^{-1}\,.
\end{align*}
Now applying Equation~(\ref{section inverse}), for $s=s_2s_1$, to the third factor in the last expression, we obtain the desired formula. For the induction step from $n-1$ to $n$ we see by first applying the induction hypothesis and the case $n=1$, then Equation~(\ref{section cocycle relation}), that
\begin{align*}
  & [\sigma(s_1),\sigma(s_2)]\cdots[\sigma(s_{2n-1}),\sigma(s_{2n})] \\
  =\ & \iota\alpha_{n-1}(s_1,\ldots,s_{2n-2}) \sigma\bigl([s_1,s_2]\cdots[s_{2n-3},s_{2n-2}]\bigr) 
             \iota\alpha_1(s_{2n-1},s_{2n})\sigma([s_{2n-1},s_{2n}]) \\
  =\ & \sigma\bigl([s_1,s_2]\cdots[s_{2n-1},s_{2n}]\bigr) 
            \iota\alpha\bigl([s_1,s_2]\cdots[s_{2n-3},s_{2n-2}],[s_{2n-1},s_{2n}]\bigr)\cdot \\
            & \qquad\qquad \cdot \iota\alpha_{n-1}(s_1,\ldots,s_{2n-2})\iota\alpha_1(s_{2n-1},s_{2n})\,.
\end{align*}

\smallskip
Parts (b), (c), (d), and (f) follow immediately from the definition of $\alpha_n$ using induction on $n$. And Part~(e) is an easy consequence of Equations~(\ref{property of alphan}) and (\ref{section of 1}).
\end{proof}

\begin{lemma}\mylabel{squeezing lemma}
Assume Hypothesis~\ref{hypothesis} on $A$. Let $G$ be a finite group,
let $K\le Z(G)$ be a $\pi$-group, let $\kappa\in K^*$ be faithful and
set $\Ktilde:=K\cap G'$. Then
there exists a short exact sequence of groups
\begin{equation}\label{sestilde}
  1 \ar \Ktilde \Ar{\iotatilde} \Gtilde \Ar{\pitilde} G/K \ar 1
\end{equation}
with $\iotatilde(\Ktilde)\le Z(\Gtilde)\cap\Gtilde'$ and an element
$(M,\mu)\in\calM_{G\times\Gtilde}$ such that, after identifying
$\Ktilde$ and $\iotatilde(\Ktilde)$ via $\iotatilde$, one has
\begin{equation*}
  p_1(M)=G\,,\ k_1(M)= K\,,\ \mu_1=\kappa\,, \quad \text{and}\quad
  p_2(M)=\Gtilde\,, \ k_2(M)=\Ktilde\,, \ \mu_2=\kappa|_{\Ktilde}\,.
\end{equation*}
In particular, with $\kappatilde:=\kappa|_{\Ktilde}$, one has $(\Ktilde,\kappatilde)\in\calR_{\Gtilde}$, by Proposition~\ref{conditions for reduced}(b).
\end{lemma}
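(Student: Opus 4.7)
The plan is to build $\Gtilde$ as a central extension of $G/K$ by $\Ktilde$ whose extension class is chosen via the cohomology diagram of Remark~\ref{cohomology diagram}. Let $\alpha\in Z^2(G/K,K)$ be a cocycle describing the central extension $1\to K\to G\to G/K\to 1$. By Proposition~\ref{cohomology proposition}(b) we have $(\epsilon_4\circ\Psi_1)([\alpha])=1$, and by the exactness of the right column from Proposition~\ref{cohomology proposition}(a), together with the injectivity of $\epsilon_3$ (arising from applying $\Hom(-,M(G/K))$ to the short exact sequence $1\to(K/\Ktilde)^*\to K^*\to\Ktilde^*\to 1$ of Proposition~\ref{hypothesis consequence}(b)), there exists a unique $\psi\in\Hom(\Ktilde^*,M(G/K))$ with $\epsilon_3(\psi)=\Psi_1([\alpha])$. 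By surjectivity of $\Psi_2$ in the bottom row, I will lift $\psi$ to some $[\tilde\alpha]\in H^2(G/K,\Ktilde)$, and take $\Gtilde$ to be the central extension of $G/K$ by $\Ktilde$ described by $\tilde\alpha$; centrality of $\Ktilde$ in $\Gtilde$ is automatic from the construction.

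Unpacking the identity $\epsilon_3(\Psi_2([\tilde\alpha]))=\Psi_1([\alpha])$ shows that for every $\mu\in K^*$ one has $[\mu\circ\alpha]=[\mu|_{\Ktilde}\circ\tilde\alpha]$ in $M(G/K)$. Taking $\mu=\kappa$ yields $[\kappa\circ\alpha]=[\kappatilde\circ\tilde\alpha]$, which is precisely condition (ii) of Proposition~\ref{linkage criterion} (with $\eta=\id_{G/K}$). Applying the implication (ii)$\Rightarrow$(i) of that proposition then produces a pair $(M,\mu)\in\calM_{G\times\Gtilde}$ linking $(G,K,\kappa)$ to $(\Gtilde,\Ktilde,\kappatilde)$; the explicit construction from the proof of Proposition~\ref{linkage criterion} gives $M=\{(g,\tilde g)\in G\times\Gtilde\mid gK=\tilde g\Ktilde\}$ with $\mu$ extending $\kappa\times\kappatilde^{-1}$, so that the required projection, kernel, and invariant data are realized.

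The remaining and most delicate point is $\iotatilde(\Ktilde)\le\Gtilde'$. I will obtain it by showing that $\psi$ is injective. Suppose $\nu\in\Ktilde^*$ with $\psi(\nu)=1$; by Proposition~\ref{hypothesis consequence}(b) extend $\nu$ to some $\mu\in K^*$, so $\Psi_1([\alpha])(\mu)=\epsilon_3(\psi)(\mu)=\psi(\nu)=1$, which means $\mu\circ\alpha$ is a coboundary, and hence $\mu$ lifts to a homomorphism $\tilde\mu\colon G\to A$. Since $A$ is abelian, $\tilde\mu$ vanishes on $G'$ and thus on $\Ktilde=K\cap G'$, forcing $\nu=\tilde\mu|_{\Ktilde}=1$. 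Now the characters of $\Ktilde$ that extend to $\Gtilde$ are exactly those factoring through $\Ktilde/(\Ktilde\cap\Gtilde')$, and by Proposition~\ref{hypothesis consequence}(b) every such character actually extends; injectivity of $\psi=\Psi_2([\tilde\alpha])$ thus says $(\Ktilde/(\Ktilde\cap\Gtilde'))^*=1$, and because $\Ktilde$ is a $\pi$-group, Proposition~\ref{hypothesis consequence}(a) forces $\Ktilde=\Ktilde\cap\Gtilde'$, i.e., $\Ktilde\le\Gtilde'$. The concluding assertion $(\Ktilde,\kappatilde)\in\calR_{\Gtilde}$ is then immediate from Proposition~\ref{conditions for reduced}(b), since $\kappatilde$ is faithful as a restriction of $\kappa$.

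The hardest step is the injectivity of $\psi$: it is the bridge that translates the purely cohomological choice of $[\tilde\alpha]$ into the group-theoretic assertion $\Ktilde\le\Gtilde'$, and it depends crucially on the extension property in Proposition~\ref{hypothesis consequence}(b) together with the hypothesis $\Ktilde=K\cap G'$. Once this is in place, the rest of the argument is a direct reading of the diagram of Remark~\ref{cohomology diagram} followed by an application of the linkage criterion.
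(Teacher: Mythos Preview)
Your argument is correct and takes a genuinely different route from the paper. Both proofs begin the same way: choose $[\tilde\alpha]\in H^2(G/K,\Ktilde)$ with $\epsilon_3(\Psi_2([\tilde\alpha]))=\Psi_1([\alpha])$ and define $\Gtilde$ as the corresponding central extension. From here the two proofs diverge. The paper introduces, for any $2$-cocycle $\alpha$, auxiliary functions $\alpha_n\colon S^{2n}\to B$ that track how a section behaves on products of commutators (Definition~\ref{definition of alphan} and Proposition~\ref{proposition on alphan}); writing $\alpha=\gamma\cdot\beta$ with $\gamma$ symmetric and $\beta=\tilde\alpha$, it obtains $\alpha_n=\beta_n$ and uses this identity to verify by explicit commutator computations that $(K\times1)\cap M'=1$ (so that $\kappa\times1$ extends to $\mu$), that $\Delta(\Ktilde)\le M'$ (forcing $\mu_2=\kappa|_{\Ktilde}$), and that $\Ktilde\le\Gtilde'$.

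You bypass all of this. For the pair $(M,\mu)$ you simply invoke the implication (ii)$\Rightarrow$(i) of the linkage criterion, Proposition~\ref{linkage criterion}, with $\eta=\id_{G/K}$; that proof already packages the required Hochschild--Serre computation. For $\Ktilde\le\Gtilde'$ you give a purely cohomological argument: the kernel of $\Psi_2([\tilde\alpha])$ is exactly the set of characters of $\Ktilde$ that extend to $\Gtilde$, namely $(\Ktilde/(\Ktilde\cap\Gtilde'))^*$, and you show this kernel is trivial by lifting any $\nu\in\ker\psi$ to $\mu\in K^*$, observing that $[\mu\circ\alpha]=1$ forces $\mu$ to extend to $G$ and hence vanish on $G'\cap K=\Ktilde$. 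Your approach is shorter and more conceptual; the paper's is more explicit and in the process develops the commutator-tracking tool $\alpha_n$, which may be of independent use but is not needed for the statement at hand.
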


\begin{proof}
Consider the short exact sequence
\begin{equation*}
  1\ar K\Ar{\iota} G\Ar{\pi} G/K \to 1
\end{equation*}
with $\iota$ the inclusion and $\pi$ the canonical surjection. Let
$\sigma\colon G/K\to G$ be a section of $\pi$ and let $\alpha\in
Z^2(G/K,K)$ denote the cocycle satisfying Equation~(\ref{section cocycle
relation}) for all $s_1,s_2\in G/K$. We consider the commutative
diagram from Remark~\ref{cohomology diagram}. By
Proposition~\ref{cohomology proposition}(b) we have
$\epsilon_4(\Psi_1(\alphabar))=1$. As the right hand
column and bottom row of the diagram are exact, there exists $\beta\in
Z^2(G/K,\Ktilde)$ such that
$\epsilon_3(\Psi_2([\beta]))=\Psi_1([\alpha])$. It follows that
$\epsilon_2([\beta])^{-1}\cdot[\alpha]\in\ker(\Psi_1)$ and, by the
exactness of the top row, there exists a symmetric $\gamma\in
Z^2(G/KG',K)$ such that
$\iota_1([\gamma])\cdot\epsilon_2([\beta])=[\alpha]$. After
changing $\beta$ and $\gamma$ by a coboundary, we may assume that
$\beta(1,1)=1$ and that $\gamma(1,1)=1$. After changing the section
$\sigma$, we may also assume that
\begin{equation*}
  \alpha(s_1,s_2)=\gamma(\sbar_1,\sbar_2)\beta(s_1,s_2)
\end{equation*}
for all $s_1,s_2\in G/K$, where $\sbar$ denotes the image of an
element $s\in G/K$ under the natural epimorphism $G/K\to G/KG'$. It
follows that $\alpha(1,1)=1$ and that $\sigma(1)=1$. By
Proposition~\ref{proposition on alphan}(e), we
have
\begin{equation}\label{vanishing of gamma}
  \gamma_n(\sbar_1,\ldots,\sbar_{2n})=1
\end{equation}
and by parts (b) and (c) of the same proposition, we
obtain
\begin{equation}\label{alpha = beta}
  \alpha_n(s_1,\ldots,s_{2n})=\beta_n(s_1,\ldots,s_{2n})
\end{equation}
for all $s_1,\ldots,s_{2n}\in G/K$ and all $n\in\NN$.

\smallskip
Next, we define the group $\Gtilde$, using the cocycle $\beta$, as the
set $\Ktilde\times G/K$ with multiplication
\begin{equation*}
  (\ktilde_1,s_1)(\ktilde_2,s_2):=(\beta(s_1,s_2)\ktilde_1\ktilde_2,s_1s_2)
\end{equation*}
for $\ktilde_1,\ktilde_2\in\Ktilde$ and $s_1,s_2\in G/K$. We obtain a
short exact sequence as in (\ref{sestilde}) with
$\iotatilde(\ktilde):=(\ktilde,1)$ and $\pitilde(\ktilde,s):=s$, for
$\ktilde\in\Ktilde$ and $s\in G/K$. With the section
\begin{equation*}
  \sigmatilde\colon G/K\to \Gtilde\,, \quad s\mapsto (1,s)\,,
\end{equation*}
of $\pitilde$ we obtain
\begin{equation}\label{section cocycle relation tilde}
  \sigmatilde(s_1)\sigmatilde(s_2)=
  \iotatilde(\beta(s_1,s_2))\sigmatilde(s_1s_2)
\end{equation}
for all $s_1,s_2\in G/K$. Note that since $\beta(1,1)=1$, also
$\beta(s,1)=\beta(1,s)=1$ for all $s\in S$. This implies that $(1,1)$
is the identity of $\Gtilde$ and that $\iotatilde(\Ktilde)\le
Z(\Gtilde)$.

\smallskip
Next, we define $M\le G\times\Gtilde$ by
\begin{equation*}
  M:=\{(g,(\ktilde,s))\in G\times\Gtilde\mid \pi(g)=s\}\,.
\end{equation*}
We will identify $\Ktilde$ with $\iotatilde(\Ktilde)$ and view
$\Ktilde$ as a subgroup of $\Gtilde$. It is now clear from the
definition of $M$ that
\begin{equation*}
  p_1(M)= G\,,\quad p_2(M)=\Gtilde\,,\quad k_1(M)=K\,, \quad
  k_2(M)=\Ktilde\,.
\end{equation*}

\smallskip
Next, we want to define $\mu\in M^*$ such that $\mu$ extends
$\kappa\times 1\in (K\times1)^*$. By Proposition~\ref{hypothesis
consequence}(e), it suffices to show that $(K\times1)\cap M'=1$. So
let $k\in K$ and assume that $(k,(1,1))\in M'$. Then we can write
$(k,(1,1))$ as a product of $n$ commutators
$[(g_i,(\ktilde_i,s_i)),(g'_i,(\ktilde'_i,s'_i))]$ of elements in $M$,
$i=1,\ldots,n$. Note that, since the above elements
belong to $M$, we have $\pi(g_i)=s_i$ and $\pi(g'_i)=s'_i$ for
$i=1,\ldots,n$. Note also that, since $K$ is central in $G$ and
$\Ktilde$ is central in $\Gtilde$, we may replace $g_i$ by
$\sigma(\pi(g_i))=\sigma(s_i)$, $g'_i$ by $\sigma(s_i)$ and we may
replace $\ktilde_i$ and $\ktilde'_i$ by $1$. This way, we see that we
can write
\begin{equation*}
  (k,(1,1)) =
  [(\sigma(s_1),\sigmatilde(s_1)),(\sigma(s'_1),\sigmatilde(s'_1))]\cdots
  [(\sigma(s_n),\sigmatilde(s_n)),(\sigma(s'_n),\sigmatilde(s'_n))]
\end{equation*}
for certain element $s_1,s'_1,\ldots,s_n,s'_n\in G/K$. It follows from
Proposition~\ref{proposition on alphan} and Equation~(\ref{alpha = beta}) 
that
\begin{align}\label{commutator product}
  & [(\sigma(s_1),\sigmatilde(s_1)),(\sigma(s'_1),\sigmatilde(s'_1))]\cdots
  [(\sigma(s_n),\sigmatilde(s_n)),(\sigma(s'_n),\sigmatilde(s'_n))]\\
  \notag
  = &
  \Bigl(\alpha_n(s_1,\ldots,s'_n)\sigma([s_1,s'_1]\cdots[s_n,s'_n]),
  \bigl(\beta_n(s_1,\ldots,s'_n),
  \sigmatilde([s_1,s'_1]\ldots[s_n,s'_n])\bigr)\Bigr)
  \\
  \notag
  = &
  \Bigl(\beta_n(s_1,\ldots,s'_n)\sigma([s_1,s'_1]\cdots[s_n,s'_n]),
  \bigl(\beta_n(s_1,\ldots,s'_n),
  \sigmatilde([s_1,s'_1]\ldots[s_n,s'_n])\bigr)\Bigr)\,.
\end{align}
This implies that $\sigmatilde([s_1,s'_1]\ldots[s_n,s'_n])=1$ and that
$\beta_n(s_1,\ldots,s'_n)=1$. Moreover, the first of the last two equation implies that
$[s_1,s'_1]\cdots[s_n,s'_n]=1$ and that
$\sigma([s_1,s'_s]\cdots[s_n,s'_n])=\sigma(1)=1$. Altogether, we
obtain that $k=1$.

\smallskip
Now we know that there exists $\mu\in M^*$ with $\mu|_{K\times
1}=\kappa\times 1$ and we choose any such $\mu$. Then clearly $\mu_1=\kappa$.

\smallskip
Next, we show that
\begin{equation*}
  \Delta(\Ktilde)=\{(\ktilde,(\ktilde,1))\mid\ktilde\in\Ktilde\} \le
  M'\,.
\end{equation*}
Let $\ktilde\in\Ktilde$. Since $\Ktilde\in G'$, there exist
elements $g_1,g'_1,\ldots,g_n,g'_n\in G$ such that
$\ktilde=[g_1,g'_1]\cdots[g_n,g'_n]$. Since $K$ is central in $G$, we
may assume that $g_i=\sigma(s_i)$ and $g'_i=\sigma(s'_i)$ for
$i=1,\ldots,n$ and elements $s_1,\ldots,s'_n\in S$. Thus, we have
\begin{align*}
  \ktilde & =
  [\sigma(s_1),\sigma(s'_1)]\cdots[\sigma(s_n),\sigma(s'_n)] \\
  & = \beta_n(s_1,\ldots,s'_n)\sigma([s_1,s'_1]\cdots[s_n,s'_n])\,,
\end{align*}
by Proposition~\ref{proposition on alphan} and Equation~(\ref{alpha =
beta}). This implies that $\sigma([s_1,s'_1]\cdots[s_n,s'_n])\in\Ktilde$ and,
consequently, that $[s_1,s'_1]\cdots[s_n,s'_n]=1$. Thus $\sigma([s_1,s'_1]\cdots[s_n,s'_n])=1$ and
$\ktilde=\beta_n(s_1,\ldots,s'_n)$. With Equation~(\ref{commutator product}),
we see now that
\begin{equation*}
  (\ktilde,(\ktilde,1)) =
  [(\sigma(s_1),\sigmatilde(s_1)),(\sigma(s'_1),\sigmatilde(s'_1))]\cdots
  [(\sigma(s_n),\sigmatilde(s_n)),(\sigma(s'_n),\sigmatilde(s'_n))]\,,
\end{equation*}
since $\sigmatilde(1)=1$. Therefore $\Delta(\Ktilde)\le M'$.

\smallskip
Since $\Delta(\Ktilde)\le M'$, we have
\begin{equation*}
  1=\mu(\ktilde,(\ktilde,1))=\mu_1(\ktilde)\mu_2^{-1}(\ktilde)
\end{equation*}
for all $\ktilde\in\Ktilde$. This implies that
$\mu_2=\mu_1|_{\Ktilde}=\kappa|_{\Ktilde}$.
Finally, since $\Delta(\Ktilde)\le M'$, we have
$(\ktilde,(\ktilde,1))\in (G\times\Gtilde)'=G'\times\Gtilde'$ and
therefore $(\ktilde,1)\in\Gtilde'$ for all $\ktilde\in\Ktilde$. Thus
$\Ktilde\le\Gtilde'$.
\end{proof}

\begin{definition}
Let $G$, $K$, $\kappa$, $\Gtilde$, and $(M,\mu)$ be as in
Lemma~\ref{squeezing lemma}. By abuse of notation, we define the
$A$-fibered $(G,\Gtilde)$-biset
\begin{equation*}
  \Ins_{\Gtilde}^G:=\left(\frac{G\times\Gtilde}{M,\mu}\right)\,,
\end{equation*}
the {\em insertion} from $\Gtilde$ to $G$ (which inserts the section
$K/\Ktilde$ into $\Gtilde$), and we define the $A$-fibered
$(\Gtilde,G)$-biset
\begin{equation*}
  \Del^G_{\Gtilde}:=(\Ins_{\Gtilde}^G)^{\mathrm{op}}\,,
\end{equation*}
the {\em deletion} from $G$ to $\Gtilde$ (which deletes the section
$K/\Ktilde$ from $G$).

\smallskip
Note that $(G,K,\kappa)$ does not determine $\Gtilde$ and $(M,\mu)$
uniquely, so that $\Ins_{\Gtilde}^G$ and $\Del^G_{\Gtilde}$ are not well-defined elements. 
But all that matters for our purposes is the existence of $\Gtilde$ and $(M,\mu)$ with
the properties from Lemma~\ref{squeezing lemma}, for
given $(G,K,\kappa)$.
\end{definition}

Next we decompose the biset $X$ from Proposition~\ref{decomposition}
further. Recall the properties of $X$ mentioned in the paragraph
following Proposition~\ref{decomposition}.

Recall from Definition~\ref{def twosided reduced} that, for finite groups $G$ and $H$, we call a pair $(U,\phi)\in\calM^c_{G\times H}$ {\em reduced} if $l_0(U,\phi)\in\calR_G$ and $r_0(U,\phi)\in\calR_H$.

\begin{proposition}\mylabel{decomposition 2}
Let $G$ and $H$ be finite groups, let $(U,\phi)\in\calM_{G\times H}$, and set $X:=\left(\frac{G\times H}{U,\phi}\right)$. 
Assume that $p_1(U)=G$, $p_2(U)=H$, set $K:=k_1(U)$, $L:=k_2(U)$, and assume that $\kappa:=\phi_1\in K^*$ and $\lambda:=\phi_2\in L^*$ are faithful. Then there exists a decomposition
\begin{equation*}
  X\cong \Ins_{\Gtilde}^G\otimes_{A\Gtilde} Y \otimes_{A\Htilde}
  \Del^H_{\Htilde}
\end{equation*}
with the following properties:

\smallskip
{\rm (a)} The group $\Gtilde$ is a central extension of $G/K$ by
$\Ktilde:=K\cap G'$ and the group $\Htilde$ is a central extension of $H/L$ by
$\Ltilde:= L\cap H'$.

\smallskip
{\rm (b)} One has an isomorphism
$Y\cong\left(\frac{\Gtilde\times\Htilde}{\Utilde,\phitilde}\right)$
for a reduced pair $(\Utilde,\phitilde)\in\calM_{\Gtilde\times\Htilde}$
satisfying
\begin{equation*}
  k_1(\Utilde)=\Ktilde\,,\ k_2(\Utilde)=\Ltilde\,,\
  \phitilde_1=\kappa|_{\Ktilde}\,,\ \phitilde_2=\lambda|_{\Ltilde}\,.
\end{equation*}
\end{proposition}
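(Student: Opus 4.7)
The strategy is to apply the squeezing lemma (Lemma~\ref{squeezing lemma}) to each of the two sides independently, producing central extensions
\begin{equation*}
  1\to\Ktilde\to\Gtilde\to G/K\to 1\quad\text{and}\quad 1\to\Ltilde\to\Htilde\to H/L\to 1
\end{equation*}
together with pairs $(M,\mu)\in\calM_{G\times\Gtilde}$ and $(N,\nu)\in\calM_{H\times\Htilde}$ satisfying $l_0(M,\mu)=(K,\kappa)$, $r_0(M,\mu)=(\Ktilde,\kappa|_{\Ktilde})$, $\Ktilde\le Z(\Gtilde)\cap\Gtilde'$, and analogously for $(N,\nu)$. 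Note that the hypotheses of Lemma~\ref{squeezing lemma} are met: $\kappa$ and $\lambda$ are faithful by assumption, so the standard arguments show $K\le Z(G)$ (similarly $L\le Z(H)$), and the faithfulness of $\kappa$ forces $K$ to embed in $\torA$, making $K$ a $\pi$-group (and likewise $L$). These pairs provide $\Ins_{\Gtilde}^G$, $\Del_{\Gtilde}^G$, $\Ins_{\Htilde}^H$, $\Del_{\Htilde}^H$ by definition.

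The candidate for the middle factor is
\begin{equation*}
  Y:=\Del^G_{\Gtilde}\otimes_{AG} X\otimes_{AH}\Ins_{\Htilde}^H\,.
\end{equation*}
With this choice, the identity $X\cong\Ins_{\Gtilde}^G\otimes_{A\Gtilde}Y\otimes_{A\Htilde}\Del_{\Htilde}^H$ follows from associativity of the tensor product together with the two computations
\begin{equation*}
  \Ins_{\Gtilde}^G\otimes_{A\Gtilde}\Del_{\Gtilde}^G\cong E_{(K,\kappa)}\quad\text{and}\quad
  \Ins_{\Htilde}^H\otimes_{A\Htilde}\Del_{\Htilde}^H\cong E_{(L,\lambda)}\,,
\end{equation*}
which are instances of Proposition~\ref{idempotent proposition}(d), combined with $E_{(K,\kappa)}\otimes_{AG}X\cong X\cong X\otimes_{AH}E_{(L,\lambda)}$ from Proposition~\ref{idempotent proposition}(c).

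The main technical step is to identify $Y$ as a single transitive $A$-fibered $(\Gtilde,\Htilde)$-biset with the advertised stabilizing pair. This is a double application of the Mackey formula (Corollary~\ref{Mackey formula for fibered bisets 2}). In each application the relevant double coset space collapses to one element because both projections of each factor are surjective, and the compatibility condition~(\ref{compatibility condition}) reduces to equalities of $\kappa$ with $\kappa$ on $K$ and of $\lambda$ with $\lambda$ on $L$. Writing $\Utilde:=(\Mbar*U)*N$ and $\phitilde:=(\mubar*\phi)*\nu$ (where $\Mbar,\mubar$ denote the opposite data of $M,\mu$), Proposition~\ref{first consequences}(a) immediately gives $p_1(\Utilde)=\Gtilde$ and $p_2(\Utilde)=\Htilde$. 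The most delicate point is to show $k_1(\Utilde)=\Ktilde$ and $k_2(\Utilde)=\Ltilde$: for the first, one unwinds the definition of $*$ to see that $(\gtilde,1)\in\Utilde$ forces an intermediate $g\in K$ (using $h\in L$ forces $g\in K$, which follows from the Goursat isomorphism $\eta_U\colon H/L\liso G/K$ of Proposition~\ref{eta and zeta}(c) sending identity to identity), and then $(g,\gtilde)\in M$ with $g\in K$ forces $\gtilde\in\Ktilde$ via the corresponding isomorphism $\eta_M$; conversely, taking $g=h=1$ shows that every element of $\Ktilde$ arises. Evaluating $\phitilde$ on $(\gtilde,1)$ with $g=h=1$ yields $\phitilde_1=\kappa|_{\Ktilde}$, and symmetrically $\phitilde_2=\lambda|_{\Ltilde}$.

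Finally, reducedness of $(\Utilde,\phitilde)$ follows directly: $\kappa|_{\Ktilde}$ is faithful (restriction of a faithful map) and $\Ktilde\le\Gtilde'$ by Lemma~\ref{squeezing lemma}, so $(\Ktilde,\kappa|_{\Ktilde})\in\calR_{\Gtilde}$ by Proposition~\ref{conditions for reduced}(b), and symmetrically $(\Ltilde,\lambda|_{\Ltilde})\in\calR_{\Htilde}$. The only step that is more than bookkeeping is the Goursat-type argument in the computation of $k_1(\Utilde)$ and $k_2(\Utilde)$; everything else is an application of the machinery already established.
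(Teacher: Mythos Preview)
Your proposal is correct and follows essentially the same approach as the paper: define $Y:=\Del^G_{\Gtilde}\otimes_{AG} X\otimes_{AH}\Ins_{\Htilde}^H$, recover $X$ from $Y$ via Proposition~\ref{idempotent proposition}(d) and (c), and read off the invariants of $Y$. One simplification you missed: the explicit Goursat-type computation of $k_1(\Utilde)$ and $k_2(\Utilde)$ is unnecessary, since $r(M^{\mathrm{op}},\mu^{\mathrm{op}})=(G,K,\kappa)=l(U,\phi)$ and $r(U,\phi)=(H,L,\lambda)=l(N,\nu)$, so Proposition~\ref{first consequences}(d) applied twice gives $l(\Utilde,\phitilde)=l(M^{\mathrm{op}},\mu^{\mathrm{op}})=(\Gtilde,\Ktilde,\kappa|_{\Ktilde})$ and $r(\Utilde,\phitilde)=r(N,\nu)=(\Htilde,\Ltilde,\lambda|_{\Ltilde})$ directly.
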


\begin{proof}
We define $Y:=\Del^G_{\Gtilde}\otimes_{AG} X \otimes_{AH}
\Ins_{\Htilde}^G$. All the statements of the proposition follow from Lemma~\ref{squeezing
lemma}, Proposition~\ref{idempotent proposition}(b) and (c) and from
the associativity of the tensor product.
\end{proof}

Assuming Hypothesis~\ref{hypothesis}, we have now a simple criterion for a pair to be reduced, thus simplifying the classification of simple fibered biset functors.

\begin{corollary}\mylabel{reduced pairs for special A}
Assume that $A$ satisfies Hypothesis~\ref{hypothesis}, let $G$ be a finite group, and let $(K,\kappa)\in\calM_G^G$. Then $(K,\kappa)$ is reduced in $G$ if and only if $K$ is a cyclic $\pi$-subgroup contained in $Z(G)\cap G'$ and $\kappa$ is faithful.
\end{corollary}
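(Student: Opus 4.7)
The plan is to handle the two directions separately, with the reverse direction being essentially free and the forward direction relying on Lemma~\ref{squeezing lemma} (the squeezing lemma) as its engine.

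For the ``$\Leftarrow$'' direction, the statement is immediate from Proposition~\ref{conditions for reduced}(b), which requires only that $\kappa$ be faithful and $K\le G'$. Hypothesis~\ref{hypothesis} is not needed here.

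For the ``$\Rightarrow$'' direction, I would proceed in two steps. First, Proposition~\ref{conditions for reduced}(a) immediately yields that $\kappa$ is faithful and that $K\le Z(G)$. Since $\kappa\colon K\hookrightarrow A$ is then an injection of a finite abelian group into $A$, Hypothesis~\ref{hypothesis} forces $K$ to be a cyclic $\pi$-group: its image lies in $\torA$, and by hypothesis every finite subgroup of $\torA$ (being contained in the $n$-torsion $A_n$, which is cyclic of order $n_\pi$ for any $n$ annihilating $K$) is cyclic of $\pi$-order.

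The remaining (and main) point is to establish $K\le G'$. Here I would apply Lemma~\ref{squeezing lemma} to the triple $(G,K,\kappa)$---whose hypotheses are now all verified---to obtain a group $\Gtilde$ (a central extension of $G/K$ by $\Ktilde:=K\cap G'$) together with $(M,\mu)\in\calM_{G\times\Gtilde}$ satisfying $l(M,\mu)=(G,K,\kappa)$ and $r(M,\mu)=(\Gtilde,\Ktilde,\kappa|_{\Ktilde})$. This exhibits a linkage $(G,K,\kappa)\sim(\Gtilde,\Ktilde,\kappa|_{\Ktilde})$. Since $(K,\kappa)\in\calR_G$ by assumption, Proposition~\ref{prop reduced and linked} rules out linkage to any triple over a strictly smaller group, forcing $|\Gtilde|\ge|G|$. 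But tautologically $|\Gtilde|=|\Ktilde|\cdot|G/K|\le|K|\cdot|G/K|=|G|$, so equality must hold throughout, giving $|\Ktilde|=|K|$ and hence $K=K\cap G'\le G'$, as required.

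The genuine work all lies in Lemma~\ref{squeezing lemma}, whose construction of $\Gtilde$ and $(M,\mu)$ is the cohomological heart of this section; with that lemma in hand, the corollary reduces to a short comparison of orders via the linkage criterion in Proposition~\ref{prop reduced and linked}.
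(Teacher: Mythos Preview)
Your proof is correct and follows essentially the same strategy as the paper's: both directions rest on Proposition~\ref{conditions for reduced}, and the crucial step $K\le G'$ is obtained from the squeezing lemma by observing that $|\Gtilde|=|\Ktilde|\cdot|G/K|<|G|$ whenever $\Ktilde=K\cap G'\neq K$. The only cosmetic difference is in the final contradiction: you invoke the linkage characterization of reduced pairs (Proposition~\ref{prop reduced and linked}), whereas the paper cites Proposition~\ref{decomposition 2} applied to $(\Delta_K(G),\phi_\kappa)$ to exhibit $e_{(K,\kappa)}$ as factoring through $\Gtilde$, hence lying in $I_G$; but Proposition~\ref{decomposition 2} is itself built directly on Lemma~\ref{squeezing lemma}, so the two arguments are the same in substance.
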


\begin{proof}
It was already proved in Proposition~\ref{conditions for reduced}(b) that the condition is sufficient for $(K,\kappa)$ to be reduced in $G$. Conversely, assume that $(K,\kappa)$ is reduced in $G$. By Proposition~\ref{conditions for reduced}(a), we know that $\kappa$ is faithful and $K\le Z(G)$. This implies that $K$ is a cyclic $\pi$-group, by the hypothesis on $A$. Assume that $K$ is not contained in $G'$. Applying Proposition~\ref{decomposition 2} to the the pair $(\Delta_K(G), \phi_\kappa)$ shows that $e_{(K,\kappa)}\in I_G$, since $|\Gtilde|<|G|$.
\end{proof}

Combining Propositions~\ref{decomposition} and \ref{decomposition 2},
we obtain the following refined decomposition of an arbitrary transitive fibered biset under Hypothesis~\ref{hypothesis}.

\begin{theorem}\mylabel{decomposition 3}
Assume that $A$ satisfies Hypothesis~\ref{hypothesis}. Let $G$ and $H$ be finite groups, let 
$(U,\phi)\in\calM_{G\times H}$, and let
\begin{equation*}
  \Khat\le\Ktilde\le K\le P\le G\,,\ \Lhat\le L\tilde\le L\le Q\le H\,,\  \kappa:=\phi_1\in K^*\,,\ \text{and}\ \lambda:=\phi_2\in L^*
\end{equation*} 
be defined as in Proposition~\ref{eta and zeta}.

There exists a decomposition
\begin{equation*}
  \left(\frac{G\times H}{U,\phi}\right)\cong
  \Ind_P^G\otimes\Inf_{P/\Khat}^P\otimes\Ins_{\Gtilde}^{P/\Khat}
  \otimes X \otimes
  \Del^{Q/\Lhat}_{\Htilde}\otimes\Def^Q_{Q/\Lhat}\otimes\Res^H_Q\,,
\end{equation*}
where $X\cong\left(\frac{\Gtilde\times
\Htilde}{\Utilde,\phitilde}\right)$ is a transitive $A$-fibered
$(\Gtilde,\Htilde)$-biset such that $(\Utilde,\phitilde)\in\calM^c_{\Gtilde\times\Htilde}$ is reduced, where $\Gtilde$ is a central extension of $P/K$ by $\Ktilde/\Khat$ and $\Htilde$ is a central extension of $Q/L$
by $\Ltilde/\Lhat$, and $\phitilde_1=\kappabar|_{\Ktilde/\Khat}$ and
$\phitilde_2=\lambdabar|_{\Ltilde/\Lhat}$. Here, $\kappabar\in (K/\Khat)^*$ and $\lambdabar\in(L/\Lhat)^*$ are
defined as the unique homomorphisms inflating to $\kappa$ and
$\lambda$, respectively.
\end{theorem}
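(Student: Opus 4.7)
The strategy is to combine the standard decomposition from Proposition~\ref{decomposition} with the further factorization of its middle factor provided by Proposition~\ref{decomposition 2}. Associativity of the tensor product of $A$-fibered bisets then glues the seven factors together.

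First, applying Proposition~\ref{decomposition} to $(U,\phi)$ yields
\begin{equation*}
  \left(\frac{G\times H}{U,\phi}\right) \cong
  \Ind_P^G \otimes_{AP}
  \Inf_{P/\Khat}^P \otimes_{A(P/\Khat)}
  Y \otimes_{A(Q/\Lhat)}
  \Def_{Q/\Lhat}^Q \otimes_{AQ}
  \Res^H_Q\,,
\end{equation*}
where $Y=\left(\frac{P/\Khat\times Q/\Lhat}{\Ubar,\phibar}\right)$ with $\Ubar:=U/(\Khat\times\Lhat)$ viewed inside $(P/\Khat)\times(Q/\Lhat)$ via the canonical isomorphism, and $\phibar$ induced by $\phi$. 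The paragraph following Proposition~\ref{decomposition} records that $Y$ has full projections ($p_1(\Ubar)=P/\Khat$ and $p_2(\Ubar)=Q/\Lhat$), and that the associated homomorphisms $\phibar_1=\kappabar$ and $\phibar_2=\lambdabar$ are faithful.

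Second, I would verify that these are exactly the hypotheses of Proposition~\ref{decomposition 2}, applied to the pair $(\Ubar,\phibar)\in\calM_{(P/\Khat)\times(Q/\Lhat)}$. The relevant kernel subgroups of that proposition are $k_1(\Ubar)=K/\Khat$ and $k_2(\Ubar)=L/\Lhat$, and the derived subgroups of $P/\Khat$ and $Q/\Lhat$ are $P'\Khat/\Khat$ and $Q'\Lhat/\Lhat$ respectively. Therefore
\begin{equation*}
  (K/\Khat)\cap (P/\Khat)' = (K\cap P'\Khat)/\Khat = \Ktilde/\Khat\,,
\end{equation*}
and symmetrically $(L/\Lhat)\cap(Q/\Lhat)'=\Ltilde/\Lhat$, matching the notation of the theorem. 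Applying Proposition~\ref{decomposition 2} then produces
\begin{equation*}
  Y \cong \Ins_{\Gtilde}^{P/\Khat}\otimes_{A\Gtilde}
  X\otimes_{A\Htilde}
  \Del^{Q/\Lhat}_{\Htilde}\,,
\end{equation*}
where $\Gtilde$ is a central extension of $(P/\Khat)/(K/\Khat)\cong P/K$ by $\Ktilde/\Khat$, and $\Htilde$ is a central extension of $Q/L$ by $\Ltilde/\Lhat$, and $X\cong\left(\frac{\Gtilde\times\Htilde}{\Utilde,\phitilde}\right)$ is reduced with $\phitilde_1=\kappabar|_{\Ktilde/\Khat}$ and $\phitilde_2=\lambdabar|_{\Ltilde/\Lhat}$.

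Finally, substituting the second isomorphism into the first and invoking associativity of $\otimes$ (established in Section~\ref{sec tensor product}) yields the claimed seven-factor decomposition. No step is genuinely difficult once Propositions~\ref{decomposition} and \ref{decomposition 2} are in hand; the only point that merits care is the bookkeeping identifying $\Ktilde/\Khat$ as the intersection of $k_1(\Ubar)$ with the derived subgroup of $p_1(\Ubar)$ (and symmetrically on the right), which is a purely group-theoretic verification using $\Ktilde=\Khat P'\cap K$. The hypothesis on $A$ enters only through the application of Proposition~\ref{decomposition 2}.
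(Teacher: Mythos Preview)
Your proposal is correct and follows exactly the route the paper intends: the theorem is presented there simply as the result of combining Propositions~\ref{decomposition} and~\ref{decomposition 2}, and you have spelled out that combination carefully, including the group-theoretic bookkeeping identifying $\Ktilde/\Khat$ and $\Ltilde/\Lhat$ with the relevant intersections. In fact your write-up contains more detail than the paper's own one-line justification.
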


%\begin{definition}\mylabel{reduced pairs (U,phi)}
%Let $G$ and $H$ be finite groups. 
%
%\smallskip
%(a) We call a pair $(K,\kappa)\in\calR_G(A)$ {\em completely reduced} (in $G$) if $\kappa$ is faithful and $K\le G'$. Note that in this case $K$ is also central in $G$. Recall from Proposition~\ref{conditions for reduced}(b) that every completely reduced pair is also reduced.
%
%\smallskip
%(b) We call a pair $(U,\phi)\in\calM^c_{G\times H}(A)$ {\em completely reduced}, if $l_0(U,\phi)\in\calM_G(A)$ and $r_0(U,\phi)\in\calM_H(A)$ are completely reduced.
%\end{definition}
%
%
%\begin{remark}
%Note that by Proposition~\ref{eta and zeta}(c) and (d) the existence of a reduced pair $(U,\phi)\in\calM_{G\times H}(A)$ implies that $G/K\cong H/L$ and $K\cong L$. In particular $|G|=|H|$. Note that by Example~\ref{special example} the groups $G$ and $H$ need not be isomorphic.
%\end{remark}

%%%%%%%%%%%%%%% SECTION 11 %%%%%%%%%%%%%%%%%%%%%%%%%%%%%%%%%%%%%%

\section{Examples}\label{sec examples}
In this section, we look at several examples of fibered biset functors. We give a criterion for a fibered biset functor to be simple and we apply it to some functors. As before, $A$ is a multiplicatively written abelian group and $k$ denotes a commutative ring.

\smallskip
More precisely, we will identify the simple functors $S_{(\{1\},\{1\},1,k)}$ for some choices of $A$ and $k$ as well-known objects in the representation theory of finite groups. Note that if $G$ is the trivial group then $(K,\kappa)=(\{1\},1)$ is the only reduced pair and $\Gamma_{(\{1\},\{1\},1)}$ is again the trivial group, so that $k$ is naturally a $k\Gamma_{(\{1,\},\{1\},1)}$-module. 

\smallskip
For any functor $F\in\calFkA$ and any class $\calH\subseteq\mathrm{Ob}(\calC)$ of finite groups we define subfunctors $\calI_{F,\calH}$ and $\calK_{F,\calH}$ of $F$ as follows. For a finite group $G$ we set
\begin{gather*}
  \calI_{F,\calH}(G):=\sum_{\substack{H\in\calH \\ x\in B_k^A(G,H)}} \im\bigl(F(x)\colon F(H)\to F(G)\bigr) \subseteq F(G)\,, \\
  \calK_{F,\calH}(G):=\bigcap_{\substack{H\in\calH \\ y\in B_k^A(H,G)}} \ker\bigl(F(y)\colon F(G)\to F(H)\bigr) \subseteq F(G)\,.
\end{gather*}
It is clear that $\calI_{F,\calH}$ and $\calK_{F,\calH}$ are subfunctors of $F$.

\medskip
The following proposition gives a criterion for $F$ to be simple. It holds for more general functor categories with the same proof. See also \cite[Theorem 3.1]{TW} for a similar result for Mackey functors.

\begin{proposition}\label{prop simplicity criterion}
Let $F\in\calFkA$ be an $A$-fibred biset functor over $k$ and let $H$ be a finite group such that $F(H)\neq \{0\}$. Then, $F$ is a simple functor if and only if the following three conditions are satisfied:

\smallskip
{\rm (i)} $F(H)$ is a simple $E_H$-module.

\smallskip
{\rm (ii)} $\calI_{F,\{H\}}=F$.

\smallskip
{\rm (iii)} $\calK_{F,\{H\}}=0$.
\end{proposition}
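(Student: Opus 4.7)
The plan is to prove both implications by direct manipulation of the subfunctors $\calI_{F,\{H\}}$ and $\calK_{F,\{H\}}$. Conceptually, condition~(ii) says that $F$ is generated by $F(H)$, condition~(iii) says that $F$ is cogenerated by $F(H)$, and condition~(i) is exactly what is needed at the group $H$ itself, so the three conditions together should capture simplicity precisely.

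For the forward direction, assume $F$ is simple. First I would establish (ii) and (iii) by exhibiting $\calI_{F,\{H\}}$ and $\calK_{F,\{H\}}$ as subfunctors of $F$ whose values at $H$ are forced by the identity morphism $\id_H\in E_H$: one has $\calI_{F,\{H\}}(H)\supseteq F(\id_H)(F(H))=F(H)\neq\{0\}$, so this nonzero subfunctor must equal $F$, and $\calK_{F,\{H\}}(H)\subseteq\ker(F(\id_H))=\{0\}$, so this subfunctor is strictly contained in $F$ and hence is zero. For (i), given any nonzero $E_H$-submodule $V\subseteq F(H)$, the key construction is the subfunctor $F_V\subseteq F$ defined by
\[
  F_V(G) := \sum_{x\in B_k^A(G,H)} F(x)(V) \subseteq F(G).
\]
Routine associativity of composition in $\calCkA$ shows that $F_V$ is indeed a subfunctor of $F$, and because $V$ is stable under the action of $E_H=B_k^A(H,H)$ one has $F_V(H)=V$; as $V\neq \{0\}$, simplicity of $F$ forces $V=F_V(H)=F(H)$, so $F(H)$ is a simple $E_H$-module.

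For the reverse direction, assume (i)--(iii) and let $F'\subseteq F$ be a nonzero subfunctor; I want to conclude $F'=F$. Pick a finite group $G$ with $F'(G)\neq \{0\}$. Applied to a nonzero element of $F'(G)$, condition~(iii) yields some $y\in B_k^A(H,G)$ with $F(y)(F'(G))\neq \{0\}$; since $F'$ is a subfunctor, this image lies in $F'(H)$, so $F'(H)\neq \{0\}$. Condition~(i) then forces $F'(H)=F(H)$, and condition~(ii) propagates the equality to every finite group:
\[
  F(G) = \calI_{F,\{H\}}(G) = \sum_{x\in B_k^A(G,H)} F(x)(F(H)) = \sum_{x} F(x)(F'(H)) \subseteq F'(G).
\]

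There is no substantial obstacle to this argument; the only point worth checking carefully is that $F_V$ is genuinely a subfunctor, which reduces to the identity $F(x'\cdotH x)(V)=F(x')(F(x)(V))$ for $x'\in B_k^A(G',G)$ and $x\in B_k^A(G,H)$, together with the fact that $x'\cdotH x\in B_k^A(G',H)$ so that $F(x'\cdotH x)(V)\subseteq F_V(G')$. This is in line with the author's remark that the argument is formal and carries over to more general functor categories.
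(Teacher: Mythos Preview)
Your proof is correct and follows essentially the same route as the paper. The only minor difference is that for condition~(i) in the forward direction the paper invokes Proposition~\ref{simple functor proposition}(a), whereas you give the self-contained construction of the subfunctor $F_V$ generated by $V$; in the reverse direction your argument is the contrapositive of the paper's case split on whether $L(H)=\{0\}$ or $L(H)=F(H)$, and the underlying logic is identical.
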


\begin{proof}
We first assume that $F$ is a simple functor. Then $F(H)$ is a simple $E_H$-module by Proposition~\ref{simple functor proposition}(a). Further note that $\calI_{F,\{H\}}(H)=F(H)$ and $\calK_{F,\{H\}}(H)=\{0\}$ (use $x=y=\id_H$ in the definition of $\calI_{F,\{H\}}(H)$ and $\calK_{F,\{H\}}(H)$). Thus $\calI_{F,\{H\}}$ is a non-zero subfunctor of $F$ and $\calK_{F,\{H\}}$ is a proper subfunctor of $F$. Since $F$ is simple, the properties (ii) and (iii) follow.

\smallskip
Next assume that the conditions (i)--(iii) are satisfied and let $L\subseteq F$ be a subfunctor of $F$. Then, $L(H)$ is an $E_H$-submodule of $F(H)$. Since the latter is simple, by (i), we have $L(H)=\{0\}$ or $L(H)=F(H)$. First assume that $L(H)=\{0\}$. We'll show that this implies that $L=0$. In fact, for every finite group $G$ we then have
\begin{align*}
  \{0\} & = \calK_{F,\{H\}}(G) = \bigcap_{y\in B_k^A(H,G)} \ker\bigl(F(y)\colon F(G)\to F(H)\bigr) \\
  & \supseteq L(G) \cap \bigcap_{y\in B_k^A(H,G)} \ker\bigl(F(y)\colon F(G)\to F(H)\bigr) \\
  & = \bigcap_{\substack{H\in\calH \\ y\in B_k^A(H,G)}} \ker\bigl(F(y)\colon L(G)\to F(H)\bigr) = L(G)\,,
\end{align*}
since $F(y)(L(G))\subseteq L(H)=\{0\}$ for all $y\in B_k^A(H,G)$. Finally, we assume that $L(H)=F(H)$ and will show that this implies $L=F$. In fact, for every finite group $G$ we then have
\begin{align*}
  F(G) & = \calI_{F,\{H\}}(G) = \sum_{x\in B_k^A(G,H)} \bigl(F(x)\colon F(H)\to F(G\bigr)) \\
  & = \sum_{x\in B_k^A(G,H)} \im\bigl(F(x)\colon L(H)\to F(G)\bigr) \subseteq L(G)\,,
\end{align*}
since $F(x)(L(H))\subseteq L(G)$, for all $x\in B_k^A(G,H)$.
\end{proof}

\bigskip
\pagebreak
\begin{center}\sc\large \ref{sec examples}A. The Functor $B_k^A$\end{center}

\bigskip\noindent
Canonically identifying a finite group $G$ with $G\times \{1\}$, we have
\begin{equation*}
B^A_k(G)=  B_k^A(G\times\{1\}) = \Hom_{\calCkA}(\{1\},G)\,.
\end{equation*}
Thus, we can view $B^A_k$ as the Yoneda functor $\Hom_{\calCkA}(\{1\}, -)\colon \calCkA\to\kMod$.
For any $A$-fibered $(G,H)$-biset $X$, the map $B^A_k([X])\colon B^A_k(H)\to B^A_k(G)$ is induced by composition with $[X]$ on the left. The functor $B^A_k$ is called the {\em$A$-fibered Burnside functor} over $k$.
Note also that $B^A_k$ is isomorphic in $\calFkA$ to the functor $L_{\{1\},k}$, where $\{1\}$ is the trivial group and $k$ is the regular $\Ebar_{\{1\}}$-module, identifying $\Ebar_{\{1\}}\cong E_{\{1\}}\cong k$ as $k$-algebras, cf.~\ref{noth LGV} and \ref{noth essential algebra}.

\smallskip
Given a transitive $A$-fibered $(G,\{1\})$-biset $\big( \frac{G\times \{1\}}{U,\phi} \big)$, we simply write $[U,\phi]_G$ for its image in the $B^A_k(G)$. With this notation, the $k$-module $B^A_k(G)$ is freely generated by the elements $[U,\phi]_G$ as $(U,\phi)$ runs through a set of representatives of the $G$-orbits of $\mathcal M_G$. 

\begin{theorem}\label{thm example BA}
Assume that $k$ is a field. The functor $B^A_k$ in $\calFkA$ is a projective and indecomposable object. It is a projective cover of the simple functor $S_{(\{1\},\{1\},1,k)}$.
\end{theorem}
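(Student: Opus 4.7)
My plan is to exploit the identification $B_k^A \cong L_{\{1\},k}$ recorded in the paragraph immediately before the theorem, where $\{1\}$ denotes the trivial group, together with the adjunction $L_{\{1\},-} \dashv \calE_{\{1\}}$ from \ref{noth LGV}. The whole argument will reduce to the standard Yoneda-style properties of the representable projective at the trivial group.

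First I will observe that $E_{\{1\}} = B_k^A(\{1\},\{1\}) \cong k$, since $\calM_{\{1\}\times\{1\}}$ contains only the trivial pair. The adjunction then gives, naturally in $F \in \calFkA$,
\[
\Hom_\calF(B_k^A, F) \;\cong\; \Hom_{E_{\{1\}}}(k, F(\{1\})) \;\cong\; F(\{1\}).
\]
Because kernels and cokernels in $\calFkA$ are computed pointwise, evaluation at $\{1\}$ is an exact functor, so $\Hom_\calF(B_k^A, -)$ is exact and $B_k^A$ is projective. Specializing $F = B_k^A$ in the displayed isomorphism yields $\End_\calF(B_k^A) \cong B_k^A(\{1\}) \cong k$; since $k$ is a field it has no idempotents other than $0$ and $1$, and hence $B_k^A$ is indecomposable.

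For the projective cover claim I will invoke Lemma~\ref{lem J}: it produces a unique maximal subfunctor $J_{\{1\},k} \subset L_{\{1\},k}$ with simple quotient $S_{\{1\},k}$, which under the parametrization of Theorem~\ref{parametrization of simples} is exactly $S_{(\{1\},\{1\},1,k)}$. Uniqueness of the maximal subfunctor means that every proper subfunctor of $L_{\{1\},k}$ is contained in $J_{\{1\},k}$, so $J_{\{1\},k}$ is a superfluous subfunctor; combined with projectivity this exhibits $B_k^A \twoheadrightarrow S_{(\{1\},\{1\},1,k)}$ as a projective cover.

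I do not foresee any real obstacle: the statement is a formal consequence of the machinery in Sections~\ref{sec fibered biset functors} and~\ref{sec simple functors}. The only genuinely new verifications are the identification $E_{\{1\}} \cong k$ and the routine observation that a unique maximal subobject is automatically superfluous; both are essentially trivial given the framework already in place.
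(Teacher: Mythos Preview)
Your proof is correct and follows essentially the same route as the paper's own proof, which simply invokes the Yoneda Lemma for projectivity and indecomposability and then cites the identification $B_k^A\cong L_{\{1\},k}$ together with the unique simple quotient from Lemma~\ref{lem J}. You have merely spelled out the details (the computation $E_{\{1\}}\cong k$, the exactness of evaluation, and why a unique maximal subfunctor is superfluous) that the paper leaves implicit.
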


\begin{proof}
The first statement is an easy consequence of the Yoneda-Lemma. The second statement follows from the first, the observation that the functors $L_{\{1\},k}$ and $B^A_k$ are isomorphic, and the fact that $S_{\{1\},k}=S_{(\{1\},\{1\},1,k)}$ is the only simple factor of the functor $L_{\{1\},k}$.
\end{proof}

In the following subsections we identify the simple functor $S_{(\{1\},\{1\},1,k)}$ for various choices of $A$ and $k$.

\bigskip
\begin{center}\sc\large \ref{sec examples}B. The Functor $kR_{\CC}$\end{center}

\bigskip\noindent
Let $A = \mathbb C^\times$. For a finite group $G$, we denote by $R_{\mathbb C}(G)$ the character ring of $\mathbb C[G]$-modules. Recall that there is a linearization map
\begin{equation*}
  \lin_G\colon B^{\CC^\times}(G)\to R_{\CC}(G), \quad [H,\phi]_G\mapsto \ind_H^G(\phi)\,,
\end{equation*}
and recall that it is surjective, by Brauer's induction theorem.
It follows from \cite[Theorem 1.1]{Bouc2010c} and the explicit formula in Corollary \ref{Mackey formula for fibered bisets 2} that 
\begin{equation*}
  \lin_{G\times H}([X])\otimes_{\CC H}\lin_{H\times K}([Y]) = \lin_{G\times K}([X]\cdotH[Y])
\end{equation*}
in $R_{\CC}(G\times K)$. This implies first that $G\mapsto R_{\CC}(G)$ gives rise to a $\CC^\times$-fibered biset functor $R_{\CC^\times}$ by mapping the standard basis element $\left[\frac{G\times H}{U,\phi}\right]$ of $B^{\CC^\times}(G,H)$ to the map $R_{\CC}(H)\to R_{\CC}(G)$, $[M]\mapsto [\Ind_{U}^{G\times H}(\CC_\phi)\otimes_{\CC H} M]$; and secondly that the maps $\lin_G$ form a morphism of fibered biset functors. Clearly, one can extend scalars from $\ZZ$ to any commutative ring $k$ and $\lin\colon kB^{\CC^\times}\to k R_{\CC}$ becomes a morphism in $\calF_k^{\CC^\times}$.

\begin{theorem}
Let $k$ be an arbitrary field. The $\CC^\times$-fibered biset functor $kR_{\CC}$ over $k$ is isomorphic to the simple functor $S_{(\{1\},\{1\},1,k)}$. Moreover, $\lin\colon kB^{\CC^\times}\to k R_{\CC}$ is a projective cover in the category $\calF_k^{\CC^\times}$.
\end{theorem}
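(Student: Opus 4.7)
The plan is to invoke Proposition~\ref{prop simplicity criterion} at $H=\{1\}$ to show that $kR_\CC$ is simple in $\calF_k^{\CC^\times}$, identify it via the parametrization in Theorem~\ref{parametrization of simples}, and then deduce the projective cover assertion from Theorem~\ref{thm example BA}.

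For condition~(i) of Proposition~\ref{prop simplicity criterion}, I would note that $E_{\{1\}}\cong k$ and $kR_\CC(\{1\})\cong k$, which is obviously simple. For condition~(ii), the identification $B_k^{\CC^\times}(G,\{1\})=kB^{\CC^\times}(G)$ combined with the explicit formula for $\lin_G$ shows that the standard basis element $[H,\phi]_G$ sends the generator $1\in kR_\CC(\{1\})$ to $\ind_H^G(\CC_\phi)=\lin_G([H,\phi]_G)$. Brauer's induction theorem makes $\lin_G\colon B^{\CC^\times}(G)\to R_\CC(G)$ surjective, and this survives base change to $k$; therefore $\calI_{kR_\CC,\{\{1\}\}}(G)=kR_\CC(G)$ for every finite group $G$.

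The substantive step is condition~(iii). For a basis element $y=\left[\frac{\{1\}\times G}{\{1\}\times V,\chi}\right]\in B_k^{\CC^\times}(\{1\},G)$, unpacking the fibered biset structure of $R_\CC$ given in the paper shows that the induced map $kR_\CC(G)\to kR_\CC(\{1\})=k$ takes $[M]$ to the image in $k$ of $\dim_\CC\bigl(\Ind_V^G(\CC_\chi)\otimes_{\CC G}M\bigr)$, and Frobenius reciprocity identifies this with the multiplicity of $\chi^{-1}$ in $\Res^G_V M$, i.e., with the usual inner product $\langle\ind_V^G(\chi^{-1}),[M]\rangle_G$. By Brauer's induction theorem, each $\psi\in\Irr(G)$ can be written as an integer linear combination $\psi=\sum n^\psi_{V,\chi}\,\ind_V^G(\chi)$, so the coefficient-of-$\psi$ functional on $R_\CC(G)$---which is $\langle\psi,-\rangle_G$ by orthonormality of $\Irr(G)$---is a $\ZZ$-linear combination of the actions of suitable $y_{(V,\chi)}$. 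Since $\Irr(G)$ is a $k$-basis of $kR_\CC(G)$, the extended $k$-linear functionals separate points, yielding $\calK_{kR_\CC,\{\{1\}\}}(G)=0$. Carefully matching the left/right module conventions and the twist by $\chi^{-1}$ inherent in the biset-functor structure will be the main technical obstacle.

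By Proposition~\ref{prop simplicity criterion}, $kR_\CC$ is simple. Since $(\{1\},1)$ is the only reduced pair in $\calM_{\{1\}}$ and $\Gamma_{(\{1\},\{1\},1)}$ is trivial, Theorem~\ref{parametrization of simples} identifies $kR_\CC$, which evaluates to the regular module $k$ at $\{1\}$, with $S_{(\{1\},\{1\},1,k)}$. For the final assertion, Theorem~\ref{thm example BA} gives that $kB^{\CC^\times}$ is an indecomposable projective with unique simple quotient $S_{(\{1\},\{1\},1,k)}\cong kR_\CC$, so any surjection from $kB^{\CC^\times}$ onto $kR_\CC$ is a projective cover; since $\lin$ is surjective by Brauer's theorem, it is one.
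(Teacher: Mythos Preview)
Your proposal is correct and follows essentially the same approach as the paper: both verify the three conditions of Proposition~\ref{prop simplicity criterion} at $H=\{1\}$, using Brauer's induction theorem for~(ii) and the orthogonality of irreducible characters for~(iii), then deduce the projective cover statement from Theorem~\ref{thm example BA}. The only cosmetic difference is that for~(iii) the paper picks a single $x$ with $\lin_G(x)$ equal to the dual of an irreducible occurring with nonzero coefficient in $f$, whereas you invoke Brauer a second time to realize every coefficient functional $\langle\psi,-\rangle$ as an integer combination of the $y$-actions; both arguments are equivalent, and your anticipated twist by $\chi^{-1}$ is exactly the passage to the dual that the paper uses.
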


\begin{proof}
It suffices to show that $kR_{\CC}\in\calF_k^{\CC^\times}$ is a simple functor. Since $\lin\colon kB^{\CC^\times}\to kR_{\CC}$ is surjective, Theorem~\ref{thm example BA} then implies the rest. To see that $kR_{\CC}$ is simple we apply the criterion from Proposition~\ref{prop simplicity criterion} with $H$ chosen as the trivial group $\{1\}$. We will show that the three conditions (i)-(iii) hold.

\smallskip
Clearly, $k R_{\CC}(\{1\})\cong k$ is a simple module for $E_{\{1\}}\cong k$ and the first condition holds.
To see that the second condition holds, we need to show that for every irreducible character $\chi$ of $G$ there exists $x\in B_k^{\CC^\times}(G,\{1\}) = B_k^{\CC^\times}(G)$ such that $(kR_{\CC}(x))(1)=\chi$, where $1\in k= kR_{\CC}(\{1\})$. But, with the above identification, $(kR_{\CC}(x))(1)=\lin_G(x)$. And since $\lin_G$ is surjective, the second condition holds. Finally we show that the third condition holds, namely that $\calK_{kR_{\CC},\{1\}}(G)=0$ for any finite group $G$. In other words, we need to show that for any non-zero $k$-linear combination $f$ of simple $\CC G$-modules, there exists 
$x\in B_k^{\CC^\times}(\{1\},G)=B^{\CC^\times}_k(G)$ such that $xf=(R_{\CC}(x))(f)\neq 0\in kR_{\CC}(\{1\})= k$. However, for $x\in B_k^{\CC^\times}(G)$ and any $\CC G$-module $V$, with the above identifications, we have $x[V]=\lin_G(x)\cdotG [V]\in R_{\CC}(\{1\})$, where $\cdotG\colon R_{\CC}(G)\times R_{\CC}(G)\to k$ is induced by $(W,V)\mapsto \dim_{\CC}(\Wtilde\otimes_{\CC G}V)$, for left $\CC G$-modules $V$ and $W$, where $\Wtilde$ is the right $\CC G$-module with $W$ as underlying space and $G$-action $wg:=g^{-1}w$ for $w\in W$ and $g\in G$. Thus, choosing $x\in B_k^{\CC^\times}(G)$ such that $\lin_G(x)$ is the dual of an irreducible constituent occurring in $f$ with non-zero coefficient $\alpha\in k$, we obtain $xf=\alpha\neq 0$.
\end{proof}

\bigskip
\begin{center}\sc\large \ref{sec examples}C. The Functor $kR_{\QQ}$\end{center}

\bigskip\noindent
Let $A=\mathbb Q^\times$ and let $R_{\mathbb Q}(G)$ denote the character ring of finitely generated $\QQ G$-modules, where $G$ is a finite group. Since $\mathbb Q^\times\subset \mathbb C^\times$, the $\mathbb C^\times$-fibered biset functor $R_{\CC}$ 
restricts to a $\mathbb Q^\times$-fibered biset functor. Considering $R_\QQ(G)$ as subgroup of $R_\CC(G)$, we see that $R_\QQ$ becomes a subfunctor of $R_\CC$ as $\QQ^\times$-biset functor and thus is a $\QQ^\times$-fibered biset functo in its on right.

On the other hand, any $\mathbb Q^\times$-fibered biset functor restricts to an ordinary biset functor. Further, recall that, by \cite[Proposition 4.4.8]{BoucBook}, for a field $k$ of characteristic zero, the functor $kR_{\mathbb Q}=k\otimes R_{\QQ}$ is simple
as a biset functor. Therefore, the $\mathbb Q^\times$-fibered biset functor $kR_{\mathbb Q}$ is also simple. Since $kR_{\QQ}(\{1\})\neq \{0\}$, the trivial group $\{1\}$ is a minimal group for this functor and the classification 
of simple fibered biset functors from Section~\ref{sec simple functors} implies that it is parameterized by the quadruple $(\{1\},\{1\},1,k)$. The map $\lin_G\colon B_k^{\CC^\times}(G)\to kR_{\CC^\times}(G)$ from \ref{sec examples}B restricts to the map 
\begin{equation*}
  \lin_G\colon B_k^{\QQ^\times}(G)\to R_{\QQ}(G)\,, \quad [H,\phi]_G\mapsto \ind_H^G(\phi)\,,
\end{equation*}
and therefore defines a morphism of $\QQ^\times$-fibered biset functors. It must be surjective, since $kR_{\QQ^\times}$ is a simple functor in $\calF_k^{\QQ^\times}$. Thus, using Theorem~\ref{thm example BA}, we now have the following result.

\begin{theorem}\label{thm: the simple (1,1)2}
Let $k$ be a field of characteristic zero. Then there is an isomorphism
\begin{equation*}
S_{(\{1\},\{1\},1,k)} \cong kR_{\mathbb Q}
\end{equation*}
of $\mathbb Q^\times$-fibered biset functors. Moreover, $\lin\colon kB^{\QQ^\times}\to k R_{\QQ}$ is a projective cover in the category $\calF_k^{\QQ^\times}$.

\end{theorem}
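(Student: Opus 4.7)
The plan is essentially to assemble the observations in the paragraph preceding the theorem into a clean argument using the classification in Theorem~\ref{parametrization of simples} and the projectivity statement in Theorem~\ref{thm example BA}. First I would verify that $kR_{\QQ}$ genuinely lives in $\calF_k^{\QQ^\times}$. Via the inclusion $\QQ^\times\hookrightarrow\CC^\times$ and Remark~\ref{isomorphic calFs}(a), the $\CC^\times$-fibered biset functor $kR_{\CC}$ from subsection~\ref{sec examples}B restricts to a $\QQ^\times$-fibered biset functor, and I would check that $kR_{\QQ}$ is a $\QQ^\times$-fibered subfunctor of it. Concretely, for $(U,\phi)\in\calM_{G\times H}$ with $\phi\in\Hom(U,\QQ^\times)$, the $\QQ$-line $\QQ_\phi$ is defined, so the action of $\left[\frac{G\times H}{U,\phi}\right]$ on $kR_\CC(H)$ preserves the $\QQ$-rational subgroup $kR_\QQ(H)$. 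This only requires unwinding the formula of subsection~\ref{sec examples}B, replacing $\CC$ by $\QQ$ throughout.

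Next I would prove simplicity. The forgetful functor from $\calF_k^{\QQ^\times}$ to the category of biset functors over $k$ reflects subfunctors: any $\QQ^\times$-fibered subfunctor is in particular a biset subfunctor, since the biset category embeds into $\calC_k^{\QQ^\times}$ (via $\phi=1$). Consequently, simplicity as a biset functor implies simplicity as a $\QQ^\times$-fibered biset functor. Since \cite[Proposition 4.4.8]{BoucBook} asserts that $kR_\QQ$ is simple as an ordinary biset functor over any field $k$ of characteristic zero, this gives simplicity of $kR_\QQ$ in $\calF_k^{\QQ^\times}$.

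With simplicity in hand, identification of the parameters is immediate from Section~\ref{sec simple functors}. Since $kR_\QQ(\{1\})=k\neq \{0\}$, the trivial group is a minimal group for $kR_\QQ$. Proposition~\ref{simple functor proposition}(a)(b) then forces the parametrizing quadruple to be $(\{1\},\{1\},1,[k])$, as this is the unique quadruple in $\calS$ at the trivial group; hence $kR_\QQ\cong S_{(\{1\},\{1\},1,k)}$.

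For the projective cover statement, I would observe that the linearization map from subsection~\ref{sec examples}B restricts to a morphism $\lin\colon kB^{\QQ^\times}\to kR_\QQ$ in $\calF_k^{\QQ^\times}$; since $\lin_{\{1\}}(1)=1$ and $kR_\QQ$ is simple, $\lin$ is surjective. By Theorem~\ref{thm example BA}, $kB^{\QQ^\times}$ is a projective indecomposable object of $\calF_k^{\QQ^\times}$ whose simple head is $S_{(\{1\},\{1\},1,k)}$; surjecting onto this simple head, $\lin$ is therefore a projective cover. The only real point to be careful about is Step~1—checking that the $\QQ^\times$-fibered biset structure on $kR_\CC$ restricts to $kR_\QQ$—but this reduces to the observation that induction, restriction, inflation, deflation, and tensoring with one-dimensional $\QQ H$-modules all preserve $\QQ$-rationality of characters, so it presents no real obstacle.
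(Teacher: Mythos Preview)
Your proposal is correct and follows essentially the same approach as the paper: the paper's argument (given in the paragraphs preceding the theorem) likewise establishes that $kR_\QQ$ is a $\QQ^\times$-fibered subfunctor of the restricted $kR_\CC$, deduces simplicity from \cite[Proposition~4.4.8]{BoucBook} via the restriction to ordinary biset functors, identifies the parametrizing quadruple using the minimality of the trivial group, and obtains the projective cover from Theorem~\ref{thm example BA}. You are slightly more explicit than the paper in justifying why biset-functor simplicity forces $\QQ^\times$-fibered simplicity, but the route is the same.
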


\bigskip
\begin{center}\sc\large \ref{sec examples}D. The Functor of Trivial Source Modules\end{center}

\bigskip\noindent
Let $F$ be an algebraically closed field of characteristic $p>0$. For any finite group $G$, we denote by $T(G)$ the ring of trivial source $F[G]$-modules, also known as $p$-permutation $FG$-modules, see \cite[Section~5.5]{Benson1995}, \cite{Broue1985}, or \cite[\S 27]{Thevenaz1995}. Again we have a linearization map
\begin{equation*}
 \lin_G\colon B^{F^\times}(G) \rightarrow T(G)\,,\quad [H,\phi]_G\mapsto [\Ind_H^G(F_\phi)]\,. 
\end{equation*}
With the same arguments as at the beginning of Subsection~11B one obtains that $T$ is an $F^\times$-fibered biset functor and that the maps $\lin_G$ form a morphism of $F^\times$-fibered biset functors. Note that $\lin_G$ is surjective (see \cite{Dress1975}, \cite{Boltje1998b}, or \cite{BoltjeKuelshammer2005}). Now let $k$ be a field. Then the functor $kT:=k\otimes T$ is a quotient of the functor $B_k^{F^\times}$. Since $B^A_k$ has a unique simple quotient isomorphic to $S_{(\{1\},\{1\},1,k)}$ (see Theorem~\ref{thm example BA}), also $kT$ has a unique quotient functor, isomorphic to $S_{(\{1\},\{1\},1,k)}$. 

We will now use Proposition~\ref{prop simplicity criterion} applied to $H:=\{1\}$ in order to show that $kT$ is not a simple functor. More precisely we show that $\calK_{kT,\{1\}}\neq0$. In fact, let $G=C_p\times C_p$ be the elementary abelian $p$-group of rank $2$ and let $H_1,\ldots, H_{p+1}$ denote its subgroups of order $p$. Then, the map $\lin_G\colon B^{F^\times}(G)=B(G)\to T(G)$, $[G/P]\mapsto\Ind_P^G(F)$, is an isomorphism, since $G$ is a $p$-group. Here $B(G)$ denotes the Burnside ring of $G$. Moreover, identifying $B^{F^\times}(\{1\},G)=B(\{1\},G)$ with $B(G)$, the map $kT([G/P])\colon kT(G)\to kT(\{1\})=k$ is given by $[G/Q]\mapsto |P\backslash G/Q|\cdot 1_k$, for $Q\le G$. Now it is easy to see that the element $p[G/G]- ([G/H_1]+\cdots+[G/H_{p+1}])+[G/\{1\}]\in kB(G)\cong kT(G)$ is a non-zero element in $\calK_{kT,\{1\}}(G)$. Thus, we have the following result.

\begin{proposition}\label{prop ex trivial source functor}
Let $k$ be a field. Then the $F^\times$-fibered biset functor $kT$ is not simple. It has a unique quotient functor and this quotient is isomorphic to $S_{(\{1\},\{1\},1,k)}$
\end{proposition}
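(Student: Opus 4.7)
The plan is to separate the statement into two assertions: (a) that $kT$ admits $S_{(\{1\},\{1\},1,k)}$ as its unique simple quotient, and (b) that $kT$ itself is not simple. For (a) I will first observe that the linearization maps $\lin_G$ assemble into a morphism $\lin\colon B_k^{F^\times}\to kT$ in $\calF_k^{F^\times}$ which is surjective on every evaluation, by the classical Dress/Boltje/Boltje--K\"ulshammer surjectivity theorem already cited in the preamble to the proposition. Hence $kT$ is a quotient of $B_k^{F^\times}$. Theorem~\ref{thm example BA} asserts that $B_k^{F^\times}$ is projective and indecomposable, and is a projective cover of $S_{(\{1\},\{1\},1,k)}$. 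A projective indecomposable has a unique maximal subfunctor and so a unique simple quotient, and the same is therefore true of any non-zero quotient of it. Applied to $kT$, this yields (a).

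For (b) I will apply the simplicity criterion of Proposition~\ref{prop simplicity criterion} with $H=\{1\}$. Conditions (i) and (ii) of that criterion are automatic: $kT(\{1\})\cong k$ is a simple module over $E_{\{1\}}\cong k$, and $\calI_{kT,\{1\}}=kT$ because $\lin$ is surjective and, by Yoneda, factors through the evaluations $B_k^{F^\times}(G,\{1\})\otimes_{k} kT(\{1\})\to kT(G)$. The whole task therefore reduces to exhibiting a finite group $G$ together with a non-zero element of $\calK_{kT,\{1\}}(G)$.

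The candidate group will be $G=C_p\times C_p$, with its $p+1$ subgroups of order $p$ denoted $H_1,\dots,H_{p+1}$. Since $G$ is a $p$-group and $F$ has characteristic $p$, the group $F^\times$ has no nontrivial $p$-torsion, so $\Hom(H,F^\times)=1$ for every subgroup $H\le G$; thus $B_k^{F^\times}(G)=kB(G)$ and likewise for $B_k^{F^\times}(\{1\},G)$. Moreover, for a $p$-group, the ordinary linearization map $kB(G)\to kT(G)$, $[G/P]\mapsto[\mathrm{Ind}_P^G(F)]$, is an isomorphism (for example via Brauer construction and lifting of idempotents, since every trivial source module is a direct summand of some transitive permutation module). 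I will then propose the explicit element
\[
\alpha := p[G/G]-\sum_{i=1}^{p+1}[G/H_i]+[G/\{1\}] \in kB(G)\cong kT(G),
\]
and check it lies in $\calK_{kT,\{1\}}(G)$ by computing, for each $P\le G$, the map $kT([G/P])\colon kT(G)\to kT(\{1\})=k$ sending $[G/Q]\mapsto|P\backslash G/Q|\cdot 1_k$. Running through the five cases $P\in\{G,H_1,\dots,H_{p+1},\{1\}\}$ and using the evident double coset counts $|P\backslash G/G|=1$, $|P\backslash G/\{1\}|=[G:P]$, $|H_j\backslash G/H_j|=p$, and $|H_j\backslash G/H_i|=1$ for $i\neq j$, the alternating sum collapses to $0$ in every case. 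Since the coefficient of $[G/\{1\}]$ in $\alpha$ is $1\neq 0$ in $k$, the element $\alpha$ is non-zero, so $\calK_{kT,\{1\}}(G)\neq 0$ and (b) follows. The only real work is the five-line double-coset verification above; the rest is a formal application of Theorem~\ref{thm example BA} and Proposition~\ref{prop simplicity criterion}.
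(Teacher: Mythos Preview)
Your proof is correct and follows essentially the same approach as the paper: you deduce the unique simple quotient from the surjectivity of $\lin$ together with Theorem~\ref{thm example BA}, and you disprove simplicity via Proposition~\ref{prop simplicity criterion} at $H=\{1\}$ by exhibiting the same element $p[G/G]-\sum_i[G/H_i]+[G/\{1\}]$ in $\calK_{kT,\{1\}}(C_p\times C_p)$. The only minor difference is that you also verify conditions~(i) and~(ii), which is unnecessary for showing non-simplicity (the failure of~(iii) alone suffices), but this does no harm.
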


%%%%%%%%%%%%%%%%%%%%%% BIBLIOGRAPHY %%%%%%%%%%%%%%%%%%%%%%%%%%%%%%%%%%

%%%%%%%%%%%%%%%%%%%%%% END %%%%%%%%%%%%%%%%%%%%%%%%%%%%%%%%%%%%%%%%%%%

\end{document}